\documentclass[11pt]{amsart}
\usepackage[utf8]{inputenc}
\usepackage[english]{babel}
\usepackage{cite}
\usepackage{amsthm,amsmath,amssymb,hyperref,amsfonts,tikz,adjustbox}
\usepackage{mathrsfs}
\usetikzlibrary{cd}
\numberwithin{equation}{section}
\usepackage[shortlabels]{enumitem}
\usepackage{imakeidx}


\usepackage{tikz}
\usetikzlibrary{cd}
\usetikzlibrary{calc} 
\usetikzlibrary{decorations.pathmorphing} 

\tikzset{curve/.style={settings={#1},to path={(\tikztostart)
    .. controls ($(\tikztostart)!\pv{pos}!(\tikztotarget)!\pv{height}!270:(\tikztotarget)$)
    and ($(\tikztostart)!1-\pv{pos}!(\tikztotarget)!\pv{height}!270:(\tikztotarget)$)
    .. (\tikztotarget)\tikztonodes}},
    settings/.code={\tikzset{quiver/.cd,#1}
        \def\pv##1{\pgfkeysvalueof{/tikz/quiver/##1}}},
    quiver/.cd,pos/.initial=0.35,height/.initial=0}

\tikzset{tail reversed/.code={\pgfsetarrowsstart{tikzcd to}}}
\tikzset{2tail/.code={\pgfsetarrowsstart{Implies[reversed]}}}
\tikzset{2tail reversed/.code={\pgfsetarrowsstart{Implies}}}
\tikzset{no body/.style={/tikz/dash pattern=on 0 off 1mm}}


\newtheorem{theorem}{Theorem}[section]
\newtheorem{corollary}[theorem]{Corollary}
\newtheorem{lemma}[theorem]{Lemma}
\newtheorem{proposition}[theorem]{Proposition}

\theoremstyle{definition}
\newtheorem{definition}[theorem]{Definition}
\newtheorem{example}[theorem]{Example}

\theoremstyle{remark}
\newtheorem{remark}[theorem]{Remark}


\newcommand{\m}{{}^{-1}}  
\newcommand{\s}{\sigma}
 \newcommand{\0}{\theta} \newcommand{\de}{\delta}

\newcommand{\kpar}{{\kappa }_{\rm par}}
\newcommand{\niton}{\not\owns}

\title[Twisted partial group algebra and topological dynamics]{Twisted partial group algebra and  related topological partial dynamical system}

\keywords{Partial projective representation, factor set,
twisted partial group algebra, spectrum, partial action}

\subjclass[2020]{Primary 16S35, Secondary 20C25}

\author{Mikhailo Dokuchaev}
\address{Departamento de Matem{\'a}tica, Universidade de S\~ao Paulo,
Rua do Mat\~ao, 1010, 05508-090 S\~ao Paulo, Brazil}
\email{dokucha@ime.usp.br}  

\author{Emmanuel Jerez}
\address{Departamento de Matem{\'a}tica, Universidade de S\~ao Paulo,
Rua do Mat\~ao, 1010, 05508-090 S\~ao Paulo, Brazil}
\email{ejerez@ime.usp.br}

\begin{document}


\begin{abstract} 
Given  a group $G,$ a field $\kappa $ and a factor set  $\sigma $  of some partial projective $\kappa$-representation of $G,$   we consider a topological partial dynamical system $(\Omega_\sigma, G, \hat{\theta}),$ in which  $\Omega_\sigma $  is a compact totally disconnected Hausdorff space and $\sigma $ is a twist for $\hat{\theta},$ such that $\kpar ^{\sigma} G$
can be seen as a crossed product of the form     
${\mathscr L}(\Omega_\sigma )  \rtimes_{({\hat{\theta}}, \sigma)} G,$ where ${\mathscr L}(\Omega_\sigma )$ stands for the $\kappa $-algebra of the locally constant functions $\Omega_\sigma \to \kappa .$ 
The space $\Omega_\sigma $ is homeomorphic  with the spectrum of a unital commutative subalgebra  of $\kpar ^{\sigma} G$ generated by idempotents. We describe $\Omega_\sigma $ as a subspace of the  Bernoulli space $2^G$ and study when the spectral partial action   $\hat{\theta}$ is topologically free, which is a property related to important information on the ideals of   $\kpar ^{\sigma} G.$   We also show how to generate arbitrary idempotent factor sets of  $G$ and give a condition on them which guarantees the topological freeness of $\hat{\theta}$. 
Furthermore, inspired by the semigroup $\mathcal{S}(G)$ defined by R. Exel to control partial actions and partial representations of $G$ 
and its relation to the partial group algebra  $\kpar G,$ we characterize the twisted partial group algebra $\kpar ^{\sigma} G$ as an algebra generated by a $\kappa$-cancellative inverse semigroup, the latter being defined using elements of $\Omega_\sigma .$   If $\Omega_\sigma $ is discrete, we prove that $\kpar ^{\sigma} G$ is a product of matrix algebras over twisted subgroup algebras, extending a known  result  for $\kpar G$ with finite $G.$ For the infinite dihedral group \(D_{\infty}\) we describe how to obtain all partial factor sets of \(D_{\infty}\), up to equivalence.
\end{abstract}

\maketitle

\section{Introduction}

Group algebras rule group representations and, more generally, given a $2$-cocycle $\sigma : G \times G \to {\kappa }^*$ of a group $G$ with values in the multiplicative group ${\kappa }^*$ of a field $\kappa ,$ the projective $\kappa $-representations of $G$ are governed by the twisted group algebra $\kappa \ast _{\s} G.$ As to partial representations of a group $G,$ introduced in the theory of operator algebras \cite{QR}, \cite{exe} (see also \cite{Mc} and\cite{DEP})  we know that the partial group algebra $\kpar G$ of $G$, defined by R. Exel initially in the context of $C^*$-algebras \cite{exe}, is responsible for their control. The study of partial projective group representations began in \cite{DN} with further developments  in  
\cite{DoNoPi}, \cite{NovP}, \cite{HGGLimaHPinedo}, \cite{Pi5}, \cite{DoSa}. Recently, the concept of the twisted partial group algebra     $\kpar ^{\sigma} G$ was offered in \cite{Dokuchaev-Jerez:twisted:2024}, where $\sigma $ is a factor set of some partial projective representation of $G$. The representations of  $\kpar ^{\sigma} G$ in a $\kappa $-algebra $R$ are in one-to-one correspondence with the partial projective representations of $G$ in $R$, whose factor set is $\sigma $ (partial projective $\sigma $-representations of $G$). One of the results of \cite{Dokuchaev-Jerez:twisted:2024} says that the algebra $\kpar ^{\sigma} G$
is isomorphic to the crossed product  $ B^\sigma \rtimes_{({\theta}^{\sigma}, \sigma)} G$ (also denoted simply by   
$ B^\sigma \rtimes_{\sigma} G$), where $ B^\sigma $ is a commutative subalgebra of  $\kpar ^{\sigma} G$ generated by idempotents and ${\theta}^{\sigma}$ is a unital partial action of $G$ on  $ B^\sigma ,$ such that $\sigma $ is  a twist of   ${\theta}^{\sigma}.$ This extends an earlier result proved for $\kpar G$ in \cite{DE1}.

Our purpose is to give a better understanding of $\kpar ^{\sigma} G,$  study the  to\-po\-lo\-gi\-cal partial dynamical system related to  ${\theta}^{\sigma}$ and contribute to the know\-led\-ge about the factors sets of partial projective group representations.

In Section~\ref{sec:SomeRemarks} we provide some  preliminary information  on spectra of commutative unital algebras generated by idempotents and, more generally, on compact totally disconnected Hausdorff spaces. In particular, we shall use the Gelfand-like Theorem~\ref{t: JacobsonKeimel}, which is a known fact, stating that  a unital commutative $\kappa $-algebra  $A$ generated by idempotents is isomorphic to the algebra of the locally constant functions 
${\rm Spec}\, (A) \to \kappa .$

 In Section~\ref{sec:Basics} we recall some background on partial projective group representations, twisted partial group actions and the twisted partial group algebra.

Since $B^{\sigma }$ is a commutative unital $\kappa $-algebra generated idempotents, by the above-mentioned Gelfand-like Theorem  it can be identified  with the algebra of the locally constant $\kappa $-valued functions on ${\rm Spec}\, (B^{\sigma}) ,$ and the partial action ${\theta}^{\sigma}$ of $G$ on 
$B^{\sigma }$ corresponds to a topological partial action  $\hat{\theta}$ of $G$ on  ${\rm Spec}\, (B^{\sigma}) $ (the spectral partial action). In 
Section~\ref{sec:Spectral} we identify ${\rm Spec}\, (B^{\sigma}) $ with a subspace $\Omega_\sigma$ of the  Bernoulli space $2^G$  and describe the spectral partial action $\hat{\theta}$
as a restriction of  the well known Bernoulli partial action of $G$ (see \cite[Definition 5.12]{E6}). We consider $2^G$ as the power set of $G$ and  define the subset $ \mathcal{P}_\sigma \subseteq  2^G,$ called the set of $\sigma$-prohibitions, and characterize the elements of   $\Omega_\sigma$ as those subsets of $G$,
which contain $1_G$ and do not  contain   $\sigma$-prohibitions. We also prove some preliminary facts on $B^{\sigma}$ and  $\Omega_\sigma$. In particular, we describe $ B^\sigma$ in terms of generators and relations in Proposition~\ref{l: B Isigma quotient}. Besides the commutativity condition, there are relations associated to equalities  of the form  $\sigma(g,h)=0$, $(g,h\in G),$ and those which occur when $\sigma $ does not satisfy the $2$-cocycle equality for some ordered triple $(g,h,t)$ of elements of $G.$ Similarly, a part of the  $\sigma$-prohibitions for the elements of $\Omega_\sigma$  is related to the equalities  $\sigma(g,h)=0$ and called of type I, and another part,  the $\sigma$-prohibitions of type II, is associated to the failure  of  the $2$-cocycle identity, i.e.  $\partial(\sigma)(g,h,t) \neq 1,$ where $\partial $ stands for the coboundary operator. Then  $$\mathcal{P}_\sigma = 
\mathcal{P}_\sigma^{\mathrm{I}} \cup \mathcal{P}_\sigma^{\mathrm{II}},$$ where $\mathcal{P}_\sigma^{\mathrm{I}}$ denotes the 
$\sigma$-prohibitions of type I, whereas $\mathcal{P}_\sigma^{\mathrm{II}}$ stands for those  of type II.

The partial group algebra $\kpar G$ can be seen as the semigroup algebra $\kappa  \mathcal{S}(G),$ where $\mathcal{S}(G)$ is the inverse semigroup defined by R. Exel  in  \cite{exe} in terms of generators and relations and, besides being important for partial representations of $G$,
the monoid $\mathcal{S}(G)$ also controls the partial actions of $G$ on sets \cite[Theorem 4.2]{exe}. It is known \cite{KL} that  $\mathcal{S}(G)$ is isomorphic to the Szendrei expansion of $G$ (see \cite{Sze}), the latter being also isomorphic to the Birget-Rhodes prefix expansion of $G$  (see \cite{BR1}). The description of 
$\kpar G$ as a semigroup algebra is extended  to  $\kpar ^{\sigma} G$ in Section~\ref{sec:CancellativeMon}. For this purpose we introduce  a $\kappa$-monoid  $\mathcal{S}^{\sigma}(G)$ determined by the $\sigma$-prohibitions. In
Proposition~\ref{prop:Cancellative} we show that 
    $\mathcal{S}^{\sigma}(G)$ is  $\kappa$-cancellative.
 Then, using the multiplication rule of  $\mathcal{S}^{\sigma}(G) ,$ we define a $\kappa $-algebra $\mathcal{A}_{\sigma}$ such that there is a  monomorphism  of
  $\mathcal{S}^{\sigma}(G) $ into the multiplicative semigroup of $\mathcal{A}_{\sigma}.$ This is followed by 
    Theorem~\ref{t: kparsG isomorphism}, which exhibits an algebra isomorphism  $\kappa_{par}^{\sigma}G \to \mathcal{A}_{\sigma}$.
    
 It was proved  in \cite{DEP} that if $G$ is a  finite group then for the partial group algebra the following decomposition holds
 $$\kappa_{par} G \cong \bigoplus_{i} M_{n_i}(\kappa H_i),$$ where  $i$ runs over the indexes of the connected components  of a groupoid $\Gamma (G)$ associated to $G$ and $H_i$ is  the isotropy group of
 an object  of the $i$th connected component of  $\Gamma (G).$ We extend this fact to $\kappa_{par}^{\sigma}G$ in Section~\ref{sec:Another}. 
 With this goal we assume that  
 $\Omega_\sigma$ is discrete. If $G$ is finite, then  $\Omega_\sigma$ is discrete for any $\sigma $, so that we include the case of an arbitrary finite group $G.$ Then we define the $\kappa $-algebra $\mathcal{R}^{\sigma}(G),$ which is a twisted analogue of the groupoid algebra 
 $\kappa \Gamma(G),$ and prove in  Proposition~\ref{prop:kpariso2} that the algebras 
     $ \kappa_{par}^{\sigma}G $ and $\mathcal{R}^{\sigma}(G)$ are isomorphic. Next we introduce a subset
     $\Gamma^{\sigma}(G) $ of $ \mathcal{R}^{\sigma}(G)$, which is a groupoid with the product inherited from $\mathcal{R}^{\sigma}(G)$.  For each connected component $ \Gamma_i^{\sigma}(G)$ of  $\Gamma^{\sigma}(G),$  we define the algebra   $\mathcal{R}^{\sigma}_{i}(G)$ as $\kappa$-subspace of $\mathcal{R}^{\sigma}(G)$ generated by $\Gamma_i^{\sigma}(G)$, where $i$ runs over the index set $I$ of the connected components of  $\Gamma^{\sigma}(G)$. Then 
$\mathcal{R}^{\sigma}(G) = \bigoplus_{i\in I} \mathcal{R}_{i}^{\sigma}(G),$ and we use this decomposition to establish Theorem~\ref{t: groupoid algebra isomorphism}, which says that if $\Omega_\sigma$ is discrete, then  
$$\kappa_{par}^{\sigma}G \cong \bigoplus_{i\in I} M_{n_i}(\kappa^{\sigma_i} H_i),$$
where $H_i$ is the isotropy group of an object of $ \Gamma_i^{\sigma}(G),$  $\sigma _i$ is a global $2$-cocycle of $H_i,$ obtained as  the restriction of $\sigma $ to $H_i \times H_i,$ and $\kappa^{\sigma_i} H_i$ is the usual twisted group algebra of $H_i.$
		
In Section~\ref{sec:S4-Invariance} we temporarily assume that the field $\kappa $ is algebraically closed and construct an action of the symmetric group $S_4$ on $G^3,$ such that any $\gamma \in S_4$ either preserves  $\partial(\sigma)(x,y,z)$ or inverts it (see Proposition~\ref{prop:S4-action on delta}). The latter option occurs exactly when   $\partial(\sigma)(x,y,z)\neq 0$ and $\gamma$ is an odd permutation. It follows that  $\partial(\sigma)(x,y,z) = 1$ if and only if $\partial(\sigma)(\gamma \triangleright (x, y, z)) = 1$ for all $\gamma \in S_4,$ which implies that the set $\mathcal{P}_\sigma^{\mathrm{II}}$  is invariant under the action of $S_4.$ As a byproduct we obtain in Proposition~\ref{prop:refinement} a refinement with respect to the defining relations of $B^{\sigma},$ provided that $\sigma$ is totally defined (and, as assumed above, $\kappa$ is algebraically closed).
		
Section~\ref{sec:TopolFree} is dedicated to the study of the topological freeness of the spectral partial action (see Definition~\ref{def:TopolFree}). Corollary~\ref{cor:TopolFreeImportance2} states that if the spectral partial action $\hat{\theta}$ is topologically free, then $\mathcal{I} \cap  B^{\sigma}  \neq 0$ for each non-zero ideal $\mathcal{I}$ of  $\kpar ^{\sigma} G$.
Thus, the to\-po\-lo\-gi\-cal freeness of $\hat{\theta}$ guarantees a relevant property of the ideals of the twisted partial group algebra \(\kappa_{par}^{\sigma}G\).
It is rather easy to see that  the spectral partial action of $G$ is topologically free, if  the group $G$ is  torsion-free (see Proposition~\ref{prop:torsion-free}).
On the other hand, the to\-po\-lo\-gy of $\Omega_{\sigma}$ is discrete when $G$ is finite (see   Remark~\ref{r: Omega sigma is discrete when G is finite}), so that the main problem is to study the topological freeness of $\hat{\theta}$ when $G$ is an infinite group with non-trivial torsion part.
After proving several facts on the topological space  $\Omega_\sigma$  and its elements, related to the topological freeness of $\hat{\theta},$ we obtain the main result of the section, Theorem~\ref{t: not topologically free charaterization}, which gives two conditions, each of them being equivalent to the fact that $\hat{\theta}$ is not topologically free.
One of the conditions is the  existence of an isolated fixed point and the other one is the existence of a finite fixed point with an additional finiteness condition.
	
	It is known that any factor set $\sigma $ of a partial projective  representation of a group $G$ (partial factor set of $G$) is equivalent to a product of an idempotent partial factor set of $G$ and a totally defined partial factor set of $G$, i.e. a partial factor set  whose domain is $G\times G$ (see \cite{DoNoPi}). Therefore, the problem of finding   all partial factor sets of a group $G$ up to equivalence is reduced to the determination of the idempotent partial factor sets and the totally defined 
	ones. In Section~\ref{sec:Idempotent} we show how to construct an arbitrary idempotent partial factor set of a group $G$ from  a subset of $ G \times G,$ which satisfy a condition of symmetry \eqref{eq: General idempotent condition}. This is achieved by introducing the so-called diagonal (partial) factor sets 
	in Section~\ref{sec:DiagonalIdempotent} and the lateral (partial) factor sets in Section~\ref{sec:LateralIdempotent}. Then Theorem~\ref{p: unique decomposition of idempotent factor sets} states, in particular, that any idempotent (partial) factor set of $G$ can be uniquely decomposed into a product of a diagonal  factor set and a lateral one. As to the topological freeness of $\hat{\theta}$, we give a sufficient condition for it in the case of a diagonal factor set in Proposition~\ref{prop:DiagonalTopolFree}
	and for the case of a lateral factor set in 
	Proposition~\ref{prop:LateralTopolFree}. This allows us to give a sufficient condition for the 
	topological freeness of $\hat{\theta}$ in the case of a general idempotent factor set in Corollary~\ref{cor:IdempotentTopolFree}. In particular, we conclude in Corollary~\ref{cor:IdempotentNullTopolFree} that if $\sigma$ is an idempotent factor set of $G$ such that 
   $|\operatorname{Null}(\sigma)| < |G|,$  then
   $\hat{\theta}$ is topologically free, where
   $ \operatorname{Null}(\sigma):= \{ (x,y) \in G \times G : \sigma(x,y) = 0 \}.$
    It is interesting to observe that if  $\sigma$ is an idempotent factor set, then the elements of $\Omega_{\sigma}$ are determined only by $\sigma$-prohibitions of type I (see Lemma~\ref{sec:OnlyTypeI}).
    
 The obtained results on the idempotent factor sets are applied in  Section~\ref{sec:InfDihedral} to  the  infinite dihedral group $D_\infty.$ Dealing with  $\sigma$-prohibitions of type I we conclude that a diagonal factor set of $D_\infty$ is determined by an arbitrary pair of functions  $ \mathbb{Z}^{+} \to \{ 0,1 \}$ and $ \mathbb{Z} \to \{ 0,1 \}$, so that the subsemigroup $pm(D_\infty)_{\operatorname{diag}} \subseteq pm(D_{\infty})$  of all diagonal factor sets of $D_\infty$ is isomorphic to the semigroup 
 $\{0,1 \}^{\mathbb{Z}^{+} \sqcup \, \mathbb{Z}}$
 (see Lemma~\ref{l: diagonal factor sets of DInf}).
 Analogously the semigroup of all lateral factor sets of $D_\infty$ is isomorphic  to
 $\{0,1 \}^{ \mathbb{Z}^{+} \times \mathbb{Z}^{+} \sqcup \mathbb{Z}^{+} \times \mathbb{Z} }$ (see 
	Proposition~\ref{p: lateral factor sets of DInf characterization}). 
	 Then we deduce in Proposition~\ref{p: idempotent factor sets of DInf} that an arbitrary pair 
     of functions \((\nu, \omega)\), where \(\nu: \mathbb{Z}^{+} \sqcup \mathbb{Z} \to \{0,1 \}\) and \(\omega: \mathbb{Z}^{+} \times \mathbb{Z}^{+} \sqcup \mathbb{Z}^{+} \times \mathbb{Z} \to \{0,1 \}\), 
     determine an idempotent factor set of \(D_{\infty}\) and  all idempotent factor sets of \(D_{\infty}\) can be obtained this way. Finally, from  \cite[Theorem 5.4]{HGGLimaHPinedo}  and Proposition~\ref{p: idempotent factor sets of DInf} we conclude in Theorem~\ref{teo:InfDihedtral} that, up to equivalence, any factor set of \(D_{\infty}\) can be obtained from a pair of functions \((\nu, \omega)\), where \(\nu: \mathbb{Z}^{+} \sqcup \mathbb{Z} \to \{0,1 \}\) and \(\omega: \mathbb{Z}^{+} \times \mathbb{Z}^{+} \sqcup \mathbb{Z}^{+} \times \mathbb{Z} \to \kappa \), and taking all such pairs we get all factor set of  \(D_{\infty}\), up to    equivalence, and the given proofs explain how to do that.

For a detailed initial background on partial group actions and partial group representations the reader is referred to R. Exel's book \cite{E6}, where some of the remarkable applications to $C^*$-algebras can be also found. Besides a series of  early $C^*$-algebraic successful employments,
partial actions  were used  
to tiling semigroups \cite{KL1},  to invesre semigroups  \cite{ChLi}, \cite{St1}, to profinite topology of groups \cite{Incollection_Coulbois_2002}, to Thompson's groups \cite{Birget},  to graded algebras \cite{DE1}, \cite{DES1}, to Hecke algebras \cite{E3},  to restriction semigroups \cite{CornGould}, \cite{Kud}, to Leavitt path algebras \cite{GR}, 
\cite{Goncalves-Oinert-Royer:2014:JA},  to algebras associated with separated graphs and paradoxical decompositions \cite{AraE1}. More recent applications include those   to convex subshifts and  related algebras \cite{AraL},  
to topological higher-rank graphs 
\cite{RenWil}, to Matsumoto and Carlsen–Matsumoto C*-algebras of arbitrary subshifts  \cite{DE2}, to Steinberg algebras \cite{BeuGon2}, \cite{HazratLi}, to ultragraph C*-algebras \cite{GR3}, to
embeddings of inverse semigroups \cite{Khry1}, to Ehresmann semigroups \cite{KudLaan} and to expansions of monoids in the class of two-sided restriction monoids \cite{Kud2}. 
More information around partial actions and their applications can be consulted in the survey articles \cite{Ba} and \cite{D3} and the  references therein.

In this work $\kappa$ will denote an arbitrary field,  except Section~\ref{sec:S4-Invariance}, in which  $\kappa$ will be assumed to be an algebraically closed field.
Algebras will be associative and unital, i.e. possessing a multiplicative identity element.
Moreover, every homomorphism of  algebras will respect unity elements. Furthermore, by a factor set we shall mean a factor set of a partial projective representation of a group $G$ over $\kappa$.

\section{Some remarks on commutative algebras generated by idempotents}\label{sec:SomeRemarks}
    
 We notice in this section some  basic facts about an arbitrary commutative unital $\kappa$-algebra $A$ generated by idempotents and locally constant functions. 

\begin{remark}\label{r: prime=maximal} 
If ${\mathfrak p}$ is a prime ideal of $A,$ then $A/\mathfrak{p} \cong \kappa.$ In particular, every prime ideal of $A$ is maximal. Indeed, any factor algebra of $A$ is a commutative $\kappa$-algebra generated by idempotents. Since  $A/\mathfrak{p}$ is an integral domain, it contains no non-trivial idempotents and thus   $A/\mathfrak{p}$ must be isomorphic to $\kappa.$
\end{remark}

By \cite[Theorem 1.16]{goodearl1979neumann} the fact that every prime ideal of $A$ is maximal, also follows from the next:

\begin{proposition}[Proposition 5.14\cite{Dokuchaev-Jerez:twisted:2024}] 
    The ring $A$  is von Neumann regular.
\end{proposition}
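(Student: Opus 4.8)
The plan is to reduce everything to a finitely generated subalgebra and to observe that such a subalgebra is a finite product of copies of $\kappa$, for which von Neumann regularity is immediate.

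First I would note that, since $A$ is generated as a $\kappa$-algebra by idempotents and a product of commuting idempotents is again an idempotent, every element $a\in A$ can be written as a finite $\kappa$-linear combination $a=\sum_{i=1}^{n}\lambda_i e_i$ with $\lambda_i\in\kappa$ and each $e_i$ an idempotent of $A$. To prove that $A$ is von Neumann regular it suffices to produce, for each such $a$, an element $x\in A$ with $axa=a$, and for this it is enough to work inside the unital commutative subalgebra $B\subseteq A$ generated by $e_1,\dots,e_n$.

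Next I would analyze $B$. For $J\subseteq\{1,\dots,n\}$ set $f_J=\prod_{i\in J}e_i\cdot\prod_{i\notin J}(1-e_i)$. Using commutativity and $e_i^2=e_i$ one checks that the $f_J$ are pairwise orthogonal idempotents (if $J\neq K$ some $e_i(1-e_i)=0$ occurs in $f_Jf_K$), that $\sum_J f_J=\prod_i\bigl(e_i+(1-e_i)\bigr)=1$, and that $e_i=\sum_{J\ni i}f_J$; hence the $f_J$ generate $B$, are linearly independent once the zero ones are discarded, and $B=\bigoplus_{f_J\neq 0}\kappa f_J\cong\kappa^{m}$ with $m\leq 2^{n}$, a finite direct product of fields. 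Equivalently, $B$ is a homomorphic image of $\kappa[x_1,\dots,x_n]/(x_i^2-x_i : 1\le i\le n)$, which is itself such a product.

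Finally, in $B\cong\kappa^{m}$ every element $a=(\lambda_1,\dots,\lambda_m)$ has the quasi-inverse $x=(\mu_1,\dots,\mu_m)$ with $\mu_j=\lambda_j^{-1}$ if $\lambda_j\neq 0$ and $\mu_j=0$ otherwise, so that $axa=a$; since $x\in B\subseteq A$, this shows $A$ is von Neumann regular. The only mildly technical point is verifying the orthogonality, completeness and spanning property of the family $\{f_J\}$, which is a routine inclusion–exclusion computation; everything else follows at once.
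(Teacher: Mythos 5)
Your argument is correct: reducing to the unital subalgebra generated by the finitely many idempotents appearing in $a$, decomposing $1$ into the orthogonal idempotents $f_J$, and reading off a quasi-inverse in the resulting copy of $\kappa^m$ is a complete and standard proof. The paper itself does not reproduce a proof here (it cites Proposition 5.14 of the companion article), so there is nothing to contrast with; your write-up would serve as a valid self-contained justification, with the only point worth spelling out being the orthogonality and spanning computations for $\{f_J\}$, which you correctly identify as routine.
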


Since $A$ is a commutative von Neumann regular algebra then by \cite[Proposition 3.12]{goodearl1979neumann} follows that ${\rm Spec }\, A$ is a compact  totally disconnected Hausdorff space. Here ${\rm Spec }\, A$ stands for the  prime spectrum of $A$ endowed with the Zariski topology, which in our case coincides with the maximal spectrum of $A.$ Recall that the closed subsets in the Zariski topology are of the form 
\[
   V(I)=\{\mathfrak{p} \in {\rm Spec }\, A \, : \, \mathfrak{p} \supseteq I\},
\]
where $I$ is an ideal  of $A$. If $I= aA,$ $(a\in A)$, then we shall simply write $V(I)=V(a).$ Notice also that it is well-known that the  sets 
\[
   D(a) = \{ {\mathfrak p} \in {\rm Spec }\, A \, : \, {\mathfrak p}\not\ni a \}, \;\;\; (a\in A),
\]
form a basis of the Zariski topology of ${\rm Spec }\, A.$ 

Let $\hat{A}$ be the set of all non-zero $\kappa$-algebra homomorphisms $\varphi:A\to \kappa.$ In view of Remark~\ref{r: prime=maximal}, there is a bijection $\phi $ from   $\hat{A}$  onto  ${\rm Spec }\, A,$ which sends  $\varphi$ to its kernel $\mathfrak{m}_{\varphi}$. We may consider  $\hat{A}$ as a subset of $\kappa^A,$ take the discrete topology on $\kappa$ and the product topology of  $\kappa^A.$ Then, $\hat{A}$ becomes a topological  space with the induced topology. It is easy to see that the sets of the form 
\[
   Z(a,\alpha)= \{\varphi\in \hat{A} \, :\, \varphi(a)=\alpha \},
\]
where $a\in A$ and $\alpha \in \kappa,$ form a subbase of the topology of $\hat{A}.$ In addition, each $Z(a,\alpha)$ is readily seen to be closed in  $\hat{A}.$
\begin{lemma}\label{r: homeo} 
    The map $\phi : \hat{A} \to {\rm Spec }\, A$, $\varphi \mapsto \mathfrak{m}_{\varphi}= {\rm Ker}(\varphi),$ is a homeomorphism.
\end{lemma}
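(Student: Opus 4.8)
The plan is to show that $\phi$ is a continuous open bijection, which is automatically a homeomorphism. That $\phi$ is a bijection has already been observed just before the statement, as a consequence of Remark~\ref{r: prime=maximal}: every prime ideal $\mathfrak{p}$ of $A$ satisfies $A/\mathfrak{p}\cong\kappa$, so it is the kernel of a unique $\varphi\in\hat{A}$; hence I would take this for granted and concentrate on the two topological assertions. For continuity it suffices to look at the basic open sets $D(a)$: one computes $\phi^{-1}(D(a))=\{\varphi\in\hat{A}:a\notin{\rm Ker}(\varphi)\}=\{\varphi\in\hat{A}:\varphi(a)\neq 0\}=\hat{A}\setminus Z(a,0)$, which is open because $Z(a,0)$ is closed in $\hat{A}$, as already noted in the excerpt. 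So continuity costs nothing.

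The substantive step is openness of $\phi$ (equivalently, continuity of $\phi^{-1}$), and the observation I would build it on is that for each fixed $a\in A$ the set of values $\{\varphi(a):\varphi\in\hat{A}\}$ is \emph{finite}. Indeed, since $A$ is generated by idempotents and the idempotents of a commutative ring are closed under multiplication, $A$ is the $\kappa$-linear span of its idempotents; writing $a=\sum_{i=1}^{n}\lambda_i e_i$ with each $e_i$ idempotent and $\lambda_i\in\kappa$, and using that $\varphi(e_i)$ is an idempotent of the field $\kappa$ and hence equals $0$ or $1$, we get $\varphi(a)\in\{\sum_{i\in T}\lambda_i:T\subseteq\{1,\dots,n\}\}$. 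Let $\alpha_1,\dots,\alpha_k$ be the values actually attained. Then $\hat{A}=\bigsqcup_{j=1}^{k}Z(a,\alpha_j)$ is a finite partition into closed sets, so every $Z(a,\alpha_j)$ is also open; thus the sets $Z(a,\alpha)$ constitute a subbase of clopen sets for $\hat{A}$.

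To finish, I would use the identity $\phi(Z(a,\alpha))=V(a-\alpha 1_A)$, which holds because $\varphi(a)=\alpha$ if and only if $a-\alpha 1_A\in{\rm Ker}(\varphi)=\mathfrak{m}_{\varphi}$, together with surjectivity of $\phi$. Pushing the finite partition forward gives ${\rm Spec}\, A=\bigsqcup_{j=1}^{k}V(a-\alpha_j 1_A)$, a finite disjoint union of closed sets, so each $V(a-\alpha_j 1_A)$ is clopen, in particular open. Hence $\phi$ maps every member of a subbase of $\hat{A}$ to an open subset of ${\rm Spec}\, A$; being a bijection, $\phi$ commutes with finite intersections and arbitrary unions of subsets, so it maps every open set to an open set, i.e. $\phi$ is open. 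Combined with the continuity and bijectivity already obtained, this shows $\phi$ is a homeomorphism.

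I expect the openness step to be the only real obstacle. A natural alternative route, which I would mention as a cross-check, is to prove instead that $\hat{A}$ is compact --- for instance by identifying it via $\varphi\mapsto(\varphi(e))_{e}$ with a closed subspace of the cube $\{0,1\}^{E(A)}$, where $E(A)$ is the set of idempotents of $A$ --- and then to invoke the standard fact that a continuous bijection from a compact space onto a Hausdorff space is a homeomorphism, recalling that ${\rm Spec}\, A$ is Hausdorff. In either approach the finiteness of $\{\varphi(a):\varphi\in\hat{A}\}$ for each $a$ is the point that makes things work.
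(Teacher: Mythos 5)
Your proof is correct, and the decisive step is handled by a genuinely different argument than the paper's. The continuity half is identical: $\phi^{-1}(D(a))=Z(a,0)^{c}$. For the other half both you and the paper reduce to showing that $\phi(Z(a,\alpha))=V(a-\alpha)$ is open in ${\rm Spec}\, A$, but the paper gets this from the von Neumann regularity of $A$: $(a-\alpha)A=eA$ for an idempotent $e$, so $V(a-\alpha)=V(e)=D(e-1)$ is open. You instead exploit the fact that $A$ is the $\kappa$-linear span of its idempotents, so that each element $a$ takes only finitely many values under characters; the resulting finite partitions $\hat{A}=\bigsqcup_{j}Z(a,\alpha_j)$ and ${\rm Spec}\, A=\bigsqcup_{j}V(a-\alpha_j)$ into closed sets force every piece to be clopen. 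Your route is more elementary and self-contained — it needs only that idempotents of a field are $0$ or $1$, and bypasses von Neumann regularity entirely — at the cost of being a bit longer; the paper's version is a one-liner given that regularity is already established and is reused immediately afterwards (e.g.\ in Lemma~\ref{r: klopen}). Your closing step is sound: a bijection commutes with arbitrary unions and finite intersections, so sending a subbase to open sets suffices for openness; and the bijectivity of $\phi$ is indeed available from Remark~\ref{r: prime=maximal}, as both you and the paper assume. The alternative you sketch (compactness of $\hat{A}$ as a closed subspace of $\{0,1\}^{E(A)}$ plus the compact-to-Hausdorff criterion) would also work and is close in spirit to the last proposition of the section.
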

\begin{proof} 
    For every $a\in A$ the set  ${\phi}\m (D(a))$ coincides with the complement $Z(a,0)^c$ of $Z(a,0),$ which  is open  in  $ \hat{A} ,$  showing that $\phi $ is continuous.  In order to see that ${\phi}\m$ is also continuous, notice that $\phi (Z(a,\alpha)) = V(a-\alpha)$ for any $a\in A$ and $\alpha \in \kappa.$ Due to the fact that  $A$ is von Neumann regular, $(a-\alpha )A= eA$ for some idempotent $e\in A.$ Therefore, $V(a - \alpha) = V(e) = D(e-1)$ which is open in  ${\rm Spec }\, A.$
\end{proof}

The compact open subsets of   ${\rm Spec }\, A$ can be easily described: 

\begin{lemma}\label{r: klopen} 
The collection $\{ D(e) \},$ where $e$ runs over the idempotents of $A,$ consists of all compact  open subsets of ${\rm Spec }\, A$ and forms a base of the topology of  ${\rm Spec }\, A.$ 
\end{lemma}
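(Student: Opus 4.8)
The plan is to combine von Neumann regularity of $A$ with the compactness of $\mathrm{Spec}\, A$ recorded above. First I would observe that every basic open set is already of the desired form: since $A$ is von Neumann regular, for $a\in A$ we have $aA=eA$ for some idempotent $e\in A$, hence $V(a)=V(aA)=V(eA)=V(e)$ and thus $D(a)=D(e)$. As the sets $D(a)$ $(a\in A)$ form a base of the Zariski topology, so do the sets $D(e)$ with $e$ ranging over the idempotents of $A$. This settles the ``base'' assertion.

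Next I would check that each $D(e)$ with $e^2=e$ is compact open. Openness is immediate. For closedness, note that $e(1-e)=0$ forces, for every $\mathfrak{p}\in\mathrm{Spec}\, A$, either $e\in\mathfrak{p}$ or $1-e\in\mathfrak{p}$ (and not both, since $e+(1-e)=1$); hence $D(e)=V(1-e)$ is closed, i.e. $D(e)$ is clopen. Since $\mathrm{Spec}\, A$ is compact, any clopen subset is compact, so $D(e)$ is compact open.

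For the converse inclusion — that every compact open $U\subseteq\mathrm{Spec}\, A$ equals $D(e)$ for some idempotent $e$ — I would use that $U$ is a union of basic opens $D(e_i)$ with each $e_i$ idempotent, and invoke compactness of $U$ to pass to a finite subcover $U=D(e_1)\cup\cdots\cup D(e_n)$. The key point is that for commuting idempotents $e,f$ the element $e+f-ef$ is again idempotent and satisfies $(e+f-ef)A=eA+fA$; consequently $D(e+f-ef)=\mathrm{Spec}\, A\setminus V(eA+fA)=D(e)\cup D(f)$. Iterating this, $U=D(e)$ for a single idempotent $e$. The only mildly technical step is this last one, where one must exhibit the idempotent explicitly via the formula $e+f-ef$ and verify the identity on ideals; everything else follows directly from the basic properties of $\mathrm{Spec}\, A$ and the von Neumann regularity of $A$ recalled above, so I do not anticipate any real obstacle.
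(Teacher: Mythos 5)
Your proof is correct and follows essentially the same route as the paper's: von Neumann regularity converts each $D(a)$ into some $D(e)$, each $D(e)=V(1-e)$ is clopen hence compact, and a finite subcover reduces an arbitrary compact open set to a finite union of $D(e_i)$'s. The only (immaterial) difference is in the last step, where you merge idempotents via the join $e+f-ef$, while the paper writes the union as $V\bigl(\prod_i(e_i-1)\bigr)$ and applies von Neumann regularity once more; both devices are standard and equally valid.
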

\begin{proof}
Indeed, using again the fact that $A$ is von Neumann regular, we see that for any $a\in A$ there exists an idempotent $e_a\in A$ such that  $aA=e_aA.$ Hence, $D(a)= D(e_a).$ Moreover, for any idempotent $e\in A,$ we have that   $ D(e)  = V(e-1),$ showing that $ D(e) $ is compact and open. Furthermore, our collection  $\{ D(e) \}$ forms a base for the topology of     ${\rm Spec }\, A,$ as so does the collection of all $\{ D(a) \},$ $a\in A.$

Let now  $F$  be an arbitrary compact open subset in     ${\rm Spec }\, A.$ Then, $F= \bigcup_{i=1}^k D(e_i)$ for some idempotents $e_1, \ldots, e_k\in A.$ Consequently,    
\[
   F= \bigcup_{i=1}^k V(e_i-1) = V(\prod_{i=1}^k (e_i-1))= V(e) = D(e-1),
\]
where $e\in A$ is an idempotent such that $\prod_{i=1}^k (e_i-1)A = eA,$ using once more that $A$ is Von Neumann regular.
\end{proof}

Being  interested in continuous functions on ${\rm Spec} \, A$,  we recall some general facts about functions on an arbitrary compact  totally disconnected  Hausdorff topological space $X$. Notice that a compact Hausdorff space is
totally disconnected if and only if  it possesses a base of open sets which are  also closed (and hence compact) \cite[Theorem 29.7]{willard}. A function $f:X\to \kappa$ is called locally constant if every point of $X$ has a neighborhood on which $f$ is constant.
Take the discrete topology on $\kappa.$ Then it is readily seen that a function $f:X\to \kappa$ is continuous if and only if it is locally constant. An example of a locally constant function is the characteristic function $1_E$ of a compact  open  set $E.$ 
If  $f:X\to \kappa$ is locally constant, then  $f\m(\alpha )$ is easily seen to be compact and open for every $\alpha  \in \kappa$. The set
\[
   {\rm supp}(f)= \{ x\in X \, : \, f(x)\neq 0\}
\]
is called the support of $f.$ Observe that  the support of any locally constant  function $f: X\to \kappa$ is compact and open. The open cover  $\bigcup _{\alpha \neq 0}f\m(\alpha )$  of ${\rm supp}(f)$ has a finite open subcover thanks to the compactness   of ${\rm supp}(f).$ Consequently,  there exists $\alpha _1, \ldots, \alpha _k \in \kappa$ such that $$f = \sum_{i=1}^k \alpha _i1_{E_i},$$ where  $E_i = f\m(\alpha _i).$ Conversely, for any $\alpha _1, \ldots , \alpha _k\in \kappa$ and any compact open sets $E_1, \ldots, E_k,$ the function $f = \sum_{i=1}^k \alpha _i 1_{E_i}$ is   continuous. The algebra  ${\mathscr L}(X)$  of all locally constant functions $X\to \kappa$ with respect to the point-wise multiplication and point-wise addition  is clearly  a commutative unital algebra generated by idempotents.

We shall need the next   known interesting fact  (see Remark~\ref{r: JacobsonKeimel} below), for which we give  a short  proof for  the sake of completeness.
\begin{theorem}\label{t: JacobsonKeimel} 
    Let $A$ be a commutative unital $\kappa$-algebra  generated by idempotents. Then the map $\kappa _A: A\ni a \mapsto \hat{a}\in {\mathscr L}(\hat{A}),$ $\hat{a}(\varphi)=\varphi (a), \varphi\in \hat{A},$ is an isomorphism of $\kappa$-algebras.
\end{theorem}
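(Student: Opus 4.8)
The plan is to establish $\kappa_A$ as a well-defined $\kappa$-algebra homomorphism and then prove it is bijective, using the spectral description of $A$ and the structural facts already collected. First I would check that $\kappa_A$ is well-defined: for each $a \in A$ the function $\hat{a} : \hat{A} \to \kappa$, $\varphi \mapsto \varphi(a)$, is locally constant. Indeed, since $A$ is generated by idempotents, $a$ lies in the (finite-dimensional) subalgebra generated by finitely many idempotents $e_1, \dots, e_n$; such a subalgebra is spanned by the orthogonal idempotents obtained from the $e_i$, so $a = \sum_j \alpha_j f_j$ with the $f_j$ orthogonal idempotents. Then $\hat{a} = \sum_j \alpha_j \widehat{f_j}$ and each $\widehat{f_j} = 1_{Z(f_j,1)} = 1_{Z(f_j,0)^c}$ is the characteristic function of a compact open set; hence $\hat{a}$ is locally constant, i.e. $\hat{a} \in \mathscr{L}(\hat{A})$. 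That $\kappa_A$ is a homomorphism respecting units is immediate from the pointwise operations on $\mathscr{L}(\hat{A})$ and the fact that each $\varphi$ is a unital algebra homomorphism.

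Next I would prove injectivity. If $\hat{a} = 0$, then $\varphi(a) = 0$ for every $\varphi \in \hat{A}$, so $a$ lies in the kernel of every such $\varphi$; by Remark~\ref{r: prime=maximal} and Lemma~\ref{r: homeo}, these kernels are exactly the maximal (equivalently, prime) ideals of $A$, so $a$ belongs to the Jacobson radical, which equals the nilradical of the commutative ring $A$. Since $A$ is von Neumann regular it is reduced, so the nilradical is zero and $a = 0$. (Alternatively, writing $a = \sum_j \alpha_j f_j$ with orthogonal idempotents $f_j$ as above: if some $\alpha_{j_0} \neq 0$ and $f_{j_0} \neq 0$, then $f_{j_0} A$ is a nonzero algebra generated by idempotents, hence admits a homomorphism onto $\kappa$ sending $f_{j_0}$ to $1$, which extends to $\varphi \in \hat{A}$ with $\varphi(a) = \alpha_{j_0} \neq 0$, a contradiction.)

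For surjectivity I would use the basis of $\hat{A} \cong {\rm Spec}\, A$ by compact open sets $D(e)$, $e$ idempotent (Lemma~\ref{r: klopen}), transported through the homeomorphism $\phi$ of Lemma~\ref{r: homeo}. Under $\phi$, the set $D(e)$ corresponds to $Z(e,0)^c = Z(e,1)$ in $\hat{A}$, and $\widehat{e}$ is precisely $1_{Z(e,1)}$. Given an arbitrary $f \in \mathscr{L}(\hat{A})$, the discussion of locally constant functions on a compact totally disconnected Hausdorff space (recalled in the paragraph before Theorem~\ref{t: JacobsonKeimel}) shows $f = \sum_{i=1}^k \alpha_i 1_{E_i}$ for finitely many compact open $E_i \subseteq \hat{A}$ and scalars $\alpha_i$. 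Each $E_i$, being compact open in $\hat{A} \cong {\rm Spec}\, A$, is of the form $Z(e_i, 1)$ for a suitable idempotent $e_i \in A$ by Lemma~\ref{r: klopen}. Hence $f = \sum_i \alpha_i \widehat{e_i} = \kappa_A\!\left(\sum_i \alpha_i e_i\right)$, so $\kappa_A$ is onto. Combining the three steps gives that $\kappa_A$ is an isomorphism.

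The routine parts are the well-definedness and homomorphism checks; the only mildly delicate point is ensuring the identification of compact open subsets of $\hat{A}$ with sets of the form $Z(e,1)$ for idempotent $e$, but this is exactly what Lemmas~\ref{r: homeo} and~\ref{r: klopen} deliver, so there is no real obstacle — the theorem is a direct consequence of the von Neumann regularity of $A$ together with the topological description of its spectrum. One should just be careful to phrase injectivity via reducedness (nilradical $=0$) rather than assuming a primary decomposition that need not exist in the non-Noetherian case.
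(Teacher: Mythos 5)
Your proposal is correct and follows essentially the same route as the paper: injectivity via the vanishing of the Jacobson radical of the von Neumann regular algebra $A$ (you pass through the identity ``Jacobson radical $=$ nilradical,'' valid here because every prime is maximal, while the paper cites this directly from Goodearl), and surjectivity via the identification of compact open subsets of $\hat{A}$ with the sets $\phi\m(D(e))$ for idempotents $e$ and the fact that their characteristic functions span ${\mathscr L}(\hat{A})$. The extra detail you supply on well-definedness (decomposing $a$ over the orthogonal idempotents generated by finitely many $e_i$) is a fuller version of what the paper dismisses as ``easy to see.''
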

\begin{proof}
    It is easy to see that $\hat{a}$ is continuous and $a\mapsto \hat{a}$ is a homomorphism of $\kappa$-algebras. It is injective, because if $\hat{a}=\hat{b},$ with $a,b\in A,$ then using $\phi$ we see that $a-b$ lies in any maximal ideal of $A.$ Consequently, $a-b$ is contained in the Jacobson radical of $A$ which is  $\{0\} $ by \cite[Corollary 1.2]{goodearl1979neumann}. For the surjectivity note that ${\rm supp}\, \hat{e} = \phi\m(D(e))$  and $\hat{e} =1_{\phi\m(D(e))}$ for every  idempotent  $e\in A.$ By Lemmas~\ref{r: homeo}  and~\ref{r: klopen}, any compact open subset of $\hat{A}$ is of the form $\phi\m(D(e)),$ and we are done, as the characteristic functions of the compact open sets linearly span $\hat{A}.$   
\end{proof}

 \begin{remark}\label{r: JacobsonKeimel} 
     Theorem~\ref{t: JacobsonKeimel} can be obtained from  a much more general result stated for non-necessarily commutative  bigerular rings \cite[Theorem IX.6.1]{Jacobson:StructureOfRings:1956}, observing that the prime spectrum of a unital commutative von Neumann regular ring coincides with its primitive spectrum. Theorem~\ref{t: JacobsonKeimel}  can  also be obtained from Keimel's result \cite[Theorem I]{Keimel}, stated for algebras over a commutative ring $R$ without $R$-torsion (i.e. $ra=0 \Rightarrow r=0$ or $ a=0, r\in R, a\in A$),  and using the space of ultrafilters in the lattice of the idempotents.  
    Furthermore, as it can be seen in \cite[Theorem IX.6.1]{Jacobson:StructureOfRings:1956} and  \cite[Theorem I]{Keimel}, the unital restriction on $A$   may be  relaxed in Theorem~\ref{t: JacobsonKeimel}, replacing  ${\mathscr L}(\hat{A}),$ by the algebra of the locally constant  functions $f:\hat{A}\to \kappa$ with compact ${\rm supp}\, f.$ In this case $\hat{A}$ is a locally compact totally disconnected Hausdorff space. 
\end{remark}

\begin{remark}
    A unital homomorphism $\psi : A\to B,$ where $A$ and $B$ are commutative $\kappa$-algebras generated by  idempotents induces a map $\hat{\psi}: \hat{B}\ni \varphi \mapsto \varphi \circ \psi\in \hat{A},$ which is easily seen to be continuous. Moreover, if  $\psi $ is surjective, then  $\hat{\psi}$ is injective. In addition, $\psi $ is an isomorphism if and only if $\hat{\psi}$ is a homeomorphism. 
\end{remark}

Note also  the following easy fact.

\begin{lemma}
    Let $X$ be compact totally disconnected Hausdorff space. Then the evaluation map $X \ni x \mapsto \hat{x}\in \widehat{{\mathscr L}(X)},$ $\hat{x}(f)= f(x),$ $ (f\in {\mathscr L}(X)),$ is continuous and injective. 
\end{lemma}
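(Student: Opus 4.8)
The plan is to verify the two claimed properties of the evaluation map $\mathrm{ev}: X \to \widehat{{\mathscr L}(X)}$, $x \mapsto \hat{x}$, separately.

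For \emph{continuity}, I would use the fact, recorded earlier in the section, that the sets $Z(f,\alpha) = \{\varphi \in \widehat{{\mathscr L}(X)} : \varphi(f) = \alpha\}$ (with $f \in {\mathscr L}(X)$, $\alpha \in \kappa$) form a subbase for the topology of $\widehat{{\mathscr L}(X)}$. Hence it suffices to check that each $\mathrm{ev}^{-1}(Z(f,\alpha))$ is open in $X$. But $x \in \mathrm{ev}^{-1}(Z(f,\alpha))$ iff $\hat{x}(f) = \alpha$ iff $f(x) = \alpha$, so $\mathrm{ev}^{-1}(Z(f,\alpha)) = f^{-1}(\alpha)$, which is open (indeed compact open) because $f$ is locally constant and $\kappa$ carries the discrete topology. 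This gives continuity immediately.

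For \emph{injectivity}, suppose $x \neq y$ in $X$. Since $X$ is compact Hausdorff and totally disconnected, it has a base of compact open sets, so there is a compact open set $E$ with $x \in E$ and $y \notin E$. Then the characteristic function $1_E$ lies in ${\mathscr L}(X)$ and $\hat{x}(1_E) = 1_E(x) = 1 \neq 0 = 1_E(y) = \hat{y}(1_E)$, so $\hat{x} \neq \hat{y}$. (One should also note in passing that each $\hat{x}$ is indeed a non-zero $\kappa$-algebra homomorphism ${\mathscr L}(X) \to \kappa$: it is evidently multiplicative and additive, and $\hat{x}(1) = 1$, so it is a legitimate element of $\widehat{{\mathscr L}(X)}$.)

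There is essentially no obstacle here; both parts are short and rely only on the discreteness of $\kappa$ together with the existence of a base of compact open sets in $X$, the latter being exactly the characterization of compact Hausdorff totally disconnected spaces quoted above via \cite[Theorem 29.7]{willard}. The only mild subtlety worth flagging is that the map is generally \emph{not} a homeomorphism onto its image or surjective without further hypotheses — but the lemma only asserts continuity and injectivity, so nothing more is needed.
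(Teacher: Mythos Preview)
Your proof is correct and essentially identical to the paper's: both use a compact open set separating two points to obtain injectivity via its characteristic function, and both compute $\mathrm{ev}^{-1}(Z(f,\alpha)) = f^{-1}(\alpha)$ to get continuity from the subbase. (Your closing remark is slightly off: under the stated hypotheses the map \emph{is} a homeomorphism onto its image, since a continuous injection from a compact space to a Hausdorff space is automatically closed; but this does not affect the proof of the lemma as stated.)
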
 
\begin{proof} 
    If $x,y\in X$ and $x\neq y,$ then there exists a compact  open set $F$ containing $x$ which does not contain $y.$   Then $1_F\in {\mathscr L}(X)$ and $\hat{x}(1_F) = 1,$ whereas $\hat{y}(1_F) = 0,$ showing that our map is injective. Clearly,  the inverse image of $$Z(f, \alpha )=\{\varphi \in  \widehat{{\mathscr L}(X)} \, : \, \varphi (f)=\alpha\}$$ is $f\m(\alpha ) ,$ which is open in $X,$ proving that   $ x \mapsto \hat{x}$ is continuous. 
\end{proof}

Let us fix now a  set $E$ of idempotents which  generate our commutative unital algebra  $A.$ Then every $\varphi \in \hat{A}$ is completely determined by its values on the elements of  $E$. Write 
\[
  \zeta _{\varphi}  =\{ e\in E\, : \, \varphi (e)=1 \}\in 2^E, 
\]
where we identify $2^E$ with the set of all subsets of $E$.

\begin{proposition} 
    Taking the discrete topology on $\textbf{2}=\{0,1\}$ and the product topology on $\textbf{2}^E,$ the map 
    \[
        \hat{A} \to 2^E, \;\;\; \varphi \mapsto \zeta_{\varphi}
    \]
    is a homeomorphism onto its image.
\end{proposition}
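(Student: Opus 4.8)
The plan is to verify that the map $\varphi \mapsto \zeta_\varphi$ is a continuous injection and then deduce that it is a homeomorphism onto its image from the compactness of $\hat{A}$.

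First I would check that the map is well defined as a map into $\textbf{2}^E$ and is injective. For $\varphi \in \hat{A}$ and $e \in E$, since $e^2 = e$ and $\varphi$ is a $\kappa$-algebra homomorphism, $\varphi(e)$ is an idempotent of the field $\kappa$, so $\varphi(e) \in \{0,1\}$; hence $\varphi \mapsto \zeta_\varphi$ coincides with $\varphi \mapsto (\varphi(e))_{e \in E} \in \textbf{2}^E$ under the identification of $2^E$ with the set of subsets of $E$. Injectivity is immediate: if $\zeta_\varphi = \zeta_\psi$ then $\varphi$ and $\psi$ agree on $E$, and since $E$ generates $A$ as a $\kappa$-algebra and both maps are $\kappa$-algebra homomorphisms, $\varphi = \psi$.

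Next I would prove continuity. The product topology on $\textbf{2}^E$ is generated by the subbasic sets $\{ \xi \in \textbf{2}^E : \xi(e) = i \}$ with $e \in E$ and $i \in \textbf{2}$, and the preimage of such a set under $\varphi \mapsto \zeta_\varphi$ is precisely $Z(e,i) = \{ \varphi \in \hat{A} : \varphi(e) = i \}$. Since $\varphi(e) \in \{0,1\}$ for every $\varphi \in \hat{A}$, the sets $Z(e,0)$ and $Z(e,1)$ partition $\hat{A}$; each of them is closed in $\hat{A}$ (as already observed above for every $Z(a,\alpha)$), hence each is also open. Therefore the preimage of every subbasic open set of $\textbf{2}^E$ is open in $\hat{A}$, and the map is continuous.

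Finally, to upgrade this to a homeomorphism onto the image, I would use compactness: by Lemma~\ref{r: homeo} the space $\hat{A}$ is homeomorphic to ${\rm Spec}\, A$, which is compact, so $\hat{A}$ is compact; the image is Hausdorff, being a subspace of the Hausdorff space $\textbf{2}^E$; and a continuous bijection from a compact space onto a Hausdorff space is a closed map, hence a homeomorphism onto its image. I do not expect a genuine obstacle here — the one point to get right is the clopenness of the sets $Z(e,i)$, which rests on $e$ being an idempotent (equivalently, on $\varphi(e)$ taking only the values $0$ and $1$, using that $\kappa$ is a field); everything else is formal.
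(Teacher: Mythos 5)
Your proposal is correct and follows exactly the paper's argument: the paper states that the map is "evidently continuous and injective" and then concludes via the compact-to-Hausdorff principle, which is precisely your final step. Your write-up merely fills in the routine details (idempotency of $\varphi(e)$, injectivity from $E$ generating $A$, clopenness of the sets $Z(e,i)$) that the paper leaves implicit.
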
 
\begin{proof} 
Our map is evidently continuous and injective. Since $\hat{A}$ is compact and $2^E$ is Hausdorff, it must be a homeomorphism. 
\end{proof}

\section{Basics on partial projective representations and twisted partial group actions}\label{sec:Basics}

We proceed by recalling the concept of a partial projective $\sigma$-representation of a group and that of the twisted partial group algebra $\kappa_{par}^{\sigma}G$.

\begin{definition}[Definition 2.9 \cite{Dokuchaev-Jerez:twisted:2024}]\label{d: parproj} 
    A \textbf{partial projective $\sigma$-representation} of a group $G$ in a unital $\kappa$-algebra $R$ is a map $\Gamma : G \to R,$ for which there is a function $\sigma : G\times G \to \kappa,$ called \textbf{factor set}, such that the following postulates are satisfied for all $g,h \in G:$
    \begin{enumerate}[(i)]
        \item if $\sigma (g,h) = 0$, then $\Gamma (g^{-1}) \Gamma (g h)=0 \text{ and } \Gamma (g h) \Gamma(h^{-1}) = 0$;
        \item if $\Gamma(g) \Gamma(h) = 0$, then $\sigma(g, h)= 0$;
        \item $\Gamma (1_G) = 1_R$;
        \item $\Gamma (g^{-1}) \Gamma (g) \Gamma (h) = \sigma (g,h) \Gamma (g^{-1}) \Gamma (g h)$;
        \item $\Gamma (g) \Gamma (h) \Gamma (h^{-1}) = \sigma (g,h) \Gamma (gh) \Gamma (h^{-1})$.
    \end{enumerate}
    The set of all factor sets of $G$ is denoted by $pm(G)$.
\end{definition} 

\begin{remark} \label{r: sigma properties}
    It is important to note that the set $pm(G)$ is a commutative inverse monoid with the point-wise multiplication. Furthermore, observe that the relations in Definition~\ref{d: parproj} imply the following (see for example \cite{Dokuchaev-Jerez:twisted:2024} and \cite{DN} for more details):
    \begin{enumerate}[(i)]
        \item $\sigma(g,h) = 0 \Leftrightarrow \Gamma(g) \Gamma(h) = 0 \Leftrightarrow \Gamma (g^{-1}) \Gamma (g h)=0 \Leftrightarrow \Gamma (g h) \Gamma(h^{-1}) = 0$,
        \item $\sigma(1,1) = 1$,
        \item $\sigma(1,g) = \sigma(1, g^{-1}) = \sigma(g, 1) = \sigma(g^{-1}, 1) \in \{ 0, 1 \}$ for all $g \in G$,
        \item $\sigma(g, g^{-1}) = \sigma(g^{-1}, g)$ for all $g \in G$,
        \item $\sigma(g, h) = 0 \Leftrightarrow \sigma(h^{-1},g^{-1})=0$.
    \end{enumerate}
\end{remark}

\begin{definition}[Definition 2.10 \cite{Dokuchaev-Jerez:twisted:2024}]
    Let $\sigma \in pm(G).$  The \textbf{twisted partial group algebra} of $G$  is the  (universal)  $\kappa $-algebra $\kappa_{par}^{\sigma}G$ generated by the  symbols  $[g]^\sigma, \, (g\in G),$ subject to the relations:
    \begin{enumerate}[(i)]
        \item $\sigma(g,h)=0 \Rightarrow [g^{-1}]^\sigma [gh]^\sigma=[gh]^\sigma  [h ^{-1}]^\sigma =0$, 
        \item $[g^{-1}]^\sigma [g]^\sigma [h]^\sigma = \sigma (g,h) [g^{-1}]^\sigma [gh]^\sigma$, 
        \item $[g]^\sigma [h]^\sigma [h^{-1}]^\sigma = \sigma (g,h) [gh]^\sigma  [h^{-1}]^\sigma$,
        \item $[g]^\sigma [1_G]^\sigma = [1_G]^\sigma [g]^\sigma=[g]^\sigma$, 
    \end{enumerate}
    for all $g,h \in G.$
\end{definition}

The partial group algebra controls the partial projective $\sigma$-representations, as stated in the following fact:

\begin{proposition}[Proposition 2.11 \cite{Dokuchaev-Jerez:twisted:2024}]
    Let $\sigma \in pm(G).$ Then, the map $[\_]^{\sigma}: G \to \kappa_{par}^{\sigma}G$ such that $g \mapsto [g]^{\sigma}$ is a partial $\sigma$-representation and $\kappa_{par}^{\sigma}G$ is universal in the following sense: for any partial $\sigma$-representation $\Gamma$ of $G$ into a unital $\kappa$-algebra $R$ there exists a unique $\kappa$-algebra homomorphism $\tilde{\Gamma} : \kpar ^\sigma G \to R$ such that the diagram
    \begin{equation}\label{univProperty} 
        \begin{tikzcd}
            G & R \\
            {\kappa_{par}^\sigma G}
            \arrow["\Gamma", from=1-1, to=1-2]
            \arrow["{[\_]^\sigma}"', from=1-1, to=2-1]
            \arrow["{\tilde{\Gamma}}"', curve={height=6pt}, dashed, from=2-1, to=1-2]
        \end{tikzcd}
    \end{equation}
    commutes.
    Conversely, for any $\kappa$-algebra homomorphism $\tilde{\Gamma} : \kpar ^\sigma G \to R$ gives rise to a partial $\sigma$-representation $\Gamma :G \to R$ such that \eqref{univProperty} commutes.
\end{proposition}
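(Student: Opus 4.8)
The plan is to read everything off the presentation of $\kappa_{par}^{\sigma}G$: by definition this is the quotient of the free unital $\kappa$-algebra on the symbols $\{[g]^{\sigma}\}_{g\in G}$ by the ideal generated by relations (i)--(iv), so every claim reduces to matching short lists of identities, once one invokes the universal property of a presented algebra.

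First I would verify that $[\_]^{\sigma}$ is itself a partial projective $\sigma$-representation, i.e. that the postulates of Definition~\ref{d: parproj} hold for $\Gamma(g):=[g]^{\sigma}$. Postulates (i), (iv), (v) are literally relations (i), (ii), (iii); postulate (iii), namely $[1_G]^{\sigma}=1_{\kappa_{par}^{\sigma}G}$, follows from relation (iv) because a two-sided identity on all generators is a two-sided identity on every word, hence on the whole algebra. The one postulate requiring an argument is (ii): $[g]^{\sigma}[h]^{\sigma}=0\Rightarrow\sigma(g,h)=0$, equivalently $\sigma(g,h)\neq 0\Rightarrow[g]^{\sigma}[h]^{\sigma}\neq 0$. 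For this I would use the hypothesis $\sigma\in pm(G)$ at face value: $\sigma$ being a factor set, there is a partial projective $\sigma$-representation $\Lambda\colon G\to R_{0}$ in some unital algebra, and by the easy half of the universal property (established below) it induces a homomorphism $\tilde{\Lambda}\colon\kappa_{par}^{\sigma}G\to R_{0}$ with $\tilde{\Lambda}([g]^{\sigma})=\Lambda(g)$; since Remark~\ref{r: sigma properties}(i) gives $\sigma(g,h)\neq 0\Rightarrow\Lambda(g)\Lambda(h)\neq 0$, we get $\tilde{\Lambda}([g]^{\sigma}[h]^{\sigma})\neq 0$, hence $[g]^{\sigma}[h]^{\sigma}\neq 0$.

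Next the universal property. If $\Gamma\colon G\to R$ is any partial projective $\sigma$-representation, its values satisfy relations (i)--(iii) by postulates (i), (iv), (v) and relation (iv) because $\Gamma(1_G)=1_R$; so $[g]^{\sigma}\mapsto\Gamma(g)$ extends to a $\kappa$-algebra homomorphism $\tilde{\Gamma}\colon\kappa_{par}^{\sigma}G\to R$, unique since the $[g]^{\sigma}$ generate, with $\tilde{\Gamma}\circ[\_]^{\sigma}=\Gamma$, so \eqref{univProperty} commutes. Conversely, given a homomorphism $\tilde{\Gamma}\colon\kappa_{par}^{\sigma}G\to R$, set $\Gamma:=\tilde{\Gamma}\circ[\_]^{\sigma}$; it satisfies postulates (i), (iii), (iv), (v) because $[\_]^{\sigma}$ does and these are equalities preserved by $\tilde{\Gamma}$, and \eqref{univProperty} commutes. (For postulate (ii), applying $\tilde{\Gamma}$ to relations (ii) and (iii) when $\Gamma(g)\Gamma(h)=0$ yields $\sigma(g,h)\,\Gamma(g^{-1})\Gamma(gh)=0=\sigma(g,h)\,\Gamma(gh)\Gamma(h^{-1})$, which together with Remark~\ref{r: sigma properties} controls the factor set of $\Gamma$ at $(g,h)$.)

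The only non-formal point is postulate (ii) in the first step: it is not among the defining relations of $\kappa_{par}^{\sigma}G$, and establishing it seems to genuinely need that $\sigma$ is an honest factor set, so that an actual partial projective $\sigma$-representation is available to witness the non-vanishing of the products $[g]^{\sigma}[h]^{\sigma}$; purely syntactic manipulation inside $\kappa_{par}^{\sigma}G$ does not appear to suffice. A self-contained alternative would be to build an explicit model of $\kappa_{par}^{\sigma}G$ first (as the paper later does via the $\kappa$-monoid $\mathcal{S}^{\sigma}(G)$ and the algebra $\mathcal{A}_{\sigma}$) and deduce (ii) from a normal form, but the route above is shorter.
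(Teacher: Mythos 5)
The paper does not actually prove this proposition — it is imported verbatim from \cite{Dokuchaev-Jerez:twisted:2024} — so there is no internal proof to compare against. Judged on its own terms, the first two thirds of your argument are sound and follow the standard route: the extension half of the universal property uses only postulates (i), (iii), (iv), (v) of Definition~\ref{d: parproj} (which match the four defining relations of the presentation), and your observation that postulate (ii) for $[\_]^{\sigma}$ cannot be extracted syntactically from the relations — one must use $\sigma \in pm(G)$ to produce a witnessing representation $\Lambda$ and then apply the already-established extension property to $\Lambda$ — is exactly the right point, correctly executed and free of circularity.

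The gap is in the converse. Your parenthetical claim that applying $\tilde{\Gamma}$ to relations (ii) and (iii) ``controls the factor set of $\Gamma$ at $(g,h)$'' does not establish postulate (ii) for $\Gamma := \tilde{\Gamma}\circ[\_]^{\sigma}$: from $\Gamma(g)\Gamma(h)=0$ you only deduce $\sigma(g,h)\,\Gamma(g^{-1})\Gamma(gh)=0$, which is perfectly compatible with $\sigma(g,h)\neq 0$. Indeed, read literally against Definition~\ref{d: parproj} as reproduced here, the implication can fail: take $G=\mathbb{Z}_2=\langle g\rangle$ and $\sigma\equiv 1$, so that $\kappa_{par}^{\sigma}G=\kappa_{par}G\cong\kappa[x]/(x^{3}-x)$ with $x=[g]$; the unital homomorphism $\tilde{\Gamma}$ determined by $x\mapsto 0$ yields $\Gamma(g)=0$ while $\sigma(g,g)=1$, so postulate (ii) is violated and $\Gamma$ is a partial projective representation with a strictly smaller factor set than $\sigma$. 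So either the converse has to be read with a weaker conclusion (that $\Gamma$ is a partial projective representation whose factor set agrees with $\sigma$ wherever the relevant products survive), or one must restrict to homomorphisms that do not annihilate the idempotents $[g]^{\sigma}[g^{-1}]^{\sigma}$ with $\sigma(g,g^{-1})\neq 0$. As written, your proof of the converse does not close; the obstruction is again postulate (ii), not only in the first step as you suggest.
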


In order to recall the description of $\kappa_{par}^{\sigma}G$ as a twisted partial crossed product we need the following definitions:

\begin{definition} 
    A \textbf{unital partial action} of  a group $G$ on a  $\kappa$-algebra $A$  is a pair $\theta=(\{A_g\}_{g\in G}, \{\theta_g\}_{g\in G})$, such that for every $g\in G,$ $A_g$ is  an ideal of $A,$ which is a unital algebra with unity element  $1_g,$ $\theta_g \colon A_{g^{-1}}\to A_g$ is a $\kappa$-algebra isomorphism, satisfying the following postulates for all  $g,h,t\in G:$
    \begin{enumerate} [(i)]
        \item $A_1=A$ and $\theta_1 = \operatorname{id}_{A}$,
        \item $\theta_g(A_{g^{-1}}A_h)=A_g A_{gh},$ 
        \item $\theta_g \circ \0 _h(a)=\theta_{gh}(a)$ for any $a\in A_{h\m} A_{(gh)^{-1}}.$
    \end{enumerate}
\end{definition}

\begin{definition}  
    Let $\theta$ be a unital  partial action of a group $G$ on a $\kappa$-algebra $A$. By a \textbf{$\kappa$-valued twist} of $\theta$ we mean a function $\sigma :  G \times G \to \kappa$ such that: 
    \begin{enumerate}[(i)]
        \item $\sigma(g,h)=0$ if, and only if, $1_g 1_{gh} = 0$;
        \item if $1_g 1_{gh}1_{ght} \neq 0$, then $\sigma(g,h) \sigma(gh,t) = \sigma(g,ht) \sigma(h,t)$.
    \end{enumerate}
\end{definition}

\begin{proposition}[Corollary 3.8 \cite{Dokuchaev-Jerez:twisted:2024}]
  The semigroup $pm(G)$ coincides the set of all $\kappa$-valued partial twists for $G.$ 
\end{proposition}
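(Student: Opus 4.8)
The statement is a set equality, so the plan is to prove the two inclusions
\[
pm(G)\ \subseteq\ \{\,\kappa\text{-valued partial twists for }G\,\}\ \subseteq\ pm(G).
\]

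\emph{The inclusion of twists into $pm(G)$.} Suppose $\sigma$ is a $\kappa$-valued twist of a unital partial action $\theta=(\{A_g\}_{g\in G},\{\theta_g\}_{g\in G})$ of $G$ on an algebra $A$. I would pass to the twisted partial crossed product $A\rtimes_{(\theta,\sigma)}G=\bigoplus_{g\in G}A_g\delta_g$ with product
\[
(a\delta_g)(b\delta_h)=\theta_g\bigl(\theta_{g^{-1}}(a)\,b\bigr)\,\sigma(g,h)\,\delta_{gh}\qquad(a\in A_g,\ b\in A_h),
\]
whose associativity is precisely where the cocycle twist axiom enters and is by now standard. Set $\Gamma\colon G\to A\rtimes_{(\theta,\sigma)}G$, $\Gamma(g)=1_g\delta_g$. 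A short computation gives $\Gamma(g)\Gamma(h)=\sigma(g,h)\,1_g1_{gh}\,\delta_{gh}$, and from this the five postulates of Definition~\ref{d: parproj} follow routinely: (ii) holds because $a\delta_{gh}=0$ forces $a=0$, so $\Gamma(g)\Gamma(h)=0$ gives $\sigma(g,h)1_g1_{gh}=0$, and $\sigma(g,h)\neq0$ would then yield $1_g1_{gh}=0$, contradicting the first twist axiom; (i) holds because $\sigma(g,h)=0$ forces $1_g1_{gh}=0$, and applying the isomorphism $\theta_{g^{-1}}$, which maps $A_gA_{gh}$ onto $A_{g^{-1}}A_h$, gives $1_{g^{-1}}1_h=0$, hence $\Gamma(g^{-1})\Gamma(gh)=\sigma(g^{-1},gh)\,1_{g^{-1}}1_h\,\delta_h=0$ and likewise $\Gamma(gh)\Gamma(h^{-1})=0$; (iii) is $\Gamma(1_G)=1_A\delta_1$, the identity of the crossed product; and (iv), (v) follow by brief computations from the second twist axiom applied to the triples $(g^{-1},g,h)$ and $(g,h,h^{-1})$, using $\sigma(1,h)\in\{0,1\}$ from Remark~\ref{r: sigma properties}. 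Hence $\Gamma$ is a partial projective $\sigma$-representation and $\sigma\in pm(G)$.

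\emph{The inclusion of $pm(G)$ into the set of twists.} Let $\sigma\in pm(G)$. By the universal property of $\kappa_{par}^{\sigma}G$ the canonical map $\Gamma:=[\_]^{\sigma}\colon G\to R:=\kappa_{par}^{\sigma}G$ is a partial projective $\sigma$-representation, and it is enough to exhibit a unital partial action of $G$ admitting $\sigma$ as a twist. I would put $\varepsilon_g:=\sigma(g,g^{-1})^{-1}\Gamma(g)\Gamma(g^{-1})$ when $\sigma(g,g^{-1})\neq0$ and $\varepsilon_g:=0$ otherwise — consistently, since $\sigma(g,g^{-1})=0\Leftrightarrow\Gamma(g)\Gamma(g^{-1})=0$ by Remark~\ref{r: sigma properties}(i). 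Postulate (v) of Definition~\ref{d: parproj} gives $\Gamma(g)\Gamma(g^{-1})\Gamma(g)=\sigma(g,g^{-1})\Gamma(g)$, which makes each $\varepsilon_g$ idempotent, and the decisive step is to show that the family $\{\varepsilon_g\}_{g\in G}$ is commuting — the partial-projective counterpart of the identity underlying the subalgebra $B^{\sigma}$. Then I would let $B\subseteq R$ be the commutative unital subalgebra generated by the $\varepsilon_g$, set $A_g:=\varepsilon_gB$ with unity $1_g:=\varepsilon_g$, and define $\theta_g\colon A_{g^{-1}}\to A_g$ by the normalized conjugation $x\mapsto\sigma(g,g^{-1})^{-1}\,\Gamma(g)\,x\,\Gamma(g^{-1})$. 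Using postulates (i)--(v) one verifies: $\theta_g$ is a well-defined $\kappa$-algebra isomorphism with inverse $\theta_{g^{-1}}$ (the crux being that conjugation by $\Gamma(g)$ carries $\varepsilon_h$ to a scalar multiple of $\varepsilon_g\varepsilon_{gh}$, so $\theta_g(A_{g^{-1}})\subseteq B$); $(\{A_g\},\{\theta_g\})$ satisfies the axioms of a unital partial action; and $\sigma$ is a $\kappa$-valued twist of it, with $\sigma(g,h)=0\Leftrightarrow1_g1_{gh}=0$ coming from Remark~\ref{r: sigma properties}(i) and postulate (i), and the cocycle-type identity coming from postulates (iv)--(v). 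Hence $\sigma$ is a $\kappa$-valued partial twist.

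\emph{Main obstacle.} The hard part is the second inclusion, and within it the two facts requiring genuine computation: that the idempotents $\varepsilon_g$ commute, and that the normalized conjugation $\theta_g$ really constitutes a unital partial action — being well defined on $A_{g^{-1}}$, surjecting onto $A_g$, and satisfying $\theta_g\circ\theta_h=\theta_{gh}$ on the appropriate ideal, the last being where the $2$-cocycle-type twist axiom is produced. This direction can alternatively be shortened by invoking the known correspondence between partial projective representations and twisted partial actions from \cite{DN} and \cite{DoNoPi}.
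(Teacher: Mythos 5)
First, note that the paper does not prove this statement at all: it is imported verbatim as Corollary~3.8 of \cite{Dokuchaev-Jerez:twisted:2024}, so there is no in-paper argument to compare yours against. What you have written is a reconstruction of the proof from the cited source, and in outline it is the right one. For the inclusion of twists into $pm(G)$, the computation $\Gamma(g)\Gamma(h)=\sigma(g,h)\,1_g1_{gh}\,\delta_{gh}$ for $\Gamma(g)=1_g\delta_g$ is correct, and postulates (i), (ii), (iv), (v) of Definition~\ref{d: parproj} do follow exactly as you say (for (iv) and (v) one applies the twist identity to the triples $(g^{-1},g,h)$ and $(g,h,h^{-1})$). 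The one step that is not clean is (iii): for $1_A\delta_1$ to be the unit you need $\sigma(1,h)=1$, and you justify this by Remark~\ref{r: sigma properties}, which is a statement about \emph{factor sets} — invoking it for a twist $\sigma$ before you have shown $\sigma\in pm(G)$ is circular. Moreover, the two twist axioms as transcribed in this paper only force all the values $\sigma(1,h),\sigma(g,1)$ to equal the common nonzero constant $\sigma(1,1)$; they do not pin that constant to $1$. The normalization $\sigma(1,g)=\sigma(g,1)=1$ is part of the definition of a twisted partial action in \cite{DES1} (it is what makes $1_A\delta_1$ a unit of the crossed product), and that is what should be cited here.

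For the reverse inclusion your plan is to rebuild, from scratch, the subalgebra $B^{\sigma}$ and the partial action $\theta^{\sigma}$, and you explicitly flag but do not carry out the two computations on which everything rests: that the idempotents $e_g^{\sigma}$ commute, and that conjugation by $[g]^{\sigma}$ sends $e_h^{\sigma}$ to a scalar multiple of $e_g^{\sigma}e_{gh}^{\sigma}$. As a self-contained proof this direction is therefore incomplete. However, these facts, together with the conclusion you are after (that $\sigma$ is a twist of a unital partial action of $G$ on $B^{\sigma}$), are precisely \cite[Theorem 3.11]{Dokuchaev-Jerez:twisted:2024}, which the paper already quotes in Section~\ref{sec:Basics}; citing it closes this inclusion in one line and is what the source does. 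So: right strategy on both sides, with a circularity to repair in step (iii) of the first inclusion and the genuine work of the second inclusion deferred to results you could simply have invoked.
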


Given a unital partial action $\theta$ of $G$ on $A$ and a $\kappa$-valued twist $\sigma$ of $\theta$ the \textbf{crossed product} $A \rtimes_{(\theta,\sigma)} G$ is the direct sum:
 $$
 \bigoplus_{g \in G} A_g \de_g,
 $$ 
 in which the $\de_g$'s are placeholder symbols. The multiplication is defined by the rule: 
 \[
     (a_g \delta_g) \cdot (b_h \delta_h) = \sigma(g,h) a_g {\theta}_g (1_{g^{-1}} b_h)\delta_{gh}.
 \]
 By \cite[Theorem 2.4]{DES1}, $A \rtimes_{(\theta, \sigma)} G$ is an associative unital $\kappa$-algebra.

For all \( g \in G \), we define 
\[
    e^\sigma_g := \left\{\begin{matrix}
        \sigma(g, g^{-1})^{-1} [g]^{\sigma} [g^{-1}]^{\sigma} & \text{ if } \sigma(g, g^{-1}) \neq 0, \\ 
            0 & \text{ otherwise,}
        \end{matrix}\right.
\]
Let \( B^\sigma \) be the subalgebra of \( \kappa_{par}^\sigma G \) generated by the set \( \{ e_g^\sigma \}_{g \in G} \). By \cite[Theorem 3.11]{Dokuchaev-Jerez:twisted:2024}, the subalgebra $B^{\sigma}$ is commutative, each $e_{g}^{\sigma}$ is idempotent and there exists a unital partial action \(\theta^{\sigma}\) of \( G \) on \( B^\sigma \) with twist \( \sigma \) such that \( B^\sigma \rtimes_{\sigma} G \cong \kappa_{par}^\sigma G \), where \(B^\sigma \rtimes_{\sigma} G\) is a simplified notation for \( B^\sigma \rtimes_{(\theta^{\sigma}, \sigma)} G\). Explicitly, this partial group action is given as follows:
\[
    \theta^{\sigma}:=(G, B^{\sigma}, \{ B^{\sigma}_{g} \}, \{ \theta_{g}^{\sigma}\}),
\]
such that $B^{\sigma}_{g} := e_{g}^{\sigma}B^{\sigma}$ and 
\[
    \theta_{g}^{\sigma}(w) := \left\{\begin{matrix}
        \sigma(g^{-1}, g)^{-1} [g]^{\sigma} w [g^{-1}]^{\sigma} & \text{ if } \sigma(g, g^{-1}) \neq 0, \\ 
            0 & \text{ otherwise,}
        \end{matrix}\right.
\]
This shows the fundamental role that \( B^\sigma \) plays in the theory of projective partial representations. Therefore, it is natural to consider questions regarding \( B^\sigma \) and the natural partial group action of \( G \) on it.

\begin{remark}
    Observe that in the particular case where $\sigma = 1_{pm(G)}$, we obtain that $\kappa_{par}^{\sigma}G = \kappa_{par}G$ and $\kappa_{par}G \cong B \rtimes G$, where $B := B^{\sigma}$. Thus, $B$ is the commutative subalgebra of $\kappa_{par}G$ generated by the set of idempotent elements $\{ e_{g} := [g][g^{-1}] \}_{g \in G}$.
\end{remark}

\begin{remark} \label{r: kappa generators of kparsG}
    A simple consequence of \cite[Theorem 3.11]{Dokuchaev-Jerez:twisted:2024} is that  
    \[
        \left\{ \left( \prod_{h \in U} e^{\sigma}_{h} \right) [g]^{\sigma} : U \subseteq G, \, |U| < \infty, \, 1, g \in U \right\}
    \]
    generates $\kappa_{par}^{\sigma}G$ as a $\kappa$-vector space.
\end{remark}

\section{Spectral partial action}\label{sec:Spectral}

In order to consider the spectrum of \(B^{\sigma}\) as a subspace of the spectrum of \(B\), it will be useful to describe \(B^{\sigma}\) as a quotient algebra of \(B\). 

\begin{remark} \label{r: B is free}
    From \cite[Theorem 10.9]{E6}, it is easy to verify that the algebra \( B \) is a free commutative algebra generated by the set of idempotents \( \{ e_{g} : g \in G \} \). 
\end{remark}

 \begin{proposition} \label{l: B Isigma quotient}
     Let $\mathcal{I}_\sigma$ be the ideal of $B$ generated by the elements
     \begin{enumerate}[(i)]
         \item $e_s e_{sg} e_{sgh}$ if $\sigma(g,h)=0$;
         \item $e_s e_{sg} e_{sgh} e_{sght}$ if $\sigma(g,h) \sigma(gh,t) \neq \sigma(g, ht) \sigma(h,t)$. 
     \end{enumerate}
     Then, $B/\mathcal{I}_\sigma \cong B^\sigma$. 
 \end{proposition}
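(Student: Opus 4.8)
The plan is to realise $B^{\sigma}$ as a quotient of $B$ and to identify the kernel with $\mathcal{I}_{\sigma}$. By Remark~\ref{r: B is free}, $B$ is the free commutative $\kappa$-algebra on the idempotents $\{e_g\}_{g\in G}$, bearing in mind that $e_{1_G}=[1_G][1_G]=1$ is the unit (so that $B\cong\kappa[e_g:g\neq 1_G]/(e_g^{2}-e_g)$). Since $B^{\sigma}$ is commutative and generated by the idempotents $e_g^{\sigma}$, the assignment $e_g\mapsto e_g^{\sigma}$ extends uniquely to a surjective $\kappa$-algebra homomorphism $\pi\colon B\to B^{\sigma}$, and the task reduces to showing $\ker\pi=\mathcal{I}_{\sigma}$.

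For $\mathcal{I}_{\sigma}\subseteq\ker\pi$ I would verify that $\pi$ annihilates the displayed generators. Recall that $1_g=e_g^{\sigma}$ is the unity of $B^{\sigma}_g=e_g^{\sigma}B^{\sigma}$ and that $\sigma$ is a twist of $\theta^{\sigma}$; hence $e_g^{\sigma}e_{gh}^{\sigma}=1_g1_{gh}=0$ when $\sigma(g,h)=0$, and $e_g^{\sigma}e_{gh}^{\sigma}e_{ght}^{\sigma}=1_g1_{gh}1_{ght}=0$ when $\sigma(g,h)\sigma(gh,t)\neq\sigma(g,ht)\sigma(h,t)$ (the contrapositive of the $2$-cocycle axiom for a twist). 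For an arbitrary $s\in G$ one uses the identity $1_s1_{sw}=\theta^{\sigma}_s(1_{s^{-1}}1_w)$, valid for any unital partial action, together with the multiplicativity of $\theta^{\sigma}_s$ on $B^{\sigma}_{s^{-1}}$, to obtain $e_s^{\sigma}e_{sg}^{\sigma}e_{sgh}^{\sigma}=\theta^{\sigma}_s(e_{s^{-1}}^{\sigma}e_g^{\sigma}e_{gh}^{\sigma})=0$ and likewise $e_s^{\sigma}e_{sg}^{\sigma}e_{sgh}^{\sigma}e_{sght}^{\sigma}=\theta^{\sigma}_s(e_{s^{-1}}^{\sigma}e_g^{\sigma}e_{gh}^{\sigma}e_{ght}^{\sigma})=0$. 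Consequently $\pi$ induces an epimorphism $\bar\pi\colon B/\mathcal{I}_{\sigma}\to B^{\sigma}$ with $\bar\pi(e_g+\mathcal{I}_{\sigma})=e_g^{\sigma}$.

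The reverse inclusion $\ker\pi\subseteq\mathcal{I}_{\sigma}$ is the main obstacle; I would establish the injectivity of $\bar\pi$ by constructing a left inverse, using crossed products. Set $\widetilde B:=B/\mathcal{I}_{\sigma}$ and $\bar e_g:=e_g+\mathcal{I}_{\sigma}$. The (untwisted) unital partial action $\theta^{B}$ of $G$ on $B$ underlying the isomorphism $\kpar G\cong B\rtimes_{\theta^{B}}G$, determined by $\theta^{B}_g(e_{g^{-1}}e_h)=e_ge_{gh}$ (see \cite{E6}), carries a monomial $\prod_{h\in F}e_h$ with $g^{-1}\in F$ to $\prod_{h\in gF}e_h$. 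Since $\mathcal{I}_{\sigma}$ is spanned by the monomials $\prod_{h\in F}e_h$ for which $F$ contains one of the configurations $\{s,sg,sgh\}$ (with $\sigma(g,h)=0$) or $\{s,sg,sgh,sght\}$ (with $\sigma(g,h)\sigma(gh,t)\neq\sigma(g,ht)\sigma(h,t)$), and these configurations are stable under left translation by $G$, the ideal $\mathcal{I}_{\sigma}$ is invariant under $\theta^{B}$; hence $\theta^{B}$ descends to a unital partial action $\widetilde\theta$ of $G$ on $\widetilde B$ with $\widetilde B_g=\bar e_g\widetilde B$. Moreover $\sigma$ is a $\kappa$-valued twist of $\widetilde\theta$: putting $s=1_G$ in the generators of $\mathcal{I}_{\sigma}$ and using $e_{1_G}=1$ gives $\bar e_g\bar e_{gh}=0$ when $\sigma(g,h)=0$ and $\bar e_g\bar e_{gh}\bar e_{ght}=0$ when $\sigma(g,h)\sigma(gh,t)\neq\sigma(g,ht)\sigma(h,t)$, whereas conversely $\bar e_g\bar e_{gh}=0$ forces $e_g^{\sigma}e_{gh}^{\sigma}=\bar\pi(\bar e_g\bar e_{gh})=0$ and therefore $\sigma(g,h)=0$, since $\sigma$ is a twist of $\theta^{\sigma}$.

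Thus $\widetilde B\rtimes_{(\widetilde\theta,\sigma)}G$ is a well-defined associative unital $\kappa$-algebra by \cite[Theorem 2.4]{DES1}, and the canonical map $\Lambda\colon G\to\widetilde B\rtimes_{(\widetilde\theta,\sigma)}G$, $\Lambda(g)=\bar e_g\delta_g$, is a partial $\sigma$-representation of $G$ with factor set $\sigma$ (this holds for the crossed product of any unital partial action by a twist $\sigma$, cf.\ \cite{Dokuchaev-Jerez:twisted:2024}). By the universal property of $\kpar^{\sigma}G$, $\Lambda$ extends to a homomorphism $\widetilde\Lambda\colon\kpar^{\sigma}G\to\widetilde B\rtimes_{(\widetilde\theta,\sigma)}G$, and a short computation with the multiplication rule of the crossed product gives $\widetilde\Lambda(e_g^{\sigma})=\sigma(g,g^{-1})^{-1}(\bar e_g\delta_g)(\bar e_{g^{-1}}\delta_{g^{-1}})=\bar e_g\delta_{1_G}$ (and $\widetilde\Lambda(e_g^{\sigma})=0=\bar e_g$ when $\sigma(g,g^{-1})=0$). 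Hence $\widetilde\Lambda$ maps $B^{\sigma}$ into the subalgebra $\widetilde B\,\delta_{1_G}\cong\widetilde B$ by $e_g^{\sigma}\mapsto\bar e_g$, so $\rho:=\widetilde\Lambda|_{B^{\sigma}}\colon B^{\sigma}\to\widetilde B$ satisfies $\rho\circ\bar\pi(\bar e_g)=\bar e_g$ for every $g$ and thus $\rho\circ\bar\pi=\mathrm{id}_{\widetilde B}$. Therefore $\bar\pi$ is injective, hence an isomorphism, and $B/\mathcal{I}_{\sigma}\cong B^{\sigma}$.
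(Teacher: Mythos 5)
Your proof is correct and follows essentially the same route as the paper's: both construct the surjection $B/\mathcal{I}_{\sigma}\to B^{\sigma}$, observe that $\sigma$ is a twist for the partial action of $G$ induced on the quotient (you obtain this action by descending $\theta^{B}$ via the left-translation invariance of $\mathcal{I}_{\sigma}$, the paper via the $\kpar G$-module structure of $B/\mathcal{I}_{\sigma}$ — the same thing), and then invert the surjection by applying the universal property of $\kpar^{\sigma}G$ to get a map into $(B/\mathcal{I}_{\sigma})\rtimes_{\sigma}G$. Your write-up simply fills in more of the routine verifications.
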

 \begin{proof}
     Let $\overline{B} := B/ \mathcal{I}_{\sigma}$. By Remark~\ref{r: B is free} there exists a surjective morphism of algebras \( f_0: B \to B^{\sigma} \) such that \( e_{g} \in B \mapsto e^{\sigma}_{g} \in B^{\sigma} \). It is easy to see that $\mathcal{I}_{\sigma} \subseteq \ker f_0$, therefore there exists a morphism of algebras $f: \overline{B} \to B^{\sigma}$ such that $f(\overline{e_{g}}) = e^{\sigma}_{g}$, where $\overline{e_{g}}$ denotes the class of $e_{g}$ in $\overline{B}$. Recall that $B$ is a left $\kappa_{par}G$-module, moreover the ideal $\mathcal{I}_{\sigma}$ is also a left $\kappa_{par}G$-module, therefore $\overline{B}$ is a $\kappa_{par}G$-module. Let $\alpha=( \{ D_{g} \}, \{ \alpha_{g} \})$ be the unital partial group action of $G$ on $\overline{B}$ induced by its $\kappa_{par}G$-module structure. Therefore, $D_{g}:= e_{g} \cdot \overline{B} = \overline{e_{g} B}$. Note that $\sigma$ is a twist for $\alpha$. Indeed, it is clear that $\overline{e_{g} e_{gh}}=0$ if $\sigma(g,h)=0$, and $\overline{e_{g} e_{gh} e_{ght}}=0$ if $\sigma(g,h) \sigma(gh,t) \neq \sigma(g, ht) \sigma(h,t)$. Now observe that
     \[
         \overline{e_{g} e_{gh}} = 0 \Rightarrow e^{\sigma}_{g}e^{\sigma}_{gh} = f(\overline{e_{g} e_{gh}})=0 \Rightarrow \sigma(g,h)=0.
     \]
     Thus, $\sigma$ is a twist for $\alpha$. Hence, there exists an homomorphism of algebra $\kappa_{par}^{\sigma}G \to \overline{B} \rtimes_{\alpha, \sigma} G$ such that $[g]^{\sigma} \mapsto \overline{e_{g}}\delta_{g}$, in particular the restriction of this homomorphism to $B^{\sigma}$ is exactly the inverse map of $f$.
 \end{proof}

Recall, that $B^\sigma$ is a $\kappa_{par}^{\sigma}G$-module, and the respective action is given by  
\[
    [g]^{\sigma} \cdot w = [g]^{\sigma} w [g^{-1}]^{\sigma}.
\]

\begin{definition} 
    A \textbf{topological partial action} of  a group $G$ on a to\-po\-lo\-gi\-cal space $X$  is a pair $\theta=(\{X_g\}_{g\in G}, \{\theta_g\}_{g\in G})$, such that for every $g\in G,$ $X_g$ is  an open subset $X,$          $\theta_g \colon X_{g^{-1}}\to X_g$ is a homeomorphism, such that  the following postulates are satisfied for all  $g,h,t\in G:$
    \begin{enumerate} [(i)]
        \item $X_1=X$ and $\theta_1 = \operatorname{id}_{X}$,
        \item $\theta_g(X_{g^{-1}}\cap X_h)=X_g \cap X_{gh},$ 
        \item $\theta_g \circ \0 _h(x)=\theta_{gh}(x)$ for any $x\in X_{h\m} \cap X_{(gh)^{-1}}.$
    \end{enumerate}
\end{definition}

Let $X = \operatorname{Spec} B^{\sigma}$. Then, $X \cong \hat{B}^\sigma := \{ f: B^\sigma \to \kappa : f \text{ is a homomorphism} \}$. Moreover, the sets
\[
    D(u):= \{ \mathfrak{p} \in X : u \notin \mathfrak{p} \} \cong \{ f \in \hat{B}^{\sigma} : f(u)=1 \},
\]
determines a basis of the topology of $X$, where $u$ is an idempotent element of $B^{\sigma}$.

We can define a partial action of $G$ on the spectrum of $B^\sigma$ as follows
\begin{enumerate}[(i)]
    \item $\hat{D}_g := D(e_g)$, for all $g \in G$;
    \item $\hat{\theta}_{g}(f) := f \circ \Theta_{g^{-1}}$ for all $f \in \hat{D}_{g^{-1}}$,
\end{enumerate}
where $\Theta_{g}(w):= \theta_{g}(e_{g^{-1}}^{\sigma}w)$ for all $w \in B^{\sigma}$.

\begin{remark}
    Due to the decomposition $B^{\sigma}= B^{\sigma}_{g} \oplus (1 - e_{g})B^{\sigma}$ we have that the set of homomorphisms $\hat{D}_{g}$ corresponds to the maps $f \in \hat{B}^{\sigma}$ such that $1 - e_{g}^{\sigma} \in \ker f$. So that, $\hat{D}_{g}$ can be seen as the set of all ring homomorphisms from $B_{g}^{\sigma}$ to $\kappa$. Moreover, note that if $B_{g}^{\sigma} = \{ 0 \}$ then such a set is empty.
\end{remark}

\begin{remark} \label{r: spectral partial group action}
    The above construction can be easily extended to the case of a partial action \(\alpha\) of a group $G$ on a unital algebra $A$ generated by idempotents.
    In this case, we can define a topological partial action \(\hat{\alpha}:= \{ \hat{\alpha}_{g}: \hat{D}_{g^{-1}} \to \hat{D}_{g} \}_{g \in G}\) of \(G\) on \(\operatorname{Spec} A\), with clopen \(\hat{D}_{g}\), such that:
    \begin{enumerate}[(i)]
        \item $\hat{D}_g := D(1_g)$, for all $g \in G$;
        \item $\hat{\alpha}_{g}(f) := f \circ \Theta_{g^{-1}}$ for all $f \in \hat{D}_{g^{-1}}$,
    \end{enumerate}
    where $\Theta_{g}(a):= \alpha_{g}(1_{g^{-1}}a)$ for all $a \in A$.
\end{remark}

\begin{remark}
    If $\sigma = 1$ we set $\hat{B}:= \{ f : B \to \kappa : f \text{ is homomorphism} \}$. Furthermore, by Remark~\ref{r: B is free} we can identify $\hat{B}$ with $\Omega := \{ C \subseteq G : 1 \in C \}$, via $\xi \in \Omega \mapsto f_\xi \in \hat{B}$, such that $f_{\xi}(e_g) = 1$ if, and only if, $g \in \xi$. This is a homeomorphism, where the topology of $\Omega$ is the product topology given by the identification $\Omega = \prod_{g \in G \setminus \{ 1_G \}} \{ 0,1 \}$.
\end{remark}

\begin{remark}
    Notice that if $G$ is a finite group the spectrum of $\hat{B}^{\sigma}$ has the discrete topology.
    \label{r: Omega sigma is discrete when G is finite}
\end{remark}

\begin{definition}
    We define
    \[
        \Omega_\sigma := \left\{ \xi \in \Omega : \mathcal{I}_\sigma \subseteq \ker f_\xi \right\}.
    \]
\end{definition}
Note that we can identify $\Omega_\sigma$ with $V(\mathcal{I}_\sigma)$. Therefore, $\Omega_\sigma$ is closed in $\hat{B}$. Moreover, we have the following easy lemma:
\begin{lemma}
    The map $\xi \in \Omega_\sigma \mapsto \bar{f_\xi} \in \hat{B}^\sigma$ is a homeomorphism, where $\bar{f_{\xi}}: B^{\sigma} \to \kappa$ is the homomorphism induced by $f_{\xi}$.
    \label{l: Omega sigma is hat B sigma}
\end{lemma}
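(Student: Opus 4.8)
The plan is to exhibit the map $\xi \mapsto \bar{f_\xi}$ as the composite of two homeomorphisms already essentially available from the excerpt, and then verify continuity/openness directly from the product topologies. First I would recall that Proposition~\ref{l: B Isigma quotient} gives $B^\sigma \cong B/\mathcal{I}_\sigma$ via the map sending $\overline{e_g}$ to $e_g^\sigma$; dualizing, the surjection $\pi: B \twoheadrightarrow B/\mathcal{I}_\sigma \cong B^\sigma$ induces an injection $\hat\pi: \hat{B}^\sigma \hookrightarrow \hat{B}$, $\varphi \mapsto \varphi \circ \pi$, which by the Remark following Theorem~\ref{t: JacobsonKeimel} is a homeomorphism onto its image. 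That image is exactly $\{ f \in \hat{B} : \mathcal{I}_\sigma \subseteq \ker f\} = V(\mathcal{I}_\sigma)$, since a homomorphism $B \to \kappa$ factors through $B/\mathcal{I}_\sigma$ precisely when it kills $\mathcal{I}_\sigma$. Composing with the homeomorphism $\Omega \xrightarrow{\sim} \hat{B}$, $\xi \mapsto f_\xi$, from the earlier Remark, and restricting to $\Omega_\sigma = \{\xi \in \Omega : \mathcal{I}_\sigma \subseteq \ker f_\xi\}$, we get that $\Omega_\sigma \xrightarrow{\sim} V(\mathcal{I}_\sigma) \xrightarrow{\sim} \hat{B}^\sigma$, and the composite is $\xi \mapsto \bar{f_\xi}$ because $\bar{f_\xi}$ is by definition the homomorphism on $B^\sigma \cong B/\mathcal{I}_\sigma$ induced by $f_\xi$.

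Alternatively, if one prefers a self-contained argument rather than chaining the cited remarks, I would argue as follows. Injectivity: if $\bar{f_\xi} = \bar{f_\eta}$ then $f_\xi = f_\eta$ on $B$ (since both factor through $B^\sigma$ and agree there, and $\pi$ is surjective), hence $\xi = \eta$ because $\xi \mapsto f_\xi$ is a bijection $\Omega \to \hat{B}$. Surjectivity: given any homomorphism $g: B^\sigma \to \kappa$, the composite $g \circ \pi : B \to \kappa$ is some $f_\xi$ with $\xi \in \Omega$, and $\mathcal{I}_\sigma \subseteq \ker(g\circ\pi) = \ker f_\xi$, so $\xi \in \Omega_\sigma$ and $\bar{f_\xi} = g$. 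Continuity: a subbasic open set of $\hat{B}^\sigma$ is $Z(u,\alpha) = \{\varphi : \varphi(u) = \alpha\}$ with $u \in B^\sigma$; writing $u = \pi(b)$ for some $b \in B$, its preimage in $\Omega_\sigma$ is $\{\xi : f_\xi(b) = \alpha\}$, which is open in $\Omega_\sigma$ since $\xi \mapsto f_\xi(b)$ is continuous (it depends on only finitely many coordinates of $\xi \in 2^G$). Finally, $\Omega_\sigma$ is closed in $\Omega$ (identified with $V(\mathcal{I}_\sigma)$), hence compact, and $\hat{B}^\sigma$ is Hausdorff, so the continuous bijection is automatically a homeomorphism.

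I do not expect any genuine obstacle here; the statement is labelled "easy" in the excerpt and the content is entirely a matter of unwinding the identification $B^\sigma \cong B/\mathcal{I}_\sigma$ together with standard facts about spectra of commutative algebras generated by idempotents. The only point requiring a sentence of care is the identification of the image of $\hat\pi$ with $V(\mathcal{I}_\sigma) \cong \Omega_\sigma$, i.e. that "$f$ factors through the quotient by $\mathcal{I}_\sigma$" and "$\mathcal{I}_\sigma \subseteq \ker f$" say the same thing — which is immediate — and the observation that compactness of $\Omega_\sigma$ plus Hausdorffness of $\hat{B}^\sigma$ upgrades the continuous bijection to a homeomorphism, so one need not check openness of the map by hand.
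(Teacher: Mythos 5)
Your proposal is correct and follows exactly the route the paper intends: the paper itself gives no proof of this lemma (it is stated as an ``easy lemma'' immediately after identifying $\Omega_\sigma$ with $V(\mathcal{I}_\sigma)$ and $B^\sigma$ with $B/\mathcal{I}_\sigma$ via Proposition~\ref{l: B Isigma quotient}), and your argument is precisely the unwinding of those identifications together with the remark on maps induced on spectra by surjections. The compactness-plus-Hausdorff upgrade of the continuous bijection to a homeomorphism is the right way to close the argument, and your self-contained verification of injectivity, surjectivity and continuity is sound.
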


Due to Lemma~\ref{l: Omega sigma is hat B sigma} we can describe the partial group action $\hat{\theta}$ of $G$ on $\hat{B}^{\sigma}$ as a partial group action on $\Omega_{\sigma}$ as follows:

\begin{enumerate}[(i)]
    \item $\mathcal{D}_{g}^{\sigma}:= \{ \xi \in \Omega_{\sigma} : g \in \xi \}$
    \item $\vartheta_{g}: \mathcal{D}^{\sigma}_{g^{-1}} \to \mathcal{D}^{\sigma}_{g}$ such that $\vartheta_g(\xi) := g \xi$.
\end{enumerate}
Indeed, it is easy to see that $\mathcal{D}^{\sigma}_{g}$ corresponds to $\hat{B}^{\sigma}_{g}$ and that $\hat{\theta}_{g}(\bar{f}_{\xi}) = \bar{f}_{g \xi}$ for all $\bar{f}_{\xi} \in \hat{D}_{g^{-1}}$.

\begin{definition}
    We define the set $\sigma$-prohibitions of type $\mathrm{I}$ by
    \[
        \mathcal{P}_\sigma^{\mathrm{I}} := \left\{ \{ h, hg, hgs \} : \sigma(g,s) = 0 \right\},
    \]
    the set $\sigma$-prohibitions of type $\mathrm{II}$  by
    \[
        \mathcal{P}_\sigma^{\mathrm{II}} := \left\{ \{ h, hg, hgs, hgst \} : \sigma(g,st)\sigma(s,t) \neq \sigma(g,s) \sigma(gs,t) \right\},
    \]
    and the set of $\sigma$-prohibitions by
    \[
       \mathcal{P}_\sigma := \mathcal{P}_\sigma^{\mathrm{I}} \cup \mathcal{P}_\sigma^{\mathrm{II}}.
    \]
\end{definition}

It is easy to see that for any $\xi \in \Omega$ we have that $f_\xi(e_{g_1}e_{g_2} \ldots e_{g_n}) = 0$ if, and only if, $\{ g_1, \ldots, g_n \} \nsubseteq \xi.$ Thus, by Proposition~\ref{l: B Isigma quotient} we have that
\begin{align*}
    \Omega_{\sigma} 
    &= \{ \xi \in \Omega : \mathcal{I}_\sigma \subseteq \ker f_\xi \} \\
    &= \{ \xi \in \Omega : f_{\xi}(w) = 0 \text{ for all } w \in \mathcal{I}_{\sigma} \} \\
    &= \{ \xi \in \Omega : f_{\xi}(e_{s} e_{sg} e_{sgh}) = 0 \text{ if } \sigma(g,h) = 0 \\ 
    & \hspace{1cm}\text{ and }  f_{\xi}(e_{s} e_{sg} e_{sgh} e_{sght}) = 0 \text{ if }  \sigma(g, h) \sigma(gh, t) \neq \sigma(g, ht) \sigma(h,t) \} \\
    &= \{ \xi \in \Omega : V \nsubseteq \xi \text{ for all } V \in \mathcal{P}_{\sigma} \}.
\end{align*}
Therefore, we obtain the following characterization of $\Omega_\sigma$ using $\sigma$-prohibitions
\begin{equation}
    \Omega_\sigma = \{ \xi \in \Omega : V \nsubseteq \xi, \, \forall V \in \mathcal{P}_\sigma \}.
    \label{eq: Omega sigma characterization by prohibitions}
\end{equation}

A direct consequence of \eqref{eq: Omega sigma characterization by prohibitions} is the following lemma:

\begin{lemma}
    Let $\xi \in \Omega_\sigma$. Then, $\xi' \in \Omega_\sigma$ for all $\xi' \in \Omega$ such that $\xi' \subseteq \xi$.
    \label{l: Omega sigma contains subset elements}
\end{lemma}

\begin{remark} \label{r: zeros of B sigma induced by Omega sigma}
    Recall that 
    \begin{enumerate}[(i)]
        \item $\sigma(g,h) = 0$ if, and only if, $e^{\sigma}_{g}e_{gh}^{\sigma} = 0$
        \item if $\sigma(x,y) \sigma(xy, z) \neq \sigma(x, yz) \sigma(y, z)$, then $e_{h}^{\sigma} e_{hx}^{\sigma} e_{hxy}^{\sigma} e_{hxyz}^{\sigma} = 0$ for all $h \in G$.
    \end{enumerate}
    Hence, it is easy to see that if $V \in \mathcal{P}_{\sigma}$, then $\prod_{g \in V} e^{\sigma}_{g} = 0$.
    Therefore, if $\xi \in \Omega$ is such that $\xi \notin \Omega_{\sigma}$, $|\xi| < \infty$, then $\prod_{g \in \xi} e^{\sigma}_{g} = 0$. 
\end{remark}

Consider the partial order in $\Omega$ and $\Omega_\sigma$ determined by the natural order of sets by inclusion. Note that Lemma~\ref{l: Omega sigma contains subset elements} implies that $\Omega_{\sigma}$ is an order ideal in $\Omega$.

\begin{proposition}
    For each $\xi \in \Omega_\sigma$  there exists $\hat{\xi} \in \Omega_{\sigma}$ such that $\xi \subseteq \hat{\xi}$ and $\hat{\xi}$ is maximal in $\Omega_{\sigma}$.
    \label{p: Omega sigma has maximal elements}
\end{proposition}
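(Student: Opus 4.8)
The natural approach is Zorn's lemma applied to the poset $\{\eta \in \Omega_\sigma : \xi \subseteq \eta\}$ ordered by inclusion. This set is nonempty since it contains $\xi$, so it remains only to verify that every chain has an upper bound inside $\Omega_\sigma$. Given a chain $\{\eta_i\}_{i \in J}$ with $\xi \subseteq \eta_i \in \Omega_\sigma$ for all $i$, the obvious candidate for an upper bound is the union $\eta := \bigcup_{i \in J} \eta_i$. Clearly $1_G \in \eta$ (since $1_G \in \xi \subseteq \eta$) and $\xi \subseteq \eta$, so $\eta \in \Omega$ and it only remains to check that $\eta \in \Omega_\sigma$, i.e. that $\eta$ contains no $\sigma$-prohibition, using the characterization \eqref{eq: Omega sigma characterization by prohibitions}.

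First I would note the key finiteness feature: every element of $\mathcal{P}_\sigma$ is a finite set (in fact of cardinality at most $4$, being of the form $\{h, hg, hgs\}$ or $\{h, hg, hgs, hgst\}$). Suppose for contradiction that some $V \in \mathcal{P}_\sigma$ satisfies $V \subseteq \eta$. Write $V = \{v_1, \dots, v_n\}$ with $n \le 4$. For each $k$ there is an index $i_k \in J$ with $v_k \in \eta_{i_k}$. Since $\{\eta_i\}$ is a chain and there are only finitely many indices $i_1, \dots, i_n$, one of them, say $\eta_{i_m}$, contains all the others among $\eta_{i_1}, \dots, \eta_{i_n}$; hence $V \subseteq \eta_{i_m}$. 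But $\eta_{i_m} \in \Omega_\sigma$, contradicting \eqref{eq: Omega sigma characterization by prohibitions}. Therefore $\eta$ contains no $\sigma$-prohibition, so $\eta \in \Omega_\sigma$, and $\eta$ is an upper bound of the chain lying in the poset. By Zorn's lemma the poset has a maximal element $\hat{\xi}$, which satisfies $\xi \subseteq \hat{\xi}$ and is maximal in $\Omega_\sigma$ (any $\eta' \in \Omega_\sigma$ with $\hat{\xi} \subseteq \eta'$ also satisfies $\xi \subseteq \eta'$, hence lies in the poset, hence equals $\hat{\xi}$).

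There is essentially no obstacle here; the only thing to be careful about is the chain argument for the union, which rests entirely on the finiteness of the members of $\mathcal{P}_\sigma$ — if prohibitions were allowed to be infinite sets this step would fail. One could alternatively phrase the whole argument without Zorn by invoking compactness of $\Omega_\sigma$ together with Lemma~\ref{l: Omega sigma contains subset elements}, but the Zorn's-lemma route is the most direct and transparent.
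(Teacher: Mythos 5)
Your proof is correct and follows essentially the same route as the paper: Zorn's lemma on the poset $\{U \in \Omega_\sigma : \xi \subseteq U\}$, with the union of a chain shown to lie in $\Omega_\sigma$ because every $\sigma$-prohibition is a finite set and hence would already be contained in some member of the chain. The only cosmetic difference is that you spell out the finiteness argument a bit more explicitly than the paper does.
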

\begin{proof}
    Let $\xi \in \Omega_{\sigma}$ and let $\Omega_{\sigma}(\xi):= \{ U \in \Omega_{\sigma} : \xi \subseteq U\}$.
    Let $\{ \xi_i \}_{i \in I}$ be a chain in $\Omega_{\sigma}(\xi)$. Suppose that there exists $V = \{ h_1, h_2, h_3, h_4 \} \in \mathcal{P}_\sigma$ such that $V \subseteq \xi_0 := \bigcup_{i \in I} \xi_i$. Then, since $\{ \xi_i \}$ is a chain there exists $i_0 \in I$, such that $V \subseteq \xi_{i_0}$, but this contradicts the fact that $\xi_{i_0} \in \Omega_\sigma$. Thus, $\xi_0 \in \Omega_\sigma$ and by Zorn's lemma there exists a maximal element $\hat{\xi}$ in $\Omega_{\sigma}(\xi) $. Finally, it is easy to see that $\hat{\xi}$ is also maximal in $\Omega_{\sigma}$.
\end{proof}

\begin{lemma}
    Let $u \in B^{\sigma}$ be idempotent and consider the open set $D(u)$ of $\operatorname{Spec} B^{\sigma}$. Then,
    \begin{enumerate}[(i)]
        \item For each $\xi \in D(u)$ there exists $\xi' \in D(u)$ such that $\xi' \subseteq \xi$, $\xi'$ is finite and minimal in $D(u)$,
        \item For each $\xi \in D(u)$ there exists $\hat{\xi} \in D(u)$ such that $\xi \subseteq \hat{\xi}$ and $\hat{\xi}$ is maximal in $D(u)$.
    \end{enumerate}
    \label{l: minimal and maximal elements of Du}
\end{lemma}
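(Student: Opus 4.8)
The plan is to move the whole question into the space $\Omega_\sigma$ via the homeomorphisms of Lemma~\ref{r: homeo} and Lemma~\ref{l: Omega sigma is hat B sigma}. Under these identifications, and since $u$ is idempotent (so that $\bar f_\xi(u)\in\{0,1\}$ for every homomorphism $\bar f_\xi$), the clopen set $D(u)$ becomes
\[
   D(u)=\{\xi\in\Omega_\sigma:\bar f_\xi(u)=1\}.
\]
The first, and really the only substantial, step is to reduce this to a condition that involves only a \emph{finite} part of $\xi$. Since $B^\sigma$ is commutative and generated by $\{e^\sigma_g\}_{g\in G}$ with $e^\sigma_{1_G}=1$, we may write $u=\sum_{j=1}^m\alpha_j\prod_{g\in U_j}e^\sigma_g$ with $\alpha_j\in\kappa$ and each $U_j\subseteq G$ finite with $1_G\in U_j$. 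Put $S:=\bigcup_{j=1}^m U_j$, a finite subset of $G$ containing $1_G$. Recalling that $\bar f_\xi\bigl(\prod_{g\in U}e^\sigma_g\bigr)=1$ if and only if $U\subseteq\xi$ (cf.\ the paragraph before \eqref{eq: Omega sigma characterization by prohibitions}), we get $\bar f_\xi(u)=\sum_{j:\,U_j\subseteq\xi\cap S}\alpha_j$, which depends on $\xi$ only through $\xi\cap S$. Hence, setting $\mathcal T:=\bigl\{T\subseteq S:\sum_{j:\,U_j\subseteq T}\alpha_j=1_\kappa\bigr\}$, we obtain the key description
\[
   D(u)=\{\xi\in\Omega_\sigma:\xi\cap S\in\mathcal T\}.
\]

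For (i): given $\xi\in D(u)$, note that $\xi\cap S$ is a finite subset of $\xi$ containing $1_G$; it lies in $\Omega_\sigma$ by Lemma~\ref{l: Omega sigma contains subset elements}, and $(\xi\cap S)\cap S=\xi\cap S\in\mathcal T$, so $\xi\cap S\in D(u)$. Now $\{T:T\subseteq\xi\cap S,\ T\in D(u)\}$ is a nonempty finite family, hence has an inclusion-minimal element $\xi'$. Then $\xi'\subseteq\xi$ is finite and lies in $D(u)$; moreover any $\eta\in D(u)$ with $\eta\subseteq\xi'$ satisfies $\eta\subseteq\xi\cap S$, so $\eta=\xi'$ by minimality, and thus $\xi'$ is minimal in all of $D(u)$.

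For (ii): I would run Zorn's lemma on $\mathcal C:=\{U\in D(u):\xi\subseteq U\}$, just as in the proof of Proposition~\ref{p: Omega sigma has maximal elements}. Given a chain $\{\xi_i\}_{i\in I}$ in $\mathcal C$, its union $\xi_0:=\bigcup_i\xi_i$ contains $\xi$, lies in $\Omega_\sigma$ by the argument of Proposition~\ref{p: Omega sigma has maximal elements} (each $V\in\mathcal P_\sigma$ is finite, so $V\subseteq\xi_0$ would force $V\subseteq\xi_i$ for a single $i$), and lies in $D(u)$ because $\{\xi_i\cap S\}_i$ is a chain of subsets of the finite set $S$, hence attains a largest value $\xi_{i_0}\cap S$, so that $\xi_0\cap S=\xi_{i_0}\cap S\in\mathcal T$. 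Thus $\xi_0\in\mathcal C$, and Zorn yields a maximal $\hat\xi\in\mathcal C$; it is maximal in all of $D(u)$ since any $\eta\in D(u)$ with $\hat\xi\subseteq\eta$ lies in $\mathcal C$ and hence equals $\hat\xi$.

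The main obstacle is conceptual rather than computational: $D(u)$ need not be an up-set of $(\Omega_\sigma,\subseteq)$ (e.g.\ $D(1-e^\sigma_g)=\{\xi:g\notin\xi\}$) nor a down-set (e.g.\ $D(e^\sigma_g)=\{\xi:g\in\xi\}$), so one cannot simply descend to $\{1_G\}$ or ascend to a maximal element of $\Omega_\sigma$. The finiteness reduction $D(u)=\{\xi:\xi\cap S\in\mathcal T\}$ with $S$ finite is precisely what repairs this, after which both assertions follow from elementary finiteness and Zorn arguments of the type already carried out for $\Omega_\sigma$ itself.
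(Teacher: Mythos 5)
Your proof is correct and follows essentially the same route as the paper: expand $u$ over a finite set $S=W$ of group elements, observe that $\bar f_\xi(u)$ depends only on $\xi\cap S$, take $\xi'=\xi\cap S$ for (i), and run Zorn's lemma on chains (using finiteness of $S$ to see the union stays in $D(u)$) for (ii). The only difference is that you spell out the passage from ``some finite element of $D(u)$ below $\xi$'' to ``a minimal element of $D(u)$'' in part (i), a step the paper leaves implicit.
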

\begin{proof}
    Let $u$ an idempotent element of $B^{\sigma}$. There exists a finite subset $W$ of $G$ such that
    \[
        u = \sum_{I \subseteq W} \alpha_I \left( \prod_{i \in I} e_{g_{i}}^{\sigma} \right), \, \, \text{ where } \alpha_I \in \kappa.
    \]
    For $(i)$ it is enough to show that for all $\xi \in D(u)$ there exists $\xi' \subseteq \xi$, such that $|\xi'| < \infty$ and $\overline{f_{\xi'}}(u) = 1$. Let $\xi \in D(u)$ and set $\xi' = W \cap \xi$. Notice that by Lemma~\ref{l: Omega sigma contains subset elements} we have that $\xi' \in \Omega_\sigma$, and on the other hand, it is clear that $f_{\xi'}(e_g) = f_{\xi}(e_g)$ for all $g \in W$. Therefore,
    \begin{align*}
        1 = \overline{f_{\xi}}(u) 
        &= \sum_{I \subseteq W} \alpha_I \left( \prod_{i \in I} \overline{f_{\xi}}(e_{g_{i}}^{\sigma}) \right) 
        = \sum_{I \subseteq W} \alpha_I \left( \prod_{i \in I} f_{\xi}(e_{g_{i}}) \right) \\
        &= \sum_{I \subseteq W} \alpha_I \left( \prod_{i \in I} f_{\xi'}(e_{g_{i}}) \right) 
        = \overline{f_{\xi'}}(u).
    \end{align*}
    For $(ii)$, let $\xi \in D(u)$ and let $\{ \xi_i \}$ be a chain in $D(u)$ such that $\xi \subseteq \xi_{i}$ for all $i$, set $\xi_0 = \cup \xi_i$, by the proof of Proposition~\ref{p: Omega sigma has maximal elements} we know that $\xi_0 \in \Omega_\sigma$. Since $W \cap \xi_0$ is finite, there exists $i_0$ such that $W \cap \xi_0 \subseteq \xi_{i_0}$. Then, $f_{\xi_{i_0}}(e_{g}) = f_{\xi_0}(e_{g})$ for all $g \in W$. Indeed, if $f_{\xi_0}(e_g) = 1,$ then $g \in \xi_0$, thus $g \in \xi_0 \cap W \subseteq \xi_{i_0}$, whence $f_{\xi_{i_0}}(g) = 1$; if $f_{\xi_0}(g) = 0$, then $g \notin \xi_0$, thus $g \notin \xi_{i_0}$ since $\xi_{i_0} \subseteq \xi_0$. Thus, $f_{\xi_{i_0}}(g) = 0$. Form this we obtain that
    \[
        \overline{f_{\xi_0}}(u) = \sum_{I \subseteq W} \alpha_I \left( \prod_{i \in I} f_{\xi_0}(e_{g_{i}}) \right) 
        = \sum_{I \subseteq W} \alpha_I \left( \prod_{i \in I} f_{\xi_{i_0}}(e_{g_{i}}) \right) 
        = \overline{f_{\xi_{i_0}}}(u)=1.
    \]
    Then, $\xi_0 \in D(u)$. Finally, the Zorn's lemma give us the desired result.
\end{proof}

\section{The \texorpdfstring{$\kappa$}{TEXT}-cancellative inverse monoid \texorpdfstring{$\mathcal{S}^{\sigma}(G)$}{TEXT}}\label{sec:CancellativeMon}

Besides using $\sigma$-prohibitions to characterize $B^{\sigma}$, we can provide another description of $\kappa_{par}^{\sigma}G$ by utilizing a $\kappa$-cancellative monoid determined by the $\sigma$-prohibitions.

\[
    \mathcal{S}^{\sigma}(G):= \{ (k, U, g) \in \kappa \times \Omega_\sigma \times G : g \in U, \, k \neq 0, |U| < \infty \} \cup \{ 0 \}.
\]
We have an obvious action of $\kappa$ on $\mathcal{S}^{\sigma}(G)$, defined as
\begin{align}
    r \cdot 0 &:= 0 \text{ for all } r \in \kappa; \\ 
    r \cdot (k, U, g) &:= (rk, U, g)  \text{ for all } r \in \kappa, \, r \neq 0; \\
    0 \cdot (k, U,g) &:= 0.
\end{align}
For the sake of simplicity we will denote $(k, U, g)$ by $k(U,g)$ and $1(U,g)$ by $(U,g)$. Therefore, the action of $\kappa$ is denoted by
\begin{align*}
    r \cdot (U, g) &= r(U, g)  \text{ for all } r \in \kappa, \, r \neq 0; \\
    0 \cdot (U,g) &= 0(U,g)=0.
\end{align*}
For the case of the trivial factor set $1$ we have that $\Omega_1 = \Omega$. Consequently, $\mathcal{S}^{\sigma}(G) \subseteq \mathcal{S}^{1}(G)$. Moreover, the action of $\kappa$ on $\mathcal{S}^{\sigma}(G)$ is just the restriction of the action of $\kappa$ on $\mathcal{S}^{1}(G)$ to $\mathcal{S}^{\sigma}(G)$. In particular, we obtain that $0(U,g) = 0$ for all $U \in \Omega$ and $g \in U$. 

Let $\chi$ be the characteristic function defined by
    \[
        \chi(U) := \left\{\begin{matrix}
            1 & \text{ if } U \in \Omega_\sigma, \\ 
            0 & \text{ otherwise.}
        \end{matrix}\right.
    \]
An easy fact about $\chi$ is its invariance by left translations, i.e., $\chi(U) = \chi(g^{^{-1}}U)$ for all $U \in \Omega_{\sigma}$, $|U| < \infty$, and $g \in U$.
This clearly follows the definition of $\sigma$-prohibitions.
Define the product $\star$ in $\mathcal{S}^{\sigma}(G)$ as follows
\begin{equation} \label{eq: product in S sigma}
    (k(U,g)) \star (r(V,h)) := \sigma(g,h) \chi(U \cup gV) kr (U \cup gV, gh) 
\end{equation}
and
\[
    0 \star z = z \star 0 := 0, \text{ for all } z \in \mathcal{S}^{\sigma}(G).
\]
\begin{lemma}
    $\chi(U \cup gV) \chi(U \cup gV \cup ghS) = \chi(V \cup hS) \chi(U \cup gV \cup ghS)$, for all $U, V, S \in \Omega_\sigma$, $g \in U$ and $h \in V$.
    \label{l: chi associative}
\end{lemma}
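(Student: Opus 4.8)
The plan is to settle the identity by a short case analysis on the value of the common factor $\chi(U \cup gV \cup ghS)$, which appears on both sides. Throughout I will use that $U,V,S\in\Omega_\sigma\subseteq\Omega$, so in particular $1_G\in U$, $1_G\in V$, $1_G\in S$.

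First I would dispose of the trivial case: if $U \cup gV \cup ghS \notin \Omega_\sigma$, then $\chi(U \cup gV \cup ghS) = 0$ and both sides of the asserted equality are $0$, so there is nothing to check. Hence the real content is the case $U \cup gV \cup ghS \in \Omega_\sigma$, where it suffices to prove $\chi(U \cup gV) = \chi(V \cup hS) = 1$, since then both sides equal $1$.

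The equality $\chi(U\cup gV)=1$ is immediate: as $1_G\in U$ we have $U\cup gV\in\Omega$, and $U \cup gV \subseteq U \cup gV \cup ghS \in \Omega_\sigma$, so by Lemma~\ref{l: Omega sigma contains subset elements} (which says that $\Omega_\sigma$ is an order ideal in $\Omega$) we get $U\cup gV\in\Omega_\sigma$. For $\chi(V\cup hS)=1$ I would invoke the left-translation invariance of the set of $\sigma$-prohibitions: because the defining relations of $\mathcal{P}_\sigma^{\mathrm{I}}$ and $\mathcal{P}_\sigma^{\mathrm{II}}$ do not constrain the leading group element, a set $W$ contains a $\sigma$-prohibition if and only if $tW$ does, for every $t\in G$. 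Combining this with the characterization \eqref{eq: Omega sigma characterization by prohibitions} and the fact that $g\in U$ (so $1_G=g^{-1}g\in g^{-1}(U\cup gV\cup ghS)$), one obtains $g^{-1}U \cup V \cup hS = g^{-1}(U \cup gV \cup ghS) \in \Omega_\sigma$. Since $1_G\in V$ and $V\cup hS\subseteq g^{-1}U\cup V\cup hS$, Lemma~\ref{l: Omega sigma contains subset elements} again gives $V\cup hS\in\Omega_\sigma$, i.e. $\chi(V\cup hS)=1$, and the proof is complete.

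I do not expect a genuine obstacle here; the statement is essentially bookkeeping. The one step worth isolating — the hinge of the argument — is the left-translation invariance of $\mathcal{P}_\sigma$ (equivalently of $\chi$, as noted just before the lemma), which is precisely what lets one pass between $\chi(U \cup gV \cup ghS)$ and the ``normalized'' set $g^{-1}U \cup V \cup hS$ that contains $V\cup hS$.
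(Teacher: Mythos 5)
Your proof is correct and takes essentially the same route as the paper's: the same case split on the common factor $\chi(U \cup gV \cup ghS)$, the same order-ideal argument (Lemma~\ref{l: Omega sigma contains subset elements}) for $\chi(U\cup gV)=1$, and the same left-translation-by-$g^{-1}$ step for $\chi(V\cup hS)=1$. The only cosmetic difference is that the paper first shrinks to $\{1_G\}\cup gV\cup ghS$ and applies $\vartheta_{g^{-1}}$, whereas you translate the whole set directly via the stated left-invariance of $\chi$; both are the same idea.
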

\begin{proof}
    It is clear that if $\chi(U \cup gV \cup ghS) = 0$ the equality holds. Now, if $\chi(U \cup gV \cup ghS) = 1$, by Lemma~\ref{l: Omega sigma contains subset elements} we have that $\chi(U \cup gV) = 1$ and $\chi(\{ 1_G \} \cup gV \cup ghS) = 1$. Thus, $\{ 1_G \} \cup gV \cup ghS \in \Omega_{\sigma}$, whence $\{ g^{-1} \} \cup V \cup hS = \vartheta_{g^{-1}}(\{ 1_G \} \cup gV \cup ghS) \in \Omega_{\sigma}$. Therefore, by Lemma~\ref{l: Omega sigma contains subset elements} we conclude that $\chi(V \cup hS) = 1$.
\end{proof}

We recall from \cite[Definition 2]{DN} that a \textbf{$\kappa$-semigroup} is a semigroup $T$ with a zero element $0_{T}$ and a map $\kappa \times T \to T$ satisfying the following properties for all $r,t \in \kappa$ and all $x,y \in T$: 
\begin{itemize}
    \item $r(t x) = (rt)x$,
    \item $r(xy) = (r x ) y = x (r y)$,
    \item $1_{\kappa} x =x$,
    \item $0_{\kappa} x = 0_{T}$.
\end{itemize}
A $\kappa$-semigroup $T$ is called \textbf{$\kappa$-cancellative} if additionally we have that: 
\[
    r x = t x \Longrightarrow r = t,
\]
for all $x \neq 0$.

\begin{proposition}\label{prop:Cancellative}
    $\mathcal{S}^{\sigma}(G)$ is a $\kappa$-cancellative inverse monoid.
\end{proposition}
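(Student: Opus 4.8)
The plan is to verify the required properties in the following order: well-definedness of the product $\star$ together with the $\kappa$-semigroup axioms, then associativity, then the existence of a two-sided identity, then $\kappa$-cancellativity, and finally that $\mathcal{S}^{\sigma}(G)$ is an inverse semigroup. Well-definedness is read off from \eqref{eq: product in S sigma}: if the scalar $\sigma(g,h)\chi(U\cup gV)kr$ is nonzero then $U\cup gV\in\Omega_{\sigma}$, it is finite, and it contains both $1_G\in U$ and $gh\in gV$, so the resulting triple is a genuine element of $\mathcal{S}^{\sigma}(G)$; otherwise the product is $0$. The $\kappa$-semigroup identities $r(xy)=(rx)y=x(ry)$, $1_{\kappa}x=x$ and $0_{\kappa}x=0$ hold because the scalar enters \eqref{eq: product in S sigma} multiplicatively, and for the same reason $\kappa$-cancellativity is trivial: for $x=k(U,g)\neq 0$ the element $r\cdot x$ is $(rk)(U,g)$ if $r\neq 0$ and $0$ if $r=0$, so $r\cdot x=t\cdot x$ forces $r=t$.

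The core of the proof is associativity. Writing $x=k(U,g)$, $y=r(V,h)$, $z=s(S,t)$ and using $g(V\cup hS)=gV\cup ghS$, direct expansion of the two triple products yields
\[
(x\star y)\star z=\sigma(g,h)\,\sigma(gh,t)\,\chi(U\cup gV)\,\chi(U\cup gV\cup ghS)\;krs\;\bigl(U\cup gV\cup ghS,\;ght\bigr)
\]
and
\[
x\star(y\star z)=\sigma(g,ht)\,\sigma(h,t)\,\chi(V\cup hS)\,\chi(U\cup gV\cup ghS)\;krs\;\bigl(U\cup gV\cup ghS,\;ght\bigr).
\]
The set-component and the group-component agree, and by Lemma~\ref{l: chi associative} the two products of $\chi$-values coincide, so it remains only to see that $\sigma(g,h)\sigma(gh,t)=\sigma(g,ht)\sigma(h,t)$ whenever $\chi(U\cup gV\cup ghS)=1$ (if this value is $0$ both displays vanish). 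But in that case $U\cup gV\cup ghS\in\Omega_{\sigma}$ contains $\{1_G,g,gh,ght\}$ --- because $1_G,g\in U$, $gh\in gV$ and $ght\in ghS$ --- so by \eqref{eq: Omega sigma characterization by prohibitions} this four-element set is not a $\sigma$-prohibition of type $\mathrm{II}$, which is exactly the asserted identity. I expect this reduction to be the main obstacle: one has to check honestly that the two triple products collapse to the displayed common form and that the relevant four-element set really is contained in $U\cup gV\cup ghS$.

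Next I will show that $(\{1_G\},1_G)$ is a two-sided identity. The crucial preliminary observation is that if $h\in U$ for some $U\in\Omega_{\sigma}$, then $\sigma(1_G,h)=\sigma(h,1_G)=1$, and $\sigma(g,g^{-1})\neq 0$ for every $g\in U$: indeed $\sigma(1_G,h)=0$ would make $\{1_G,h\}$ a $\sigma$-prohibition of type $\mathrm{I}$ lying inside $U$, and $\sigma(g,g^{-1})=0$ would do the same with $\{1_G,g\}$, which is impossible by \eqref{eq: Omega sigma characterization by prohibitions} unless $h=1_G$ (respectively $g=1_G$), where the value is $\sigma(1_G,1_G)=1\neq 0$ by Remark~\ref{r: sigma properties}; the equality $\sigma(h,1_G)=\sigma(1_G,h)$ is part of Remark~\ref{r: sigma properties}(iii). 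Granting this, $(\{1_G\},1_G)\star r(V,h)=\sigma(1_G,h)\chi(V)\,r(V,h)=r(V,h)$ and $r(V,h)\star(\{1_G\},1_G)=\sigma(h,1_G)\chi(V)\,r(V,h)=r(V,h)$, using $\{1_G\}\cup V=V=V\cup\{h\}$ and $\chi(V)=1$, while $0$ is absorbing by definition.

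Finally I will invoke the standard fact that a monoid whose idempotents commute and in which every element is regular is an inverse monoid. Expanding $e\star e=e$ for $e=k(U,g)\neq 0$ forces $g^{2}=g$, hence $g=1_G$, and then $k^{2}=k$, hence $k=1$; thus the idempotents of $\mathcal{S}^{\sigma}(G)$ are exactly the $(U,1_G)$ with $U\in\Omega_{\sigma}$ finite, together with $0$, and $(U,1_G)\star(V,1_G)=\chi(U\cup V)\,(U\cup V,1_G)$ is symmetric in $U$ and $V$, so idempotents commute. For regularity I will verify that the inverse of $k(U,g)$ is
\[
y:=\frac{1}{k\,\sigma(g,g^{-1})}\,\bigl(g^{-1}U,\;g^{-1}\bigr),
\]
which belongs to $\mathcal{S}^{\sigma}(G)$ since $g^{-1}U=\vartheta_{g^{-1}}(U)\in\Omega_{\sigma}$ is finite and contains $g^{-1}g=1_G$ and $g^{-1}$; substituting into \eqref{eq: product in S sigma}, using $g(g^{-1}U)=U$, $\chi(U)=1$, $\sigma(1_G,g)=1$ and $\sigma(g^{-1},g)=\sigma(g,g^{-1})$ (Remark~\ref{r: sigma properties}(iv)), one gets $x\star y=(U,1_G)$ and $y\star x=(g^{-1}U,1_G)$, whence $(x\star y)\star x=x$ and $(y\star x)\star y=y$; the inverse of $0$ is $0$. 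Combining this with the identity found above shows that $\mathcal{S}^{\sigma}(G)$ is a $\kappa$-cancellative inverse monoid.
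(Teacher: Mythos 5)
Your proof is correct and follows essentially the same route as the paper's: associativity via Lemma~\ref{l: chi associative} together with the observation that $\{1,g,gh,ght\}\subseteq U\cup gV\cup ghS\in\Omega_\sigma$ forces the cocycle identity, then the unit $(\{1\},1)$, the explicit inverse $\sigma(g,g^{-1})^{-1}k^{-1}(g^{-1}U,g^{-1})$, commuting idempotents $(U,1)$, and the trivial $\kappa$-cancellativity. The only (cosmetic) difference is that you derive the cocycle identity directly from the definition of type~II prohibitions and \eqref{eq: Omega sigma characterization by prohibitions}, where the paper passes through $e_g^{\sigma}e_{gh}^{\sigma}e_{ghs}^{\sigma}\neq 0$, and you spell out more carefully why $\sigma(1,h)=1$ and $\sigma(g,g^{-1})\neq 0$ for $g,h$ in an element of $\Omega_\sigma$.
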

\begin{proof}
    First we have to verify that the product of $\mathcal{S}^{\sigma}(G)$ is associative. Let $(U,g)$, $(V, h)$ and $(S, s)$ in $\mathcal{S}^{\sigma}(G)$, observe that if $\chi(U \cup gV \cup ghS) = 0$, then immediately we have that
    \[
        (U,g) \Big( (V,h)  (S,s) \Big) = 0 = \Big( (U,g) (V,h) \Big) (S,s).
    \]
    Now, assume that $\chi(U \cup gV \cup ghS) = 1$. Hence, by Lemma~\ref{l: Omega sigma contains subset elements}, we have that $\{ 1, g, gh, ghs \} \in \Omega_{\sigma}$ and by Lemma~\ref{l: Omega sigma is hat B sigma} we conclude that $e_{g}^{\sigma} e_{gh}^{\sigma}e_{ghs}^{\sigma} \neq 0$ whence $\sigma(g,h) \sigma(gh, s) = \sigma(g, hs) \sigma(h,s)$. Thus, we get
    \begin{align*}
        (U,g) \Big( (V,h)  (S,s) \Big)
        &= (U,g) \Big( \sigma(h,s) \chi(V \cup hS) (V \cup hS,hs)\Big) \\
        &= \sigma(g, hs) \sigma(h,s) \chi(V \cup hS) \chi(U \cup gV \cup ghS) (U \cup gV \cup ghS, ghs) \\
        (\flat) 
        &= \sigma(h,s) \sigma(g, hs) \chi(U \cup gV) \chi(U \cup gV \cup ghS) (U \cup gV \cup ghS, ghs) \\
        &= \sigma(g,h) \sigma(gh, s) \chi(U \cup gV) \chi(U \cup gV \cup ghS) (U \cup gV \cup ghS, ghs) \\
        &= \sigma(g,h) \chi(U \cup gV) (U \cup gV, gh) (S,s) \\
        &= \Big( (U,g) (V,h) \Big) (S,s),
    \end{align*}
    where $(\flat)$ holds by Lemma~\ref{l: chi associative}. It is clear that $(\{ 1 \}, 1)$ is the unit of $\mathcal{S}^{\sigma}(G)$. Now, it is easy to verify that for an element $k(U,g)$ of $\mathcal{S}^{\sigma}(G)$ we have that its inverse (in the sense of regular semigroups) is $\sigma(g,g^{-1})^{-1}k^{-1}(g^{-1}U, g^{-1})$. Indeed,
    \begin{align*}
       \sigma(g,g^{-1})^{-1} k(U,g) \star k^{-1}(g^{-1}U, g^{-1}) \star  k(U,g)
        & = (U, 1_{G}) \star k(U,g) \\ 
        & = \sigma(1, g)k(U, g) \\ 
        &= k(U,g),
    \end{align*}
    where the last equality holds since $\sigma(1, g) = 1$ by Remark~\ref{r: sigma properties}.
    Moreover, a non-zero element $k(U,g)$ is an idempotent if, and only if, $k=1$ and $g = 1$. Whence, we conclude that the idempotent elements of $\mathcal{S}^{\sigma}(G)$ commute with each other. Therefore, $\mathcal{S}^{\sigma}(G)$ is an inverse monoid. Finally, by construction it is clear the $\mathcal{S}^{\sigma}(G)$ is $\kappa$-cancellative.    
\end{proof}

Observe that $\mathcal{S}^{\sigma}(G)$ generalizes the Szendrei expansion of a group  see \cite{Sze} (which is isomorphic to the Exel's semigroup S(G) see \cite{KL} and to the Birget-Rhodes prefix expansion of $G$) in the following sense. If $\kappa$ is the field with two elements $\{ 0, 1 \}$ and $\sigma = 1$, then $S(G) = \mathcal{S}^{1}(G) \setminus \{ 0 \}$.

\begin{proposition}
    $\chi(\{ 1,g \}) = 0$ if, and only if, $\sigma(g,g^{-1})=0$
\end{proposition}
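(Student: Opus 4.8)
The plan is to prove the equivalent statement that $\{1_G,g\}\in\Omega_\sigma$ if and only if $\sigma(g,g^{-1})\neq 0$, since by the definition of $\chi$ the equality $\chi(\{1_G,g\})=0$ means precisely $\{1_G,g\}\notin\Omega_\sigma$. The case $g=1_G$ is trivial and can be disposed of first: then $\{1_G,g\}=\{1_G\}$ is a singleton, which contains no $\sigma$-prohibition and hence lies in $\Omega_\sigma$, while $\sigma(1_G,1_G)=1\neq 0$ by Remark~\ref{r: sigma properties}; thus both sides of the asserted equivalence fail, and from now on I assume $g\neq 1_G$, so that $\{1_G,g\}$ has exactly two elements.

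For the direction $\sigma(g,g^{-1})=0\Rightarrow\chi(\{1_G,g\})=0$, the key observation is that a $\sigma$-prohibition of type $\mathrm{I}$ can collapse to a two-element set. Indeed, in the definition of $\mathcal{P}_\sigma^{\mathrm{I}}$ take the base point $h=1_G$ and let the two inner arguments be $g$ and $g^{-1}$; since $\sigma(g,g^{-1})=0$, the set $\{1_G,\,1_G g,\,1_G g g^{-1}\}=\{1_G,g\}$ is a $\sigma$-prohibition of type $\mathrm{I}$. As $\{1_G,g\}\subseteq\{1_G,g\}$, the characterization \eqref{eq: Omega sigma characterization by prohibitions} forces $\{1_G,g\}\notin\Omega_\sigma$, that is, $\chi(\{1_G,g\})=0$.

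For the converse I avoid enumerating all the ways a prohibition (in particular one of type $\mathrm{II}$, which has four inner points and could collapse into $\{1_G,g\}$ in many patterns) might be contained in $\{1_G,g\}$, and argue algebraically instead. Assume $\chi(\{1_G,g\})=0$, i.e. $\{1_G,g\}\notin\Omega_\sigma$. Since $\{1_G,g\}$ is finite and contains $1_G$, Remark~\ref{r: zeros of B sigma induced by Omega sigma} gives $\prod_{x\in\{1_G,g\}}e_x^\sigma=0$. Now $e_{1_G}^\sigma=\sigma(1_G,1_G)^{-1}[1_G]^\sigma[1_G]^\sigma=1$, so this product equals $e_g^\sigma$, whence $e_g^\sigma=0$. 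It remains to deduce $\sigma(g,g^{-1})=0$ from $e_g^\sigma=0$: if instead $\sigma(g,g^{-1})\neq 0$, then by definition $e_g^\sigma=\sigma(g,g^{-1})^{-1}[g]^\sigma[g^{-1}]^\sigma$, forcing $[g]^\sigma[g^{-1}]^\sigma=0$; but $[\_]^\sigma$ is a partial $\sigma$-representation, so Remark~\ref{r: sigma properties}(i) yields $\sigma(g,g^{-1})=0$, a contradiction. Hence $\sigma(g,g^{-1})=0$, which finishes the proof.

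The only delicate point is the final step of the converse: one must be certain that the vanishing $[g]^\sigma[g^{-1}]^\sigma=0$ in the universal algebra forces $\sigma(g,g^{-1})=0$. This is exactly where the equivalence $\sigma(g,h)=0\Leftrightarrow[g]^\sigma[h]^\sigma=0$ of Remark~\ref{r: sigma properties}(i) is essential, and it rests on the fact that the canonical map $[\_]^\sigma\colon G\to\kappa_{par}^\sigma G$ is itself a partial $\sigma$-representation whose factor set is exactly $\sigma$. Everything else reduces to direct bookkeeping with the definitions of $\chi$, of $\mathcal{P}_\sigma^{\mathrm{I}}$, and of $e_g^\sigma$.
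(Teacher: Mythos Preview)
Your proof is correct and takes a genuinely different route from the paper's. The paper proves the converse direction by an exhaustive case analysis: it assumes $\{1,g\}\in\mathcal{P}_\sigma$, splits into whether the prohibition is of type~$\mathrm{I}$ or type~$\mathrm{II}$, and then enumerates all the ways a three- or four-point prohibition $\{h,hx,hxy\}$ or $\{h,hx,hxy,hxyz\}$ can collapse to $\{1,g\}$ (about two dozen subcases), checking in each that the relevant $\sigma$-identity forces $\sigma(g,g^{-1})=0$ or gives a contradiction. Your argument bypasses all of this by passing through $B^\sigma$: you invoke Remark~\ref{r: zeros of B sigma induced by Omega sigma} to get $e_g^\sigma=0$ from $\{1,g\}\notin\Omega_\sigma$, and then use the fact (Proposition cited from \cite{Dokuchaev-Jerez:twisted:2024}) that $[\_]^\sigma$ is a genuine partial $\sigma$-representation, so that $[g]^\sigma[g^{-1}]^\sigma=0$ forces $\sigma(g,g^{-1})=0$ via Remark~\ref{r: sigma properties}(i). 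This is considerably shorter and conceptually cleaner; the price is that it leans on the non-obvious external input that the universal map $[\_]^\sigma$ satisfies condition~(ii) of Definition~\ref{d: parproj}, whereas the paper's argument is a self-contained combinatorial verification using only the definition of $\mathcal{P}_\sigma$ and the elementary identities of Remark~\ref{r: sigma properties}.
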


\begin{proof}
    By \cite[Corollary 6]{DN} we know that
    \[
        \sigma(g, g^{-1}) = 0 \Leftrightarrow \sigma(1, g) = 0 \Leftrightarrow \sigma(g, 1) = 0.
    \]
    Then, $\sigma(g, g^{-1}) = 0$ implies that $\{ 1, g \} \in \mathcal{P}_{\sigma}$. For the converse, assume that $\{ 1, g \} \in \mathcal{P}_{\sigma}$. Then, $\{ 1, g \} \in \mathcal{P}_{\sigma}^{\mathrm{I}}$ or $\{ 1, g \} \in \mathcal{P}_{\sigma}^{\mathrm{II}}$, then we have the following cases:
    \begin{enumerate}[(a)]
        \item if $\{ 1, g \} \in \mathcal{P}_{\sigma}^{\mathrm{I}}$, then there exists $h, x, y \in G$ such that $\{ 1, g \} = \{  h, hx, hxy \}$ and $\sigma(x,y)=0$. Then,
            \begin{enumerate}[(i)]
                \item if $h = 1$, then $\{ 1, x, xy \} = \{ 1, g \}$. Hence,
                    \begin{enumerate}[(1)]
                        \item if $x = 1$ and $y=g$, then $\sigma(1, g) = 0$;
                        \item if $x = g$ and $y=1$, then $\sigma(g, 1) = 0$;
                        \item if $x = g$ and $y=g^{-1}$, then $\sigma(g, g^{-1}) = 0$.
                    \end{enumerate}
                \item If $h = g$, then $\{ h, hx, hxy \} = \{ g, gx, gxy \}$. Hence,
                    \begin{enumerate}[(1)]
                        \item if $x = 1$ and $y=g^{-1}$, then $\sigma(1, g^{-1}) = 0$;
                        \item if $x = 1$ and $y=1$, then $\sigma(1, 1) = 0$, contradiction;
                        \item if $x = g^{-1}$ and $y=g$, then $\sigma(g^{-1}, g) = 0$;
                        \item if $x = g^{-1}$ and $y=1$, then $\sigma(g^{-1}, 1) = 0$.
                    \end{enumerate}
            Observe that each of these cases above implies that $\sigma(g, g^{-1})=0$.
            \end{enumerate}
        Recall also that by \cite[Proposition 5]{DN} we know that $\sigma(g,1) \in \{0, 1\}$ for all $g \in G$. The remaining cases are as follows:
        \item if $\{ 1, g \} \in \mathcal{P}_{\sigma}^{\mathrm{II}}$, then there exists $h, x, y, z \in G$ such that $\sigma(x,y)\sigma(xy, z) \neq \sigma(x, yz) \sigma(y,z)$ and $\{ 1, g \} = \{ h, hx, hxy, hxyz \}$, then
            \begin{enumerate}[(i)]
                \item if $h=1$, then $\{ 1, g \} = \{ 1, x, xy, xyz \}$. Hence,
                    \begin{enumerate}[(1)]
                        \item if $x = 1$, $y = 1$ and $z = g$,
                            then $\sigma(1, 1)\sigma(1,g)=\sigma(1,g)\sigma(1,g)$;
                        \item if $x = 1$, $y = g$ and $z = 1$,
                            then $\sigma(1, g)\sigma(g,1)=\sigma(1,g)\sigma(g,1)$;
                        \item if $x = 1$, $y = g$ and $z = g^{-1}$,
                            then $\sigma(1, g)\sigma(g,g^{-1})=\sigma(1,1)\sigma(g,g^{-1})$;
                            \item if $x = g$, $y = 1$ and $z = 1$,
                            then $\sigma(g, 1)\sigma(g,1)=\sigma(g,1)\sigma(1,1)$;
                        \item if $x = g$, $y = 1$ and $z = g^{-1}$,
                            then $\sigma(g, 1)\sigma(g,g^{-1})=\sigma(g,g^{-1})\sigma(1,g^{-1})$;
                        \item if $x = g$, $y = g^{-1}$ and $z = 1$,
                            then $\sigma(g, g^{-1})\sigma(1,1)=\sigma(g,g^{-1})\sigma(g^{-1}, 1)$;
                        \item if $x = g$, $y = g^{-1}$ and $z = g$,
                            then $\sigma(g, g^{-1})\sigma(1,g)=\sigma(g,1)\sigma(g^{-1},g)$;
                    \end{enumerate}
                    note that the last equality of item $(7)$ holds since $\sigma(g, g^{-1}) = \sigma(g^{-1},g)$ (see for example \cite[p. 261]{DN}).
                    Thus, in the above cases we obtain no prohibitions of type $II$;
                \item if $h = g$, then $\{ 1, g \} = \{ g, gx, gxy, gxyz \}$. Hence, 
                    \begin{enumerate}[(1)]
                        \item if $x = 1$, $y = 1$ and $z = g^{-1}$,
                            then $\sigma(1, 1)\sigma(1, g^{-1})=\sigma(1,g^{-1})\sigma(1, g^{-1})$;
                        \item if $x = 1$, $y = g^{-1}$ and $z = 1$,
                            then $\sigma(1, g^{-1})\sigma(g^{-1}, 1)=\sigma(1,g^{-1})\sigma(g^{-1},1)$;
                        \item if $x = 1$, $y = g^{-1}$ and $z = g$,
                            then $\sigma(1, g^{-1})\sigma(g^{-1},g)=\sigma(1,1)\sigma(g^{-1},g)$;
                        \item if $x = g^{-1}$, $y = 1$ and $z = g$,
                            then $\sigma(g^{-1}, 1)\sigma(g^{-1},g)=\sigma(g^{-1},g)\sigma(1, g)$;
                        \item if $x = g^{-1}$, $y = g$ and $z = 1$,
                            then $\sigma(g^{-1}, g)\sigma(1,1)=\sigma(g^{-1},g)\sigma(g,1)$;
                        \item if $x = g^{-1}$, $y = g$ and $z = g^{-1}$,
                            then $\sigma(g^{-1}, g)\sigma(1,g^{-1})=\sigma(g^{-1},1)\sigma(g,g^{-1})$.
                    \end{enumerate}
                Then, in subcase (ii) of (b) there are also no prohibitions.
            \end{enumerate}
    \end{enumerate}
    Therefore, the cases (a) and (b) yields to the desired conclusion.
\end{proof}

Let $\mathcal{A}_\sigma$ be the $\kappa$-vector space with basis $\{ (U, g) : U \in \Omega_{\sigma}, |U| < \infty, \text{ and } g \in U \}$. Observe that we have an injective map $\iota: \mathcal{S}^{\sigma}(G) \to \mathcal{A}_{\sigma}$ such that $\iota(r(U,g)):= r(U,g)$, which allows us to identify $\mathcal{S}^{\sigma}$ as a subset of $\mathcal{A}_{\sigma}$. Moreover, since the product $\star$ of $\mathcal{S}^{\sigma}(G)$ (see \eqref{eq: product in S sigma}) is defined in the elements of a basis of $\mathcal{A}_{\sigma}$ a direct computation shows that the multiplication of $\mathcal{S}^{\sigma}(G)$ endows $\mathcal{A}_{\sigma}$ with a $\kappa$-algebra structure.

\begin{lemma} \label{l: chi small computation}
    Let $g,h \in G$, then
    \[
       \chi(\{ 1, g \}) \chi(\{ \{ 1, g, h \} \}) =  \chi(\{ \{ 1, g, h \} \})
    \]
    
\end{lemma}
\begin{proof}
    Observe that if $\chi(\{ 1, g \}) = 0$ then $\chi(\{ \{ 1, g, h \} \}) = 0$ by Lemma~\ref{l: Omega sigma contains subset elements}, and if $\chi(\{ 1, g \}) = 1$ the equality trivially holds. 
\end{proof}

\begin{theorem}
    There exists an isomorphism of algebras $\kappa_{par}^{\sigma}G \to \mathcal{A}_{\sigma}$ such that $\phi([g]^{\sigma}) = \chi(\{ 1, g \})(\{ 1, g \}, g)$.
    \label{t: kparsG isomorphism}
\end{theorem}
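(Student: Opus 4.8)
The plan is to obtain $\phi$ from the universal property of $\kappa_{par}^{\sigma}G$ and then to exhibit an explicit $\kappa$-linear right inverse for it. Write $a_{g}:=\chi(\{1,g\})(\{1,g\},g)\in\mathcal{A}_{\sigma}$. The starting point is the identity
\[
    a_{g}\star a_{h}=\sigma(g,h)\,\chi(\{1,g,gh\})\,(\{1,g,gh\},gh),\qquad g,h\in G,
\]
which follows from the product rule \eqref{eq: product in S sigma}, the left-translation invariance of $\chi$, and Lemma~\ref{l: Omega sigma contains subset elements} (if $\{1,g,gh\}\in\Omega_{\sigma}$ then so are $\{1,g\}$ and, after translating by $g^{-1}$, $\{1,h\}$, which lets one absorb the spurious factors $\chi(\{1,g\})$ and $\chi(\{1,h\})$). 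Applying this twice and using that $\mathcal{A}_{\sigma}$ is associative (Proposition~\ref{prop:Cancellative}) gives the triple formula $a_{g}\star a_{h}\star a_{t}=\sigma(g,h)\sigma(gh,t)\,\chi(\{1,g,gh,ght\})\,(\{1,g,gh,ght\},ght)$.

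The core step is to verify that $g\mapsto a_{g}$ is a partial $\sigma$-representation of $G$ in $\mathcal{A}_{\sigma}$, i.e.\ that (i)--(v) of Definition~\ref{d: parproj} hold with factor set $\sigma$. Condition (iii) is immediate, since $a_{1_{G}}=(\{1\},1)$ is the unit of $\mathcal{A}_{\sigma}$. For (v), both $a_{g}\star a_{h}\star a_{h^{-1}}$ and $\sigma(g,h)\,a_{gh}\star a_{h^{-1}}$ reduce through the two formulas above to $\sigma(g,h)\sigma(gh,h^{-1})\chi(\{1,g,gh\})(\{1,g,gh\},g)$, so they agree term by term; condition (i) is similar, because $\sigma(g,h)=0$ forces the finite sets $\{1,g,gh\}$ and $\{1,g^{-1},h\}$ into $\mathcal{P}_{\sigma}^{\mathrm{I}}$, hence out of $\Omega_{\sigma}$, killing the relevant $\chi$. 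For (iv) the two sides become $\sigma(g^{-1},g)\sigma(1,h)\,\chi(\{1,g^{-1},h\})(\{1,g^{-1},h\},h)$ and $\sigma(g,h)\sigma(g^{-1},gh)\,\chi(\{1,g^{-1},h\})(\{1,g^{-1},h\},h)$, and when $\chi(\{1,g^{-1},h\})=1$ one has $\{1,g^{-1},h\}\in\Omega_{\sigma}$, hence $e_{g^{-1}}^{\sigma}e_{h}^{\sigma}\neq0$ by Lemma~\ref{l: Omega sigma is hat B sigma} (recall $e_{1_{G}}^{\sigma}=1$), so the scalar identity $\sigma(g^{-1},g)\sigma(1,h)=\sigma(g,h)\sigma(g^{-1},gh)$ is exactly the twist identity for $\theta^{\sigma}$ on the triple $(g^{-1},g,h)$. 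Finally, for (ii): if $\sigma(g,h)\neq0$ then $e_{g}^{\sigma}e_{gh}^{\sigma}\neq0$ (Remark~\ref{r: zeros of B sigma induced by Omega sigma}), whence $\{1,g,gh\}\in\Omega_{\sigma}$, $\chi(\{1,g,gh\})=1$, and $a_{g}\star a_{h}\neq0$. The universal property of $\kappa_{par}^{\sigma}G$ now gives a unital algebra homomorphism $\phi:\kappa_{par}^{\sigma}G\to\mathcal{A}_{\sigma}$ with $\phi([g]^{\sigma})=a_{g}$.

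It remains to prove $\phi$ bijective. From the definition of $e_{g}^{\sigma}$ and the product formula, $\phi(e_{g}^{\sigma})=\chi(\{1,g\})(\{1,g\},1)$ in all cases (when $\sigma(g,g^{-1})=0$ both sides vanish, using $\chi(\{1,g\})=0\Leftrightarrow\sigma(g,g^{-1})=0$). Multiplying these out, again with $\sigma(1,1)=1$ and Lemma~\ref{l: Omega sigma contains subset elements}, yields $\phi\bigl(\prod_{h\in U}e_{h}^{\sigma}\bigr)=\chi(U)(U,1)$ for finite $U\ni1_{G}$, hence $\phi\bigl(\bigl(\prod_{h\in U}e_{h}^{\sigma}\bigr)[g]^{\sigma}\bigr)=\chi(U\cup\{g\})(U\cup\{g\},g)$; in particular this is $(U,g)$ whenever $U\in\Omega_{\sigma}$ is finite with $1_{G},g\in U$ (the scalar $\sigma(1,g)$ that appears is then forced to be $1$). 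Now define the $\kappa$-linear map $\psi:\mathcal{A}_{\sigma}\to\kappa_{par}^{\sigma}G$ on the basis of $\mathcal{A}_{\sigma}$ by $\psi(U,g):=\bigl(\prod_{h\in U}e_{h}^{\sigma}\bigr)[g]^{\sigma}$. By Remark~\ref{r: kappa generators of kparsG}, together with $\prod_{h\in U}e_{h}^{\sigma}=0$ for finite $U\notin\Omega_{\sigma}$ (Remark~\ref{r: zeros of B sigma induced by Omega sigma}), $\psi$ is surjective, while the computation just made gives $\phi\circ\psi=\operatorname{id}_{\mathcal{A}_{\sigma}}$. Hence $\psi$ is injective, so bijective, and $\phi$ is injective as well (if $\phi(x)=0$, write $x=\psi(y)$; then $y=\phi\psi(y)=0$, so $x=0$) and surjective. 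Thus $\phi$ is a bijective algebra homomorphism, i.e.\ the desired isomorphism, with $\phi([g]^{\sigma})=\chi(\{1,g\})(\{1,g\},g)$.

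The main obstacle is the second paragraph, the verification of (i)--(v): the delicate point is to translate the purely combinatorial statements ``a finite set lies in $\Omega_{\sigma}$'' and ``a set is a $\sigma$-prohibition'' into the algebraic facts about $B^{\sigma}$ (non-vanishing of products of the $e_{g}^{\sigma}$, and the $2$-cocycle identity holding on exactly the right support), for which Lemma~\ref{l: Omega sigma is hat B sigma}, Remark~\ref{r: zeros of B sigma induced by Omega sigma} and the twist property of $\theta^{\sigma}$ are the essential tools; everything else is routine once the double and triple product formulas for the $a_{g}$ are available. (An alternative route to the same map identifies $\mathcal{A}_{\sigma}$ directly with the crossed product $B^{\sigma}\rtimes_{\sigma}G$ via $(U,g)\mapsto\bigl(\prod_{h\in U}e_{h}^{\sigma}\bigr)\delta_{g}$ and then composes with the isomorphism $B^{\sigma}\rtimes_{\sigma}G\cong\kappa_{par}^{\sigma}G$.)
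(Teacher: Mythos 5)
Your proof is correct and follows essentially the same route as the paper: define $a_g=\chi(\{1,g\})(\{1,g\},g)$, check that $g\mapsto a_g$ satisfies the defining relations so the universal property yields $\phi$, compute $\phi$ on the spanning set $(\prod_{h\in U}e_h^{\sigma})[g]^{\sigma}$, and exhibit the linear map $(U,g)\mapsto(\prod_{h\in U}e_h^{\sigma})[g]^{\sigma}$ as a surjective right inverse, which forces bijectivity. The only cosmetic difference is that in relation (iv) you invoke the twist identity for $\theta^{\sigma}$ on the triple $(g^{-1},g,h)$, whereas the paper sidesteps it by associating the triple product as $a_{g^{-1}}\star(a_g\star a_h)$; both are legitimate once associativity of $\mathcal{A}_{\sigma}$ is in hand.
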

\begin{proof}
    Consider the map
    \begin{align*}
        \pi: G & \to \mathcal{A}_{\sigma} \\
            g &\mapsto \chi(\{ 1, g \})\big(\{ 1, g \}, g \big).
    \end{align*}
    It is immediate that $\pi_1 = (\{ 1 \}, 1) = 1_{\mathcal{A}_{\sigma}}$. Let $g, h \in G$, first observe that by the left invariance of $\sigma$-prohibitions we obtain
    \begin{equation} \label{eq: left invariance chi formula}
        \chi(\{ 1, g, gh \}) = \chi(\{ 1, g^{-1}, h \}).
    \end{equation}
    Note that
    \begin{align*}
        \pi(g^{-1})& \pi(g) \pi(h) = \\
        &= \chi(\{ 1, g^{-1} \}) \chi(\{ 1, g \}) \chi(\{ 1, h \}) \bigl( \{ 1, g^{-1} \}, g^{-1} \bigr) \bigl( \{ 1, g \}, g \bigr) \bigl( \{ 1, h \}, h \bigr) \\
        &= \chi(\{ 1, g^{-1} \}) \chi(\{ 1, g \}) \chi(\{ 1, h \}) \chi(\{ \{ 1, g, gh \} \}) \sigma(g,h) \bigl( \{ 1, g^{-1} \}, g^{-1} \bigr) \bigl( \{ 1, g, gh \}, gh \bigr)  \\
        &= \chi(\{ 1, g^{-1} \}) \chi(\{ 1, g \}) \chi(\{ 1, h \}) \chi(\{ \{ 1, g, gh \} \}) \chi(\{ \{ 1, g^{-1}, h \} \})\sigma(g,h) \sigma(g^{-1}, gh) \bigl( \{ 1, g^{-1}, h \}, h \bigr) \\
        &= \chi(\{1, g, gh \}) \sigma(g,h) \sigma(g^{-1}, gh) \bigl( \{ 1, g^{-1}, h \}, h \bigr),
    \end{align*}
    where the last equality holds by Lemma~\ref{l: chi small computation} and equality \eqref{eq: left invariance chi formula}. On the other hand, using again Lemma~\ref{l: chi small computation} and equation \eqref{eq: left invariance chi formula} we get
    \begin{align*}
        \pi(g^{-1}) \pi(gh)
        &= \chi(\{ 1, g^{-1} \}) \chi(\{ 1, gh \}) \big( \{ 1, g^{-1} \}, g^{-1} \big) \big( \{ 1, g h \}, gh \big) \\
        &= \chi(\{ 1, g^{-1} \}) \chi(\{ 1, gh \}) \chi(\{ 1, g^{-1}, h \}) \sigma(g^{-1}, gh) \big( \{ 1, g^{-1}, h \}, h \big) \\
        &= \chi(\{ 1, g, gh \}) \sigma(g^{-1}, gh) \big( \{ 1, g^{-1}, h \}, h \big).
    \end{align*}
     Hence, we conclude that $\pi(g^{-1})\pi(g) \pi(h) = \sigma(g,h)\pi(g^{-1})\pi(gh)$. Analogously, one proves that $\pi(g) \pi(h) \pi(h^{-1}) = \sigma(g,h)\pi(gh) \pi(h^{-1})$. Thus, by the universal property of $\kappa_{par}^{\sigma}G$ there exists a morphism of algebras $\phi: \kappa_{par}^{\sigma}G \to \mathcal{A}_{\sigma}$ such that $\phi([g]^{\sigma}) = \chi(\{ 1, g \})(\{ 1, g \}, g)$.
    By a direct computation one verifies that
    \begin{equation*}
        \phi(e_{h_1}^{\sigma}e_{h_2}^{\sigma} \ldots e_{h_n}^{\sigma} ) = \chi(\{ 1, h_1, \ldots, h_n \}) \big( \{ 1, h_1, \ldots, h_n \}, 1 \big)
    \end{equation*}
    for all $h_1, \ldots, h_n \in G$. Thus, we have that
    \begin{equation} \label{eq: phi in basis of kparsG}
        \phi(e_{h_1}^{\sigma}e_{h_2}^{\sigma} \ldots e_{h_n}^{\sigma} [g]^{\sigma} ) = \chi(\{ 1, g, h_1, \ldots, h_n \}) \big( \{ 1, g, h_1, \ldots, h_n \}, g \big).
    \end{equation}
    Consider the $\kappa$-linear map
    \begin{align*}
        \phi': \mathcal{A}_{\sigma} &\to \kappa_{par}^{\sigma}G \\
                (U, g) &\mapsto (\prod_{g \in U} e^{\sigma}_{g})[g]^{\sigma}.
    \end{align*}
    By Remark~\ref{r: kappa generators of kparsG} we know that
    \[
        \left\{ \left( \prod_{h \in U} e^{\sigma}_{h} \right) [g]^{\sigma} : U \subseteq G, \, |U| < \infty, \, 1, g \in U \right\}
    \]
    generates $\kappa_{par}^{\sigma}G$ as $\kappa$-vector space. Moreover, if $\prod_{h \in U} e^{\sigma}_{h} \neq 0$, then by Remark~\ref{r: zeros of B sigma induced by Omega sigma} implies that $U \in \Omega_{\sigma}$. Therefore, if $g \in U$, then $\phi'(U,g) = \prod_{h \in U} e_{h}^{\sigma}[g]^{\sigma}$ and consequently $\phi'$ is surjective. Furthermore, $\phi'$ is injective. Indeed, observe that for all $U \in \Omega_{\sigma}$, such that $U = \{ 1, g, h_1, \ldots, h_n \}$ we have that
    \begin{align*}
        \phi \phi'(U,g)
        &= \phi(e_{h_1}^{\sigma}e_{h_2}^{\sigma} \ldots e_{h_n}^{\sigma}[g]^{\sigma}) \\
        (\text{by \eqref{eq: phi in basis of kparsG}})&= \chi(U)(U, g) \\
        &= (U, g).
    \end{align*}
    Since $\{ (U,g) : g \in G, \, U \in \Omega_{\sigma}, \text{ and } g \in U \}$ generates $\mathcal{A}_{\sigma}$ as $\kappa$-module, we conclude that $\phi \phi' = 1_{\mathcal{A}}$. Hence, $\phi'$ is injective. Consequently, $\phi'$ is a bijection, and $\phi$ is an isomorphism of algebras.
\end{proof}

Form Theorem~\ref{t: kparsG isomorphism} we conclude that 
\begin{equation}
    \xi = \{ 1, h_1, \ldots, h_n \} \in \Omega_\sigma \Leftrightarrow e_{h_1}^{\sigma}e_{h_2}^{\sigma} \ldots e_{h_n}^{\sigma} \neq 0.
    \label{eq: Omega sigma charaterization via kparsG}
\end{equation}

\begin{corollary} \label{c: type I prohibition charaterization}
    $\{ 1, g, h \} \notin \Omega_{\sigma}$ if, and only if, $\sigma(g^{-1},h)=0$.
    \label{c: basic zeros of B sigma depends only on sigma}
\end{corollary}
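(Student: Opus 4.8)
The plan is to deduce the statement from the description \eqref{eq: Omega sigma charaterization via kparsG} of the finite members of $\Omega_\sigma$ in terms of products of the idempotents $e_g^\sigma$, combined with the left-translation invariance of $\Omega_\sigma$ and part (i) of Remark~\ref{r: zeros of B sigma induced by Omega sigma}.

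First I would note that $\mathcal{P}_\sigma$ is invariant under left translations by elements of $G$: left-multiplying $\{h,hg,hgs\}$, respectively $\{h,hg,hgs,hgst\}$, by $t\in G$ only turns the leading element $h$ into $th$ while leaving the defining condition on $\sigma$ untouched, so $t\mathcal{P}_\sigma^{\mathrm{I}}=\mathcal{P}_\sigma^{\mathrm{I}}$ and $t\mathcal{P}_\sigma^{\mathrm{II}}=\mathcal{P}_\sigma^{\mathrm{II}}$. Combined with \eqref{eq: Omega sigma characterization by prohibitions}, this yields that for a finite $U\in\Omega$ and any $g\in U$ one has $U\in\Omega_\sigma\Leftrightarrow g^{-1}U\in\Omega_\sigma$ (this is also built into the well-definedness of the partial action $\vartheta$). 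Taking $U=\{1,g,h\}$, which contains $g$, gives
\[
    \{1,g,h\}\in\Omega_\sigma \;\Longleftrightarrow\; \{1,g^{-1},g^{-1}h\}\in\Omega_\sigma .
\]

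Now I would apply \eqref{eq: Omega sigma charaterization via kparsG} to the set on the right, obtaining $\{1,g^{-1},g^{-1}h\}\in\Omega_\sigma\Leftrightarrow e_{g^{-1}}^\sigma e_{g^{-1}h}^\sigma\neq 0$, and then Remark~\ref{r: zeros of B sigma induced by Omega sigma}(i) applied to the pair $(g^{-1},h)$ — so that the product there reads $g^{-1}\cdot h=g^{-1}h$ — to get $e_{g^{-1}}^\sigma e_{g^{-1}h}^\sigma=0\Leftrightarrow\sigma(g^{-1},h)=0$. Chaining the three equivalences gives precisely $\{1,g,h\}\notin\Omega_\sigma\Leftrightarrow\sigma(g^{-1},h)=0$.

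I do not foresee a real obstacle; the only step needing a moment of care is the left-invariance reduction, and that is already implicit in the earlier material (for instance the proof of Lemma~\ref{l: chi associative} applies $\vartheta_{g^{-1}}$ to elements of $\Omega_\sigma$). One should also remark that the degenerate cases where two of $1,g,h$ coincide cause no difficulty: the corresponding product of idempotents then merely carries a repeated or trivial factor and every cited identity still applies, consistently with the earlier Proposition stating $\chi(\{1,g\})=0\Leftrightarrow\sigma(g,g^{-1})=0$. One could instead avoid the translation by applying \eqref{eq: Omega sigma charaterization via kparsG} directly to $\{1,g,h\}$ and Remark~\ref{r: zeros of B sigma induced by Omega sigma}(i) to the pair $(g,g^{-1}h)$, at the cost of having to check $\sigma(g,g^{-1}h)=0\Leftrightarrow\sigma(g^{-1},h)=0$ separately; the translation route is cleaner.
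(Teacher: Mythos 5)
Your proof is correct and follows essentially the same route as the paper: both arguments hinge on the characterization \eqref{eq: Omega sigma charaterization via kparsG} together with the equivalence between $\sigma(g^{-1},h)=0$ and the vanishing of the corresponding product of idempotents. The only cosmetic difference is that you funnel both implications through a left translation to $\{1,g^{-1},g^{-1}h\}$ and Remark~\ref{r: zeros of B sigma induced by Omega sigma}(i), whereas the paper handles the converse by directly exhibiting $\{1,g,h\}=\{g,gg^{-1},gg^{-1}h\}$ as a type I prohibition.
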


\begin{proof}
    Suppose that $\{ 1, g, h \} \notin \Omega_\sigma$. Then, by \eqref{eq: Omega sigma charaterization via kparsG} we have that $e_{g}^{\sigma} e_{h}^{\sigma} = 0$. Moreover,
    \[
        e_{g}^{\sigma} e_{h}^{\sigma} = 0 \Rightarrow [g^{-1}]^{\sigma} [h]^{\sigma} = 0 \Rightarrow \sigma(g^{-1}, h) = 0.
    \]
    In the other direction, if $\sigma(g^{-1}, h) = 0$, we obtain
    \[
      \{ 1, g, h \} = \{ g, g g^{-1}, g g^{-1} h \}  \in \mathcal{P}_{\sigma}.
    \]
\end{proof}

\section{Another structural characterization \texorpdfstring{$\kappa_{par}^{\sigma}G$}{TEXT}}\label{sec:Another}
Now we will give another characterization of the structure of $\kappa_{par}^{\sigma}G$ when $\Omega_{\sigma}$ is discrete.
We shall see in  Corollary~\ref{c: spectre is discrete if the admisibles of 1 is finite}, that in this case \( \Omega_{\sigma} \) is finite.
Consequently, by Lemma~\ref{l: Omega sigma contains subset elements}, we have that each \( \xi \in \Omega_{\sigma} \) is finite.

\textbf{In this section} we will assume that \(\Omega_{\sigma}\) is discrete. Let $\mathcal{R}^{\sigma}(G)$ be the $\kappa$-vector space with basis $\{ (g, A) : A \in \Omega_{\sigma} \text{ and } g^{-1} \in A \}$. Furthermore, $\mathcal{R}^{\sigma}(G)$ is a $\kappa$-algebra with the product
\begin{equation} \label{eq: product in Gamma G}
    (g, B)*(h, A) := \left\{\begin{matrix}
        \sigma(g, h)(gh, A) & \text{ if } B = hA, \\ 
        0 & \text{ otherwise.}
    \end{matrix}\right.
\end{equation}
We have to verify that $\mathcal{R}^{\sigma}(G)$ is associative with the product \eqref{eq: product in Gamma G}. Let $(g, A), \, (h, B)$ and $(t, C)$ be in $\mathcal{R}^{\sigma}(G)$, if $A \neq hB$ or $B \neq tC$, then $\big( (g, A) (h, B) \big) (t, C) = 0$. Indeed, observe that
\begin{align*}
    \big( (g, A) (h, B) \big) (t, C) = \left\{\begin{matrix}
        \sigma(g, h)(gh, B)(t, C) & \text{ if } A = hB, \\ 
        0 & \text{ if } A \neq hB.
    \end{matrix}\right.
\end{align*}
Hence, by the hypothesis we conclude that $\big( (g, A) (h, B) \big) (t, C) = 0$. Analogously, 
\begin{align*}
    (g, A) \big( (h, B) (t, C) \big) = \left\{\begin{matrix}
        \sigma(h, t)(g, A)(ht, C) & \text{ if } B = tC, \\ 
        0 & \text{ if } B \neq tC.
    \end{matrix}\right.
\end{align*}
Thus, if $B = tC$, then $A \neq hB$, which implies $A \neq htC$. Therefore, $(g, A) \big( (h, B) (t, C) \big) = 0$.

If $A = hB$ and $B = tC$, then $A = htC$, $\{ 1, g, gh, ght \} \subseteq gA$,
\[
    \big( (g, A) (h, B) \big) (t, C)
    = \sigma(g, h)(gh, B)(t, C)
    = \sigma(g, h)\sigma(gh, t)(ght, C)
\]
and
\[
    (g, A) \big( (h, B) (t, C) \big)
    = \sigma(h, t) (g, A)(ht, C) = \sigma(g, ht)\sigma(h, t) (ght, C).
\]
By definition, $A \in \Omega_{\sigma}$, and since $g^{-1} \in A$, we conclude that $gA \in \Omega_{\sigma}$. Hence, $\{1, g, gh, ght\} \subseteq gA$ is not a type II prohibition, and therefore $\sigma(g, h) \sigma(gh, t) = \sigma(g, ht) \sigma(h, t)$. Consequently, $\big( (g, A) (h, B) \big) (t, C) = (g, A) \big( (h, B) (t, C) \big)$. Henceforth, $\mathcal{R}^{\sigma}(G)$ is a well-defined associative algebra.
Observe that
\[
    \sum_{A \in \Omega_{\sigma}} (1, A)
\]
is the unit of $\mathcal{R}^{\sigma}(G)$. Indeed,
\[
    \left( \sum_{A \in \Omega_{\sigma}} (1, A) \right) * (g, B) = (1, gB) * (g, B) = \sigma(1, g) (g, B) = (g, B)
\]
and
\[
    (g, B) * \left( \sum_{A \in \Omega_{\sigma}} (1, A) \right)  = (g, B) (1, B)  = \sigma(g, 1)(g, B) = (g, B).
\]
Thus, $\mathcal{R}^{\sigma}(G)$ is a unital $\kappa$-algebra.
\begin{proposition}
    The map $\pi: G \to \mathcal{R}^{\sigma}(G)$ given by $\pi(g) := \sum_{A \ni g^{-1}}(g, A)$ is a partial projective $\sigma$-representation of $G$, where we tacitly assume $A \in \Omega_{\sigma}$.
\end{proposition}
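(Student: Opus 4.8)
The plan is to verify the five postulates of Definition~\ref{d: parproj} for $\pi$, and essentially everything rests on a single bookkeeping computation of a product $\pi(a)\pi(b)$, $a,b\in G$. Expanding with the multiplication rule \eqref{eq: product in Gamma G}, the term $(a,A')*(b,A)$ is nonzero only when $A'=bA$, in which case it equals $\sigma(a,b)(ab,A)$; moreover $A'=bA$ is an element of $\Omega_\sigma$ containing $a^{-1}$ exactly when $A\in\Omega_\sigma$, $b^{-1}\in A$ and $(ab)^{-1}\in A$ (here one uses stability of $\Omega_\sigma$ under $\vartheta_b$). Hence
\[
    \pi(a)\pi(b)=\sigma(a,b)\sum_{\substack{A\in\Omega_\sigma\\ \{1,b^{-1},(ab)^{-1}\}\subseteq A}}(ab,A).
\]
By Lemma~\ref{l: Omega sigma contains subset elements} the index set is nonempty precisely when $\{1,b^{-1},(ab)^{-1}\}\in\Omega_\sigma$, and Corollary~\ref{c: type I prohibition charaterization} together with the elementary identities for factor sets recalled in Remark~\ref{r: sigma properties} and in \cite{DN} give $\{1,b^{-1},(ab)^{-1}\}\in\Omega_\sigma\Leftrightarrow\sigma(ab,b^{-1})\neq0\Leftrightarrow\sigma(a,b)\neq0$. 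This yields the \emph{vanishing criterion}: $\pi(a)\pi(b)=0$ if and only if $\sigma(a,b)=0$.

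Postulates (ii) and (iii) are now immediate: (ii) is literally the vanishing criterion, and (iii) holds because $\pi(1_G)=\sum_{A\in\Omega_\sigma}(1,A)$ is the unit of $\mathcal{R}^\sigma(G)$, as computed above. For (i), if $\sigma(g,h)=0$, then $\sigma(g^{-1},gh)=0$ and $\sigma(gh,h^{-1})=0$ by Remark~\ref{r: sigma properties}, so $\pi(g^{-1})\pi(gh)=0$ and $\pi(gh)\pi(h^{-1})=0$ by the vanishing criterion (equivalently, the relevant index sets are translates of the type~$\mathrm I$ prohibition $\{1,g,gh\}$, hence empty).

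For (iv) and (v) I would iterate the computation. Applying the vanishing criterion with $(a,b)=(g^{-1},g)$ gives $\pi(g^{-1})\pi(g)=\sigma(g^{-1},g)\sum_{A\in\Omega_\sigma,\,g^{-1}\in A}(1,A)$, and multiplying on the right by $\pi(h)$ — using $(1,A)*\pi(h)=(h,h^{-1}A)$ when $h\in A$ (with $\sigma(1,h)=1$ on the relevant $A$) and $0$ otherwise — produces
\[
    \pi(g^{-1})\pi(g)\pi(h)=\sigma(g^{-1},g)\sum_{\substack{A\in\Omega_\sigma\\ g^{-1}\in A,\ h\in A}}(h,h^{-1}A),
\]
whereas the vanishing criterion applied to $\pi(g^{-1})\pi(gh)$ yields
\[
    \sigma(g,h)\,\pi(g^{-1})\pi(gh)=\sigma(g,h)\sigma(g^{-1},gh)\sum_{\substack{A'\in\Omega_\sigma\\ \{1,h^{-1},(gh)^{-1}\}\subseteq A'}}(h,A').
\]
The substitution $A'=h^{-1}A$ is a bijection of the two index sets (again by the partial action $\vartheta$) sending $(h,h^{-1}A)$ to $(h,A')$, so the two sides agree provided the scalars $\sigma(g^{-1},g)$ and $\sigma(g,h)\sigma(g^{-1},gh)$ coincide whenever the common index set is nonempty (both sides vanish otherwise). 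But a nonempty index set forces $\{1,g^{-1},h\}=\{1,g^{-1},g^{-1}g,g^{-1}gh\}$ into $\Omega_\sigma$, hence it is not a $\sigma$-prohibition of type $\mathrm{II}$, which is exactly the identity $\sigma(g^{-1},gh)\sigma(g,h)=\sigma(g^{-1},g)\sigma(1,h)$; and $\sigma(1,h)=1$ since $\{1,h\}\subseteq\{1,g^{-1},h\}\in\Omega_\sigma$ forces $\sigma(h,h^{-1})\neq0$, whence $\sigma(1,h)\neq0$, and $\sigma(1,h)\in\{0,1\}$ by Remark~\ref{r: sigma properties}. Postulate (v) is symmetric, this time expanding $\pi(h)\pi(h^{-1})$ first and invoking the type~$\mathrm{II}$ prohibition attached to the triple $(g,h,h^{-1})$ and the set $\{1,g,gh\}$.

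The only genuine difficulty is organisational: throughout one must track the requirement that every symbol $(x,A)$ occurring is an honest basis vector of $\mathcal{R}^\sigma(G)$ — that is, $A\in\Omega_\sigma$ and $x^{-1}\in A$ — and one must check that the re-indexing substitutions really identify the two index sets. This is precisely where the defining property of the spectral partial action, namely $A\in\Omega_\sigma$ and $a^{-1}\in A\Rightarrow aA\in\Omega_\sigma$, gets used repeatedly. Once the index sets have been matched, the remaining cocycle-type scalar identities are handed to us for free by the observation that a member of $\Omega_\sigma$ cannot contain a $\sigma$-prohibition of type $\mathrm{II}$.
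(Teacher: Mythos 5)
Your proposal is correct and follows essentially the same route as the paper: a direct bookkeeping computation of the products in $\mathcal{R}^{\sigma}(G)$, matching index sets via left translation and reducing the vanishing cases to Corollary~\ref{c: type I prohibition charaterization} and the scalar identities to the absence of type $\mathrm{II}$ prohibitions in members of $\Omega_{\sigma}$. The only difference is one of packaging — you isolate a ``vanishing criterion'' and check all five postulates explicitly, and you rederive the cocycle identity from the type $\mathrm{II}$ prohibition where the paper simply collapses $(g,hC)(h,C)=\sigma(g,h)(gh,C)$ using the associativity of $\mathcal{R}^{\sigma}(G)$ already established just before the proposition.
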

\begin{proof}
    Suppose that $\pi(g^{-1}) \pi(g) \pi(h) \neq 0$, then
    \begin{align*}
        \pi(g^{-1}) \pi(g) \pi(h)
        &= \left( \sum_{A \ni g} (g^{-1}, A) \right) \left( \sum_{B \ni g^{-1}} (g, B) \right) \left( \sum_{C \ni h^{-1}} (h, C) \right) \\
        &= \sum_{C \ni h^{-1}, h^{-1}g^{-1}} (g^{-1}, ghC) (g, hC) (h, C) \\
        &= \sigma(g, h) \left(\sum_{C \ni h^{-1}, h^{-1}g^{-1}} (g^{-1}, ghC) (gh, C) \right)\\
        &= \sigma(g, h) \left(\sum_{A \ni g} (g^{-1}, A)\right)  \left(\sum_{C \ni h^{-1}g^{-1}} (gh, C) \right) \\
        &= \sigma(g, h) \pi(g^{-1}) \pi(gh).
    \end{align*}
    If $\pi(g^{-1}) \pi(g) \pi(h) = 0$, then there exists no set $C \in \Omega_{\sigma}$ such that $h^{-1}, h^{-1}g^{-1} \in C$. In particular, $\{ 1, h^{-1}, h^{-1} g^{-1} \} \notin \Omega_{\sigma}$, hence $\{ 1, h, g^{-1} \} \notin \Omega_\sigma$. Thus, by Corollary~\ref{c: type I prohibition charaterization} we conclude that $\sigma(g, h) = 0$. Analogously, one verifies that $\pi(g)\pi(h)\pi(h^{-1}) = \sigma(g, h) \pi(gh)\pi(h^{-1})$.
\end{proof}

By the universal property of $\kappa_{par}^{\sigma}G$ there exists a map of $\kappa$-algebras $\psi: \kappa_{par}^{\sigma}G \to \mathcal{R}^{\sigma}(G)$ such that
\begin{equation} \label{eq: psi formula isomorphism}
    \psi([g]^{\sigma}) = \sum_{A \ni g^{-1}} (g, A).
\end{equation}
For $\xi \in \Omega_{\sigma}$ we define
\begin{equation*}
    \Upsilon_\xi:= \prod_{s \in \xi} e_{s}^{\sigma} \prod_{t \notin \xi} (1 - e_{t}^{\sigma}).
\end{equation*}

Observe that since \(\Omega_{\sigma}\) is finite, then the sets
\(\mathfrak{A}_{\sigma}:=\{ g \in G : \{1,g\} \in \Omega_{\sigma} \} \) and $\xi$ are also finite. Moreover, if \(t \notin \mathfrak{A}_{\sigma}\) then \(e_{t}^{\sigma} = 0\), which implies that \(\Upsilon_{\xi}\) is well-defined for all \(\xi \in \Omega_{\sigma}\).
Hence,

\begin{align*}
    1_{\kappa_{par}^{\sigma}G}
    &= \prod_{g \in \mathfrak{A}_{\sigma}} 1
    = \prod_{g \in \mathfrak{A}_{\sigma}} (1 - e_{g}^{\sigma}  + e_{g}^{\sigma} ) \\
    &= \sum_{S \subseteq \mathfrak{A}_{\sigma}} \left( \prod_{g \in S} e_{g}^{\sigma} \right) \left( \prod_{g \notin S} 1 - e_{g}^{\sigma} \right) \\
    &=  \sum_{\xi   \in \Omega_{\sigma} } \left( \prod_{g \in \xi} e_{g}^{\sigma} \right) \left( \prod_{g \notin \xi} 1 - e_{g}^{\sigma} \right) \\
    &= \sum_{\xi \in \Omega_{\sigma} }  \Upsilon_{\xi}, 
\end{align*}
since if \(S \subseteq \mathfrak{A}_{\sigma}\) is not in \(\Omega_{\sigma}\) then \(\prod_{g \in S} e_{g}^{\sigma} = 0\).
Hence,
\begin{equation}
    \sum_{A \in \Omega_{\sigma}} \Upsilon_{A} = 1_{\mathcal{B}^{\sigma}} = 1_{\kappa_{par}^{\sigma}G}.
    \label{eq: sum of Upsilon is unit}
\end{equation}

\begin{lemma} \label{l: psi Upsilon computations}
    The morphism $\psi$ has the following properties:
    \begin{enumerate}[(i)]
        \item $\psi(e_{g_1}^{\sigma}e_{g_2}^{\sigma} \ldots e_{g_n}^{\sigma}) = \sum_{A \ni g_1, \ldots, g_n} (1, A)$,
        \item $\psi\big((1-e_{g_1}^{\sigma})(1-e_{g_2}^{\sigma}) \ldots (1- e_{g_n}^{\sigma}) \big) = \sum_{A \niton g_1, \ldots, g_n} (1, A)$,
        \item $\psi(\Upsilon_A) = (1, A)$ for all $A \in \Omega_{\sigma}$,
        \item $\psi([g]^{\sigma}e_{h_1}^{\sigma}e_{h_2}^{\sigma} \ldots e_{h_n}^{\sigma}) = \sum_{A \ni g^{-1}, h_1, \ldots, h_n}(g, A)$ for all $g, h_1, \ldots, h_n \in G$.
        \item $\psi([g]^{\sigma}\Upsilon_A) = (g, A)$ for all $g \in G$ and $A \in \Omega_{\sigma}$ such that $g^{-1} \in A$.
    \end{enumerate}
\end{lemma}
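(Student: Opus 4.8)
The plan is to reduce all five statements to item~(i) together with the elementary multiplicative behaviour of the basis elements of $\mathcal{R}^{\sigma}(G)$. First I would record, straight from the product rule~\eqref{eq: product in Gamma G} and the identities $\sigma(1,g)=\sigma(g,1)=1$ from Remark~\ref{r: sigma properties}, the following: the elements $\{(1,A):A\in\Omega_{\sigma}\}$ are pairwise orthogonal idempotents, i.e. $(1,A)*(1,B)=\delta_{A,B}(1,A)$; moreover $(g,B)*(1,C)=\delta_{B,C}(g,C)$; and $(g,A)*(g^{-1},B)=\sigma(g,g^{-1})\,\delta_{A,\,g^{-1}B}\,(1,B)$. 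I would also note that $\psi$ is a unital homomorphism, so $\psi(1_{\kappa_{par}^{\sigma}G})=\sum_{A\in\Omega_{\sigma}}(1,A)$, the unit of $\mathcal{R}^{\sigma}(G)$.

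For~(i), I would first compute $\psi(e_{g}^{\sigma})$ for a single $g$. If $\sigma(g,g^{-1})=0$ then $e_{g}^{\sigma}=0$, while on the other hand $\{1,g\}\notin\Omega_{\sigma}$ (which, as shown earlier, is equivalent to $\sigma(g,g^{-1})=0$), so by Lemma~\ref{l: Omega sigma contains subset elements} no $A\in\Omega_{\sigma}$ contains $g$ and the sum $\sum_{A\ni g}(1,A)$ is empty as well. If $\sigma(g,g^{-1})\neq0$, I use $e_{g}^{\sigma}=\sigma(g,g^{-1})^{-1}[g]^{\sigma}[g^{-1}]^{\sigma}$, formula~\eqref{eq: psi formula isomorphism}, and the computation of $(g,A)*(g^{-1},B)$ above to get $\psi(e_{g}^{\sigma})=\sum_{A\ni g^{-1}}(1,gA)$; reindexing along the map $A\mapsto gA$, which restricts to a bijection between $\{A\in\Omega_{\sigma}:g^{-1}\in A\}$ and $\{A\in\Omega_{\sigma}:g\in A\}$ (this is the partial bijection $\vartheta_{g}$, and it keeps one inside $\Omega_{\sigma}$ because the $\sigma$-prohibitions are left-translation invariant), turns this into $\sum_{A\ni g}(1,A)$. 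Then~(i) follows by a one-line induction on $n$, multiplying out and using orthogonality of the $(1,A)$.

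For~(ii) I would subtract, $\psi(1-e_{g}^{\sigma})=\sum_{A\in\Omega_{\sigma}}(1,A)-\sum_{A\ni g}(1,A)=\sum_{A\niton g}(1,A)$, and then multiply the $n$ factors out, again using orthogonality exactly as in~(i). For~(iii) I would first observe that $\Upsilon_{A}=\prod_{s\in A}e_{s}^{\sigma}\prod_{t\notin A}(1-e_{t}^{\sigma})$ is genuinely a finite product, since the factors with $t\notin\mathfrak{A}_{\sigma}$ equal $1$ and $\Omega_{\sigma}$ is finite (being discrete), and that $A\subseteq\mathfrak{A}_{\sigma}$ because $\Omega_{\sigma}$ is an order ideal; then~(i), ~(ii) and orthogonality give $\psi(\Upsilon_{A})=\sum_{D}(1,D)$, where $D$ runs over the $D\in\Omega_{\sigma}$ with $A\subseteq D$ and $D\cap(\mathfrak{A}_{\sigma}\setminus A)=\emptyset$, and since also $D\subseteq\mathfrak{A}_{\sigma}$ this forces $D=A$, so the sum is $(1,A)$. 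Finally~(iv) and~(v) come from one more multiplication, $\psi([g]^{\sigma}e_{h_1}^{\sigma}\cdots e_{h_n}^{\sigma})=\psi([g]^{\sigma})*\psi(e_{h_1}^{\sigma}\cdots e_{h_n}^{\sigma})$, combined with $(g,B)*(1,C)=\delta_{B,C}(g,C)$ and~(i), respectively~(iii). The only slightly delicate points are the reindexing step in~(i) and checking that all the products in sight are finite; everything else is routine bookkeeping with the orthogonal idempotents $(1,A)$.
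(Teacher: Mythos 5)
Your proposal is correct and follows essentially the same route as the paper, whose proof simply declares (i) and (ii) to be direct computations and derives (iii) from (i)--(ii) and (iv)--(v) from (i), (iii) and the formula $\psi([g]^{\sigma})=\sum_{A\ni g^{-1}}(g,A)$; you have merely filled in those computations (orthogonality of the $(1,A)$, the reindexing $A\mapsto gA$, finiteness of the products) explicitly and correctly.
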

\begin{proof}
    Items $(i)$ and $(ii)$ are obtained by a direct computation. Item $(iii)$ is consequence of items $(i)$ and $(ii)$. 
    Item $(iv)$ is a consequence of item $(i)$ and equation \eqref{eq: psi formula isomorphism}.
    Finally, item $(v)$ is consequence of item $(iii)$ and equation \eqref{eq: psi formula isomorphism}.
\end{proof}

\begin{proposition}\label{prop:kpariso2}
    The map $\psi: \kappa_{par}^{\sigma}G \to \mathcal{R}^{\sigma}(G)$ is an isomorphism of algebras.
\end{proposition}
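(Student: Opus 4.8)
The plan is to use that $\psi$ is already an algebra homomorphism (being induced by the universal property of $\kappa_{par}^{\sigma}G$), so that only bijectivity remains, and to obtain bijectivity from surjectivity together with a dimension count, which is available here because the standing hypothesis forces $\Omega_{\sigma}$ to be finite.

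First I would record surjectivity. By Lemma~\ref{l: psi Upsilon computations}(v), $\psi([g]^{\sigma}\Upsilon_A) = (g,A)$ for every $A \in \Omega_{\sigma}$ and every $g \in G$ with $g^{-1} \in A$; since these pairs $(g,A)$ are precisely the chosen $\kappa$-basis of $\mathcal{R}^{\sigma}(G)$, the map $\psi$ is onto. (The elements $\Upsilon_A$ genuinely lie in $\kappa_{par}^{\sigma}G$: because $\Omega_{\sigma}$, and hence $\mathfrak{A}_{\sigma}$, is finite, the seemingly infinite product defining $\Upsilon_A$ is really a finite one, as observed just before that lemma.)

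Next I would bound $\dim_{\kappa}\kappa_{par}^{\sigma}G$ from above. By Corollary~\ref{c: spectre is discrete if the admisibles of 1 is finite}, discreteness of $\Omega_{\sigma}$ makes it finite, so all its elements are finite sets. By Remark~\ref{r: kappa generators of kparsG}, $\kappa_{par}^{\sigma}G$ is spanned over $\kappa$ by the elements $\bigl(\prod_{h \in U} e^{\sigma}_{h}\bigr)[g]^{\sigma}$ with $U \subseteq G$ finite and $1,g \in U$; whenever such an element is nonzero one has $\prod_{h \in U} e^{\sigma}_{h} \neq 0$, which by \eqref{eq: Omega sigma charaterization via kparsG} forces $U \in \Omega_{\sigma}$. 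Hence $\kappa_{par}^{\sigma}G$ is spanned by the \emph{finite} family indexed by the pairs $(U,g)$ with $U \in \Omega_{\sigma}$ and $g \in U$, so $\dim_{\kappa}\kappa_{par}^{\sigma}G \le \sum_{U \in \Omega_{\sigma}}|U|$. On the other side, for each fixed $A \in \Omega_{\sigma}$ the basis of $\mathcal{R}^{\sigma}(G)$ contributes exactly $|\{g : g^{-1}\in A\}| = |A|$ vectors, so $\dim_{\kappa}\mathcal{R}^{\sigma}(G) = \sum_{A \in \Omega_{\sigma}}|A|$. Therefore $\dim_{\kappa}\kappa_{par}^{\sigma}G \le \dim_{\kappa}\mathcal{R}^{\sigma}(G) < \infty$, and a surjective linear map between finite-dimensional spaces whose domain is no larger than its codomain is necessarily bijective; thus $\psi$ is an isomorphism of algebras.

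I do not expect a serious obstacle — the real work is already packaged in Lemma~\ref{l: psi Upsilon computations} and in the identity \eqref{eq: sum of Upsilon is unit} — but the step I would double-check is precisely the dimension estimate: one must be sure that, after discarding the vanishing elements of the spanning family of Remark~\ref{r: kappa generators of kparsG}, at most $\sum_{U\in\Omega_{\sigma}}|U|$ of them remain, which is exactly where \eqref{eq: Omega sigma charaterization via kparsG} is used. Should one prefer an explicit inverse, I would instead define the $\kappa$-linear map $\psi'\colon (g,A)\mapsto [g]^{\sigma}\Upsilon_A$ and check $\psi\circ\psi' = \mathrm{id}$ via Lemma~\ref{l: psi Upsilon computations}(v), and $\psi'\circ\psi = \mathrm{id}$ on the generators $[g]^{\sigma}$ via $\sum_{A\ni g^{-1}}\Upsilon_A = e^{\sigma}_{g^{-1}}$ and $[g]^{\sigma}e^{\sigma}_{g^{-1}} = [g]^{\sigma}$; but completing that route would require verifying that $\psi'$ is multiplicative (commuting $\Upsilon_B$ past $[h]^{\sigma}$ through the partial action $\theta^{\sigma}$), so the counting argument is the more economical one.
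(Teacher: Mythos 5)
Your argument is correct, and it diverges from the paper's proof in how injectivity is obtained. The paper defines the $\kappa$-linear map $\psi'\colon (g,A)\mapsto [g]^{\sigma}\Upsilon_{A}$ and verifies \emph{both} compositions: $\psi\psi'=\mathrm{id}$ via Lemma~\ref{l: psi Upsilon computations}(v), and $\psi'\psi=\mathrm{id}$ on the spanning set $[g]^{\sigma}e_{h_1}^{\sigma}\cdots e_{h_n}^{\sigma}$ via Lemma~\ref{l: psi Upsilon computations}(iv), the absorption $e_{h_i}^{\sigma}\Upsilon_A=\Upsilon_A$ for $h_i\in A$, and the partition of unity \eqref{eq: sum of Upsilon is unit}. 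You keep the first composition (your surjectivity step is exactly $\psi\psi'=\mathrm{id}$ on basis elements) but replace the second by a dimension count: Remark~\ref{r: kappa generators of kparsG} together with \eqref{eq: Omega sigma charaterization via kparsG} and Corollary~\ref{c: spectre is discrete if the admisibles of 1 is finite} bounds $\dim_{\kappa}\kappa_{par}^{\sigma}G$ by $\sum_{U\in\Omega_{\sigma}}|U|=\dim_{\kappa}\mathcal{R}^{\sigma}(G)$, and a surjection onto a space of at least the same finite dimension is bijective. This is a legitimate and somewhat shorter route; what it gives up is the explicit formula for $\psi^{-1}$, which the paper's computation supplies and which is the kind of information one often wants downstream. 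One small correction to your closing remark: completing the explicit-inverse route does \emph{not} require checking that $\psi'$ is multiplicative. The paper treats $\psi'$ purely as a linear map; once $\psi$ is known to be a bijective algebra homomorphism, its inverse is automatically an algebra homomorphism, so no commuting of $\Upsilon_B$ past $[h]^{\sigma}$ is ever needed. Thus the two routes are closer in cost than you suggest, though your counting argument remains perfectly valid.
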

\begin{proof}
    Define the $\kappa$-linear map $\psi': \mathcal{R}^{\sigma}(G) \to \kappa_{par}^{\sigma}G$ such that $\psi'(g, A) = [g]^{\sigma} \Upsilon_{A}$. Then, $\psi$ and $\psi'$ are mutually inverses. Indeed, by item $(v)$ of Lemma~\ref{l: psi Upsilon computations} we have that $\psi \psi' = 1_{\mathcal{R}^{\sigma}(G)}$. For the converse, by item (iv) of Lemma~\ref{l: psi Upsilon computations} we see that
    \begin{align*}
        \psi' \psi([g]^{\sigma}e_{h_1}^{\sigma} \ldots e_{h_n}^{\sigma})
        &= \psi' \left( \sum_{A \ni g^{-1}, h_1, \ldots, h_n}(g, A) \right) \\
        &= \sum_{A \ni g^{-1}, h_1, \ldots, h_n} [g]^{\sigma}\Upsilon_A \\
        &= \sum_{A \ni g^{-1}, h_1, \ldots, h_n} [g]^{\sigma}e_{h_1}^{\sigma} \ldots e_{h_n}^{\sigma}\Upsilon_A \\
        &= [g]^{\sigma}e_{h_1}^{\sigma} \ldots e_{h_n}^{\sigma} \sum_{A \in \Omega_{\sigma}}\Upsilon_A \\
        &= [g]^{\sigma}e_{h_1}^{\sigma} \ldots e_{h_n}^{\sigma},
    \end{align*}
    where the latter equality holds by equation \eqref{eq: sum of Upsilon is unit}. Hence, $\psi$ is a bijection, which yields the desired conclusion.
\end{proof}

Let $\Gamma^{\sigma}(G) := \{ r(g, U) \in \mathcal{R}^{\sigma}(G), \, r \neq 0\}$. Note that $\Gamma^{\sigma}(G)$ is a groupoid with the product inherited from $\mathcal{R}^{\sigma}(G)$, establishing that two elements $r(g, B)$ and $l(h, A)$ in $\Gamma^{\sigma}(G)$ are composable if, and only if $B = hA$, and by Remark~\ref{r: sigma properties} this is equivalent to the fact that $(g,B) \star (h, A) \neq 0$.

Let $\{ \Gamma_i^{\sigma}(G) \}_{i \in I}$ be the set of connected components of $\Gamma^{\sigma}(G)$. We define $\mathcal{R}^{\sigma}_{i}(G)$ as $\kappa$-subspace of $\mathcal{R}^{\sigma}(G)$ generated by $\Gamma_i^{\sigma}(G)$. Analogously to $\mathcal{R}^{\sigma}(G)$, one have that $\mathcal{R}^{\sigma}_{i}(G)$ is a $\kappa$-algebra. Therefore,
\begin{equation}
    \mathcal{R}^{\sigma}(G) = \bigoplus_{i} \mathcal{R}_{i}^{\sigma}(G).
    \label{eq: decomposition of groupoids algebra}
\end{equation}

Let $\mathcal{G}$ be a connected component of $\Gamma^{\sigma}(G)$. Take an enumeration $\{ A_i \}_{i=1}^{n}$ of the set of objects of $\mathcal{G}$. Fix $A := A_1$. We define the stabilizer of $A$ by $H_A := \{ g \in G : gA = A \}$.
Observe that since $H_A$ is a subgroup of $G$ and $H_{A} \subseteq A \in \Omega_{\sigma}$ we can conclude that $\sigma_{H_A}:= \sigma|_{H_A \times H_A}: H_A \times H_A \to \kappa$ is a global $2$-cocycle of $H_A$.
Furthermore, observe that the isotropy group $G_{A}$ of $A$ is $\kappa^{*} \times H_A \times \{ A \}$ (recall that the product in $G_A$ is the product inherited from $\Gamma^{\sigma}(G)$). 
Let $\kappa G_{A}$ be the subalgebra of $\mathcal{R}^{\sigma}(G)$ generated by $G_{A}$. Then, a direct computation shows that the map 
\begin{equation}
   (r, g, A) \in \kappa G_A \mapsto rg \in \kappa^{\sigma_{H_A}} H_A
    \label{eq: twisted group algebra isomorphism}
\end{equation}
is an isomorphism of algebras, where $\kappa^{\sigma_{H_A}} H_A$ is the twisted group algebra of the group $H_A$ by the $2$-cocycle $\sigma_{H_A}$. In view of the isomorphism \eqref{eq: twisted group algebra isomorphism} we identify $G_A$ with the subset $\{ rg : r \in \kappa^{*}, \, g \in H_A \}$ of $\kappa^{\sigma_{H_A}} H_A$.

For $i \in \{ 2, \ldots, n\}$ fix an arrow (morphism) $A \overset{\gamma_i}{\to} A_i$. Then, for any arrow $A_i \overset{\gamma}{\to} A_j$ we have that
\begin{equation} \label{eq: gamma decomposition}
    \gamma = \gamma_j( \gamma_j^{-1} \gamma  \gamma_i)\gamma_i^{-1}.
\end{equation}
Thus, by equation~\eqref{eq: gamma decomposition} for all $\gamma \in \mathcal{G}$ there exists a unique $z_{\gamma}:= \gamma_j^{-1} \gamma  \gamma_i \in G_A$ such that $\gamma = \gamma_j z_{\gamma}\gamma_i^{-1} $.
Denote by $\mathcal{R}_{\mathcal{G}}$ the subalgebra of $\mathcal{R}^{\sigma}(G)$ generated by $\mathcal{G}$ and define the $\kappa$-linear map
\begin{align*}
    \tau_{\mathcal{G}}: \mathcal{R}_\mathcal{G} &\to M_n(\kappa^{\sigma_{H_A}} H_A) \\
                            \gamma &\mapsto z_\gamma E_{j, i},
\end{align*}
where $E_{i,j}$ represents a matrix with zero entries everywhere except at the $(i,j)$-entry, where it equals $1$, i.e., $\{ E_{i, j} \}_{i,j = 1}^{n}$ is the canonical basis of $M_n(\kappa^{\sigma_{H_A}} H_A)$ as a left free $\kappa^{\sigma_{H_A}} H_A$-module. It is readily seen that $\tau_{\mathcal{G}}$ is $\kappa$-linear isomorphism. Moreover, let $A_i \overset{\gamma}{\to} A_j$ and $A_s \overset{\gamma'}{\to} A_t$ be in $\mathcal{G}$. Then,
\[
    \tau_{\mathcal{G}}(\gamma) \tau_{\mathcal{G}}(\gamma') = z_\gamma z_{\gamma'} \delta_{i,t} E_{j,s} = \tau_{\mathcal{G}}(\gamma \gamma'),
\]
where $\delta_{i,t}$ is the Kronecker delta. Whence, we conclude that $\tau_{\mathcal{G}}$ is an isomorphism of algebras. For any connected component $\Gamma_{i}^{\sigma}(G)$ of $\Gamma^{\sigma}(G)$ write $n_i = | \operatorname{Ob}(\Gamma_{i}^{\sigma}(G)) |$, let $H_i$ be the stabilizer of an arbitrary object of $\Gamma_{i}^{\sigma}(G)$ and $\sigma_i = \sigma|_{H_i \times H_i}$. Then, by equation~\eqref{eq: decomposition of groupoids algebra} and the aforementioned isomorphism we obtain

\begin{theorem} \label{t: groupoid algebra isomorphism}
 Assume that $\Omega_{\sigma}$ is discrete. Then   $$\kappa_{par}^{\sigma}G \cong \bigoplus_{i\in I} M_{n_i}(\kappa^{\sigma_i} H_i).$$
\end{theorem}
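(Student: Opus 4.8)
The plan is simply to concatenate three facts that are already in hand by this point of Section~\ref{sec:Another}: the isomorphism $\psi\colon\kappa_{par}^{\sigma}G\xrightarrow{\sim}\mathcal{R}^{\sigma}(G)$ of Proposition~\ref{prop:kpariso2}; the block decomposition $\mathcal{R}^{\sigma}(G)=\bigoplus_{i\in I}\mathcal{R}_{i}^{\sigma}(G)$ from \eqref{eq: decomposition of groupoids algebra}; and the matrix identification $\tau_{\mathcal{G}}$ of each block constructed just above. Before doing so I would record the (forward-referenced) point that discreteness of $\Omega_{\sigma}$ forces $\Omega_{\sigma}$ to be finite (Corollary~\ref{c: spectre is discrete if the admisibles of 1 is finite}), whence by Lemma~\ref{l: Omega sigma contains subset elements} every $\xi\in\Omega_{\sigma}$ is finite; this is precisely what makes $\mathcal{R}^{\sigma}(G)$, the elements $\Upsilon_{\xi}$, the groupoid $\Gamma^{\sigma}(G)$, the set $\mathfrak{A}_{\sigma}$, and all the constructions used in Proposition~\ref{prop:kpariso2} and afterwards legitimate.

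Next I would argue as follows. Fix a connected component $\mathcal{G}=\Gamma_{i}^{\sigma}(G)$ of $\Gamma^{\sigma}(G)$. The subalgebra $\mathcal{R}_{\mathcal{G}}$ generated by $\mathcal{G}$ coincides with the $\kappa$-span $\mathcal{R}_{i}^{\sigma}(G)$ of $\Gamma_{i}^{\sigma}(G)$, since each $r(g,A)\in\Gamma^{\sigma}(G)$ is a scalar multiple of the distinguished basis vector $(g,A)$ of $\mathcal{R}^{\sigma}(G)$; and since $(g,B)\star(h,A)$ is either $0$ or a scalar multiple of another such basis vector, with the two basis vectors composable exactly when $B=hA$, products of elements lying in different components vanish, so \eqref{eq: decomposition of groupoids algebra} is a decomposition of $\mathcal{R}^{\sigma}(G)$ into two-sided ideals $\mathcal{R}_{i}^{\sigma}(G)$. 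By the isomorphism $\tau_{\mathcal{G}}$ built above, $\mathcal{R}_{i}^{\sigma}(G)\cong M_{n_{i}}(\kappa^{\sigma_{i}}H_{i})$, where $n_{i}=|\operatorname{Ob}(\Gamma_{i}^{\sigma}(G))|$, $H_{i}$ is the stabilizer of a chosen object $A$ (a subgroup of $G$ lying inside $A\in\Omega_{\sigma}$, so $\sigma_{i}=\sigma|_{H_{i}\times H_{i}}$ is a genuine global $2$-cocycle, and $\kappa^{\sigma_{i}}H_{i}$ is the usual twisted group algebra, matched with the isotropy group $G_{A}$ by \eqref{eq: twisted group algebra isomorphism}). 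Composing gives
\[
    \kappa_{par}^{\sigma}G\ \cong\ \mathcal{R}^{\sigma}(G)\ =\ \bigoplus_{i\in I}\mathcal{R}_{i}^{\sigma}(G)\ \cong\ \bigoplus_{i\in I}M_{n_{i}}(\kappa^{\sigma_{i}}H_{i}),
\]
all maps being unital, the unit $\sum_{A\in\Omega_{\sigma}}\Upsilon_{A}$ of \eqref{eq: sum of Upsilon is unit} passing through $\sum_{A}(1,A)$ to $\bigoplus_{i}I_{n_{i}}$.

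Essentially all the substance was already discharged before the theorem, in verifying that $\tau_{\mathcal{G}}$ is multiplicative: the key inputs there were the decomposition $\gamma=\gamma_{j}z_{\gamma}\gamma_{i}^{-1}$ relative to a fixed family of connecting arrows, which converts groupoid composition into $E_{j,i}E_{s,t}=\delta_{i,s}E_{j,t}$ while tracking the scalar $z_{\gamma}z_{\gamma'}\in G_{A}$, together with the fact that $gA\in\Omega_{\sigma}$ whenever $g^{-1}\in A\in\Omega_{\sigma}$, which rules out type II prohibitions on $\{1,g,gh,ght\}$ and hence yields the cocycle identity $\sigma(g,h)\sigma(gh,t)=\sigma(g,ht)\sigma(h,t)$ along $\mathcal{G}$. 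So at the level of this theorem there is no real obstacle left; the only point worth a remark is that $n_{i}$, $H_{i}$ and the isomorphism class of $\kappa^{\sigma_{i}}H_{i}$ do not depend on the chosen base object of the component: in a connected groupoid all isotropy groups are conjugate via a connecting arrow, and conjugation by such an arrow replaces $\sigma|_{H_{A_{i}}\times H_{A_{i}}}$ by a cohomologous $2$-cocycle, leaving the twisted group algebra unchanged up to isomorphism.
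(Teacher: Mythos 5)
Your proposal is correct and follows the paper's own route exactly: the paper likewise obtains the theorem by composing the isomorphism $\psi$ of Proposition~\ref{prop:kpariso2}, the component decomposition \eqref{eq: decomposition of groupoids algebra}, and the matrix isomorphisms $\tau_{\mathcal{G}}$, with the independence of the chosen base object addressed in a subsequent remark. Nothing further is needed.
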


\begin{corollary}
    Let \(G\) a finite group and \(\sigma\) any factor set of \(G\).
    Then,
    $\kappa_{par}^{\sigma}G \cong \bigoplus_{i\in I} M_{n_i}(\kappa^{\sigma_i} H_i)$.
\end{corollary}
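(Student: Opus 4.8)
The plan is to reduce the statement to Theorem~\ref{t: groupoid algebra isomorphism} by checking its only hypothesis, namely that $\Omega_\sigma$ is discrete. Recall that $\Omega_\sigma$ was realized as a closed subspace of $\Omega = \{ C \subseteq G : 1_G \in C \}$, which carries the product topology coming from the identification $\Omega = \prod_{g \in G \setminus \{1_G\}} \{0,1\}$. When $G$ is finite this is a finite product of finite discrete spaces, so $\Omega$ itself is a finite discrete space, and hence so is its subspace $\Omega_\sigma$. This is precisely the observation recorded in Remark~\ref{r: Omega sigma is discrete when G is finite}, and it holds for \emph{any} factor set $\sigma$ of $G$ since the construction of $\Omega_\sigma$ only constrains which subsets of $G$ are admitted.

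With $\Omega_\sigma$ discrete, Theorem~\ref{t: groupoid algebra isomorphism} applies directly and gives
\[
    \kappa_{par}^{\sigma}G \cong \bigoplus_{i \in I} M_{n_i}(\kappa^{\sigma_i} H_i),
\]
where $I$ indexes the connected components of the groupoid $\Gamma^{\sigma}(G)$, $n_i = |\operatorname{Ob}(\Gamma_i^{\sigma}(G))|$, $H_i$ is the isotropy (stabilizer) group of an arbitrary object of $\Gamma_i^{\sigma}(G)$, and $\sigma_i = \sigma|_{H_i \times H_i}$ is the corresponding global $2$-cocycle. Since $\Omega_\sigma$ is now finite, $\Gamma^{\sigma}(G)$ has finitely many objects; consequently $I$ is finite and every $n_i$ is finite, so the right-hand side is an honest finite direct sum of matrix algebras over twisted group algebras of subgroups of $G$. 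This recovers, in the case $\sigma = 1$, the classical decomposition of $\kappa_{par}G$ from \cite{DEP}.

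There is essentially no real obstacle here: the entire content of the corollary is the discreteness reduction, which is immediate from the finiteness of $G$, after which one simply invokes the theorem. The only point worth stating explicitly is that finiteness of $G$ guarantees discreteness of $\Omega_\sigma$ uniformly in $\sigma$, so the hypothesis of Theorem~\ref{t: groupoid algebra isomorphism} is met without any further restriction on the factor set.
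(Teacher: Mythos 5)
Your proposal is correct and matches the paper's intended argument exactly: the corollary is stated as an immediate consequence of Theorem~\ref{t: groupoid algebra isomorphism}, with the discreteness of $\Omega_\sigma$ for finite $G$ supplied by Remark~\ref{r: Omega sigma is discrete when G is finite} (as $\Omega_\sigma$ sits inside the finite discrete space $\Omega = \prod_{g \in G \setminus \{1_G\}} \{0,1\}$). Nothing further is needed.
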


\begin{corollary} \label{c: simple twisted partial group algebra}
    $\kappa_{par}^{\sigma}G$ is simple, if and only if, $\sigma(g,1)=0$ for all $g \in G \setminus \{ 1 \}$.
\end{corollary}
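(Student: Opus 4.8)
The plan is to reduce the statement to the criterion $\dim_{\kappa}\kappa_{par}^{\sigma}G=1$ (equivalently $\kappa_{par}^{\sigma}G\cong\kappa$), and then to identify this one-dimensionality with the condition on $\sigma$. First I would record, for $g\in G$, the chain of equivalences
\[
    \{1,g\}\in\Omega_{\sigma}\iff \sigma(g,g^{-1})\neq 0\iff\sigma(g,1)\neq 0,
\]
where the first equivalence follows from \eqref{eq: Omega sigma charaterization via kparsG} together with the proposition of Section~\ref{sec:CancellativeMon} comparing $\chi(\{1,g\})$ with $\sigma(g,g^{-1})$, and the second is \cite[Corollary 6]{DN}. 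Combining this with Lemma~\ref{l: Omega sigma contains subset elements} gives
\[
    \Omega_{\sigma}=\{\{1\}\}\quad\Longleftrightarrow\quad \sigma(g,1)=0\ \text{ for all }\ g\in G\setminus\{1\},
\]
since if $\xi\in\Omega_{\sigma}$ with $\xi\neq\{1\}$ and $g\in\xi\setminus\{1\}$, then $\{1,g\}\subseteq\xi$ forces $\{1,g\}\in\Omega_{\sigma}$, hence $\sigma(g,1)\neq 0$; the converse is clear. By Theorem~\ref{t: kparsG isomorphism} the dimension $\dim_{\kappa}\kappa_{par}^{\sigma}G$ is the cardinality of $\{(U,g):U\in\Omega_{\sigma},\,|U|<\infty,\,g\in U\}$, and this set is a singleton exactly when $\Omega_{\sigma}=\{\{1\}\}$ (if $\{1,g\}\in\Omega_{\sigma}$ for some $g\neq 1$ it already contains the three distinct elements $(\{1\},1)$, $(\{1,g\},1)$, $(\{1,g\},g)$). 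Hence $\dim_{\kappa}\kappa_{par}^{\sigma}G=1$ if and only if $\sigma(g,1)=0$ for all $g\neq 1$.

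For the implication $\Leftarrow$, if $\sigma(g,1)=0$ for all $g\neq 1$ then $\kappa_{par}^{\sigma}G\cong\kappa$ by the previous paragraph, which is a simple algebra. For $\Rightarrow$, I would exhibit a proper two-sided ideal whenever the hypothesis fails. The assignment $[g]^{\sigma}\mapsto\delta_{g,1}\,1_{\kappa}$ respects the defining relations of $\kappa_{par}^{\sigma}G$: one checks directly that under it both sides of relations (i)--(iii) vanish unless every group element occurring equals $1_{G}$, in which case all the factor set values that appear are $1$ by Remark~\ref{r: sigma properties}, while relation (iv) is immediate. (Equivalently, this map is the composite of the isomorphism $\phi$ of Theorem~\ref{t: kparsG isomorphism} with the $\kappa$-linear functional on $\mathcal{A}_{\sigma}$ extracting the coefficient of $(\{1\},1)$, which is multiplicative by the rule \eqref{eq: product in S sigma}.) Thus we obtain a unital, hence surjective, homomorphism $\epsilon\colon\kappa_{par}^{\sigma}G\to\kappa$, so $\ker\epsilon$ is a two-sided ideal different from $\kappa_{par}^{\sigma}G$. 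If $\kappa_{par}^{\sigma}G$ is simple this forces $\ker\epsilon=0$, hence $\dim_{\kappa}\kappa_{par}^{\sigma}G=1$, and by the first paragraph $\sigma(g,1)=0$ for all $g\in G\setminus\{1\}$.

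The only non-routine point is the construction of the augmentation $\epsilon$ -- equivalently, checking multiplicativity of the coefficient functional on $\mathcal{A}_{\sigma}$; everything else is bookkeeping with results from Sections~\ref{sec:Spectral}--\ref{sec:CancellativeMon}. When $\Omega_{\sigma}$ is discrete the corollary can alternatively be read off Theorem~\ref{t: groupoid algebra isomorphism}: the connected component of the object $\{1\}$ of $\Gamma^{\sigma}(G)$ reduces to $\{1\}$ alone with isotropy group $\kappa^{*}$, so it contributes a summand $\kappa$ to the decomposition $\bigoplus_{i\in I}M_{n_{i}}(\kappa^{\sigma_{i}}H_{i})$, and that direct sum is simple if and only if it has no further summand, i.e. if and only if $\Omega_{\sigma}=\{\{1\}\}$.
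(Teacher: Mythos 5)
Your proposal is correct, but it takes a genuinely different route from the paper's. The paper proves this corollary under the standing assumption of Section~\ref{sec:Another} that $\Omega_{\sigma}$ is discrete, reading the statement off the decomposition of Theorem~\ref{t: groupoid algebra isomorphism}: the object $\{1\}$ always contributes a direct summand $\kappa$, and if $\sigma(g,1)=1$ for some $g\neq 1$ then the objects $\{1,g\}$ and $\{1,g^{-1}\}$ span a second connected component of $\Gamma^{\sigma}(G)$, hence a second nonzero ideal --- essentially the alternative you sketch in your final paragraph. Your main argument instead constructs the augmentation $\epsilon([g]^{\sigma})=\delta_{g,1}$ (equivalently, the $(\{1\},1)$-coefficient functional on $\mathcal{A}_{\sigma}$, whose multiplicativity follows from the product rule \eqref{eq: product in S sigma}), so that simplicity forces $\ker\epsilon=0$ and hence $\dim_{\kappa}\kappa_{par}^{\sigma}G=1$; you then correctly identify one-dimensionality with $\Omega_{\sigma}=\{\{1\}\}$ and, via Lemma~\ref{l: Omega sigma contains subset elements} and the equivalence $\{1,g\}\in\Omega_{\sigma}\Leftrightarrow\sigma(g,1)\neq 0$, with the stated condition on $\sigma$. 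What your approach buys is generality: it uses only Theorem~\ref{t: kparsG isomorphism} and material from Sections~\ref{sec:Spectral}--\ref{sec:CancellativeMon}, so it establishes the corollary for an arbitrary factor set, whereas the paper's argument is justified only under the section's discreteness hypothesis (which, in the forward direction, is not implied by the failure of the condition on $\sigma$). Your verification that $\epsilon$ annihilates the defining relations is routine but sound; note in particular that in relation (i) the hypothesis $\sigma(g,h)=0$ already rules out $g=h=1$, so both monomials map to $0$.
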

\begin{proof}
    It is immediate that if $\sigma(g,1)=0$ for all $g \in G \setminus \{ 1 \}$ then $\kappa_{par}^{\sigma}G = \kappa$. On the other hand, notice that $\kappa$ is always a direct summand of $\kappa_{par}^{\sigma}G$ associated to the trivial connected component of $(1, \{ 1 \})$. Now, suppose that there exists $g \in G \setminus \{ 1 \}$ such that $\sigma(g,1) = 1$. Then, $\{ r(g, \{ 1, g^{-1} \}), \, l(g^{-1}, \{ 1, g \}) : r,l \in \kappa \setminus \{ 0 \} \}$ is a connected component of $\Gamma^{\sigma}(G)$. Hence, by Theorem~\ref{t: groupoid algebra isomorphism} we obtain another non-zero ideal, different from the ideal associated to $(1, \{ 1 \})$.
\end{proof}

\begin{remark}
    It is important to note that the condition $\sigma(g,1)=0$ for all $g \in G \setminus \{ 1 \}$ in Corollary~\ref{c: simple twisted partial group algebra} implies that $\sigma(g,h)=0$ for all $g,h \in G$ such that $g \neq 1$ or $g \neq 1$. Indeed, if $\sigma(g,1) = 0$ then $\{ 1, g, h^{-1} \} \notin \Omega_{\sigma}$, thus by Corollary~\ref{c: basic zeros of B sigma depends only on sigma} we conclude that $\sigma(g,h)=0$.
\end{remark}

\begin{remark}
    Note that the construction of the isomorphism $\tau_{\mathcal{G}}$ may depend on the choice of the object $A$. Let $A, B \in \operatorname{Ob}(\mathcal{G})$ such that $A \neq B$. Then, there exists $g \in G$ such that $B = gA$. Thus, $H_B = g H_A g^{-1}$ and the map
    \[
       h \in \kappa^{\sigma_{H_A}} H_A \mapsto g \cdot h \cdot g^{-1} \in \kappa^{\sigma_{H_B}} H_B
    \]
    is an isomorphism of algebras, where $\cdot$ stands for the product in $\kappa^{\sigma_{H_B}}H_B$. Therefore, the direct summands in Theorem~\ref{t: groupoid algebra isomorphism} do not depend (up to isomorphism) on the choice of the fixed object in the connected components of $\Gamma^{\sigma}(G)$.
\end{remark}

\begin{remark}
    Let $H$ be a subgroup of $G$ such that $\sigma|_{H \times H}$ is a global $2$-cocycle. Then, $H \in \Omega_{\sigma}$ and the connected component containing the object $H$ consists of the single object $H$ with isotropy group $\kappa^{*} \times H$.
\end{remark}

\section[Invariance of type II prohibitions]{\(S_4\)-Invariance of type II \(\sigma\)-prohibitions}\label{sec:S4-Invariance}

{\it In this section   $\kappa$ will be an  algebraically closed field.} Then by \cite[Proposition 4.5]{Dokuchaev-Jerez:twisted:2024} we can assume, without loss of generality, that $\sigma(g, g^{-1}) \in \{ 0, 1 \}$ for all $g \in G$, and consequently 
\begin{equation} \label{eq: sigma involution formula}
    \sigma(x, y)^{-1} = \sigma(y^{-1}, x^{-1})
\end{equation}
if $\sigma(x,y) \neq 0$, and
\begin{equation} \label{eq: C3 Orbit invariance}
    \sigma(x, y) = \sigma(y, y^{-1} x^{-1}) = \sigma(y^{-1} x^{-1}, x).
\end{equation}
Define the map $\partial(\sigma): G^{3} \to \kappa$ such that
\[
    \partial(\sigma)(x,y,z):= \sigma(x,y) \sigma(xy, z) \sigma(z^{-1} y^{-1}, x^{-1}) \sigma(z^{-1}, y^{-1}).
\]
Note that if $\partial(\sigma)(x,y,z) \neq 0$ then
\[
    \sigma(x,y)\sigma(xy, z) = \sigma(x, yz) \sigma(y,z) \Leftrightarrow \partial(\sigma)(x,y,z)=1.
\]

\begin{lemma}
    Let \(\sigma \in pm(G)\) as above and \(x,y,z \in G\) such that \(\partial(\sigma)(x,y,z) \neq 0 \).
    Then, $\partial(\sigma)(x, y, z) = 1$ if $x = 1$ or $y = 1$ or $z = 1$.
\end{lemma}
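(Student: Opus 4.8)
The plan is to handle the three cases \(x = 1\), \(y = 1\), \(z = 1\) separately, in each case substituting directly into the defining formula
\[
    \partial(\sigma)(x,y,z) = \sigma(x,y)\,\sigma(xy,z)\,\sigma(z^{-1}y^{-1},x^{-1})\,\sigma(z^{-1},y^{-1}).
\]
The observation that drives all three cases is the same: whenever one of \(x,y,z\) equals \(1\), exactly two of the four factors above become of the form \(\sigma(g,1)\) or \(\sigma(1,g)\), which by Remark~\ref{r: sigma properties} take values in \(\{0,1\}\). Since \(\partial(\sigma)(x,y,z)\neq 0\) by hypothesis, none of the four factors can vanish, so these two \(\{0,1\}\)-valued factors are both equal to \(1\) and can be discarded.

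First I would do the case \(x=1\). Then \(xy = y\), \(z^{-1}y^{-1}x^{-1} = z^{-1}y^{-1}\), and the formula becomes \(\sigma(1,y)\,\sigma(y,z)\,\sigma(z^{-1}y^{-1},1)\,\sigma(z^{-1},y^{-1})\); the first and third factors equal \(1\) as explained, leaving \(\sigma(y,z)\,\sigma(z^{-1},y^{-1})\). Since \(\sigma(y,z)\neq 0\), the inversion formula \eqref{eq: sigma involution formula} gives \(\sigma(z^{-1},y^{-1}) = \sigma(y,z)^{-1}\), so the product is \(1\). The case \(z=1\) is analogous: the formula reduces to \(\sigma(x,y)\,\sigma(xy,1)\,\sigma(y^{-1},x^{-1})\,\sigma(1,y^{-1})\), the middle-right \(\{0,1\}\) factors disappear, and \(\sigma(x,y)\,\sigma(y^{-1},x^{-1}) = 1\) again by \eqref{eq: sigma involution formula} (using \(\sigma(x,y)\neq 0\)). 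For \(y=1\) one gets \(\sigma(x,1)\,\sigma(x,z)\,\sigma(z^{-1},x^{-1})\,\sigma(z^{-1},1)\), the first and last factors equal \(1\), and \(\sigma(x,z)\,\sigma(z^{-1},x^{-1}) = 1\) by \eqref{eq: sigma involution formula}.

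I do not expect any genuine obstacle here; the only thing requiring care is the routine bookkeeping of which group elements collapse to \(1\) in each substitution, and the repeated (but legitimate) use of the nonvanishing of \(\partial(\sigma)(x,y,z)\) to justify both that the \(\{0,1\}\)-valued factors equal \(1\) and that the surviving \(\sigma\)-values are invertible so that \eqref{eq: sigma involution formula} applies.
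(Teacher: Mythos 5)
Your proposal is correct and follows essentially the same route as the paper: substitute into the defining formula of \(\partial(\sigma)\), use that the factors of the form \(\sigma(1,g)\) or \(\sigma(g,1)\) lie in \(\{0,1\}\) and hence equal \(1\) under the nonvanishing hypothesis, and cancel the remaining pair via \eqref{eq: sigma involution formula}. The paper only writes out the case \(x=1\) and declares the others analogous, whereas you carry out all three substitutions explicitly; the content is identical.
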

\begin{proof}
    Observe that
    \[
        \partial(\sigma)(1, y, z) = \sigma(1,y) \sigma(y, z) \sigma(z^{-1} y^{-1}, 1) \sigma(z^{-1}, y^{-1}) = \sigma(y, z)\sigma(z^{-1}, y^{-1}) = 1.
    \]
    The other cases are analogous.
\end{proof}

We define the following equivalence relation in $G^{3}$, we say that $(x,y,z) \sim_{S_4} (a,b,c)$ if, and only if, there exist $g \in G$ such that $\{ 1, x, xy, xyz \} = \{ g, ga, gab, gabc \}$. 
Indeed, this relation is obviously reflexive and symmetric. For transitivity, observe that if $(x,y,z) \sim_{S_4} (a,b,c)$ and $(a,b,c) \sim_{S_4} (u,v,w)$, then there exist $g,h \in G$ such that
\[
   \{ 1, x, xy, xyz \} = \{ g, ga, gab, gabc \} \text{ and } \{ 1, a, ab, abc \} =  \{ h, hu, huv, huvw \}, 
\]
but this implies that $\{ 1, x, xy, xyz \} = \{ gh, ghu, ghuv, ghuvw \}$ and therefore $(x,y,z) \sim_{S_4} (u,v,w)$.

Fix an arbitrary element $(x,y,z) \in G^{3}$.
We want to determine the class of $(x,y,z)$ under $\sim_{S_4}$.
If $(a,b,c) \sim_{S_4} (x,y,z)$, then there exist $g \in G$ such that $(g, ga, gab, gabc) = (v_0, v_1, v_2, v_3)$, where $(v_0, v_1, v_2, v_3)$ is some permutation of the tuple $(1, x, xy, xyz)$.
Thus, we conclude that
\begin{equation} \label{eq: solveFor} 
    g = v_0, \, a = v_0^{-1}v_1, \, b = v_1^{-1}v_2, \, c = v_2^{-1}v_3.
\end{equation}
Then, we have to analyze a total of $24$ cases each one corresponding to one permutation of $(1, x, xy, xyz)$.
We have to compute \eqref{eq: solveFor} for each permutation of $(1, x, xy, xyz)$.
The following table shows the computations in the following form:
\[
    \Big(\text{permutation of } (1, x, xy, xyz) \Big) \to \Big(\text{ corresponding solution }(g, a, b, c) \Big)
\]
\begin{minipage}{0.4\textwidth}
    \begin{align*}
        (1, x, xy, xyz)  & \to (1, x, y, z) \\
        (1, x, xyz, xy)  & \to (1, x, yz, z^{-1}) \\
        (1, xy, x, xyz)  & \to (1, xy, y^{-1}, yz) \\
        (1, xy, xyz, x)  & \to (1, xy, z, z^{-1}y^{-1}) \\
        (1, xyz, x, xy)  & \to (1, xyz, z^{-1}y^{-1}, y) \\
        (1, xyz, xy, x)  & \to (1, xyz, z^{-1}, y^{-1}) \\
        (x, 1, xy, xyz)  & \to (x, x^{-1}, xy, z) \\
        (x, 1, xyz, xy)  & \to (x, x^{-1}, xyz, z^{-1}) \\
        (x, xy, 1, xyz)  & \to (x, y, y^{-1}x^{-1}, xyz) \\
        (x, xy, xyz, 1)  & \to (x, y, z, z^{-1}y^{-1}x^{-1}) \\
        (x, xyz, 1, xy)  & \to (x, yz, z^{-1}y^{-1}x^{-1}, xy) \\
        (x, xyz, xy, 1)  & \to (x, yz, z^{-1}, y^{-1}x^{-1})
    \end{align*}
\end{minipage}
\begin{minipage}{0.5\textwidth}
    \begin{align*}            
        (xy, 1, x, xyz)  & \to (xy, y^{-1}x^{-1}, x, yz) \\
        (xy, 1, xyz, x)  & \to (xy, y^{-1}x^{-1}, xyz, z^{-1}y^{-1}) \\
        (xy, x, 1, xyz)  & \to (xy, y^{-1}, x^{-1}, xyz) \\
        (xy, x, xyz, 1)  & \to (xy, y^{-1}, yz, z^{-1}y^{-1}x^{-1}) \\
        (xy, xyz, 1, x)  & \to (xy, z, z^{-1}y^{-1}x^{-1}, x) \\
        (xy, xyz, x, 1)  & \to (xy, z, z^{-1}y^{-1}, x^{-1}) \\
        (xyz, 1, x, xy)  & \to (xyz, z^{-1}y^{-1}x^{-1}, x, y) \\
        (xyz, 1, xy, x)  & \to (xyz, z^{-1}y^{-1}x^{-1}, xy, y^{-1}) \\
        (xyz, x, 1, xy)  & \to (xyz, z^{-1}y^{-1}, x^{-1}, xy) \\
        (xyz, x, xy, 1)  & \to (xyz, z^{-1}y^{-1}, y, y^{-1}x^{-1}) \\
        (xyz, xy, 1, x)  & \to (xyz, z^{-1}, y^{-1}x^{-1}, x) \\
        (xyz, xy, x, 1)  & \to (xyz, z^{-1}, y^{-1}, x^{-1})
    \end{align*}
\end{minipage}

Therefore, the equivalence class of an arbitrary element $(x, y, z) \in G^{3}$ is
\begin{align*}
    \overline{(x, y, z)} = \Big\{ 
    & (x, y, z),
    (x, yz, z^{-1}), 
    (xy, y^{-1}, yz), \\ 
    &(xy, z, z^{-1}y^{-1}), 
    (xyz, z^{-1}y^{-1}, y), 
    (xyz, z^{-1}, y^{-1}), \\ 
    &(x^{-1}, xy, z),
    (x^{-1}, xyz, z^{-1}), 
    (y, y^{-1}x^{-1}, xyz), \\
    &(y, z, z^{-1}y^{-1}x^{-1}), 
    (yz, z^{-1}y^{-1}x^{-1}, xy), 
    (yz, z^{-1}, y^{-1}x^{-1}) \\
    &(y^{-1}x^{-1}, x, yz), 
    (y^{-1}x^{-1}, xyz, z^{-1}y^{-1}),
    (y^{-1}, x^{-1}, xyz), \\
    &(y^{-1}, yz, z^{-1}y^{-1}x^{-1}), 
    (z, z^{-1}y^{-1}x^{-1}, x), 
    (z, z^{-1}y^{-1}, x^{-1}), \\
    &(z^{-1}y^{-1}x^{-1}, x, y), 
    (z^{-1}y^{-1}x^{-1}, xy, y^{-1}), 
    (z^{-1}y^{-1}, x^{-1}, xy), \\
    &(z^{-1}y^{-1}, y, y^{-1}x^{-1}), 
    (z^{-1}, y^{-1}x^{-1}, x), 
    (z^{-1}, y^{-1}, x^{-1})                
    \Big\}.
\end{align*}
Furthermore, we can define a map $\triangleright : G^{3} \times S_{4} \to G^{3}$ such that
\[
   \gamma \triangleright (x,y,z) := (a,b,c)
\]
where $(a,b,c)$ is given by \eqref{eq: solveFor} for the permutation $\gamma$. That is, $(v_0, v_1, v_2, v_3) = (u_{\gamma^{-1}(0)}, u_{\gamma^{-1}(1)}, u_{\gamma^{-1}(2)}, u_{\gamma^{-1}(3)})$, where $(u_0, u_1, u_2, u_3) = (1, x, xy, xyz)$.
Therefore, we obtain the following table for the function~$\triangleright$:

\begin{minipage}{0.5\textwidth}
    \begin{align*}
        (0)          \triangleright X  & =  (x, y, z) \\
        (2, 3)       \triangleright X  & =  (x, yz, z^{-1}) \\
        (1, 2)       \triangleright X  & =  (xy, y^{-1}, yz) \\
        (1, 3, 2)    \triangleright X  & =  (xy, z, z^{-1}y^{-1}) \\
        (1, 2, 3)    \triangleright X  & =  (xyz, z^{-1}y^{-1}, y) \\
        (1, 3)       \triangleright X  & =  (xyz, z^{-1}, y^{-1}) \\
        (0, 1)       \triangleright X  & =  (x^{-1}, xy, z) \\
        (0, 1)(2, 3) \triangleright X  & =  (x^{-1}, xyz, z^{-1}) \\
        (0, 2, 1)    \triangleright X  & =  (y, y^{-1}x^{-1}, xyz) \\
        (0, 3, 2, 1) \triangleright X  & =  (y, z, z^{-1}y^{-1}x^{-1}) \\
        (0, 2, 3, 1) \triangleright X  & =  (yz, z^{-1}y^{-1}x^{-1}, xy) \\
        (0, 3, 1)    \triangleright X  & =  (yz, z^{-1}, y^{-1}x^{-1})
    \end{align*}
\end{minipage}
\begin{minipage}{0.5\textwidth}
    \begin{align*}
        (0, 1, 2)    \triangleright X  & =  (y^{-1}x^{-1}, x, yz) \\
        (0, 1, 3, 2) \triangleright X  & =  (y^{-1}x^{-1}, xyz, z^{-1}y^{-1}) \\
        (0, 2)       \triangleright X  & =  (y^{-1}, x^{-1}, xyz) \\
        (0, 3, 2)    \triangleright X  & =  (y^{-1}, yz, z^{-1}y^{-1}x^{-1}) \\
        (0, 2)(1, 3) \triangleright X  & =  (z, z^{-1}y^{-1}x^{-1}, x) \\
        (0, 3, 1, 2) \triangleright X  & =  (z, z^{-1}y^{-1}, x^{-1}) \\
        (0, 1, 2, 3) \triangleright X  & =  (z^{-1}y^{-1}x^{-1}, x, y) \\
        (0, 1, 3)    \triangleright X  & =  (z^{-1}y^{-1}x^{-1}, xy, y^{-1}) \\
        (0, 2, 3)    \triangleright X  & =  (z^{-1}y^{-1}, x^{-1}, xy) \\
        (0, 3)       \triangleright X  & =  (z^{-1}y^{-1}, y, y^{-1}x^{-1}) \\
        (0, 2, 1, 3) \triangleright X  & =  (z^{-1}, y^{-1}x^{-1}, x) \\
        (0, 3)(1, 2) \triangleright X  & =  (z^{-1}, y^{-1}, x^{-1}).                
    \end{align*}
\end{minipage}
To verify that $\triangleright$ is a group action of $S_4$ on $G^{3}$ it is enough to check it on the basic products of some generator set, for example take the generators $(0, 1, 2, 3)$ and $(1, 2)$. Hence, observe that
\begin{align*}
    &(0,1,2,3) \triangleright (0,1,2,3) \triangleright X= (0,1,2,3) \triangleright (z^{-1}y^{-1}x^{-1}, x, y) =  (z, z^{-1}y^{-1}x^{-1}, x)\\
    &(0, 1, 2, 3)(0,1,2,3) \triangleright X = (0,2)(1,3) \triangleright X = (z, z^{-1}y^{-1}x^{-1}, x).
\end{align*}

\begin{align*}
    &(1,2) \triangleright (0,1,2,3) \triangleright X = (1,2) \triangleright (z^{-1}y^{-1}x^{-1}, x, y) =  (z^{-1}y^{-1}, x^{-1}, xy)\\
    &(1, 2)(0,1,2,3) \triangleright X = (0,2,3) \triangleright X = (z^{-1}y^{-1}, x^{-1}, xy).
\end{align*}

\begin{align*}
    &(1,2) \triangleright (1,2) \triangleright X = (1,2) \triangleright (xy, y^{-1}, yz) =  (x, y, z)\\
    &(1, 2)(1,2) \triangleright X = (0) \triangleright X = X.
\end{align*}

\begin{align*}
    &(0,1,2,3) \triangleright (1,2) \triangleright X= (0,1,2,3) \triangleright (xy, y^{-1}, yz) =  (z^{-1}y^{-1}x^{-1}, xy, y^{-1})\\
    &(0, 1, 2, 3)(0,1,2,3) \triangleright X = (0,1,3) \triangleright X = (z^{-1}y^{-1}x^{-1}, xy, y^{-1}).
\end{align*}
Thus, we have an action of $S_4$ on $G^{3}$. Moreover, the relation $\sim_{S_4}$ corresponds to the relation determined by the action $\triangleright$.

\begin{proposition}\label{prop:S4-action on delta}
   Assume that the field  $\kappa $ is algebraically closed and let $A_4$ be the alternating subgroup of $S_4$. Then,
    \begin{equation}
       \partial(\sigma)(\gamma \triangleright (x,y,z)) = \partial(\sigma)(x,y,z) \text{ for all } \gamma \in A_4.
        \label{eq: A4 invariance}
    \end{equation}
    Furthermore, $\partial(\sigma)(x,y,z) = 0$ if and only if $\partial(\sigma)((0,2) \triangleright (x,y,z)) = 0$. Moreover, if $\partial(\sigma)(x,y,z) \neq 0$ then
    \[
        \partial(\sigma)((0,2) \triangleright (x,y,z)) = \partial(\sigma)(x,y,z)^{-1}.
    \]
\end{proposition}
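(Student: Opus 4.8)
The plan is to exploit the ``tetrahedral'' nature of $\partial(\s)$. For an ordered triple $(p,q,r)\in G^3$ put $\mu(p,q,r):=\s(p\m q,\, q\m r)$. I would first record three elementary properties of $\mu$: (a) left invariance, $\mu(gp,gq,gr)=\mu(p,q,r)$, which is immediate from the definition; (b) cyclic invariance, $\mu(q,r,p)=\mu(p,q,r)$, which follows from \eqref{eq: C3 Orbit invariance} applied with $s=p\m q$, $t=q\m r$ (noting $t\m s\m=r\m p$); and (c) the transposition law: if $\mu(p,q,r)\neq 0$ then $\mu(p,r,q)=\mu(p,q,r)\m$, which comes from \eqref{eq: sigma involution formula} together with (b), while on the zero sets $\mu(p,q,r)=0\iff\mu(\pi\cdot(p,q,r))=0$ for every $\pi\in S_3$, using Remark~\ref{r: sigma properties}(v) and (b). In particular $\mu(\pi\cdot(p,q,r))=\mu(p,q,r)^{\operatorname{sgn}\pi}$ whenever the value is nonzero, and the zero set of $\mu$ is $S_3$-invariant.

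Next I would rewrite $\partial(\s)$. Setting $u_0=1$, $u_1=x$, $u_2=xy$, $u_3=xyz$, a direct check gives $\s(x,y)=\mu(u_0,u_1,u_2)$, $\s(xy,z)=\mu(u_0,u_2,u_3)$, $\s(z\m y\m,x\m)=\mu(u_3,u_1,u_0)$ and $\s(z\m,y\m)=\mu(u_3,u_2,u_1)$, so that, using (b) and (c) to bring each face into increasing order,
\[
    \partial(\s)(x,y,z)=\nu(012)\,\nu(023)\,\nu(013)\m\,\nu(123)\m ,
\]
where $\nu(ijk):=\mu(u_i,u_j,u_k)$ for $i<j<k$; this is read in $\kappa$, with the convention that a side of the product is $0$ as soon as the corresponding $\nu$ is $0$. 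Thus $\partial(\s)(x,y,z)$ is, up to the obvious sign convention, the simplicial coboundary on the tetrahedron with vertices $u_0,u_1,u_2,u_3$, and $\partial(\s)(x,y,z)=0$ iff one of the four numbers $\nu(012),\nu(023),\nu(013),\nu(123)$ is $0$.

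Now observe that, by the very definition of $\triangleright$, applying $\gamma\in S_4$ replaces the ordered tuple $(u_0,u_1,u_2,u_3)$ by a reordering of a left translate of it, namely by $v_0\m(v_0,v_1,v_2,v_3)$ with $v_i=u_{\gamma\m(i)}$; hence the new face value $\nu'(ijk)$ equals $\mu(u_{\gamma\m(i)},u_{\gamma\m(j)},u_{\gamma\m(k)})$ by (a), which by (b)--(c) equals $\nu(\gamma\m\{i,j,k\})^{\pm1}$, the sign recording the parity of the reordering. Since the zero set of $\mu$ is $S_3$-invariant, the zero set of $\partial(\s)$ is $S_4$-invariant; this already yields the ``iff'' statement of the proposition and the degenerate cases of \eqref{eq: A4 invariance}. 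For the remaining, nonzero, cases I would reduce to generators, using $A_4=\langle(0,1,2),(1,2,3)\rangle$ and $S_4=\langle A_4,(0,2)\rangle$: since $\triangleright$ is a group action, invariance (resp. inversion) under a generating set propagates to the whole group on the locus $\partial(\s)\neq 0$ (the $S_4$-invariance of the zero set letting one pass to inverses of generators). Finally I would carry out the three generator checks from the explicit table for $\triangleright$ and the bookkeeping above: for $\gamma=(0,1,2)$ and $\gamma=(1,2,3)$ the four faces get permuted and reoriented so that the product $\nu'(012)\nu'(023)\nu'(013)\m\nu'(123)\m$ equals $\nu(012)\nu(023)\nu(013)\m\nu(123)\m$ (the reorientation signs cancel because $\gamma$ is even), giving \eqref{eq: A4 invariance}; for the odd permutation $\gamma=(0,2)$ the faces $\{0,1,2\}$ and $\{0,2,3\}$ are fixed but reoriented with odd parity, while $\{0,1,3\}$ and $\{1,2,3\}$ are interchanged and reoriented with odd parity, so every factor is inverted and $\partial(\s)((0,2)\triangleright(x,y,z))=\partial(\s)(x,y,z)\m$.

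The main obstacle is the orientation/sign bookkeeping in this last step: for each of the four faces and each generator one must track which face of the original tetrahedron it becomes and with which parity, and verify that the four $\pm1$ exponents conspire correctly --- which they do, precisely because the coboundary formula is alternating and the chosen generators are even, resp. odd. Everything else is routine manipulation of \eqref{eq: sigma involution formula}, \eqref{eq: C3 Orbit invariance} and Remark~\ref{r: sigma properties}, valid under the standing normalization $\s(g,g\m)\in\{0,1\}$.
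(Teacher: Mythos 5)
Your proposal is correct and follows essentially the same route as the paper: both arguments rest on the left-translation invariance together with the cyclic identity \eqref{eq: C3 Orbit invariance} and the involution formula \eqref{eq: sigma involution formula}, establish $S_4$-invariance of the zero set first, and then reduce the nonzero case to a check on generators of $A_4$ plus one odd transposition. Your function $\mu$ and the alternating-cochain/face-orientation bookkeeping is exactly the content of the paper's table \eqref{eq: s3-table} (whose columns are the cyclic orbits of the four factors), just packaged in simplicial language.
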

\begin{proof}
    We construct the following table, placing the factors of $\partial(\sigma)(x,y,z)$ in the first row.
    In the columns, we list the invariant elements corresponding to each factor of $\partial(\sigma)(x,y,z)$, as determined by \eqref{eq: C3 Orbit invariance}.
    \begin{equation} \label{eq: s3-table}
        \begin{matrix}
            \sigma(x, y)              & \sigma(xy, z)                 &  \sigma(z^{-1}y^{-1}, x^{-1})& \sigma(z^{-1}, y^{-1}) \\ 
            \sigma(y, y^{-1} x^{-1})  & \sigma(z, z^{-1}y^{-1}x^{-1}) &  \sigma(xyz, z^{-1}y^{-1})   & \sigma(yz, z^{-1})\\ 
            \sigma(y^{-1} x^{-1}, x)  & \sigma(z^{-1}y^{-1}x^{-1}, xy)&  \sigma(x^{-1}, xyz)         & \sigma(y^{-1}, yz)
        \end{matrix}
    \end{equation}
    Note that we can express $\partial(\sigma)(x, y, z)$ as any product involving one element from each column. Explicitly, $\partial(\sigma)(x, y, z) = A \cdot B \cdot C \cdot D$, where $A$ is in the first column, $B$ is in the second column, $C$ is in the third column, and $D$ is in the fourth column. Thus, for the first part of the proposition it is enough to check the equality \eqref{eq: A4 invariance} in some generators of $A_4$, for example, take $(0,1,2)$ and $(0,3)(1,2)$. Thus,
    \begin{align*}
        \partial(\sigma)((0,1,2) \triangleright (x, y, z)) 
        &= \partial(\sigma)(y^{-1}x^{-1}, x, yz)  \\
        &= \sigma(y^{-1}x^{-1}, x) \sigma(y^{-1}, yz) \sigma(z^{-1}y^{-1}x^{-1}, xy) \sigma(z^{-1}y^{-1}, x^{-1})
    \end{align*}
    \begin{align*}
        \partial(\sigma)((0, 3)(1, 2) \triangleright (x, y, z)) 
        &= \partial(\sigma)(z^{-1}, y^{-1}, x^{-1})  \\
        &= \sigma(z^{-1}, y^{-1}) \sigma(z^{-1}y^{-1}, x^{-1}) \sigma(xy, z)\sigma(x, y),
    \end{align*}
    we can easily verify that both of the above cases can be obtained from table \eqref{eq: s3-table}.
    For the second part note that
    \[
        \partial(\sigma)((0, 2) \triangleright (x, y, z)) = \partial(\sigma)(xy, y^{-1}, yz)   =    \sigma(xy, y^{-1}) \sigma(x, yz) \sigma(z^{-1}, y^{-1}x^{-1}) \sigma(z^{-1}y^{-1}, y),
    \]
    observe that each factor of $\partial(\sigma)(xy, y^{-1}, yz)$ is equal to $\sigma(b^{-1}, a^{-1})$, where $\sigma(a,b)$ is in table \eqref{eq: s3-table}. Moreover, the corresponding elements are in different columns.
    Therefore, (v) of Remark~\ref{r: sigma properties} and \eqref{eq: sigma involution formula} yield the desired conclusion. 
\end{proof}

\begin{corollary} \label{c: Q s4-invariance}
    Assume that the field  $\kappa $ is algebraically closed and let $(x, y, z) \in G^{3}$. Then $\partial(\sigma)(x, y, z) = 0$ if and only if $\partial(\sigma)(\gamma \triangleright (x, y, z)) = 0$ for all $\gamma \in S_4$. Furthermore, $\partial(\sigma)(x,y,z) = 1$ if and only if $\partial(\sigma)(\gamma \triangleright (x, y, z)) = 1$ for all $\gamma \in S_4$.
\end{corollary}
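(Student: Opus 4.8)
The plan is to read this corollary directly off Proposition~\ref{prop:S4-action on delta}, using only the standard fact that the alternating group $A_4$ has index $2$ in $S_4$. Fix the odd permutation $\tau := (0,2)$, so that $S_4 = A_4 \sqcup A_4 \tau$ and $\tau^2 = \mathrm{id}$. Since $\triangleright$ is a genuine left action of $S_4$ on $G^3$, for any $\gamma \in S_4$ there are two cases: either $\gamma \in A_4$, in which case $\partial(\sigma)(\gamma \triangleright (x,y,z)) = \partial(\sigma)(x,y,z)$ by \eqref{eq: A4 invariance}; or $\gamma \notin A_4$, in which case $\gamma' := \gamma\tau \in A_4$ and $\gamma = \gamma'\tau$, so that $\gamma \triangleright (x,y,z) = \gamma' \triangleright \bigl(\tau \triangleright (x,y,z)\bigr)$, and applying \eqref{eq: A4 invariance} now to the triple $\tau \triangleright (x,y,z)$ gives $\partial(\sigma)(\gamma \triangleright (x,y,z)) = \partial(\sigma)(\tau \triangleright (x,y,z))$. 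Hence in all cases $\partial(\sigma)(\gamma \triangleright (x,y,z))$ equals one of the two numbers $\partial(\sigma)(x,y,z)$ or $\partial(\sigma)(\tau \triangleright (x,y,z))$.

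With this reduction, both equivalences follow at once from the last two assertions of Proposition~\ref{prop:S4-action on delta}. For the statement about $0$: the reverse implication is trivial by taking $\gamma = \mathrm{id}$; for the forward implication, if $\partial(\sigma)(x,y,z) = 0$ then also $\partial(\sigma)(\tau \triangleright (x,y,z)) = 0$ (this is precisely the equivalence $\partial(\sigma)(x,y,z) = 0 \Leftrightarrow \partial(\sigma)((0,2) \triangleright (x,y,z)) = 0$ of that proposition), so by the previous paragraph $\partial(\sigma)(\gamma \triangleright (x,y,z)) = 0$ for every $\gamma \in S_4$. For the statement about $1$: again the reverse implication is trivial; for the forward implication, if $\partial(\sigma)(x,y,z) = 1$ then it is nonzero, so by the last formula of the proposition $\partial(\sigma)(\tau \triangleright (x,y,z)) = \partial(\sigma)(x,y,z)^{-1} = 1$, and the previous paragraph again yields $\partial(\sigma)(\gamma \triangleright (x,y,z)) = 1$ for all $\gamma \in S_4$.

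There is essentially no obstacle here, since all the real content is already in Proposition~\ref{prop:S4-action on delta}; the proof is pure bookkeeping with the coset decomposition $S_4 = A_4 \sqcup A_4\tau$. The one point that deserves a moment's care is that \eqref{eq: A4 invariance} and the two $\tau$-formulas of the proposition are stated for an \emph{arbitrary} element of $G^3$, so they may legitimately be applied to the shifted triple $\tau \triangleright (x,y,z)$, not just to $(x,y,z)$ itself; this, together with the fact that $\triangleright$ is an honest action (so $\gamma \triangleright (x,y,z) = \gamma' \triangleright (\tau \triangleright (x,y,z))$ whenever $\gamma = \gamma'\tau$), is exactly what makes the case distinction work. (In fact the same argument shows that whenever $\partial(\sigma)(x,y,z) \neq 0$, the value $\partial(\sigma)(\gamma \triangleright (x,y,z))$ depends only on the parity of $\gamma$, equalling $\partial(\sigma)(x,y,z)$ on $A_4$ and $\partial(\sigma)(x,y,z)^{-1}$ off it; the corollary is the special case where that value, or its inverse, is $0$ or $1$.)
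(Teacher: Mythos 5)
Your proof is correct and follows exactly the route the paper intends: the corollary is stated without proof as an immediate consequence of Proposition~\ref{prop:S4-action on delta}, and your coset decomposition $S_4 = A_4 \sqcup A_4(0,2)$ together with the observation that \eqref{eq: A4 invariance} applies to the shifted triple $(0,2)\triangleright(x,y,z)$ is precisely the bookkeeping being left to the reader. Nothing is missing.
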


\begin{proposition}\label{prop:refinement}
     Assume that the field  $\kappa $ is algebraically closed and let  $\sigma$ be a factor set with total domain. Then,
    \[
        [x]^{\sigma} [y]^{\sigma} [z]^{\sigma} = 0 \Leftrightarrow \sigma(x,y)\sigma(xy, z) \neq \sigma(x, yz) \sigma(y,z).
    \]
\end{proposition}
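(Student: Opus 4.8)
The plan is to translate the vanishing of $[x]^{\sigma}[y]^{\sigma}[z]^{\sigma}$ into a statement about membership in $\Omega_{\sigma}$, and then to settle that statement using the $S_4$-invariance of the type II prohibitions. The first step is a preliminary observation: since $\sigma$ has total domain we have $\sigma(g,h)\neq 0$ for all $g,h\in G$, so $\mathcal{P}_{\sigma}^{\mathrm{I}}=\emptyset$, hence $\mathcal{P}_{\sigma}=\mathcal{P}_{\sigma}^{\mathrm{II}}$, and also $\partial(\sigma)(g,s,t)\neq 0$ for every triple, so each $V=\{a,ag,ags,agst\}\in\mathcal{P}_{\sigma}^{\mathrm{II}}$ satisfies $\partial(\sigma)(g,s,t)\neq 1$. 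I claim that every such $V$ has exactly four distinct elements. Indeed, if two of the four ``vertices'' $1,g,gs,gst$, that is, of the entries of $a^{-1}V$, coincided, then, since $S_4$ permutes these four positions arbitrarily, one could pick $\gamma\in S_4$ carrying the two coinciding positions onto the first two slots; by the definition of $\triangleright$ through \eqref{eq: solveFor} this means that the first coordinate of $\gamma\triangleright(g,s,t)$ equals $1_G$, whence $\partial(\sigma)(\gamma\triangleright(g,s,t))=1$ by the first lemma of the present section (which asserts that $\partial(\sigma)$ equals $1$ on any triple one of whose entries is $1_G$), and then Corollary~\ref{c: Q s4-invariance} would give $\partial(\sigma)(g,s,t)=1$, a contradiction. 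In particular, no subset of $G$ of cardinality at most three can contain a $\sigma$-prohibition, so $\chi(\{1,g\})=\chi(\{1,g,h\})=1$ for all $g,h\in G$.

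With this in hand I would compute $[x]^{\sigma}[y]^{\sigma}[z]^{\sigma}$ through the isomorphism $\phi\colon\kappa_{par}^{\sigma}G\to\mathcal{A}_{\sigma}$ of Theorem~\ref{t: kparsG isomorphism}: using the product rule \eqref{eq: product in S sigma} and the equalities $\chi(\{1,x\})=\chi(\{1,x,xy\})=1$, a short calculation ($\phi([x]^{\sigma})\phi([y]^{\sigma})=\sigma(x,y)(\{1,x,xy\},xy)$, then multiplication by $\phi([z]^{\sigma})$) yields
\[
  \phi\big([x]^{\sigma}[y]^{\sigma}[z]^{\sigma}\big)=\sigma(x,y)\,\sigma(xy,z)\,\chi(\{1,x,xy,xyz\})\,\big(\{1,x,xy,xyz\},\,xyz\big).
\]
As $\sigma(x,y)\sigma(xy,z)\neq 0$, this gives $[x]^{\sigma}[y]^{\sigma}[z]^{\sigma}=0$ if and only if $\{1,x,xy,xyz\}\notin\Omega_{\sigma}$. (The same reduction can be obtained inside $B^{\sigma}\rtimes_{\sigma}G$, where $[x]^{\sigma}[y]^{\sigma}[z]^{\sigma}=\sigma(x,y)\sigma(xy,z)\,e_{x}^{\sigma}e_{xy}^{\sigma}e_{xyz}^{\sigma}\delta_{xyz}$, and then one invokes \eqref{eq: Omega sigma charaterization via kparsG}.) Since $\sigma$ has total domain, $\partial(\sigma)(x,y,z)\neq 0$, so by \eqref{eq: sigma involution formula} the inequality $\sigma(x,y)\sigma(xy,z)\neq\sigma(x,yz)\sigma(y,z)$ is equivalent to $\partial(\sigma)(x,y,z)\neq 1$ (as already remarked just below the definition of $\partial(\sigma)$). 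Thus everything reduces to showing that $\{1,x,xy,xyz\}\notin\Omega_{\sigma}$ if and only if $\partial(\sigma)(x,y,z)\neq 1$. For the implication ``$\Leftarrow$'', if $\partial(\sigma)(x,y,z)\neq 1$ then the triple $(x,y,z)$ (with $h=1_G$) exhibits $\{1,x,xy,xyz\}$ as an element of $\mathcal{P}_{\sigma}^{\mathrm{II}}$, so it lies outside $\Omega_{\sigma}$ by \eqref{eq: Omega sigma characterization by prohibitions}. For ``$\Rightarrow$'', if $\{1,x,xy,xyz\}\notin\Omega_{\sigma}$ then by \eqref{eq: Omega sigma characterization by prohibitions} some $V\in\mathcal{P}_{\sigma}^{\mathrm{II}}$ is contained in $\{1,x,xy,xyz\}$; by the preliminary claim $|V|=4$, forcing $V=\{1,x,xy,xyz\}$, and writing $V=\{a,ag,ags,agst\}$ the tuple $(1,g,gs,gst)=a^{-1}(1,x,xy,xyz)$ is a reordering of $(1,x,xy,xyz)$ re-based at $1_G$; that is, $(g,s,t)=\gamma\triangleright(x,y,z)$ for a suitable $\gamma\in S_4$, and Corollary~\ref{c: Q s4-invariance} then gives $\partial(\sigma)(x,y,z)=\partial(\sigma)(g,s,t)\neq 1$.

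I expect the crux of the argument to be the preliminary claim that every type II prohibition has exactly four elements: a priori a degenerate quadruple $\{a,ag,ags,agst\}$ with fewer than four distinct entries could still encode a genuine failure of the $2$-cocycle identity, and ruling this out cannot be done by manipulating $\sigma$ alone. This is precisely the role of the $S_4$-invariance of $\mathcal{P}_{\sigma}^{\mathrm{II}}$ (Corollary~\ref{c: Q s4-invariance}), and it is here that the section-wide hypothesis that $\kappa$ is algebraically closed enters, through Proposition~\ref{prop:S4-action on delta} and the normalization $\sigma(g,g^{-1})\in\{0,1\}$. A secondary technical point is the bookkeeping of how a chosen $\gamma\in S_4$ acts on the vertices $(1,g,gs,gst)$: one must check against the $\triangleright$-table that two coinciding vertices really can be moved into the first two coordinates so that the resulting triple acquires a trivial first entry.
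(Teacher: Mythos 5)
Your proof is correct and follows essentially the same route as the paper's: reduce $[x]^{\sigma}[y]^{\sigma}[z]^{\sigma}=0$ to $e_x^{\sigma}e_{xy}^{\sigma}e_{xyz}^{\sigma}=0$, i.e.\ to $\{1,x,xy,xyz\}\notin\Omega_{\sigma}$, and then invoke Corollary~\ref{c: Q s4-invariance}. The only difference is that you explicitly check that every type II prohibition has four distinct elements, so that the witnessing tuple really lies in the $\triangleright$-orbit of $(x,y,z)$ — a point the paper's proof passes over in silence — which is a useful added detail rather than a divergence.
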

\begin{proof}
    We already know that if $\sigma(x,y)\sigma(xy, z) \neq \sigma(x, yz) \sigma(y,z)$ then $[x]^{\sigma} [y]^{\sigma} [z]^{\sigma} = 0$. Conversely, suppose that $[x]^{\sigma} [y]^{\sigma} [z]^{\sigma} = 0$, therefore $e_x^{\sigma} e_{xy}^{\sigma}e_{xyz}^{\sigma} = 0$. Then, by Theorem~\ref{t: kparsG isomorphism} we have that $\{ 1, x, xy, xyz \} \in \mathcal{P}^{II}_{\sigma}$.
    Hence, there exist $g,a,b,c \in G$ such that $\{ 1, x, xy, xyz \} = \{ g, ga, gab, gabc \}$ and $\partial(\sigma)(a,b,c) \neq 1$, but by Corollary~\ref{c: Q s4-invariance} we conclude that $\partial(\sigma)(x,y,z) \neq 1$, thus $\sigma(x,y)\sigma(xy, z) \neq \sigma(x, yz) \sigma(y,z)$.
\end{proof}

\section{Topological freeness of the spectral partial action}\label{sec:TopolFree}

In all what follows  \(\kappa\) will be an arbitrary field.

\begin{definition}\label{def:TopolFree}
    A topological partial action $\theta = (\{ X_{g} \}, \{ \theta_{g} \})$ of a group $G$ on a topological space $X$ is called \textbf{topologically free} if for all $g \neq 1_{G}$ the set $\{ x \in X_{g^{-1}} : \theta_{g}(x) = x \}$ has empty interior.
\end{definition}

Let \( X \) be a compact, locally disconnected Hausdorff topological space. Recall that \( \mathscr{L}(X) \) is the commutative algebra of all continuous functions from \( X \) to \( \kappa \), where \( \kappa \) has the discrete topology. Then, as previously mentioned, \( \mathscr{L}(X) \) is the algebra of all locally constant functions. From \cite{Dokuchaev-Exel:IdealStructure:2017}, we know that any topological partial group action on \( \theta= (G, X, \{ X_g \}, \{ \theta_{g} \}) \) such that $X_g$ is closed for all $g \in G$, gives rise to a unital partial group action $\alpha$ on $A = \mathscr{L}(X)$ as follows:
\begin{itemize}
    \item $\alpha:=(G, A, \{ A_{g} \}, \{ \alpha_g \})$,
    \item $A_{g}:= \{ f: X_g \to \kappa \text{ continuous}: f|_{X \setminus X_g} = 0 \}  \cong \{ f: X_g \to \kappa : f \text{ is continuous}\}$,
    \item each ideal $A_{g}$ is generated by the idempotent $1_g$ which is the characteristic function of $X_g$.
    \item $\alpha_{g}(f) := f \circ \theta_{g^{-1}}$,
\end{itemize}

Furthermore, given a group \(G\) and a unital algebra \(A\) generated by idempotents, in view of the above construction, and by Remark~\ref{r: spectral partial group action} and Theorem~\ref{t: JacobsonKeimel}, we obtain a bijection between the unital partial actions of \(G\) on \(A\) and the topological partial actions of \(G\) on \(\operatorname{Spec} A\) with clopen domains.

The following proposition is analogous to \cite[Proposition 4.7]{Goncalves-Oinert-Royer:2014:JA}, and the proof is completely similar; however, we will include it for the sake of completeness.

\begin{proposition}\label{prop:MaxCommut}
    Suppose that $\theta= (\{ X_{g} \}_{g \in G}, \{ \theta_{g} \}_{g})$ is a topologically free partial group action such that each \(X_{g}\) is clopen.
    Let \(\alpha\) be the partial action of \(G\) on \(\mathscr{L}(X)\) induced by \(\theta\).
    Let, furthermore, \(\sigma: G \times G \to \kappa\) be {red} a  twist of \(\alpha\).
    Then, $\mathscr{L}(X)\delta_{1_{G}}$ is a maximal commutative subalgebra of $\mathscr{L}(X) \rtimes_{(\alpha, \sigma)} G$.
\end{proposition}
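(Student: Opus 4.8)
The plan is to follow the argument of \cite[Proposition 4.7]{Goncalves-Oinert-Royer:2014:JA}. First note that $\mathscr{L}(X)\delta_{1_G}$ is a commutative subalgebra: since $\sigma(1_G,1_G)=1$, $\theta_{1_G}=\operatorname{id}_X$ and $1_{1_G}$ is the identity of $\mathscr{L}(X)$, the multiplication rule of the crossed product gives $(f\delta_{1_G})(h\delta_{1_G})=fh\,\delta_{1_G}$, so this subalgebra is a copy of the commutative algebra $\mathscr{L}(X)$. Hence it suffices to prove that its centralizer in $\mathscr{L}(X)\rtimes_{(\alpha,\sigma)}G$ is contained in it.

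Let $w=\sum_{g\in G}a_g\delta_g$, a finite sum with $a_g\in A_g$, commute with $f\delta_{1_G}$ for every $f\in\mathscr{L}(X)$. Comparing the $\delta_g$-components of $(f\delta_{1_G})w$ and $w(f\delta_{1_G})$ yields, for each $g\in G$,
\[
    \sigma(1_G,g)\, f a_g = \sigma(g,1_G)\, a_g\,\alpha_g(1_{g^{-1}}f).
\]
Fix $g$ with $a_g\neq 0$; then $A_g\neq 0$, so $1_g\neq 0$ and the twist axiom forces $\sigma(1_G,g)\neq 0$ and $\sigma(g,1_G)\neq 0$; putting $f=1_X$ in the displayed identity (and using $1_g a_g=a_g$) gives $\sigma(1_G,g)=\sigma(g,1_G)$, so after dividing we obtain $f a_g=a_g\,\alpha_g(1_{g^{-1}}f)$ for all $f\in\mathscr{L}(X)$. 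Since $\alpha_g(h)=h\circ\theta_{g^{-1}}$ on $X_g$ and everything in sight is supported on $X_g$, reading this identity pointwise gives
\[
    a_g(x)\bigl(f(x)-f(\theta_{g^{-1}}(x))\bigr)=0\qquad\text{for all }x\in X_g,\ f\in\mathscr{L}(X).
\]

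Now I would derive a contradiction from $a_g\neq 0$ for some $g\neq 1_G$. Choose $x_0\in X_g$ with $a_g(x_0)\neq 0$; by local constancy there is a nonempty clopen $U\subseteq X_g$ on which $a_g$ vanishes nowhere, so $f(x)=f(\theta_{g^{-1}}(x))$ for every $x\in U$ and every $f\in\mathscr{L}(X)$. As $X$ is compact, Hausdorff and totally disconnected, the characteristic functions of clopen sets belong to $\mathscr{L}(X)$ and separate the points of $X$; hence $\theta_{g^{-1}}(x)=x$ for all $x\in U$. Thus $U$ is a nonempty open subset of $\{x\in X_g:\theta_{g^{-1}}(x)=x\}$, contradicting the topological freeness of $\theta$ for the element $g^{-1}\neq 1_G$. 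Therefore $a_g=0$ for every $g\neq 1_G$, i.e. $w=a_{1_G}\delta_{1_G}\in\mathscr{L}(X)\delta_{1_G}$, which proves maximality.

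The argument is essentially routine; the only points that need a little care are the bookkeeping with the values of $\sigma$ on the pairs $(1_G,g)$ and $(g,1_G)$ (handled via the twist axiom together with the substitution $f=1_X$, so that Remark~\ref{r: sigma properties} is not even needed) and the correct pointwise interpretation of $\alpha_g(1_{g^{-1}}f)$ on the clopen set $X_g$. Everything else is the standard ``separation of points plus topological freeness'' mechanism.
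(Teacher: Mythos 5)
Your proof is correct and follows essentially the same route as the paper's: reduce to a homogeneous component $a_g\delta_g$ with $g\neq 1_G$ commuting with $\mathscr{L}(X)\delta_{1_G}$, read the resulting identity pointwise via $\alpha_g(h)=h\circ\theta_{g^{-1}}$, and use characteristic functions of clopen sets together with topological freeness to force $a_g=0$. The only cosmetic differences are that you derive $\sigma(1_G,g)=\sigma(g,1_G)\neq 0$ from the twist axioms rather than quoting that both values equal $1$, and you phrase the final contradiction as ``the clopen set $U$ lies in the fixed-point set of $\theta_{g^{-1}}$'' instead of first selecting a non-fixed point of $U$ and separating it from its image.
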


\begin{proof}
    Let $A := \mathscr{L}(X)$ and $\alpha=\{ \alpha_{g}: A_{g^{-1}} \to A_{g} \}_{g \in G}$ be the partial action induced by $\theta$.
    Suppose that $A \delta_{1}$ is not maximal commutative.
    Then, there exists a non-zero function $f_{g} \in A_{g}$, with $g \neq 1_{G}$, such that $(f_{g}\delta_{g}) (f \delta_{1}) = (f \delta_{1})(f_{g}\delta_{g})$, which is equivalent to  $f_{g}\alpha_{g}(1_{g^{-1}}f) = f f_{g}$, for all $f \in A$, as $\sigma(g,1)=\sigma(1,g)=1$.
    This means that
    \begin{equation} \label{eq: maximal commutative}
        f_{g}(z) f(\theta_{g^{-1}}(z)) = f(z) f_{g}(z)
    \end{equation}
    for all $f \in A$ and $z \in X_{g}$.
    Since $f_{g}$ is non-zero and continuous, there exists $x \in X_{g}$ and a neighborhood $U_{x} \subseteq X_{g}$ of $x$ such that $f_{g}(y) \neq 0$ for all $ y \in U_{x}$.
    Thanks to the fact that the partial action is topologically free, there exist $y \in U_{x}$ such that $\theta_{g^{-1}}(y) \neq y$. 
    Since \( X \) is compact, totally disconnected and Hausdorff, there exists a compact open neighborhood \( V_{y} \) of \( y \) such that \( \theta_{g^{-1}}(y) \notin V_{y} \). However, for \( f = 1_{V_{y}} \), equation \eqref{eq: maximal commutative} implies that \( f_{g}(y) = 0 \), which leads to a contradiction.
\end{proof}

The proof of the following fact is a simple adaptation of the first part of the proof of \cite[Theorem 2.1]{Goncalves-Oinert-Royer:2014:JA}.

\begin{proposition} \label{p: top free importance}
    Let $A$ be a commutative associative algebra, $G$ a group and $\alpha$ a unital partial action of $G$ on $A$. Let, furthermore, \(\sigma: G \times G \to \kappa\) be  a  twist of \(\alpha\). If $A\delta_{1}$ is a maximal commutative subalgebra of $A \rtimes_{(\alpha, \sigma)} G$, then $\mathcal{I} \cap A \delta_{1} \neq 0$ for each non-zero ideal $\mathcal{I}$ of $A \rtimes_{(\alpha, \sigma)} G$.
\end{proposition}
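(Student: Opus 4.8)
The plan is to take a nonzero ideal $\mathcal{I}$ of $A \rtimes_{(\alpha,\sigma)} G$ and produce a nonzero element of $\mathcal{I} \cap A\delta_1$ by a ``shortest support'' argument. First I would pick a nonzero element $x = \sum_{g \in S} a_g \delta_g \in \mathcal{I}$ (the sum over a finite set $S$, with each $a_g \in A_g$ nonzero) whose support $S$ has minimal cardinality among all nonzero elements of $\mathcal{I}$. If $S = \{1\}$ we are done. Otherwise I want to derive a contradiction with minimality. Fix some $h \in S$ with $h \neq 1$ and, after left-multiplying by $a_h^{-1}\delta_{1_G}$ where $a_h^{-1}$ is a local inverse of the idempotent $e_h := a_h a_h^{\ast}$ (using that $A$ is von Neumann regular, as it is generated by idempotents), I may assume that the coefficient $a_h$ at $h$ is a nonzero idempotent $e \in A_h$; note this does not increase $|S|$ and keeps $x$ in $\mathcal{I}$ and nonzero. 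Actually, more carefully: multiply $x$ on the right by a suitable $u\delta_1$, $u$ an idempotent in $A$, to shrink the coefficient at $h$ to a minimal nonzero idempotent-localized piece — the point is to arrange a controlled idempotent coefficient at some nontrivial group element.

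The heart of the argument uses the maximal commutativity hypothesis. Consider the element $e\delta_1 \in A\delta_1$, where $e$ is the (idempotent) coefficient of $x$ at $1$ if $1 \in S$, and otherwise any idempotent localizing some nonzero coefficient; the key is that $A\delta_1$ being maximal commutative means that any $f\delta_1$ commuting with $x$ must satisfy $(f\delta_1)x = x(f\delta_1)$, and since $x \notin A\delta_1$, there must exist some $f \in A$ with $(f\delta_1)x \neq x(f\delta_1)$. Computing, $(f\delta_1)(a_g\delta_g) = f a_g \delta_g$ while $(a_g\delta_g)(f\delta_1) = a_g \alpha_g(1_{g^{-1}} f)\delta_g$, so the element
\[
    y := (f\delta_1)x - x(f\delta_1) = \sum_{g \in S} \bigl( f a_g - a_g \alpha_g(1_{g^{-1}}f) \bigr)\delta_g
\]
lies in $\mathcal{I}$, is nonzero by the choice of $f$, and its coefficient at $g = 1$ is $f a_1 - a_1 f = 0$. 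Hence $\operatorname{supp}(y) \subseteq S \setminus \{1\}$ if $1 \in S$, contradicting minimality of $|S|$; and if $1 \notin S$ then first absorb a nonzero idempotent coefficient at $1$ into $x$ by multiplying $x$ by $x^{\ast}$-type element (so WLOG $1 \in S$). The main obstacle is precisely the bookkeeping needed to guarantee that $1 \in \operatorname{supp}(x)$ (or that some manipulation forces it) while keeping control of the support size; this is where the idempotent-localization via von Neumann regularity of $A$ and the unitality of $\sigma$ on $\{1\}\times G$ and $G\times\{1\}$ (so that $\delta_1$ is a genuine identity-like element on each component) are used. Once that is arranged, the commutator computation above together with minimality closes the argument, showing $x \in A\delta_1$ and hence $\mathcal{I}\cap A\delta_1 \ni x \neq 0$.

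I expect the delicate point to be handling the case where no translate of the support of a minimal element contains $1_G$: one must multiply $x$ by an element of the form $a_h^{\ast}\delta_{h^{-1}}$ (with $a_h^{\ast}$ a partial inverse of $a_h$) to translate the support so that $1_G$ enters it without enlarging it — here associativity of the crossed product and the twist identities from the definition of a $\kappa$-valued twist are needed to verify that the coefficient at $1_G$ of the translated element is indeed nonzero (it becomes essentially $\alpha_{h^{-1}}(1_h a_h^{\ast} a_h) = \alpha_{h^{-1}}(e_h)$, a nonzero idempotent). After that normalization the commutator trick applies verbatim.
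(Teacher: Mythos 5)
Your strategy is the same as the paper's: pick a nonzero $x\in\mathcal{I}$ of minimal support, right-translate it so that $1_G$ enters the support, observe that the commutator with any $a\delta_1$ has strictly smaller support (the coefficient at $1_G$ dies by commutativity of $A$), so minimality forces the translated element to commute with $A\delta_1$, and maximal commutativity places it in $A\delta_1$. The one flaw is your normalization: the proposition assumes only that $A$ is a commutative associative algebra, so you may not invoke von Neumann regularity or a ``partial inverse'' $a_h^{\ast}$ of $a_h$ --- neither is available here, and neither is needed. The paper simply multiplies on the right by $1_{s^{-1}}\delta_{s^{-1}}$ (the local unit of $A_{s^{-1}}$, which exists because the partial action is unital), obtaining coefficient $\sigma(s,s^{-1})a_s\neq 0$ at $\delta_1$ since $a_s\neq 0$ forces $1_s\neq 0$ and hence $\sigma(s,s^{-1})\neq 0$ by the twist axioms; with that substitution your argument closes correctly.
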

\begin{proof}
    For each \(x = \sum a_{g}\delta_{g} \in A \rtimes_{(\alpha, \sigma)} G\) we define
    \[
       \operatorname{Supp}(x):= \{ g \in G : a_{g} \neq 0  \}.
    \]
    Let \(\mathcal{I}\) be a non-zero ideal of \(A \rtimes_{(\alpha, \sigma)} G\), and let \(x \in \mathcal{I}\) be a non-zero element, such that \(|\operatorname{Supp}(x)|\) is minimal among all non-zero elements of \(\mathcal{I}\).
    Fix \(s \in \operatorname{Supp}(x)\), and let
    \[
        y:= x \cdot 1_{s^{-1}}\delta_{s^{-1}} = \sigma(s,s^{-1})a_{s} \delta_{1} + \sum_{g \neq s} \sigma(g,s^{-1})a_{g}1_{gs^{-1}}\delta_{gs^{-1}} \in \mathcal{I}.
    \]
    Note that \(y \neq 0\).  Since \(|\operatorname{Supp}(x)|\) is minimal, then \(|\operatorname{Supp}(y)| = |\operatorname{Supp}(x)|\).
    Let \(a \in A\), then,  due to the commutativity of $A,$ the element \(z:= a \delta_{1} y - y a \delta_{1} \in \mathcal{I}\) is such that \(|\operatorname{Supp}(z)| < |\operatorname{Supp}(x)|\).
    Thus, the minimality of \(|\operatorname{Supp}(x)|\) implies that \(a \delta_{1} y = y a \delta_{1}\) for all \(a \in A\).
    Hence, the maximal commutativity of \(A \delta_{1}\) implies that \(y \in A \delta_{1}\), and therefore \(\mathcal{I} \cap A \delta_{1}  \neq 0\).
\end{proof}

Since $\kappa_{par}^{\sigma}G \cong B^{\sigma} \rtimes_{\sigma}G$ the next fact immediately follows from Proposition~\ref{prop:MaxCommut} and Proposition~\ref{p: top free importance}

\begin{corollary}\label{cor:TopolFreeImportance2}
    If the spectral partial group action  $\hat{\theta}$ is topologically free, then $\mathcal{I} \cap B^{\sigma} \neq 0$ for each non-zero ideal $\mathcal{I}$ of $\kappa_{par}^{\sigma}G$.
\end{corollary}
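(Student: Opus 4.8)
The plan is to deduce the statement directly from Proposition~\ref{prop:MaxCommut} and Proposition~\ref{p: top free importance}, once the relevant identifications are spelled out. First I would set $X := \operatorname{Spec} B^{\sigma}$, which by the discussion of Section~\ref{sec:SomeRemarks} is a compact totally disconnected Hausdorff space, homeomorphic to $\Omega_{\sigma}$ by Lemma~\ref{l: Omega sigma is hat B sigma}; the spectral partial action $\hat{\theta}$ has clopen domains $\hat{D}_{g} = D(e_{g}^{\sigma})$ by Lemma~\ref{r: klopen}. Via the Gelfand-type isomorphism of Theorem~\ref{t: JacobsonKeimel} I identify $B^{\sigma}$ with $\mathscr{L}(X)$, and under this identification the partial action $\theta^{\sigma}$ of $G$ on $B^{\sigma}$ is exactly the partial action $\alpha$ of $G$ on $\mathscr{L}(X)$ induced by $\hat{\theta}$ in the manner recalled from \cite{Dokuchaev-Exel:IdealStructure:2017} (this is the content of Remark~\ref{r: spectral partial group action}). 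In particular $\sigma$ remains a $\kappa$-valued twist of $\alpha$, since the defining conditions of a twist only involve the vanishing of the products $1_{g}1_{gh}$ of unit idempotents and the $2$-cocycle identity on the relevant idempotents, both preserved by an algebra isomorphism. Hence $\kappa_{par}^{\sigma}G \cong B^{\sigma}\rtimes_{\sigma}G \cong \mathscr{L}(X)\rtimes_{(\alpha,\sigma)}G$, and under these isomorphisms the subalgebra $B^{\sigma}$ of $\kappa_{par}^{\sigma}G$ corresponds to $\mathscr{L}(X)\delta_{1_{G}}$.

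Assuming now that $\hat{\theta}$ is topologically free, Proposition~\ref{prop:MaxCommut} applies with $X_{g}=\hat{D}_{g}$ clopen, and gives that $\mathscr{L}(X)\delta_{1_{G}}$ is a maximal commutative subalgebra of $\mathscr{L}(X)\rtimes_{(\alpha,\sigma)}G$. Then Proposition~\ref{p: top free importance}, applied with $A=\mathscr{L}(X)$, the partial action $\alpha$ and the twist $\sigma$, yields $\mathcal{I}\cap\mathscr{L}(X)\delta_{1_{G}}\neq 0$ for every non-zero ideal $\mathcal{I}$ of $\mathscr{L}(X)\rtimes_{(\alpha,\sigma)}G$. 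Transporting this through the isomorphism $\kappa_{par}^{\sigma}G\cong\mathscr{L}(X)\rtimes_{(\alpha,\sigma)}G$ gives $\mathcal{I}\cap B^{\sigma}\neq 0$ for every non-zero ideal $\mathcal{I}$ of $\kappa_{par}^{\sigma}G$, as desired.

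The only point that requires genuine care — and which I would regard as the main obstacle, though it is really bookkeeping — is checking that the partial action of $G$ on $B^{\sigma}$ coming from the crossed product decomposition of \cite{Dokuchaev-Jerez:twisted:2024} coincides, under $B^{\sigma}\cong\mathscr{L}(X)$, with the partial action induced by $\hat{\theta}$; this reduces to verifying that the map $\Theta_{g}$ defined just before Remark~\ref{r: spectral partial group action} is the spectral counterpart of $\theta^{\sigma}_{g}$, which follows from the explicit formula $\theta_{g}^{\sigma}(w)=\sigma(g^{-1},g)^{-1}[g]^{\sigma}w[g^{-1}]^{\sigma}$ together with $\hat{\theta}_{g}(f)=f\circ\Theta_{g^{-1}}$. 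Note also that the clopenness of the $\hat{D}_{g}$ automatically supplies the hypotheses "$X_{g}$ closed'' needed for the construction of \cite{Dokuchaev-Exel:IdealStructure:2017} and "$X_{g}$ clopen'' needed for Proposition~\ref{prop:MaxCommut}; everything else is a direct invocation of the two already-established propositions.
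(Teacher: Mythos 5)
Your proposal is correct and follows exactly the paper's route: the paper also deduces the corollary directly from Proposition~\ref{prop:MaxCommut} and Proposition~\ref{p: top free importance} via the identification $\kappa_{par}^{\sigma}G \cong B^{\sigma}\rtimes_{\sigma}G$, with your additional bookkeeping about $\mathscr{L}(X)$ and the induced partial action being exactly the implicit content of that identification.
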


 Corollary~\ref{cor:TopolFreeImportance2}  highlights the importance of determining when the topological partial group action of \(G\) on \(B^{\sigma}\) is topologically free, as it gives relevant information about the ideals of the twisted partial group algebra \(\kappa_{par}^{\sigma}G\).

Let $\sigma \in pm(G)$, and $\hat{\theta}$ the spectral partial action of $G$ on $\Omega_{\sigma} \cong \operatorname{Spec} B^{\sigma}$. For $g \in G$ we set
\[
    \operatorname{Fix}_g^{\sigma} := \{ \xi \in \hat{D}_{g^{-1}} : \hat{\theta}_g(\xi) = \xi \}.
\]

For the case $\sigma = 1$ we just write $\operatorname{Fix}_{g}$ to denote $\operatorname{Fix}_{g}^{1}$.

\begin{remark}
    It is clear that $\xi \in \operatorname{Fix}_g^{\sigma}$ if, and only if, there exists $\{ h_i \}_{i \in I} \subseteq G \setminus \{ 1 \}$ such that $\xi = \Big(\bigsqcup_{i \in I} \{ g^{n}h_i \}_{n \in \mathbb{Z}}\Big) \sqcup \{ g^{n} \}_{n \in \mathbb{Z}} \in \Omega_{\sigma}$. Thus, if $|g| = \infty$ then $\xi$ is an infinite set, since $ \{ g^{n} \}_{n \in \mathbb{Z}} \subseteq \xi$.
    \label{r: invariant g point charaterization}
\end{remark}

 Recall that by Remark~\ref{r: Omega sigma is discrete when G is finite} the topology of $\Omega_{\sigma}$ is discrete when $G$ is finite. Thus, we are interested in the infinite case. \textbf{In what follows} we assume that $G$ is an \textbf{infinite} group.

\begin{lemma}
    Let $g \in G$ such that $|g| = \infty$. Then, $\operatorname{int} \operatorname{Fix}^{\sigma}_g = \varnothing$.
    \label{l: infinite order elements has empty interior}
\end{lemma}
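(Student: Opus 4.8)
The plan is to show that for $g$ of infinite order, $\operatorname{Fix}^{\sigma}_g$ contains no non-empty open set. Since the sets $D(u)$ with $u$ an idempotent of $B^{\sigma}$ form a basis of the topology of $\operatorname{Spec} B^{\sigma} \cong \Omega_\sigma$, it suffices to show that no $D(u)$ with $D(u) \neq \varnothing$ is contained in $\operatorname{Fix}^{\sigma}_g$; equivalently, every non-empty basic open set $D(u)$ contains a point $\xi$ with $g\xi \neq \xi$ (here I am using the description $\vartheta_g(\xi) = g\xi$ of the spectral partial action on $\Omega_\sigma$ from Lemma~\ref{l: Omega sigma is hat B sigma} and the paragraph following it). First I would fix a non-empty $D(u)$ and invoke Lemma~\ref{l: minimal and maximal elements of Du}(i) to produce a \emph{finite} element $\xi' \in D(u)$ that is minimal in $D(u)$. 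The key point is then that a finite $\xi'$ cannot lie in $\operatorname{Fix}^{\sigma}_g$: by Remark~\ref{r: invariant g point charaterization}, any $\xi \in \operatorname{Fix}^{\sigma}_g$ contains $\{g^n\}_{n \in \mathbb{Z}}$, which is infinite when $|g| = \infty$. Hence $\xi' \notin \operatorname{Fix}^{\sigma}_g$, so $g\xi' \neq \xi'$, and $\xi' \in D(u)$ witnesses that $D(u) \not\subseteq \operatorname{Fix}^{\sigma}_g$.

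There is a subtlety to address: $\operatorname{Fix}^{\sigma}_g$ is by definition a subset of $\hat D_{g^{-1}} = \mathcal{D}^{\sigma}_{g^{-1}} = \{\xi \in \Omega_\sigma : g^{-1} \in \xi\}$, which is itself an open (clopen) subset of $\Omega_\sigma$. So one should be slightly careful: a basic open set $D(u)$ that meets $\operatorname{Fix}^{\sigma}_g$ need not be contained in $\mathcal{D}^{\sigma}_{g^{-1}}$. The clean way around this is to argue that if $\operatorname{int}\operatorname{Fix}^{\sigma}_g \neq \varnothing$, then since $\operatorname{Fix}^{\sigma}_g \subseteq \mathcal{D}^{\sigma}_{g^{-1}}$ and $\mathcal{D}^{\sigma}_{g^{-1}}$ is clopen, we may choose a non-empty basic open $D(u) \subseteq \operatorname{Fix}^{\sigma}_g \subseteq \mathcal{D}^{\sigma}_{g^{-1}}$. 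Then apply Lemma~\ref{l: minimal and maximal elements of Du}(i) inside $D(u)$ to get a finite $\xi' \in D(u)$; since $D(u) \subseteq \operatorname{Fix}^{\sigma}_g$, we would have $\xi' \in \operatorname{Fix}^{\sigma}_g$, contradicting the infinitude of $\{g^n\}_{n\in\mathbb{Z}} \subseteq \xi'$. Therefore $\operatorname{int}\operatorname{Fix}^{\sigma}_g = \varnothing$.

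The only real content is the observation that fixed points of $\hat\theta_g$ are infinite when $|g| = \infty$, which is exactly Remark~\ref{r: invariant g point charaterization}, combined with the fact (Lemma~\ref{l: minimal and maximal elements of Du}) that every non-empty clopen set contains a finite point of $\Omega_\sigma$. I do not anticipate a serious obstacle; the main thing to get right is the bookkeeping that lets us pass from "interior is non-empty" to "some basic $D(u)$ sits inside $\operatorname{Fix}^{\sigma}_g$", for which clopenness of $\mathcal{D}^{\sigma}_{g^{-1}}$ and the basis property of the $D(u)$'s suffice.
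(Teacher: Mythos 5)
Your proof is correct and follows essentially the same route as the paper's: both arguments combine Remark~\ref{r: invariant g point charaterization} (every element of $\operatorname{Fix}^{\sigma}_g$ contains the infinite set $\{g^n\}_{n\in\mathbb{Z}}$) with Lemma~\ref{l: minimal and maximal elements of Du}(i) (every non-empty $D(u)$ contains a finite element) to conclude that no basic open set lies in $\operatorname{Fix}^{\sigma}_g$. The extra bookkeeping you add about $\operatorname{Fix}^{\sigma}_g \subseteq \mathcal{D}^{\sigma}_{g^{-1}}$ is harmless but not needed, since the conclusion only requires that no non-empty basic open set is contained in $\operatorname{Fix}^{\sigma}_g$.
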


\begin{proof}
    By Remark~\ref{r: invariant g point charaterization} we know that if $|g| = \infty$, then $\xi$ is an infinite set for all $\xi \in \operatorname{Fix}^{\sigma}_g$. Thus, $D(u) \nsubseteq \operatorname{Fix}^{\sigma}_g$ for all idempotent $u \in B^{\sigma}$, since $D(u)$ always has finite elements due to Lemma~\ref{l: minimal and maximal elements of Du} $(i)$. Henceforth, $\operatorname{int} \operatorname{Fix}^{\sigma}_g = \varnothing$. 
\end{proof}

An immediate consequence of Lemma~\ref{l: infinite order elements has empty interior} is the following proposition.

\begin{proposition}\label{prop:torsion-free}
    If $G$ is a torsion-free group. Then, the spectral partial action of $G$ is topologically free.
\end{proposition}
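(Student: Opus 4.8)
The plan is to obtain Proposition~\ref{prop:torsion-free} as an immediate corollary of Lemma~\ref{l: infinite order elements has empty interior}. First I would recall that a torsion-free group has no non-trivial element of finite order, so that every $g \in G$ with $g \neq 1_G$ satisfies $|g| = \infty$.

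Next I would unwind Definition~\ref{def:TopolFree} applied to the spectral partial action $\hat{\theta} = (\{\hat{D}_g\}_{g \in G}, \{\hat{\theta}_g\}_{g \in G})$: topological freeness amounts to the assertion that for every $g \neq 1_G$ the set $\operatorname{Fix}^{\sigma}_g = \{\xi \in \hat{D}_{g^{-1}} : \hat{\theta}_g(\xi) = \xi\}$ has empty interior. Combining this with the first step, each such $g$ has infinite order, so Lemma~\ref{l: infinite order elements has empty interior} gives $\operatorname{int}\operatorname{Fix}^{\sigma}_g = \varnothing$, which is precisely what the definition requires. Hence $\hat{\theta}$ is topologically free.

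I do not anticipate any genuine obstacle here: all the substance has already been packaged into Lemma~\ref{l: infinite order elements has empty interior}, whose argument in turn rests on Remark~\ref{r: invariant g point charaterization} — when $|g| = \infty$ every element of $\operatorname{Fix}^{\sigma}_g$ contains the infinite set $\{g^n\}_{n \in \mathbb{Z}}$, hence is itself infinite — together with Lemma~\ref{l: minimal and maximal elements of Du}(i), which guarantees that every basic clopen set $D(u)$ of $\operatorname{Spec} B^{\sigma}$ contains a \emph{finite} element of $\Omega_\sigma$, so that no $D(u)$ can be contained in $\operatorname{Fix}^{\sigma}_g$. The only point worth stating explicitly in the write-up is the reduction of topological freeness, as formulated in Definition~\ref{def:TopolFree}, to the emptiness of $\operatorname{int}\operatorname{Fix}^{\sigma}_g$ for all $g \neq 1_G$.
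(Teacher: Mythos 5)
Your proposal is correct and follows exactly the paper's route: the paper also derives this proposition as an immediate consequence of Lemma~\ref{l: infinite order elements has empty interior}, using that in a torsion-free group every $g \neq 1_G$ has infinite order. Nothing further is needed.
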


Henceforth, we are interested in the case when $G$ has torsion elements.
\begin{lemma} \label{l: xi U h not fixed}
    Let $\xi \in \Omega_{\sigma}$ and $h \in G \setminus \xi$. Then,
    
    \[
        \xi \in \operatorname{Fix}^{\sigma}_g \Rightarrow \xi \cup \{ h \} \notin \operatorname{Fix}^{\sigma}_g,
    \]
    for all $g \in G \setminus \{ 1 \}$.
    
\end{lemma}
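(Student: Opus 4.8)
The plan is to argue by contradiction. Suppose $\xi \in \operatorname{Fix}^{\sigma}_g$ but also $\xi \cup \{h\} \in \operatorname{Fix}^{\sigma}_g$ for some $g \neq 1_G$ and some $h \in G \setminus \xi$. Recall that, by definition, $\xi \in \operatorname{Fix}^{\sigma}_g$ means $g \in \xi$ (so that $\xi \in \hat{D}_{g^{-1}}$, i.e.\ $g^{-1} \in g^{-1}\xi$, equivalently $1_G \in g^{-1}\xi$... more precisely $\xi \in \mathcal D^\sigma_{g^{-1}}$ means $g^{-1}\in\xi$) and $g\xi = \xi$. The key structural fact is Remark~\ref{r: invariant g point charaterization}: a set $\eta \in \Omega_\sigma$ lies in $\operatorname{Fix}^\sigma_g$ precisely when $\eta$ is a disjoint union of full $\langle g\rangle$-orbits, one of which is $\{g^n\}_{n\in\mathbb Z}$. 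In particular $\operatorname{Fix}^\sigma_g$-membership forces $\eta$ to be $\langle g\rangle$-invariant: $g\eta = \eta$.

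So the heart of the argument is: if $g\xi = \xi$ and $g(\xi\cup\{h\}) = \xi\cup\{h\}$, then since $g\xi = \xi$ we get $g\{h\} \subseteq \xi \cup \{h\}$, i.e.\ $gh \in \xi \cup \{h\}$. If $gh \in \xi$, then applying $g^{-1}$ (which also fixes $\xi$) gives $h \in g^{-1}\xi = \xi$, contradicting $h \notin \xi$. Hence $gh = h$, which forces $g = 1_G$, again a contradiction. I would also need to check the hypothesis that $\xi \cup \{h\}$ actually lies in $\Omega_\sigma$ and in $\hat D_{g^{-1}}$ for the statement $\xi\cup\{h\}\in\operatorname{Fix}^\sigma_g$ to even make sense; but since we are assuming $\xi\cup\{h\}\in\operatorname{Fix}^\sigma_g$ as the contradiction hypothesis, that membership is part of the assumption, so no extra work is needed there.

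The only genuinely delicate point is making sure the equivalence in Remark~\ref{r: invariant g point charaterization} is being used correctly when $|g| < \infty$ versus $|g| = \infty$: in both cases membership in $\operatorname{Fix}^\sigma_g$ entails $g\eta = \eta$ as sets, and that is all the argument uses, so the dichotomy on the order of $g$ is irrelevant here. I expect no real obstacle — the proof is a two-line orbit-chasing argument once Remark~\ref{r: invariant g point charaterization} is invoked — so the writeup is essentially: assume both are fixed, deduce $gh \in \xi\cup\{h\}$, split into the two cases $gh\in\xi$ (giving $h\in\xi$, contradiction) and $gh = h$ (giving $g = 1_G$, contradiction), and conclude.
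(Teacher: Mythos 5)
Your proposal is correct and follows essentially the same route as the paper: from $g(\xi\cup\{h\})=\xi\cup\{h\}$ and $g\neq 1_G$ one gets $gh\in\xi$, and then applying $\hat{\theta}_{g^{-1}}$ (using that $\xi\in\operatorname{Fix}^{\sigma}_{g}$ implies $\xi\in\operatorname{Fix}^{\sigma}_{g^{-1}}$) yields $h\in\xi$, a contradiction. The paper folds your two cases into one line by observing $gh\neq h$ directly, and also notes up front the trivial case $\xi\cup\{h\}\notin\Omega_{\sigma}$, but the argument is the same.
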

\begin{proof}
    First note that if $\xi \cup \{ h \} \notin \Omega_{\sigma}$ it is immediate that $\xi \cup \{ h \} \notin \operatorname{Fix}^{\sigma}_{g}$.
    Suppose that $\xi \cup \{ h \} \in \operatorname{Fix}^{\sigma}_g$, then $gh \in \xi \cup \{ h \}$.
    Hence, $gh \in \xi$ and $h \notin \xi$.
    If $\xi \in \operatorname{Fix}^{\sigma}_{g}$, then $\xi \in \operatorname{Fix}^{\sigma}_{g^{-1}}$, and therefore $h \in \hat{\theta}_{g^{-1}}(\xi) = \xi$ which is a contradiction.
\end{proof}
A direct consequence of Lemma~\ref{l: xi U h not fixed} is the following:
\begin{corollary} \label{c: nonfixed opens}
    Let $U$ be an open subset of $\Omega_\sigma$.
    If there exists $\xi \in U$ and $h \in G \setminus \xi$ such that $\xi \cup \{ h \} \in U$.
    Then,  $U \nsubseteq \operatorname{Fix}^{\sigma}_g$.
\end{corollary}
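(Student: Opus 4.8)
The plan is a short case analysis: fix an arbitrary $g \in G \setminus \{1\}$ and produce an element of $U$ that does not belong to $\operatorname{Fix}^{\sigma}_g$, which already shows $U \nsubseteq \operatorname{Fix}^{\sigma}_g$. By hypothesis we have $\xi \in U$, $h \in G \setminus \xi$, and $\xi \cup \{h\} \in U$, so the two candidate witnesses at hand are $\xi$ and $\xi \cup \{h\}$, and exactly one of them will do the job depending on whether $\xi$ is fixed.

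If $\xi \notin \operatorname{Fix}^{\sigma}_g$, then $\xi$ itself is the required witness, since $\xi \in U$. If instead $\xi \in \operatorname{Fix}^{\sigma}_g$, then I would invoke Lemma~\ref{l: xi U h not fixed} with this $\xi$, the element $h \in G \setminus \xi$, and the chosen $g$, to conclude $\xi \cup \{h\} \notin \operatorname{Fix}^{\sigma}_g$; and $\xi \cup \{h\} \in U$ by assumption, so again we have our witness. In either case $U \nsubseteq \operatorname{Fix}^{\sigma}_g$, and as $g \in G \setminus \{1\}$ was arbitrary the claim follows.

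I expect no genuine obstacle here: the real content sits in Lemma~\ref{l: xi U h not fixed} (a fixed $\xi$ and a strictly larger $\xi \cup \{h\}$ cannot both be $g$-fixed, because $h \notin \xi = \hat{\theta}_{g^{-1}}(\xi)$ would be forced), and the corollary is merely the observation that any subset of $\Omega_\sigma$ containing such a comparable pair meets the complement of $\operatorname{Fix}^{\sigma}_g$. Openness of $U$ plays no role in the argument itself; it is recorded in the statement only because the corollary will be applied to open sets when deducing topological freeness of $\hat{\theta}$.
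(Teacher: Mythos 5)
Your argument is correct and is precisely the intended one: the paper offers no written proof, presenting the corollary as a direct consequence of Lemma~\ref{l: xi U h not fixed}, and your two-case dichotomy (either $\xi$ or $\xi \cup \{h\}$ witnesses non-containment) is exactly that deduction. Your side remark that openness of $U$ is not actually used is also accurate.
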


Let $\alpha \in \Omega$, $\gamma \subseteq G \setminus \{ 1 \}$ such that $\alpha$ and $\gamma$ are finite sets, and $\alpha \cap \gamma = \varnothing$. We define 
\[
    (\alpha, \gamma) := \{ \xi \in \Omega : \alpha \subseteq \xi \text{ and } \xi \cap \gamma = \varnothing \},
\]
and 
\[
    (\alpha, \gamma)^{\sigma} := (\alpha, \gamma) \cap \Omega_\sigma.
\]
Notice that $\{ (\alpha, \gamma) \}$ is a basis for the topology of $\Omega$ consisting of open and compact sets. Thus, $\{ (\alpha, \gamma)^{\sigma} \}$ is a basis for the induced topology of $\Omega_{\sigma}$. Furthermore, it is easy to see that
\[
    (\alpha, \gamma)^{\sigma} \neq \varnothing \Longleftrightarrow \alpha \in \Omega_{\sigma}.
\]
\begin{proposition} \label{p: intFix not empety if contains isolated points}
    Let $g \in G$, such that $g^{n}=1$. If there exists $(\alpha, \gamma)^{\sigma} \neq \varnothing$, such that $(\alpha, \gamma)^{\sigma} \subseteq \operatorname{Fix}^{\sigma}_g$. Then,
    \begin{enumerate}[(i)]
        \item There exists $\{ h_1, \ldots, h_m \} \subseteq G$, such that $\alpha = \{ g^{s} h_i : 0 \leq s \leq n, 1 \leq i \leq m\}$.
        \item $(\alpha, \gamma)^{\sigma} = \{ \alpha \}$.
    \end{enumerate}
    Consequently, $\operatorname{int} \operatorname{Fix}^{\sigma}_g \neq \varnothing$ if, and only if, there exists $\{ h_1, \ldots, h_m \} \subseteq G$ such that $\xi = \bigcup_{1 \leq i \leq m} \{ g^{s}h_i \}^{n}_{s = 1}$ is an isolated point of $\Omega_\sigma$.
\end{proposition}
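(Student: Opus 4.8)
The plan is to analyze when a basic open set $(\alpha,\gamma)^\sigma$ can be contained in $\operatorname{Fix}^\sigma_g$, using the characterization of fixed points in Remark~\ref{r: invariant g point charaterization} together with the obstruction given by Corollary~\ref{c: nonfixed opens}. First I would observe that if $(\alpha,\gamma)^\sigma\ne\varnothing$, then $\alpha\in\Omega_\sigma$ and, by Proposition~\ref{p: Omega sigma has maximal elements} (or by adding elements one at a time and invoking Lemma~\ref{l: Omega sigma contains subset elements}), either $\alpha$ is maximal in $\Omega_\sigma$ or there is $h\in G\setminus(\alpha\cup\gamma)$ with $\alpha\cup\{h\}\in\Omega_\sigma$. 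In the latter case both $\alpha$ and $\alpha\cup\{h\}$ lie in $(\alpha,\gamma)^\sigma$, so Corollary~\ref{c: nonfixed opens} gives $(\alpha,\gamma)^\sigma\nsubseteq\operatorname{Fix}^\sigma_g$, a contradiction. Hence $\alpha$ must be maximal in $\Omega_\sigma$; combined with $\gamma$ being finite this forces $(\alpha,\gamma)^\sigma=\{\alpha\}$, which is part (ii), and shows $\alpha$ is an isolated point of $\Omega_\sigma$.

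Next I would pin down the structure of $\alpha$ itself using the hypothesis $\alpha\in\operatorname{Fix}^\sigma_g$ and $g^n=1$. By Remark~\ref{r: invariant g point charaterization}, $\alpha\in\operatorname{Fix}^\sigma_g$ means $\alpha=g\alpha$, i.e. $\alpha$ is a union of orbits of the cyclic group $\langle g\rangle$ acting by left translation on $G$; since $1_G\in\alpha$ the orbit $\{g^s:0\le s\le n\}$ is contained in $\alpha$, and since $\alpha$ is finite it is a finite union of such $\langle g\rangle$-orbits, say $\alpha=\bigcup_{i=1}^m\{g^s h_i:0\le s\le n\}$ for suitable coset representatives $h_i$ (with $h_1=1_G$). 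This is exactly statement (i); here one should be a little careful that the orbits may have size a proper divisor of $n$, but writing $\{g^s h_i:0\le s\le n\}$ absorbs this harmlessly, and the indexing in the statement already allows repetitions among the listed elements.

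Finally, for the "consequently" clause: if $\operatorname{int}\operatorname{Fix}^\sigma_g\ne\varnothing$, pick a basic open $(\alpha,\gamma)^\sigma\ne\varnothing$ inside it; by parts (i) and (ii) just proved, $(\alpha,\gamma)^\sigma=\{\alpha\}$ with $\alpha=\bigcup_{1\le i\le m}\{g^sh_i\}_{s=1}^n$, so $\alpha$ is the desired isolated point $\xi$ (note $\{g^s\}_{s=1}^n=\{g^s\}_{s=0}^n$ since $g^n=1$, so this matches Remark~\ref{r: invariant g point charaterization}, and $\xi\in\operatorname{Fix}^\sigma_g$). Conversely, if such a $\xi=\bigcup_{1\le i\le m}\{g^sh_i\}_{s=1}^n$ is an isolated point of $\Omega_\sigma$, then $g\xi=\xi$ gives $\xi\in\operatorname{Fix}^\sigma_g$, and the singleton $\{\xi\}$ is open and contained in $\operatorname{Fix}^\sigma_g$, so $\operatorname{int}\operatorname{Fix}^\sigma_g\ne\varnothing$. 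The main obstacle I anticipate is the maximality argument in the first paragraph: one must be sure that whenever $\alpha$ is \emph{not} maximal in $\Omega_\sigma$, the witnessing larger element $\alpha\cup\{h\}$ can be chosen to avoid the finite prohibited set $\gamma$ and to still satisfy $1_G\in\alpha\cup\{h\}$ (automatic) — this is where Lemma~\ref{l: Omega sigma contains subset elements} and the finiteness of $\gamma$ do the work, but it deserves a careful sentence rather than being taken for granted.
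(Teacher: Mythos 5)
Your overall strategy coincides with the paper's: part (i) is the orbit decomposition of a finite left-$\langle g\rangle$-invariant set via Remark~\ref{r: invariant g point charaterization}, part (ii) rests on applying Corollary~\ref{c: nonfixed opens} to the pair $\alpha$ and $\alpha\cup\{h\}$, and the ``consequently'' clause follows by combining (i)--(ii) with the observation that a singleton basic open set is exactly an isolated point. However, the way you produce the witness $h$ in part (ii) contains a genuine flaw. The dichotomy ``either $\alpha$ is maximal in $\Omega_\sigma$ or there is $h\in G\setminus(\alpha\cup\gamma)$ with $\alpha\cup\{h\}\in\Omega_\sigma$'' is not exhaustive: it can happen that $\alpha$ is not maximal but every $h\notin\alpha$ with $\alpha\cup\{h\}\in\Omega_\sigma$ lies inside the finite set $\gamma$. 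In that third case your argument concludes nothing, so the inference ``hence $\alpha$ must be maximal'' is invalid; and the repair you anticipate at the end --- that the finiteness of $\gamma$ ``does the work'' --- does not help, because the set of such admissible $h$ may itself be finite and entirely contained in $\gamma$. Neither Proposition~\ref{p: Omega sigma has maximal elements} nor Lemma~\ref{l: Omega sigma contains subset elements} controls whether a witnessing $h$ avoids $\gamma$. (Note also that the detour is unnecessary: maximality of $\alpha$ alone would give $(\alpha,\gamma)^{\sigma}=\{\alpha\}$ with no use of the finiteness of $\gamma$, and conversely the desired conclusion $(\alpha,\gamma)^{\sigma}=\{\alpha\}$ does not require $\alpha$ to be maximal.)

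The correct way to produce $h$ --- and it is exactly what the paper does --- is to start from the cylinder rather than from $\Omega_\sigma$: suppose $(\alpha,\gamma)^{\sigma}\neq\{\alpha\}$ and pick $\xi\in(\alpha,\gamma)^{\sigma}$ with $\xi\neq\alpha$; since $\alpha\subseteq\xi$ there is $h\in\xi\setminus\alpha$, and this $h$ automatically satisfies $h\notin\gamma$ (because $\xi\cap\gamma=\varnothing$) and $\alpha\cup\{h\}\in\Omega_\sigma$ (by Lemma~\ref{l: Omega sigma contains subset elements}, as $\alpha\cup\{h\}\subseteq\xi$). Hence $\alpha$ and $\alpha\cup\{h\}$ both lie in $(\alpha,\gamma)^{\sigma}$ and Corollary~\ref{c: nonfixed opens} yields the contradiction. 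With this one-sentence repair the rest of your argument, including both directions of the ``consequently'' clause, goes through and matches the paper.
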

\begin{proof}
    For $(i)$ recall that, by definition, $\alpha$ is finite.
    Since $\alpha \in \operatorname{Fix}^{\sigma}_g$, then $g^{s}h \in \alpha$ for all $s \in \mathbb{Z}$ and $h \in \alpha$.
    Whence, we conclude that the desired set $\{ h_i \}^{m}_{i = 1}$ there exists.
    Now for $(ii)$, suppose that there exists $\xi \in (\alpha, \gamma)^{\sigma}$, $\xi \neq \alpha$.
    Let $h \in \xi \setminus \alpha$, then $\alpha \cup \{ h \} \in (\alpha, \gamma)^{\sigma}$ since $h \notin \gamma$.
    Thus, by Corollary~\ref{c: nonfixed opens} we have that $(\alpha, \gamma)^{\sigma} \nsubseteq \operatorname{Fix}^{\sigma}_g$, which is a contradiction.
\end{proof}

\begin{corollary}
    If $\Omega_\sigma$ has no isolated points, then the spectral partial action $\hat{\theta}$ is topologically free.
        \label{c: no isolated points implies topologically free}
\end{corollary}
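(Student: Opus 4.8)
The plan is to verify Definition~\ref{def:TopolFree} directly, by splitting on the order of the group element. Fix an arbitrary $g \in G \setminus \{1_G\}$; I need to show that $\operatorname{Fix}_g^{\sigma}$ has empty interior in $\Omega_\sigma$. The two cases $|g| = \infty$ and $|g| < \infty$ are exhaustive, so it suffices to handle each separately.

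If $|g| = \infty$, then Lemma~\ref{l: infinite order elements has empty interior} already yields $\operatorname{int}\operatorname{Fix}_g^{\sigma} = \varnothing$ with nothing further to do; the point there was that every $\xi \in \operatorname{Fix}_g^{\sigma}$ is an infinite set (it contains $\{g^n\}_{n \in \mathbb{Z}}$ by Remark~\ref{r: invariant g point charaterization}), whereas every nonempty basic open set $D(u)$ contains a finite element by Lemma~\ref{l: minimal and maximal elements of Du}(i).

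If instead $g^n = 1_G$ for some $n$, I invoke the last assertion of Proposition~\ref{p: intFix not empety if contains isolated points}, which states that $\operatorname{int}\operatorname{Fix}_g^{\sigma} \neq \varnothing$ if and only if there is a finite set $\{h_1,\dots,h_m\} \subseteq G$ such that $\xi = \bigcup_{1 \le i \le m}\{g^s h_i\}_{s=1}^{n}$ is an isolated point of $\Omega_\sigma$. Since by hypothesis $\Omega_\sigma$ has no isolated points, the right-hand condition cannot hold, whence $\operatorname{int}\operatorname{Fix}_g^{\sigma} = \varnothing$ in this case as well. Combining the two cases, $\operatorname{Fix}_g^{\sigma}$ has empty interior for every $g \neq 1_G$, which is precisely the statement that $\hat{\theta}$ is topologically free.

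There is no real obstacle here: the corollary is a formal consequence of Lemma~\ref{l: infinite order elements has empty interior} and Proposition~\ref{p: intFix not empety if contains isolated points}, all the substantive work having been carried out in establishing those two results. The only thing to be careful about is to phrase the argument uniformly in $g$ and to note explicitly that "no isolated points" kills exactly the one scenario in which $\operatorname{int}\operatorname{Fix}_g^{\sigma}$ could be nonempty for torsion $g$.
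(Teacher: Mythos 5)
Your proof is correct and follows exactly the route the paper intends: the corollary is stated without proof precisely because it is the immediate combination of Lemma~\ref{l: infinite order elements has empty interior} (for $|g|=\infty$) and the final assertion of Proposition~\ref{p: intFix not empety if contains isolated points} (for torsion $g$), which is what you do. Nothing is missing.
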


Corollary~\ref{c: no isolated points implies topologically free} helps us to understand the importance of studying the isolated points of $\Omega_\sigma$. In particular, one can conclude that if $\Omega_\sigma$ is open in $\Omega$, then $\Omega_\sigma$ has no isolated points since $\Omega$ has no isolated points. 

\begin{proposition}
    Let $\sigma$ be a non-global $2$-cocycle. Then, $\operatorname{int} \Omega_\sigma = \varnothing$ considering $\Omega_{\sigma}$ as a subspace of $\Omega$.
\end{proposition}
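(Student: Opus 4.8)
The plan is to prove the slightly stronger statement that \emph{no} nonempty basic open subset of $\Omega$ is contained in $\Omega_\sigma$; since the sets $(\alpha,\gamma)$ form a basis of the topology of $\Omega$, this yields $\operatorname{int}\Omega_\sigma=\varnothing$ at once. The first step is to unwind the hypothesis: saying that $\sigma$ is not a global $2$-cocycle means that $\sigma$ is either not totally defined or fails the $2$-cocycle identity at some triple, and in both cases $\mathcal{P}_\sigma\neq\varnothing$. Indeed, if $\sigma(g,s)=0$ for some $g,s\in G$ then $\{1_G,g,gs\}\in\mathcal{P}_\sigma^{\mathrm{I}}$, while if $\sigma(g,st)\sigma(s,t)\neq\sigma(g,s)\sigma(gs,t)$ for some $g,s,t\in G$ then $\{1_G,g,gs,gst\}\in\mathcal{P}_\sigma^{\mathrm{II}}$. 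So I fix a prohibition $V_0\in\mathcal{P}_\sigma$; note that it is a finite set (with $|V_0|\in\{3,4\}$).

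The key structural fact I will use is the left-translation invariance of the set of $\sigma$-prohibitions: directly from the definitions, a left translate by any $x\in G$ of a set of the form $\{h,hg,hgs\}$ (resp.\ $\{h,hg,hgs,hgst\}$) is again of that form with the same pair $(g,s)$ (resp.\ triple $(g,s,t)$), so $xV\in\mathcal{P}_\sigma$ whenever $V\in\mathcal{P}_\sigma$ and $x\in G$. This is the same left-invariance already invoked for $\chi$ and in the proof of Theorem~\ref{t: kparsG isomorphism}.

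Now let $(\alpha,\gamma)$ be an arbitrary basic open set of $\Omega$, so $\alpha,\gamma$ are finite, $1_G\in\alpha$, $\gamma\subseteq G\setminus\{1_G\}$ and $\alpha\cap\gamma=\varnothing$. The set $\gamma V_0^{-1}=\{cv^{-1}:c\in\gamma,\ v\in V_0\}$ is finite, and since $G$ is infinite (as assumed throughout this section) I may choose $x\in G\setminus\gamma V_0^{-1}$; for such $x$ one has $xV_0\cap\gamma=\varnothing$. Put $\xi:=\alpha\cup xV_0$. Then $\xi$ is a finite subset of $G$ containing $1_G$, hence $\xi\in\Omega$; moreover $\alpha\subseteq\xi$ and $\xi\cap\gamma=(\alpha\cap\gamma)\cup(xV_0\cap\gamma)=\varnothing$, so $\xi\in(\alpha,\gamma)$. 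On the other hand $xV_0\subseteq\xi$ and $xV_0\in\mathcal{P}_\sigma$, so by \eqref{eq: Omega sigma characterization by prohibitions} we get $\xi\notin\Omega_\sigma$. Therefore $(\alpha,\gamma)\nsubseteq\Omega_\sigma$, which is exactly what was needed.

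There is no serious obstacle here: the argument is elementary and robust. The only points requiring a little attention are the case split when extracting a concrete prohibition from the failure of globality, and the (harmless) observation that only finitely many translates of $V_0$ must be excluded in order to avoid the finite set $\gamma$, which is possible precisely because $G$ is infinite.
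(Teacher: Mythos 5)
Your proposal is correct and follows essentially the same argument as the paper: pick a prohibition, left-translate it to avoid the finite "forbidden" part $\gamma$ of the cylinder (possible since $G$ is infinite), and adjoin the translate to $\alpha$ to produce a point of the cylinder outside $\Omega_\sigma$. The only difference is that you make explicit the left-invariance of $\mathcal{P}_\sigma$ and the finite set $\gamma V_0^{-1}$ to be avoided, which the paper leaves implicit.
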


\begin{proof}
    Since, $\sigma$ is a non-global $2$-cocycle, we have that $\mathcal{P}_{\sigma} \neq \varnothing$. Let $\{ g_1, g_2, g_3, g_4 \} \in \mathcal{P}_\sigma$, and $\alpha, \beta \subseteq G$ finite subsets such that $1 \in \alpha$ and $\alpha \cap \beta = \varnothing$, thus $(\alpha, \beta)$ is a cylinder of $\Omega$. Since, $\beta$ is finite and $G$ infinite there exists $h \in G$ such that $\beta \cap \{ h g_1, h g_2, h g_3, h g_4 \} = \varnothing$. Therefore, $\alpha \cup \{h g_1, h g_2, h g_3, h g_4 \} \in (\alpha , \beta)$, but $\alpha \cup \{ h g_1, h g_2, h g_3, h g_4 \} \notin \Omega_{\sigma}$. Thus, $(\alpha, \beta) \nsubseteq \Omega_{\sigma}$ for any cylinder $(\alpha, \beta)$ of $\Omega$.
\end{proof}

\begin{corollary}
    The following statements are equivalent:
    \begin{enumerate}[(i)]
        \item $\operatorname{int} \Omega_{\sigma} \neq \varnothing$,
        \item $\sigma$ is a global $2$-cocycle,
        \item $\Omega_\sigma = \Omega$.
    \end{enumerate}
\end{corollary}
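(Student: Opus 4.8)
The plan is to prove the three statements equivalent by running the cycle $(i)\Rightarrow(ii)\Rightarrow(iii)\Rightarrow(i)$, each implication being short. For $(i)\Rightarrow(ii)$ I would simply invoke the contrapositive of the preceding Proposition: if $\sigma$ is not a global $2$-cocycle, then $\operatorname{int}\Omega_\sigma=\varnothing$; hence $\operatorname{int}\Omega_\sigma\neq\varnothing$ forces $\sigma$ to be a global $2$-cocycle, and no further work is needed here.

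For $(ii)\Rightarrow(iii)$, assume $\sigma$ is a global $2$-cocycle, so that $\sigma(g,h)\neq 0$ for all $g,h\in G$ and the cocycle identity $\sigma(g,st)\sigma(s,t)=\sigma(g,s)\sigma(gs,t)$ holds for every triple $g,s,t\in G$. Inspecting the definitions of the two types of $\sigma$-prohibitions, the first condition makes $\mathcal{P}_\sigma^{\mathrm{I}}$ empty, while the second makes $\mathcal{P}_\sigma^{\mathrm{II}}$ empty; hence $\mathcal{P}_\sigma=\varnothing$. Then the characterization \eqref{eq: Omega sigma characterization by prohibitions} gives $\Omega_\sigma=\{\xi\in\Omega : V\nsubseteq\xi,\ \forall V\in\mathcal{P}_\sigma\}=\Omega$, since the defining condition is vacuous.

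Finally, $(iii)\Rightarrow(i)$ is immediate: if $\Omega_\sigma=\Omega$, then viewed as a subspace of $\Omega$ it is the whole space, which is open and nonempty (it contains $\{1_G\}$), so $\operatorname{int}\Omega_\sigma=\Omega\neq\varnothing$. There is no genuine obstacle in this argument; the only point worth spelling out, used in $(ii)\Rightarrow(iii)$, is the elementary equivalence between ``$\sigma$ is a global $2$-cocycle'' and ``$\mathcal{P}_\sigma=\varnothing$'', which is read off directly from the definition of the $\sigma$-prohibitions of types I and II (equivalently, from $\operatorname{Null}(\sigma)=\varnothing$ together with the validity of the cocycle identity on all of $G^3$).
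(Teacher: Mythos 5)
Your proof is correct and follows the route the paper clearly intends: the implication $(i)\Rightarrow(ii)$ is the contrapositive of the preceding proposition, $(ii)\Rightarrow(iii)$ follows because a global $2$-cocycle produces no $\sigma$-prohibitions of either type so that \eqref{eq: Omega sigma characterization by prohibitions} gives $\Omega_\sigma=\Omega$, and $(iii)\Rightarrow(i)$ is immediate. The paper leaves the corollary without an explicit proof, and your argument supplies exactly the expected one.
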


\begin{lemma}
    If $\xi \in \Omega_{\sigma}$ is infinite then $\xi$ is not an isolated point.
        \label{l: isolated points are finite}
\end{lemma}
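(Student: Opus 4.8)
The plan is a short argument by contradiction, exploiting the explicit basis $\{(\alpha,\gamma)^{\sigma}\}$ for the topology of $\Omega_{\sigma}$ together with the fact, recorded in Lemma~\ref{l: Omega sigma contains subset elements}, that $\Omega_{\sigma}$ is an order ideal in $\Omega$. Suppose $\xi\in\Omega_{\sigma}$ is infinite and isolated. Then $\{\xi\}$ is open in $\Omega_{\sigma}$, so there are finite sets $\alpha\in\Omega$ and $\gamma\subseteq G\setminus\{1_{G}\}$ with $\alpha\cap\gamma=\varnothing$ for which $\xi\in(\alpha,\gamma)^{\sigma}$ and $(\alpha,\gamma)^{\sigma}=\{\xi\}$; in particular $\alpha\subseteq\xi$ and $\xi\cap\gamma=\varnothing$.

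Next I would use that $\xi$ is infinite whereas $\alpha$ is finite: pick $h\in\xi\setminus\alpha$ and set $\xi':=\xi\setminus\{h\}$. Since $h\neq 1_{G}$ (because $1_{G}\in\alpha$), we still have $1_{G}\in\xi'$, so $\xi'\in\Omega$; and $\xi'\subseteq\xi$ forces $\xi'\in\Omega_{\sigma}$ by Lemma~\ref{l: Omega sigma contains subset elements}. As $h\notin\alpha$ we get $\alpha\subseteq\xi'$, and $\xi'\cap\gamma\subseteq\xi\cap\gamma=\varnothing$, hence $\xi'\in(\alpha,\gamma)^{\sigma}$. But $\xi'\neq\xi$, contradicting $(\alpha,\gamma)^{\sigma}=\{\xi\}$, and therefore $\xi$ cannot be isolated. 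There is no genuine obstacle here; the only point to be careful about is to delete an element of $\xi$ lying outside $\alpha$ (so that $\xi'$ remains both in the cylinder and in $\Omega$), which is possible precisely because $\xi$ is infinite while $\alpha$ is finite.
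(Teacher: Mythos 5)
Your argument is correct and is essentially the paper's own proof: both remove an element $h\in\xi\setminus\alpha$ (possible since $\xi$ is infinite and $\alpha$ finite) and use Lemma~\ref{l: Omega sigma contains subset elements} to see that $\xi\setminus\{h\}$ stays in $(\alpha,\gamma)^{\sigma}$, so no basic neighborhood of $\xi$ is a singleton. The only difference is that you phrase it as a contradiction while the paper states it directly; the content is identical.
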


\begin{proof}
    Let $\alpha, \gamma \subseteq G$ (finite sets) such that $\xi \in (\alpha, \gamma)^{\sigma}$. Then, $\xi \setminus \alpha$ is infinite. Hence, for all $h \in \xi \setminus \alpha$ we have that $\xi \setminus \{ h \} \in (\alpha, \gamma)^{\sigma}$. Therefore, any open subset that contains $\xi$ has infinite many points.
\end{proof}

\begin{definition}
    For $\xi \in \Omega_{\sigma}$, we define de set of admissible elements for $\xi$ by
    \[
        A_{\sigma}(\xi) := \{ g \in G \setminus \xi : \xi \cup \{ g \} \in \Omega_{\sigma}\}.
    \]
\end{definition}

\begin{proposition} \label{p: charaterization isolated points}
    Let $\xi \in \Omega_\sigma$. Then, $\xi$ is an isolated point if, and only if, $\xi$ and $A_\sigma(\xi)$ are finite sets. Furthermore, if $\xi$ is an isolated point, then $\{ \xi \} = (\xi, A_\sigma(\xi))^{\sigma}$.
\end{proposition}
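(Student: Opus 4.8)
The plan is to prove the two implications of the biconditional separately and then extract the final ``furthermore'' clause from the argument. First I would handle the easy direction: suppose $\xi$ is finite and $A_{\sigma}(\xi)$ is finite. Set $\alpha := \xi$ and $\gamma := A_{\sigma}(\xi)$. These are finite, disjoint (by the definition of $A_{\sigma}(\xi)$, whose elements lie in $G \setminus \xi$), and $1_G \in \alpha$ since $\xi \in \Omega_{\sigma} \subseteq \Omega$; moreover $\alpha = \xi \in \Omega_{\sigma}$, so $(\alpha,\gamma)^{\sigma} \neq \varnothing$. I claim $(\xi, A_{\sigma}(\xi))^{\sigma} = \{\xi\}$, which simultaneously shows $\xi$ is isolated and gives the last assertion. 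Indeed, take any $\eta \in (\xi, A_{\sigma}(\xi))^{\sigma}$. Then $\xi \subseteq \eta$ and $\eta \cap A_{\sigma}(\xi) = \varnothing$. If $\eta \neq \xi$, pick $h \in \eta \setminus \xi$; by Lemma~\ref{l: Omega sigma contains subset elements}, $\xi \cup \{h\} \subseteq \eta \in \Omega_{\sigma}$ forces $\xi \cup \{h\} \in \Omega_{\sigma}$, so $h \in A_{\sigma}(\xi)$, contradicting $h \in \eta$ and $\eta \cap A_{\sigma}(\xi) = \varnothing$. Hence $\eta = \xi$, as desired.

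For the converse, suppose $\xi$ is an isolated point. By Lemma~\ref{l: isolated points are finite}, $\xi$ is finite, so it remains to show $A_{\sigma}(\xi)$ is finite. Since $\xi$ is isolated, there is a basic open set $(\alpha,\gamma)^{\sigma}$ with $\{\xi\} = (\alpha,\gamma)^{\sigma}$, where $\alpha, \gamma \subseteq G$ are finite, $\alpha \cap \gamma = \varnothing$, $\alpha \subseteq \xi$ and $\xi \cap \gamma = \varnothing$. I claim $A_{\sigma}(\xi) \subseteq \gamma$, which immediately yields finiteness. Take $g \in A_{\sigma}(\xi)$, so $g \notin \xi$ and $\xi \cup \{g\} \in \Omega_{\sigma}$. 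Then $\xi \cup \{g\}$ satisfies $\alpha \subseteq \xi \subseteq \xi \cup \{g\}$, so if $g \notin \gamma$ we would have $(\xi \cup \{g\}) \cap \gamma = \varnothing$ (using $\xi \cap \gamma = \varnothing$), hence $\xi \cup \{g\} \in (\alpha,\gamma)^{\sigma} = \{\xi\}$, forcing $\xi \cup \{g\} = \xi$, i.e. $g \in \xi$, a contradiction. Therefore $g \in \gamma$, proving $A_{\sigma}(\xi) \subseteq \gamma$ and so $A_{\sigma}(\xi)$ is finite.

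I do not anticipate a serious obstacle here; the main point to get right is the bookkeeping: that the basic clopen sets $(\alpha,\gamma)^{\sigma}$ form a base for $\Omega_{\sigma}$ (already recorded in the text before Proposition~\ref{p: intFix not empety if contains isolated points}), that $1_G \in \xi$ for every $\xi \in \Omega_\sigma$ so $\alpha := \xi$ is a legitimate first coordinate, and the repeated use of Lemma~\ref{l: Omega sigma contains subset elements} to pass from ``$\xi \cup \{h\}$ is contained in something in $\Omega_\sigma$'' to ``$\xi \cup \{h\} \in \Omega_\sigma$''. The ``furthermore'' statement needs no separate work: it was established in the proof of the first implication, where the explicit witness $(\xi, A_\sigma(\xi))^{\sigma} = \{\xi\}$ was produced.
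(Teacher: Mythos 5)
Your proposal is correct and follows essentially the same route as the paper: both directions rest on Lemma~\ref{l: isolated points are finite}, the cylinder basis $(\alpha,\gamma)^{\sigma}$, and the containment $A_{\sigma}(\xi)\subseteq\gamma$, with the ``furthermore'' clause coming from the explicit identity $\{\xi\}=(\xi,A_{\sigma}(\xi))^{\sigma}$. The only difference is that you spell out the step the paper dismisses as ``clear'' (that $(\xi,A_{\sigma}(\xi))^{\sigma}=\{\xi\}$ when both sets are finite), using Lemma~\ref{l: Omega sigma contains subset elements} exactly as intended.
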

\begin{proof}
    By Lemma~\ref{l: isolated points are finite} we may assume that $\xi$ is finite. Suppose that $\xi$ is an isolated point. Then, there exists finite sets $\alpha, \gamma \in \Omega$ such that $\{ \xi \} = (\alpha, \gamma)^{\sigma}$. Hence, $(\xi, \gamma)^{\sigma} = (\alpha, \gamma)^{\sigma}$. Moreover, $A_{\sigma}(\xi) \subseteq \gamma$. Indeed, if there exists $h \in A_{\sigma}(\xi) \setminus \gamma$, then $\xi \cup \{ h \} \in (\xi, \gamma)^{\sigma}$, which is a contradiction. Thus, $A_{\sigma}(\xi)$ is finite. In the opposite direction, it is clear that if $A_{\sigma}(\xi)$ is finite then $\{ \xi \} = (\xi, A_\sigma(\xi))^{\sigma}$.
\end{proof}

\begin{corollary}
    $\Omega_{\sigma}$ has isolated points if, and only if, there exists maximal elements of $\Omega_{\sigma}$ which are finite.
\end{corollary}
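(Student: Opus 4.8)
The plan is to play the two implications off against Proposition~\ref{p: charaterization isolated points}, which characterizes an isolated point $\xi$ of $\Omega_\sigma$ as one for which both $\xi$ and $A_\sigma(\xi)$ are finite, together with the elementary observation that $\xi\in\Omega_\sigma$ is maximal in $\Omega_\sigma$ precisely when $A_\sigma(\xi)=\varnothing$. The latter is immediate: if $h\in A_\sigma(\xi)$ then $\xi\cup\{h\}\in\Omega_\sigma$ and $\xi\subsetneq\xi\cup\{h\}$, contradicting maximality, and conversely $A_\sigma(\xi)=\varnothing$ together with Lemma~\ref{l: Omega sigma contains subset elements} forces every $\eta\in\Omega_\sigma$ with $\xi\subseteq\eta$ to equal $\xi$.

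For the implication that a finite maximal element forces an isolated point, I would argue as follows. Let $\xi$ be a finite maximal element of $\Omega_\sigma$. By the observation above, $A_\sigma(\xi)=\varnothing$, which is in particular finite, so Proposition~\ref{p: charaterization isolated points} applies and $\xi$ is isolated. This direction is essentially a one-line consequence of the preparatory results.

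For the converse, suppose $\xi\in\Omega_\sigma$ is an isolated point; then $\xi$ and $A_\sigma(\xi)$ are finite by Proposition~\ref{p: charaterization isolated points}. The key step — the one I expect to carry the weight of the argument — is a \emph{boundedness} claim: every $\eta\in\Omega_\sigma$ with $\xi\subseteq\eta$ satisfies $\eta\subseteq\xi\cup A_\sigma(\xi)$. Indeed, if $h\in\eta\setminus\xi$, then $1_G\in\xi\subseteq\xi\cup\{h\}\subseteq\eta$ and $\eta\in\Omega_\sigma$, so Lemma~\ref{l: Omega sigma contains subset elements} gives $\xi\cup\{h\}\in\Omega_\sigma$, whence $h\in A_\sigma(\xi)$; thus $\eta\setminus\xi\subseteq A_\sigma(\xi)$, proving the claim. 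Since $\xi\cup A_\sigma(\xi)$ is finite, the family $\{\eta\in\Omega_\sigma:\xi\subseteq\eta\}$ is finite, so it has a maximal member $\hat\xi$, which is a finite set. Finally $\hat\xi$ is maximal in the whole of $\Omega_\sigma$: any $\zeta\in\Omega_\sigma$ with $\hat\xi\subsetneq\zeta$ would satisfy $\xi\subseteq\zeta$ and hence lie in that finite family, contradicting the maximality of $\hat\xi$ there. This produces a finite maximal element of $\Omega_\sigma$, as required.

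I would remark that one could instead obtain a maximal $\hat\xi\supseteq\xi$ directly from Proposition~\ref{p: Omega sigma has maximal elements}, but the boundedness claim is unavoidable since it is what guarantees that $\hat\xi$ is finite; so the genuine content of the corollary is exactly that finiteness of $A_\sigma(\xi)$ caps off the chains above $\xi$. Everything else reduces to unwinding the definitions of $A_\sigma(\xi)$, maximality, and isolated points.
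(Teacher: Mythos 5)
Your proof is correct; the paper states this corollary without an explicit proof, and your argument is exactly the natural way to fill that gap: combining Proposition~\ref{p: charaterization isolated points} with the observation that maximality of $\xi$ in $\Omega_\sigma$ is equivalent to $A_\sigma(\xi)=\varnothing$, and using the boundedness claim $\eta\subseteq\xi\cup A_\sigma(\xi)$ to ensure the maximal element above an isolated point is finite. Both directions check out, including the subtle point that an isolated point need not itself be maximal.
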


\begin{corollary} \label{c: spectre is discrete if the admisibles of 1 is finite}
    $\Omega_{\sigma}$ is discrete if, and only if, $A_\sigma(\{ 1 \})$ is finite. Consequently, $\Omega_{\sigma}$ is finite if, and only if, $\Omega_{\sigma}$ is discrete.
\end{corollary}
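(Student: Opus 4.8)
The plan is to reduce discreteness of $\Omega_\sigma$ to a statement about the single point $\{1_G\}$, and then to finiteness of $\Omega_\sigma$, using only the order-ideal structure of $\Omega_\sigma$ and the characterization of isolated points already established.

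First I would prove the forward implication of the first equivalence. Note that $\{1_G\}\in\Omega_\sigma$: a singleton cannot contain a $\sigma$-prohibition, since the smallest prohibitions collapse to $\{1_G\}$ only when all the group parameters equal $1_G$, and then the defining (in)equality is never violated because $\sigma(1_G,1_G)=1$. Hence, if $\Omega_\sigma$ is discrete, then $\{1_G\}$ is an isolated point of $\Omega_\sigma$, and Proposition~\ref{p: charaterization isolated points} applied to $\xi=\{1_G\}$ immediately gives that $A_\sigma(\{1_G\})$ is finite.

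Next I would handle the reverse implication, and simultaneously the stronger assertion that $A_\sigma(\{1_G\})$ finite forces $\Omega_\sigma$ finite. Set $\mathfrak{A}_\sigma:=\{g\in G : \{1_G,g\}\in\Omega_\sigma\}$, so that $\mathfrak{A}_\sigma=A_\sigma(\{1_G\})\cup\{1_G\}$ is finite by hypothesis. The key observation is that every $\xi\in\Omega_\sigma$ satisfies $\xi\subseteq\mathfrak{A}_\sigma$: if $g\in\xi$, then $\{1_G,g\}\subseteq\xi$ (as $1_G\in\xi$), and $\{1_G,g\}\in\Omega$, so $\{1_G,g\}\in\Omega_\sigma$ by Lemma~\ref{l: Omega sigma contains subset elements}, whence $g\in\mathfrak{A}_\sigma$. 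Therefore $\Omega_\sigma$ embeds into the finite power set $2^{\mathfrak{A}_\sigma}$ and is finite; and any finite subset of the Hausdorff space $\Omega$ is discrete in the subspace topology, so $\Omega_\sigma$ is discrete.

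Finally, the \emph{consequently} clause follows by assembling the pieces: a finite topological space is always discrete, and conversely if $\Omega_\sigma$ is discrete then $A_\sigma(\{1_G\})$ is finite by the first part, hence $\Omega_\sigma$ is finite by the second. I do not expect a genuine obstacle; the only step needing a little care is the containment $\xi\subseteq\mathfrak{A}_\sigma$, which rests precisely on the order-ideal property of $\Omega_\sigma$ (Lemma~\ref{l: Omega sigma contains subset elements}) together with the fact that $1_G$ belongs to every element of $\Omega$.
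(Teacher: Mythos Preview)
Your proof is correct and follows essentially the same approach as the paper: both argue that $\{1_G\}$ isolated forces $A_\sigma(\{1_G\})$ finite via Proposition~\ref{p: charaterization isolated points}, and conversely both use the order-ideal property (Lemma~\ref{l: Omega sigma contains subset elements}) to show every $\xi\in\Omega_\sigma$ sits inside the finite set $\mathfrak{A}_\sigma=\{1_G\}\cup A_\sigma(\{1_G\})$. The only cosmetic difference is that the paper also notes $A_\sigma(\xi)\subseteq\mathfrak{A}_\sigma$ and re-invokes Proposition~\ref{p: charaterization isolated points} to conclude each $\xi$ is isolated, whereas you pass through finiteness of $\Omega_\sigma$ first; one small slip to fix is the phrase ``a finite topological space is always discrete''---you need Hausdorff here, which you correctly invoked a paragraph earlier.
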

\begin{proof}
    If $\Omega_{\sigma}$ is discrete, then $\{ 1 \}$ is isolated and by Proposition~\ref{p: charaterization isolated points} the set $A_{\sigma}(\{ 1 \})$ is finite. Conversely, note that for every $\xi \in \Omega_{\sigma}$ we have that $\xi, A_{\sigma}(\xi) \subseteq \{ 1 \} \cup A_{\sigma}(\{ 1 \})$, and again by Proposition~\ref{p: charaterization isolated points} we conclude that $\xi$ is an isolated point.
\end{proof}

\begin{proposition} \label{p: fixed point are non empty if is locally global}
    $\operatorname{Fix}^{\sigma}_{g} \neq \varnothing$ if, and only if, $\sigma|_{\langle g \rangle}$ is a global $2$-cocycle.
\end{proposition}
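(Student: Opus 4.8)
The plan is to reduce the nonemptiness of $\operatorname{Fix}^{\sigma}_{g}$ to the single condition $\langle g\rangle\in\Omega_\sigma$, where $\langle g\rangle$ denotes the cyclic subgroup generated by $g$, and then to read off the two defining properties of a global $2$-cocycle on $\langle g\rangle$ directly from the description of $\Omega_\sigma$ by $\sigma$-prohibitions in \eqref{eq: Omega sigma characterization by prohibitions}.

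First I would prove that $\operatorname{Fix}^{\sigma}_{g}\neq\varnothing$ if and only if $\langle g\rangle\in\Omega_\sigma$. If $\langle g\rangle\in\Omega_\sigma$, then $g^{-1}\in\langle g\rangle$ and $g\langle g\rangle=\langle g\rangle$, so $\langle g\rangle\in\operatorname{Fix}^{\sigma}_{g}$. Conversely, if $\xi\in\operatorname{Fix}^{\sigma}_{g}$, then by Remark~\ref{r: invariant g point charaterization} we have $\langle g\rangle=\{g^{n}:n\in\mathbb{Z}\}\subseteq\xi$; since $1_G\in\langle g\rangle$ we have $\langle g\rangle\in\Omega$, and as $\Omega_\sigma$ is an order ideal in $\Omega$ (Lemma~\ref{l: Omega sigma contains subset elements}) it follows that $\langle g\rangle\in\Omega_\sigma$.

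Next I would determine when $\langle g\rangle$ contains a $\sigma$-prohibition. By \eqref{eq: Omega sigma characterization by prohibitions}, $\langle g\rangle\in\Omega_\sigma$ precisely when $\langle g\rangle$ contains no element of $\mathcal{P}_\sigma^{\mathrm{I}}$ and no element of $\mathcal{P}_\sigma^{\mathrm{II}}$. A type~I prohibition $\{h,hx,hxy\}$ (for which $\sigma(x,y)=0$) is contained in $\langle g\rangle$ iff $h\in\langle g\rangle$ — forced by the first entry — and then automatically $x=h^{-1}(hx)\in\langle g\rangle$ and $y=(hx)^{-1}(hxy)\in\langle g\rangle$; conversely, whenever $x,y\in\langle g\rangle$ satisfy $\sigma(x,y)=0$, the set $\{1_G,x,xy\}$ is such a prohibition inside $\langle g\rangle$. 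Hence $\langle g\rangle$ avoids $\mathcal{P}_\sigma^{\mathrm{I}}$ exactly when $\sigma$ is $\kappa^{*}$-valued on $\langle g\rangle\times\langle g\rangle$. The same argument with one extra entry shows that $\langle g\rangle$ avoids $\mathcal{P}_\sigma^{\mathrm{II}}$ exactly when $\sigma(x,yz)\sigma(y,z)=\sigma(x,y)\sigma(xy,z)$ for all $x,y,z\in\langle g\rangle$, i.e.\ when $\sigma|_{\langle g\rangle}$ satisfies the $2$-cocycle identity. These two conditions together say precisely that $\sigma|_{\langle g\rangle}$ is a global $2$-cocycle of $\langle g\rangle$, and chaining this equivalence with the first step yields the claim.

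The whole argument is essentially bookkeeping. The only point deserving a little care is the two-way translation between prohibitions and conditions on $\sigma|_{\langle g\rangle}$: one must note that a prohibition contained in $\langle g\rangle$ automatically has its first entry $h$ in $\langle g\rangle$, so left-translating by $h^{-1}$ normalizes it to the form $\{1_G,\dots\}$ and forces all the ``increment'' elements $x,y$ (respectively $x,y,z$) into $\langle g\rangle$. I do not expect any genuine obstacle.
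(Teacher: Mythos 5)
Your proof is correct and follows essentially the same route as the paper: reduce $\operatorname{Fix}^{\sigma}_{g}\neq\varnothing$ to $\langle g\rangle\in\Omega_\sigma$ and then translate the absence of $\sigma$-prohibitions inside $\langle g\rangle$ into the nonvanishing and cocycle conditions on $\sigma|_{\langle g\rangle}$. If anything, your write-up is a bit more careful than the paper's, which asserts $\operatorname{Fix}^{\sigma}_{g}\neq\varnothing\Rightarrow\langle g\rangle\in\Omega_\sigma$ without explicitly citing Remark~\ref{r: invariant g point charaterization} and Lemma~\ref{l: Omega sigma contains subset elements}, and which treats the two prohibition types less explicitly in the converse.
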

\begin{proof}
    Suppose that $\operatorname{Fix}^{\sigma}_{g} \neq \varnothing$, then $\langle g \rangle \in \Omega_{\sigma}$, therefore $\sigma(g^{t}, g^{s}) \neq 0$ for all $s,t \in \mathbb{Z}$ and $\sigma(g^{s}, g^{t+m})\sigma(g^{t}, g^{m}) = \sigma(g^{s}, g^{t})\sigma(g^{s + t}, g^{m})$ for all $s,t,m \in \mathbb{Z}$. Hence, $\sigma|_{\langle g \rangle}$ is a global $2$-cocycle. Conversely, suppose that $\sigma|_{\langle g \rangle}$ is a global $2$-cocycle. Let $\{ h, hx, hxy, hxyz \} \subseteq \langle g \rangle$, then $h = g^{m_1}$, $x = g^{m_2}$, $y = g^{m_3}$, $z = g^{m_4}$. Thus, $\{ h, hx, hxy, hxyz \} \notin \mathcal{P}_\sigma$ since $\sigma|_{\langle g \rangle}$ is global. Therefore, $\langle g \rangle \in \operatorname{Fix}^{\sigma}_g$. 
\end{proof}

\begin{example}
    Let $G_1$ be a finite group and $G_2$ an infinite group.
    Let $\sigma_1: G_{1} \times G_{1} \to \kappa$ be a global $2$-cocycle of $G_1$ and $\sigma_2 \in pm(G_2)$.
    We define $G := G_1 \times G_2$ and $\sigma: G \times G \to \kappa$ by
    \[
        \sigma((g,g'), (h, h')) := \left\{\begin{matrix}
            \sigma_1(g,h)  & \text{ if } g'=h'=1, \\ 
            \sigma_2(g',h')  & \text{ if } g=h=1, \\ 
            0 & \text{ otherwise}.
        \end{matrix}\right.
    \]
    Notice that, for all $g \in G_1$, by Proposition~\ref{p: fixed point are non empty if is locally global} we have that $\langle (g,1) \rangle \in \operatorname{Fix}^{\sigma}_{(g,1)} \subseteq \Omega_{\sigma}$.
    Moreover, it follows that $A_{\sigma}(\langle (g,1) \rangle)$ is finite since $\{ (h,1), (u,v) \} \in \mathcal{P}_{\sigma}$, for all $h,u \in G_1$ and $v \in G_2 \setminus \{ 1 \}$. 
    Hence, $\langle (g,1) \rangle$ is an isolated point by Proposition~\ref{p: charaterization isolated points}.
    Therefore, $\operatorname{int}(\operatorname{Fix}^{\sigma}_{(g,1)}) \neq \varnothing$ for all $g \in G_1 \setminus \{ 1 \}$.
\end{example}

From Proposition~\ref{p: intFix not empety if contains isolated points}, Proposition~\ref{p: charaterization isolated points} we obtain the following fact.

\begin{theorem} \label{t: not topologically free charaterization}
    Let $G$ be an infinite group and $\sigma \in pm(G)$. Then, the following are equivalent    
    \begin{enumerate}[(i)]
        \item The spectral partial action $\hat{\theta}$ is not topologically free,
        \item There exists an element $g \in G$ such that $\operatorname{Fix}^{\sigma}_g$ contains isolated points of $\Omega_{\sigma}$.
        \item There exists $g \in G$ and $\xi \in \operatorname{Fix}^{\sigma}_g$ such that $|\xi| < \infty$ and $|A_{\sigma}(\xi)| < \infty$.
    \end{enumerate}
\end{theorem}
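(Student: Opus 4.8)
The plan is to establish the cycle (i) $\Rightarrow$ (ii) $\Rightarrow$ (iii) $\Rightarrow$ (i), where throughout the element $g$ appearing in (ii) and (iii) is understood to lie in $G \setminus \{1_G\}$, in accordance with Definition~\ref{def:TopolFree}. The starting observation is that $\hat{\theta}$ is not topologically free exactly when there is some $g \in G \setminus \{1_G\}$ with $\operatorname{int} \operatorname{Fix}^{\sigma}_{g} \neq \varnothing$; and by Lemma~\ref{l: infinite order elements has empty interior} any such $g$ must have finite order, say $g^{n} = 1_G$. This reduction is what makes Proposition~\ref{p: intFix not empety if contains isolated points} and Proposition~\ref{p: charaterization isolated points}, both phrased for torsion elements, available.

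For (i) $\Rightarrow$ (ii): taking $g \in G \setminus \{1_G\}$ of finite order $n$ with $\operatorname{int} \operatorname{Fix}^{\sigma}_{g} \neq \varnothing$, I would invoke the ``consequently'' clause of Proposition~\ref{p: intFix not empety if contains isolated points} to produce $h_1, \ldots, h_m \in G$ such that $\xi := \bigcup_{1 \leq i \leq m} \{ g^{s} h_i \}_{s=1}^{n}$ is an isolated point of $\Omega_{\sigma}$. Since $\xi \in \Omega_{\sigma}$ forces $1_G \in \xi$ and hence $\langle g \rangle \subseteq \xi$, while $g\xi = \xi$, Remark~\ref{r: invariant g point charaterization} gives $\xi \in \operatorname{Fix}^{\sigma}_{g}$; thus $\operatorname{Fix}^{\sigma}_{g}$ contains an isolated point of $\Omega_{\sigma}$.

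For (ii) $\Rightarrow$ (iii): if $\xi$ is an isolated point of $\Omega_{\sigma}$ with $\xi \in \operatorname{Fix}^{\sigma}_{g}$, then Proposition~\ref{p: charaterization isolated points} immediately yields that $\xi$ and $A_{\sigma}(\xi)$ are finite, which is (iii) for the same $g$ and $\xi$. For (iii) $\Rightarrow$ (i): if $\xi \in \operatorname{Fix}^{\sigma}_{g}$ satisfies $|\xi| < \infty$ and $|A_{\sigma}(\xi)| < \infty$, the converse direction of Proposition~\ref{p: charaterization isolated points} shows $\xi$ is isolated, so $\{\xi\}$ is a nonempty open subset of $\Omega_{\sigma}$ contained in $\operatorname{Fix}^{\sigma}_{g}$; since $g \neq 1_G$, this means $\operatorname{int} \operatorname{Fix}^{\sigma}_{g} \neq \varnothing$, i.e.\ $\hat{\theta}$ is not topologically free.

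None of these steps is technically demanding once the cited results are in place; the only points needing attention are the bookkeeping that the same element $g$ travels through the chain and is kept nontrivial and torsion, and the verification — via Remark~\ref{r: invariant g point charaterization} — that the isolated point furnished by Proposition~\ref{p: intFix not empety if contains isolated points} actually lies in $\operatorname{Fix}^{\sigma}_{g}$ rather than merely in $\Omega_{\sigma}$.
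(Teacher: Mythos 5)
Your proof is correct and takes essentially the same approach as the paper: the paper's own proof is just the one-line citation of Proposition~\ref{p: intFix not empety if contains isolated points} and Proposition~\ref{p: charaterization isolated points}, and your cycle (i)$\Rightarrow$(ii)$\Rightarrow$(iii)$\Rightarrow$(i) is exactly the intended unpacking of those two results, together with Lemma~\ref{l: infinite order elements has empty interior} to reduce to torsion elements and the check via Remark~\ref{r: invariant g point charaterization} that the isolated point produced is indeed fixed by $g$. Your observation that $g$ must be read as lying in $G \setminus \{1_G\}$ in items (ii) and (iii) is the correct reading of the statement.
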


\begin{remark}
    Another important property of topological partial group actions is their minimality. 
    We recall from \cite[Definition 29.7]{E6} that a topological partial group action \(\beta\) is called \textbf{minimal} if there is no \(\beta\)-invariant closed subset of \(X\) other than \(\varnothing\) and \(X\).
    It is easy to see that the spectral partial action \(\hat{\theta}\) is not minimal since \(\{ 1 \}\) is a closed point of \(\Omega_{\sigma}\) which is also
      \(\hat{\theta}\)-invariant.
\end{remark}

\section{Idempotent factor sets}\label{sec:Idempotent}
In this section we will work with idempotent ($\sigma(g,h) \in \{ 0, 1 \}$) partial $2$-cocycles of an infinite group $G$ with non-trivial torsion group.

\begin{lemma}\label{sec:OnlyTypeI}
    If $\sigma$ is idempotent then $\xi \notin \Omega_{\sigma}$ if, and only if, $\xi$ contains a $\sigma$-prohibition of type $I$.
\end{lemma}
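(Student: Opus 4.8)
The plan is to use the characterization \eqref{eq: Omega sigma characterization by prohibitions}, which says that $\xi \notin \Omega_\sigma$ precisely when $\xi$ contains some $V \in \mathcal{P}_\sigma = \mathcal{P}_\sigma^{\mathrm{I}} \cup \mathcal{P}_\sigma^{\mathrm{II}}$. One implication is immediate: if $\xi$ contains a $\sigma$-prohibition of type $\mathrm{I}$, then $\xi \notin \Omega_\sigma$ by definition. For the converse, it suffices to show that, when $\sigma$ is idempotent, every type $\mathrm{II}$ prohibition $V \subseteq \xi$ already contains a type $\mathrm{I}$ prohibition (which is then contained in $\xi$ as well).

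So I would take $V = \{h, hg, hgs, hgst\} \in \mathcal{P}_\sigma^{\mathrm{II}}$, meaning $\sigma(g, st)\sigma(s,t) \neq \sigma(g,s)\sigma(gs,t)$. Since $\sigma$ is idempotent, the four values $\sigma(g,st), \sigma(s,t), \sigma(g,s), \sigma(gs,t)$ all lie in $\{0,1\}$; were all four equal to $1$, both products would equal $1$, contradicting the inequality. Hence at least one of these four values is $0$, and in each case I read off a three-element subset of $V$ of the shape $\{h', h'a, h'ab\}$ with $\sigma(a,b)=0$, i.e. an element of $\mathcal{P}_\sigma^{\mathrm{I}}$: if $\sigma(g,s)=0$ take $\{h, hg, hgs\}$; if $\sigma(s,t)=0$ take $\{hg, hgs, hgst\}$ (base point $hg$, step elements $s$ and $t$); if $\sigma(gs,t)=0$ take $\{h, hgs, hgst\}$ (base point $h$, step elements $gs$ and $t$); and if $\sigma(g,st)=0$ take $\{h, hg, hgst\}$ (base point $h$, step elements $g$ and $st$). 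Each such set is visibly a subset of $V$.

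Putting the two directions together: if $\xi \notin \Omega_\sigma$, then by \eqref{eq: Omega sigma characterization by prohibitions} it contains some $V \in \mathcal{P}_\sigma$; if $V \in \mathcal{P}_\sigma^{\mathrm{I}}$ we are done, and if $V \in \mathcal{P}_\sigma^{\mathrm{II}}$ the case analysis above furnishes a type $\mathrm{I}$ prohibition contained in $V \subseteq \xi$. I do not anticipate any real obstacle beyond the bookkeeping of the four cases; the only point requiring a little care is to match each chosen three-element set to the defining form $\{h', h'a, h'ab\}$ with $\sigma(a,b)=0$ of the elements of $\mathcal{P}_\sigma^{\mathrm{I}}$, which is a direct rewriting.
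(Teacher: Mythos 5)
Your proof is correct and follows essentially the same route as the paper's: reduce to showing that, for idempotent $\sigma$, every type $\mathrm{II}$ prohibition contains a type $\mathrm{I}$ prohibition, by noting that the failure of the cocycle identity forces one of the four $\sigma$-values to vanish and then extracting the corresponding three-element subset. Your bookkeeping of the four cases is in fact slightly more careful than the paper's (which lists $\{h,hy,hyz\}$ for the case $\sigma(y,z)=0$, apparently a slip for the subset $\{hx,hxy,hxyz\}$ of $V$ that you correctly identify).
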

\begin{proof}
    By definition if $\xi$ contains a $\sigma$-prohibition of type $I$ we have that $\xi \notin \Omega_{\sigma}$.
    Conversely, suppose that there exists $V \in \mathcal{P}_\sigma$ such that $V \subseteq \xi$, if $V$ is of type $I$ we are done, thus suppose that $V$ is of type $II$.
    Then, there exists $h,x,y,z \in G$ such that $V = \{ h, hx, hxy, hxyz \}$ and $\sigma(x,y)\sigma(xy,z) \neq \sigma(x,yz)\sigma(y,z)$.
    Since, $\sigma$ is idempotent then, at least one of the elements of $\{ \sigma(x,y), \sigma(xy,z), \sigma(x,yz), \sigma(y,z) \}$ is zero.
    This means that at least one of the sets $ \{ h, hx, hxy \}, \{ h, hxy, hxyz \}, \{ h, hx, hxyz \}, \{ h, hy, hyz \}$ is a $\sigma$-prohibition of type $I$.
    Since $\xi$ contains all the previous sets, it also includes a type $I$ prohibition.
\end{proof}

\subsection{Diagonal idempotent factor sets}\label{sec:DiagonalIdempotent}

Let $S \subseteq G$ be such that 
\begin{equation}
    1 \notin S \text{ and } g^{-1} \in S \text{ for all }g \in S.
    \label{eq: Diagonal condition}
\end{equation}
    
We define
\[
    \sigma_S(x,y) :=\left\{\begin{matrix}
        0 & \text{ if } \{ x, y, xy \} \cap S \neq \varnothing\\ 
        1 & \text{ otherwise.}
    \end{matrix}\right.
\]

Let $(x,y)$ be such that $\sigma_S(x,y) = 1$, then $\{ x, y, xy, x^{-1}, y^{-1}, y^{-1}x^{-1} \} \cap S = \varnothing$. Thus, $\sigma_{S}(x,1) = \sigma_{S}(xy,y^{-1}) = \sigma_{S}(y^{-1},x^{-1}) = 1$. Therefore, by \cite[Theorem 4]{DN} we conclude that $\sigma_{S} \in pm(G)$. We call $\sigma_{S}$ the \textbf{diagonal factor set} generated by $S$. Thus, we obtain the following fact.

\begin{proposition} \label{p: diagonal factor set}
    Let $S \subseteq G$, such that $g^{-1} \in S$ for all $g \in S$ and $1 \notin S$. Then, $\sigma_{S} \in pm(G)$.
\end{proposition}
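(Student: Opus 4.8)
The plan is to verify directly that $\sigma_S$ satisfies the hypotheses of \cite[Theorem 4]{DN}, which characterizes the elements of $pm(G)$: a function $\tau : G\times G\to\kappa$ lies in $pm(G)$ if and only if it satisfies the partial $2$-cocycle-type conditions, and it suffices to check that whenever $\tau(x,y)\neq 0$ one has the ``boundary'' equalities $\tau(x,1)=\tau(1,y)=1$, $\tau(x,y)=\tau(xy,y^{-1})=\tau(y^{-1},x^{-1})$ (up to the precise formulation used in the cited theorem), together with the cocycle identity on the locus where all relevant products are nonzero. In fact the paragraph immediately preceding the statement already carries out exactly this computation, so the proof is essentially a matter of recording it cleanly.

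First I would fix $(x,y)$ with $\sigma_S(x,y)=1$; by definition this means $\{x,y,xy\}\cap S=\varnothing$. Using the symmetry hypothesis on $S$ (namely $g\in S\Leftrightarrow g^{-1}\in S$) I get simultaneously $\{x^{-1},y^{-1},(xy)^{-1}\}\cap S=\varnothing$, i.e. $\{x,y,xy,x^{-1},y^{-1},y^{-1}x^{-1}\}\cap S=\varnothing$. From this and the hypothesis $1\notin S$ I read off $\sigma_S(x,1)=1$ (the relevant triple is $\{x,1,x\}$, which misses $S$ since $x\notin S$ and $1\notin S$), $\sigma_S(1,y)=1$, $\sigma_S(xy,y^{-1})=1$ (triple $\{xy,y^{-1},x\}$), and $\sigma_S(y^{-1},x^{-1})=1$ (triple $\{y^{-1},x^{-1},y^{-1}x^{-1}\}$). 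These are precisely the boundary conditions required by \cite[Theorem 4]{DN}. The cocycle identity $\sigma_S(x,y)\sigma_S(xy,z)=\sigma_S(x,yz)\sigma_S(y,z)$ on the locus where the left side (equivalently, all four factors) is nonzero is automatic, since there $\sigma_S$ takes the value $1$ on all four pairs and $1\cdot 1=1\cdot 1$.

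I would then invoke \cite[Theorem 4]{DN} to conclude $\sigma_S\in pm(G)$, and close by recording the terminology: $\sigma_S$ is the diagonal factor set generated by $S$.

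I do not anticipate a genuine obstacle here: the only subtlety is making sure that the two conditions in \eqref{eq: Diagonal condition} are used in the right places — the symmetry condition to pass from $\{x,y,xy\}$ to its inverse triple, and $1\notin S$ to guarantee $\sigma_S(x,1)=1$ rather than $0$ — and that the formulation of \cite[Theorem 4]{DN} being cited is the one in terms of nonvanishing-plus-boundary-equalities (equivalently, conditions (i)--(v) of Remark~\ref{r: sigma properties} together with the cocycle identity on the nonvanishing locus), which the preceding paragraph already uses. So the ``hard part'' is purely bookkeeping: checking the handful of triples listed above really do avoid $S$.

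\begin{proof}
    This is exactly the content of the paragraph preceding the statement. Let $(x,y)\in G\times G$ be such that $\sigma_S(x,y)=1$, i.e. $\{x,y,xy\}\cap S=\varnothing$. Since $g\in S\Leftrightarrow g^{-1}\in S$, we also have $\{x^{-1},y^{-1},y^{-1}x^{-1}\}\cap S=\varnothing$, so that
    \[
        \{x,y,xy,x^{-1},y^{-1},y^{-1}x^{-1}\}\cap S=\varnothing .
    \]
    As $1\notin S$, the triples $\{x,1,x\}$, $\{xy,y^{-1},x\}$ and $\{y^{-1},x^{-1},y^{-1}x^{-1}\}$ all miss $S$, whence
    \[
        \sigma_S(x,1)=\sigma_S(xy,y^{-1})=\sigma_S(y^{-1},x^{-1})=1 .
    \]
    Finally, if $\sigma_S(x,y)\sigma_S(xy,z)\neq 0$ then each of $\sigma_S(x,y)$, $\sigma_S(xy,z)$, $\sigma_S(x,yz)$, $\sigma_S(y,z)$ equals $1$, because the corresponding triples all avoid $S$; hence the $2$-cocycle identity holds trivially on this locus. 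By \cite[Theorem 4]{DN}, $\sigma_S\in pm(G)$.
\end{proof}
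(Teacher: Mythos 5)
Your proof is correct and is essentially identical to the paper's own argument: the paper's proof is precisely the paragraph preceding the statement, which checks that $\sigma_S(x,y)=1$ forces $\{x,y,xy,x^{-1},y^{-1},y^{-1}x^{-1}\}\cap S=\varnothing$ and hence $\sigma_S(x,1)=\sigma_S(xy,y^{-1})=\sigma_S(y^{-1},x^{-1})=1$, then invokes \cite[Theorem 4]{DN}. Your extra verification of the cocycle identity on the nonvanishing locus is harmless bookkeeping that the paper leaves implicit.
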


\begin{lemma} \label{l: omega sigma S complement charaterization}
    Let $\xi \in \Omega$. Then, $\xi \notin \Omega_{\sigma_{S}}$ if, and only if, there exists $h \in G$ and $g \in S$ such that $\{ h, hg \} \subseteq \xi$.
\end{lemma}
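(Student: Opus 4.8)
The plan is to unwind the characterization of $\Omega_{\sigma_S}$ via $\sigma_S$-prohibitions, established in \eqref{eq: Omega sigma characterization by prohibitions}, and to exploit the fact (Lemma~\ref{sec:OnlyTypeI}, since $\sigma_S$ is idempotent by Proposition~\ref{p: diagonal factor set}) that $\xi \notin \Omega_{\sigma_S}$ if and only if $\xi$ contains a $\sigma_S$-prohibition of type~$\mathrm{I}$, i.e. a set of the form $\{h, hg, hgs\}$ with $\sigma_S(g,s)=0$. So the whole statement reduces to translating the condition ``$\xi$ contains some $\{h,hg,hgs\}$ with $\sigma_S(g,s)=0$'' into ``$\xi$ contains some $\{k, k\ell\}$ with $\ell \in S$.''

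First I would prove the easy direction. Suppose $\{k, k\ell\} \subseteq \xi$ with $\ell \in S$. Take $h = k$, $g = 1_G$, $s = \ell$; then $\sigma_S(1,\ell) = 0$ because $\ell \in \{1,\ell,\ell\} \cap S$, so $\{h, hg, hgs\} = \{k, k, k\ell\} = \{k, k\ell\}$ is a type~$\mathrm{I}$ prohibition contained in $\xi$, whence $\xi \notin \Omega_{\sigma_S}$. (Alternatively one can invoke Corollary~\ref{c: type I prohibition charaterization}: $\{k, k\ell\} = \{k, k\cdot 1, k\cdot 1\cdot\ell\} \notin \Omega_{\sigma_S}$ precisely when $\sigma_S(1,\ell)=0$, i.e. when $\ell\in S$, and then monotonicity from Lemma~\ref{l: Omega sigma contains subset elements} handles general $\xi$.)

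For the converse, suppose $\xi \notin \Omega_{\sigma_S}$. By Lemma~\ref{sec:OnlyTypeI} there are $h, g, s \in G$ with $\{h, hg, hgs\} \subseteq \xi$ and $\sigma_S(g,s) = 0$. By definition of $\sigma_S$ this means $\{g, s, gs\} \cap S \neq \varnothing$, so at least one of $g$, $s$, $gs$ lies in $S$. In each of the three cases I exhibit a pair $\{k, k\ell\} \subseteq \{h, hg, hgs\} \subseteq \xi$ with $\ell \in S$: if $g \in S$ take $k = h$, $\ell = g$, giving $\{h, hg\}$; if $gs \in S$ take $k = h$, $\ell = gs$, giving $\{h, hgs\}$; if $s \in S$ take $k = hg$, $\ell = s$, giving $\{hg, hgs\}$. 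In all cases the required pair is found inside $\xi$, completing the proof.

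I do not expect a genuine obstacle here; the only point requiring a little care is making sure Lemma~\ref{sec:OnlyTypeI} is applicable, which needs $\sigma_S$ to be an idempotent element of $pm(G)$ — and this is exactly the content of Proposition~\ref{p: diagonal factor set} together with the fact that $\sigma_S$ takes values in $\{0,1\}$ by construction. Everything else is a direct case check on the three ways a type~$\mathrm{I}$ prohibition $\{h,hg,hgs\}$ can arise from $\sigma_S(g,s)=0$.
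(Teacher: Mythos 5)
Your proposal is correct and follows essentially the same route as the paper: both directions are handled by the same reduction to type~$\mathrm{I}$ prohibitions and the same three-case analysis according to which of $g$, $s$, $gs$ lies in $S$, yielding the pairs $\{h,hg\}$, $\{hg,hgs\}$, $\{h,hgs\}$. The only (harmless) difference is that you make explicit the appeal to Lemma~\ref{sec:OnlyTypeI} for discarding type~$\mathrm{II}$ prohibitions, which the paper leaves implicit.
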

\begin{proof}
    Suppose that $g \in S$ and $h \in G$ are such that $\{ h, hg \} \subseteq \xi$.
    Therefore, $\sigma_{S}(1,g)=0$, so $\{ h, hg \}= \{ h, h\cdot 1, h \cdot 1 \cdot g \} \in \mathcal{P}_{\sigma}^{\mathrm{I}}$. 
    Thus, $\xi \notin \Omega_{\sigma_{S}}$. Conversely, suppose that there exists $h, x, y \in G$ such that $\sigma_{S}(x,y) = 0$ and $\{ h, hx, hxy \} \subseteq \xi$. Hence, $\{ x, y, xy \} \cap S \neq \varnothing$, consequently at least one of the sets $\{ h, hx \}, \,\{ hx, hxy \}, \, \{ h, hxy \}$ is of the desired form and is contained in $\xi$.
\end{proof}

\begin{proposition} \label{p: omega sigma S complement charaterization}
    Let $\xi \in \Omega_{\sigma_{S}}$, then $\big(A_{\sigma_{S}}(\xi)\big)^{c}= \{ xg : x \in \xi \text{ and } g \in S \}$.
\end{proposition}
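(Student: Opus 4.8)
The plan is to reduce the whole statement to Lemma~\ref{l: omega sigma S complement charaterization}, which already characterizes membership in $\Omega_{\sigma_{S}}$ through left translates of elements of $S$. First I would unwind the definition: since $A_{\sigma_{S}}(\xi)$ is a subset of $G\setminus\xi$ by definition, its complement is to be read inside $G\setminus\xi$, so that $\big(A_{\sigma_{S}}(\xi)\big)^{c}=\{\,w\in G\setminus\xi:\xi\cup\{w\}\notin\Omega_{\sigma_{S}}\,\}$. As a preliminary observation I would record that the right-hand side $\{\,xg:x\in\xi,\ g\in S\,\}$ is automatically disjoint from $\xi$: if $xg\in\xi$ with $x\in\xi$ and $g\in S$, then $\{x,xg\}\subseteq\xi$, which contradicts $\xi\in\Omega_{\sigma_{S}}$ by Lemma~\ref{l: omega sigma S complement charaterization}.

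Next I would prove the inclusion $\{\,xg:x\in\xi,\ g\in S\,\}\subseteq\big(A_{\sigma_{S}}(\xi)\big)^{c}$: given $w=xg$ with $x\in\xi$ and $g\in S$, the preliminary observation gives $w\notin\xi$, while $\{x,w\}\subseteq\xi\cup\{w\}$ together with $g\in S$ lets Lemma~\ref{l: omega sigma S complement charaterization} (applied with $h=x$) conclude that $\xi\cup\{w\}\notin\Omega_{\sigma_{S}}$, i.e. $w\in\big(A_{\sigma_{S}}(\xi)\big)^{c}$.

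For the reverse inclusion I would take $w\in G\setminus\xi$ with $\xi\cup\{w\}\notin\Omega_{\sigma_{S}}$, apply Lemma~\ref{l: omega sigma S complement charaterization} to obtain $h\in G$ and $g\in S$ with $\{h,hg\}\subseteq\xi\cup\{w\}$, and then observe that $\{h,hg\}\not\subseteq\xi$ (again by that lemma, since $\xi\in\Omega_{\sigma_{S}}$), so $w\in\{h,hg\}$. The case $w=hg$ is immediate, since then $h\in\xi$ and $w=hg$ is already of the required form. In the case $w=h$ one has $wg\in\xi$, say $wg=x$ with $x\in\xi$, so $w=xg^{-1}$; here I would invoke condition \eqref{eq: Diagonal condition}, namely that $S$ is closed under inversion, to conclude $g^{-1}\in S$ and hence rewrite $w=xg^{-1}$ in the required form. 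This last step — the need for $S^{-1}=S$ — is the only subtle point; everything else is a direct translation through Lemma~\ref{l: omega sigma S complement charaterization}, and I do not anticipate any real obstacle.
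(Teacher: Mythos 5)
Your argument is correct and follows essentially the same route as the paper: both reduce the statement to Lemma~\ref{l: omega sigma S complement charaterization} applied to $\xi\cup\{w\}$, split into the cases $w=hg$ and $w=h$, and use the symmetry $S^{-1}=S$ from \eqref{eq: Diagonal condition} in the second case (the paper compresses this into a chain of equivalences, leaving the symmetry implicit). Your preliminary observations — that the complement is taken inside $G\setminus\xi$ and that the right-hand side is disjoint from $\xi$ — are accurate and only make the argument more explicit.
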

\begin{proof}
    By Lemma~\ref{l: omega sigma S complement charaterization} we have that
    \begin{align*}
        y \in \big(A_{\sigma_{S}}(\xi)\big)^{c} & \Leftrightarrow \exists x \in \xi \text{ such that } \{ x, y \} \in \mathcal{P}_{\sigma_{S}}^{\operatorname{I}} \\
        &\Leftrightarrow \exists x \in \xi, \, g \in S, \, h \in G \text{ such that } \{ x, y \} = \{ h, hg \} \\
        & \Leftrightarrow \exists g \in S \text{ and } x \in \xi \text{ such that } y = xg.
    \end{align*}
\end{proof}

\begin{remark} \label{r: complement admissibles of xi cardinal S}
    Let $\xi \in \Omega_{\sigma_{S}}$ be such that $\xi$ is finite. By Proposition~\ref{p: omega sigma S complement charaterization} we have that if $S$ is finite then $\big(A_{\sigma_{S}}(\xi) \big)^{c}$ is finite. Analogously, if $S$ is infinite then $|S| = |\big(A_{\sigma_{S}}(\xi) \big)^{c}|$.
\end{remark}

\begin{proposition}\label{prop:DiagonalTopolFree}
    Let $S$ be as above, if $|S| < |G|$ then $\Omega_{\sigma_{S}}$ has no isolated points. Consequently, the partial action $\hat{\theta}$ is topologically free.
\end{proposition}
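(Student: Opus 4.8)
The plan is to show that under the hypothesis $|S| < |G|$ every point $\xi \in \Omega_{\sigma_S}$ fails to be isolated, and then invoke Corollary~\ref{c: no isolated points implies topologically free} to obtain topological freeness of $\hat{\theta}$. By Proposition~\ref{p: charaterization isolated points}, a point $\xi$ is isolated if and only if both $\xi$ and $A_{\sigma_S}(\xi)$ are finite; hence it suffices to prove that for every $\xi \in \Omega_{\sigma_S}$, at least one of $\xi$ or $A_{\sigma_S}(\xi)$ is infinite.

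First I would dispose of the case where $\xi$ is infinite: then $\xi$ is not isolated by Lemma~\ref{l: isolated points are finite} (or directly, since $\xi$ and $A_{\sigma_S}(\xi)$ cannot both be finite). So assume $\xi$ is finite. The key computation is Proposition~\ref{p: omega sigma S complement charaterization}, which gives
\[
    \big(A_{\sigma_S}(\xi)\big)^c = \{ xg : x \in \xi,\ g \in S \}.
\]
Since $\xi$ is finite, this set has cardinality at most $|\xi|\cdot|S|$. If $S$ is finite, then $\big(A_{\sigma_S}(\xi)\big)^c$ is finite (Remark~\ref{r: complement admissibles of xi cardinal S}), and since $|S| < |G|$ forces $G$ to be infinite, the complement $A_{\sigma_S}(\xi)$ is infinite, so $\xi$ is not isolated. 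If $S$ is infinite, then by Remark~\ref{r: complement admissibles of xi cardinal S} we have $|\big(A_{\sigma_S}(\xi)\big)^c| = |S| < |G|$; since $G$ is the disjoint union of $\big(A_{\sigma_S}(\xi)\big)^c$ and $A_{\sigma_S}(\xi)$, and $|G| > |S| = |\big(A_{\sigma_S}(\xi)\big)^c|$, it follows that $A_{\sigma_S}(\xi)$ must be infinite (with $|A_{\sigma_S}(\xi)| = |G|$, using that $G$ is infinite so cardinal arithmetic gives $|G| = \max(|S|,|A_{\sigma_S}(\xi)|)$). In either case $A_{\sigma_S}(\xi)$ is infinite, so by Proposition~\ref{p: charaterization isolated points} the point $\xi$ is not isolated.

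Having shown $\Omega_{\sigma_S}$ has no isolated points, the conclusion that $\hat{\theta}$ is topologically free is immediate from Corollary~\ref{c: no isolated points implies topologically free}. The only mildly delicate point is the cardinal arithmetic in the infinite-$S$ case: one needs that a set of cardinality $|G|$ cannot be written as a union of two sets both of cardinality strictly less than $|G|$ when... — actually here it is simpler, since one of the two pieces has cardinality $|S| < |G|$, the other piece must have cardinality $|G|$ (for an infinite set $G$, if $G = A \sqcup B$ with $|A| < |G|$, then $|B| = |G|$). This elementary fact is the main thing to state carefully; everything else is a direct application of the cited propositions.
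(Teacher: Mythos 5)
Your proof is correct and follows essentially the same route as the paper: it bounds $\bigl(A_{\sigma_{S}}(\xi)\bigr)^{c}$ using Proposition~\ref{p: omega sigma S complement charaterization} and Remark~\ref{r: complement admissibles of xi cardinal S}, concludes $|A_{\sigma_{S}}(\xi)| = |G|$ for finite $\xi$, and then applies Proposition~\ref{p: charaterization isolated points} together with the no-isolated-points criterion to get topological freeness. One small aside: the hypothesis $|S|<|G|$ does not by itself force $G$ to be infinite (that is the standing assumption of this section), but this does not affect the argument.
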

\begin{proof}
     By Remark~\ref{r: complement admissibles of xi cardinal S} and $|S| < |G|$  we conclude that $|A_{\sigma_{S}}(\xi)| = |G|$ for any finite $\xi \in \Omega_{\sigma_{S}}$. Thus, by Proposition~\ref{p: charaterization isolated points} we conclude that $\Omega_{\sigma_{S}}$ has no isolated points and consequently by Theorem~\ref{t: not topologically free charaterization} the partial action is topologically free.
\end{proof}

\subsection{Lateral idempotent factor sets}\label{sec:LateralIdempotent}

For any pair of elements $a,b \in G$ we define 
\[
    C_{(a,b)} := \{ (a,b), (b^{-1}a^{-1}, a), (b, b^{-1}a^{-1}), (b^{-1}, a^{-1}), (a^{-1}, ab), (ab, b^{-1})\}. 
\]

Let $W \subseteq G \times G$ such that 
\begin{equation}
    C_{(z,z^{-1})} \cap W = \varnothing \text{ for all } z \in G.
    \label{eq: Lateral condition}
\end{equation}
We define,
\begin{equation} \label{eq: lateral partial factor set}
    \sigma_{W}(a,b) := \left\{\begin{matrix}
        0 & \text{ if } C_{(a,b)} \cap W \neq \varnothing \\ 
        1 &  \text{ otherwise.}
    \end{matrix}\right.
\end{equation}

\begin{proposition} \label{p: lateral factor set}
    $\sigma_{W} \in pm(G)$.
\end{proposition}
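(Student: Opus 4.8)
The plan is to verify the three conditions of \cite[Theorem 4]{DN} characterizing factor sets, namely that whenever $\sigma_W(a,b) = 1$ one has $\sigma_W(a,1) = \sigma_W(ab,b^{-1}) = \sigma_W(b^{-1},a^{-1}) = 1$; this is exactly the strategy used in the proof of Proposition~\ref{p: diagonal factor set}. So suppose $\sigma_W(a,b) = 1$, i.e. $C_{(a,b)} \cap W = \varnothing$. The key observation is that the six "companion pairs" appearing in $C_{(a,b)}$ are precisely the pairs whose own $C$-sets we need to control, and that these $C$-sets coincide (as sets) with $C_{(a,b)}$ up to the identities \eqref{eq: C3 Orbit invariance}-style symmetries. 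Concretely, I would check that
\[
    C_{(a,1)} = C_{(b^{-1},a^{-1})} = C_{(a^{-1},ab)} \subseteq C_{(a,b)} \cup (\text{trivial pairs})
\]
or, more carefully, establish the set-theoretic equalities $C_{(a,b)} = C_{(b^{-1},a^{-1})}$, $C_{(a,b)} = C_{(ab,b^{-1})}$, and relate $C_{(a,1)}$ to $C_{(a,b)}$, all by direct substitution into the definition of $C_{(\cdot,\cdot)}$.

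First I would record the elementary symmetry $C_{(a,b)} = C_{(b^{-1},a^{-1})}$: substituting $(a',b') = (b^{-1},a^{-1})$ into the defining list of $C_{(a',b')}$ and simplifying (using $b'^{-1}a'^{-1} = ab$ and so on) reproduces the same six pairs, possibly in a permuted order. Similarly $C_{(a,b)} = C_{(ab,b^{-1})} = C_{(b^{-1}a^{-1},a)}$, etc. — in fact the six pairs in $C_{(a,b)}$ are a single orbit under the natural $S_3$-action, so the $C$-set attached to any one of them equals $C_{(a,b)}$. Granting these equalities, the hypothesis $C_{(a,b)} \cap W = \varnothing$ immediately forces $C_{(b^{-1},a^{-1})} \cap W = \varnothing$, hence $\sigma_W(b^{-1},a^{-1}) = 1$, and likewise $\sigma_W(ab,b^{-1}) = 1$ since $C_{(ab,b^{-1})} = C_{(a,b)}$.

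The one case needing a small extra argument is $\sigma_W(a,1) = 1$: here the relevant set is $C_{(a,1)}$, which unwinds to a subset of $\{(a,1),(a^{-1},a),(1,a^{-1}),(1,a^{-1}),(a^{-1},a),(a,1)\}$, i.e. contained in $C_{(a,a^{-1})} \cup \{(a,1),(1,a^{-1})\}$ roughly speaking. By condition \eqref{eq: Lateral condition}, $C_{(z,z^{-1})} \cap W = \varnothing$ for every $z$, so in particular $C_{(a,a^{-1})} \cap W = \varnothing$; the remaining pairs of the form $(a,1)$, $(1,a^{-1})$ are handled because they also lie in some $C_{(z,z^{-1})}$ (taking $z = a$ and using that $1\cdot a^{-1} = a^{-1}$, the pair $(a,1)$ sits in $C_{(a,a^{-1})}$ as $(ab,b^{-1})$ with $b = a^{-1}$). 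Thus $C_{(a,1)} \cap W = \varnothing$ and $\sigma_W(a,1) = 1$.

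The main obstacle is purely bookkeeping: one must carefully expand $C_{(a,b)}$ for each of the six substituted pairs and check that the resulting six-element sets genuinely coincide, keeping track of which pair maps to which under the relabelling. I expect no conceptual difficulty — the identities are forced by the $S_3$-symmetry built into the definition of $C_{(a,b)}$ — but writing it out cleanly requires a short table analogous to the one in Section~\ref{sec:S4-Invariance}. Once the set equalities $C_{(a,b)} = C_{(b^{-1},a^{-1})} = C_{(ab,b^{-1})}$ and the inclusion controlling $C_{(a,1)}$ are in place, the conclusion $\sigma_W \in pm(G)$ follows instantly from \cite[Theorem 4]{DN}.
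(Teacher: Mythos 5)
Your proposal is correct and follows essentially the same route as the paper: one verifies the conditions of \cite[Theorem 4]{DN} using that $C_{(a,b)} = C_{(b^{-1},a^{-1})} = C_{(ab,b^{-1})}$ (the $S_3$-orbit property of $C_{(\cdot,\cdot)}$) and that condition \eqref{eq: Lateral condition} forces $C_{(a,1)} \cap W = \varnothing$. One bookkeeping correction: $C_{(a,1)} = \{(a,1),(a^{-1},a),(1,a^{-1})\}$ coincides with $C_{(a^{-1},a)} = C_{(z,z^{-1})}$ for $z = a^{-1}$ rather than with $C_{(a,a^{-1})} = \{(a,a^{-1}),(1,a),(a^{-1},1)\}$, and in particular $(a,1)$ is not the pair $(ab,b^{-1})$ of $C_{(a,a^{-1})}$ as you claim; since \eqref{eq: Lateral condition} is quantified over all $z \in G$, the conclusion is unaffected.
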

\begin{proof}
    Let $(x,y)$ such that $\sigma_{W}(x,y) = 1$, consequently $\sigma_{W}(y^{-1}, x^{-1}) =\sigma_{W}(xy, y^{-1}) = 1$. On the other hand, $\sigma_{W}(x,1) = 1$ since $ C_{(x, x^{-1})} \cap W = \varnothing$. Hence, by \cite[Theorem 4]{DN} we have $\sigma_{W} \in pm(G)$. We call $\sigma_{W}$ as the \textbf{lateral factor set} generated by $W$.
\end{proof}

\begin{lemma} \label{l: lateral factor set prohibitions charaterization}
    $\xi \notin \Omega_{\sigma_{W}}$ if, and only if, there exists $h \in G$ and $(x,y) \in W$ such that $\{ h, hx, hxy \} \subseteq \xi$.
\end{lemma}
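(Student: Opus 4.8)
The plan is to mirror the proof of Lemma~\ref{l: omega sigma S complement charaterization} for diagonal factor sets, adapting it to the combinatorics of the sets $C_{(a,b)}$. Recall that by definition $\xi \notin \Omega_{\sigma_W}$ if and only if $\xi$ contains some $V \in \mathcal{P}_{\sigma_W}$, and since $\sigma_W$ is idempotent, Lemma~\ref{sec:OnlyTypeI} tells us we only need to worry about $\sigma_W$-prohibitions of type $\mathrm{I}$, i.e.\ sets of the form $\{h, hx, hxy\}$ with $\sigma_W(x,y) = 0$. So the statement reduces to showing: $\xi$ contains a type $\mathrm{I}$ prohibition $\{h, hx, hxy\}$ (with $\sigma_W(x,y)=0$) if and only if $\xi$ contains a set $\{h', h'x', h'x'y'\}$ with the \emph{specific} pair $(x', y') \in W$ (rather than merely $\sigma_W(x',y') = 0$).

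First I would prove the easy direction: if $(x,y) \in W$ and $\{h, hx, hxy\} \subseteq \xi$, then since $C_{(x,y)} \ni (x,y)$, we get $C_{(x,y)} \cap W \neq \varnothing$, hence $\sigma_W(x,y) = 0$ by \eqref{eq: lateral partial factor set}, so $\{h, hx, hxy\}$ is a genuine type $\mathrm{I}$ prohibition contained in $\xi$, whence $\xi \notin \Omega_{\sigma_W}$. For the converse, suppose $\xi \notin \Omega_{\sigma_W}$; by Lemma~\ref{sec:OnlyTypeI} there are $h, x, y \in G$ with $\sigma_W(x,y) = 0$ and $\{h, hx, hxy\} \subseteq \xi$. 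By \eqref{eq: lateral partial factor set}, $C_{(x,y)} \cap W \neq \varnothing$, so some pair among the six elements of $C_{(x,y)}$ lies in $W$. The key point is then that for \emph{each} of the six candidate pairs $(a,b) \in C_{(x,y)}$, the corresponding triple $\{1, a, ab\}$ is a left-translate of a subset of the triple $\{1, x, xy\}$ — more precisely, one checks (using the identifications $\{1,x,xy\}$ versus $\{1, y^{-1}x^{-1}, \dots\}$ etc., exactly as in the $24$-element orbit analysis of Section~\ref{sec:S4-Invariance} restricted to triples) that $\{g, ga, gab\} \subseteq \{h, hx, hxy\}$ for a suitable $g$. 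Concretely: $C_{(x,y)}$ is built so that the triples $\{1,a,ab\}$ for its members are precisely the cyclic/inverse images of $\{1, x, xy\}$ under the action of $S_3$ on triples — and each such image, after left translation, sits inside $\{h, hx, hxy\}$. Hence whichever pair of $C_{(x,y)}$ lies in $W$, it produces the desired $h' \in G$ with $\{h', h'x', h'x'y'\} \subseteq \xi$.

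The main obstacle, and the only place requiring care, is the bookkeeping for the second direction: I must verify for all six members $(a,b)$ of $C_{(x,y)}$ that the triple $\{1,a,ab\}$ equals $g^{-1}\{h,hx,hxy\}$ for an appropriate $g$ lying in $\{h,hx,hxy\}$ (equivalently $\{1,a,ab\} = \{v_0^{-1}v_0, v_0^{-1}v_1, v_0^{-1}v_2\}$ for a permutation $(v_0,v_1,v_2)$ of $(1,x,xy)$). This is the $3$-variable analogue of the table in Section~\ref{sec:S4-Invariance} and is a finite, routine check: e.g.\ $(a,b) = (x,y)$ gives $\{1,x,xy\}$ directly; $(a,b) = (y^{-1}x^{-1}, x)$ gives $\{1, y^{-1}x^{-1}, y^{-1}\} = (xy)^{-1}\{xy, x, 1\}$; $(a,b) = (x^{-1}, xy)$ gives $\{1, x^{-1}, y\} = x^{-1}\{x, 1, xy\}$; and so on for the remaining three. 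Once this correspondence is laid out, the lemma follows immediately, and I expect the write-up to be short, with the six-case verification either displayed compactly or left to the reader as "a direct computation analogous to the one in Section~\ref{sec:S4-Invariance}."
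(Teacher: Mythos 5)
Your proposal is correct and follows essentially the same route as the paper: the forward direction is immediate from $(x,y)\in C_{(x,y)}$, and the converse reduces (via the idempotency of $\sigma_W$, i.e.\ Lemma~\ref{sec:OnlyTypeI}) to a type $\mathrm{I}$ prohibition $\{h,hx,hxy\}$ with $C_{(x,y)}\cap W\neq\varnothing$, followed by the same six-case check that each member $(a,b)$ of $C_{(x,y)}$ yields a left-translate $\{g,ga,gab\}$ equal to $\{h,hx,hxy\}$. If anything, you are slightly more careful than the paper in explicitly citing Lemma~\ref{sec:OnlyTypeI} to rule out type $\mathrm{II}$ prohibitions before extracting the triple.
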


\begin{proof}
    Suppose that $\xi \notin \Omega_{\sigma_{W}}$, then there exists $h, x, y \in G$ such that $\sigma_{W}(x,y)=0$ and $\{ h, hx, hxy \} \subseteq \xi$. Thus, $C_{(x,y)} \cap W \neq \varnothing$, therefore:
    \begin{enumerate}[(i)]
        \item if $(x,y) \in W$ we are done, 
        \item if $(y^{-1}x^{-1}, x) \in W$, then $\{ hxy, hxy (y^{-1} x^{-1}), hxy (y^{-1}x^{-1}) x \} = \{ h, hx, hxy \} \subseteq \xi $,        
        \item if $(y, y^{-1}x^{-1}) \in W$, then $\{ hx, hxy, hxy(y^{-1}x^{-1}) \} = \{ h, hx, hxy \} \subseteq \xi$,
        \item if $(y^{-1},x^{-1}) \in W$, then $\{ hyx, (hxy)y^{-1}, (hxy)y^{-1}x^{-1} \} = \{ hxy, hx, h \} \subseteq \xi$,
        \item if $(x^{-1}, xy) \in W$, then by items $(iv)$ and $(ii)$ we obtain the desired conclusion,
        \item if $(xy, y^{-1}) \in W$, then our conclusion follows from items $(iv)$ and $(iii)$.
    \end{enumerate}
\end{proof}

By Lemma~\ref{l: lateral factor set prohibitions charaterization} we conclude that 
\[
    \big(A_{\sigma_{W}}(\xi)\big)^{c} = \big\{ z \in G \setminus \xi: \text{ there exists } h \in G, \, (x,y) \in W \text{ such that } \{ h, hx, hxy \} \subseteq \xi \cup \{ z \} \big\}.
\]
Thus,
\begin{align*}
    \big(A_{\sigma_{W}}(\xi)\big)^{c} = \{ &hxy : h, hx \in \xi \text{ and } (x,y) \in W \} \\ 
    & \cup \{ hx : h, hxy \in \xi \text{ and } (x,y) \in W \} \\ 
    & \cup \{ h : hx, hxy \in \xi \text{ and } (x,y) \in W\}.
\end{align*}
Whence one conclude that if $\xi \in \Omega_{\sigma_{W}}$ and $W$ are finite, then $\big( A_{\sigma_{W}}(\xi)\big)^{c}$ is finite. Moreover, if $\xi$ is finite and $W$ is infinite, then $|\big( A_{\sigma_{W}}(\xi)\big)^{c}| \leq |W|$. Therefore, we obtain the following fact:

\begin{proposition}\label{prop:LateralTopolFree}
    If $|W| < |G|$, then $\Omega_{\sigma_{W}}$ has no isolated points, and consequently the partial action $\hat{\theta}$ is topologically free.
\end{proposition}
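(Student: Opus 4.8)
The plan is to follow closely the strategy of the proof of Proposition~\ref{prop:DiagonalTopolFree}, replacing the input about diagonal factor sets by the description of $\big(A_{\sigma_{W}}(\xi)\big)^{c}$ obtained just above the statement. Everything will be reduced to the characterization of isolated points in Proposition~\ref{p: charaterization isolated points} together with Lemma~\ref{l: isolated points are finite}.

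First I would take an arbitrary $\xi \in \Omega_{\sigma_{W}}$. If $\xi$ is infinite, then by Lemma~\ref{l: isolated points are finite} it is not isolated, so it suffices to treat the case where $\xi$ is finite. For such a $\xi$, I would invoke the displayed formula preceding the proposition, which exhibits $\big(A_{\sigma_{W}}(\xi)\big)^{c}$ as a union of three sets, each consisting of group elements built from one or two elements of $\xi$ together with a pair $(x,y)\in W$. Counting, this gives $\big|\big(A_{\sigma_{W}}(\xi)\big)^{c}\big| \le 3\,|\xi|\,|W|$ when $W$ is infinite, and $\big(A_{\sigma_{W}}(\xi)\big)^{c}$ is finite when $W$ is finite; in either case, since $G$ is infinite and $|W| < |G|$, one obtains $\big|\big(A_{\sigma_{W}}(\xi)\big)^{c}\big| < |G|$.

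Next I would use that $G \setminus \xi = A_{\sigma_{W}}(\xi) \sqcup \big(A_{\sigma_{W}}(\xi)\big)^{c}$ and that $|G\setminus\xi| = |G|$ because $\xi$ is finite and $G$ is infinite; the cardinal identity $\mu + \nu = \max(\mu,\nu)$ for an infinite $\mu$ then forces $|A_{\sigma_{W}}(\xi)| = |G|$, so $A_{\sigma_{W}}(\xi)$ is infinite. By Proposition~\ref{p: charaterization isolated points}, a finite $\xi$ can be isolated only if $A_{\sigma_{W}}(\xi)$ is finite; hence no finite $\xi$ is isolated, and together with the first step $\Omega_{\sigma_{W}}$ has no isolated points. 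Corollary~\ref{c: no isolated points implies topologically free} (equivalently Theorem~\ref{t: not topologically free charaterization}) then yields that $\hat{\theta}$ is topologically free.

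The argument has no real obstacle: the substantial work is already packaged in Lemma~\ref{l: isolated points are finite}, Proposition~\ref{p: charaterization isolated points}, and the displayed description of $\big(A_{\sigma_{W}}(\xi)\big)^{c}$. The only step demanding (routine) care is the cardinality bookkeeping — confirming that the three pieces in that description have total size bounded by a finite multiple of $|W|$, and then invoking infinite cardinal arithmetic to pass from $\big|\big(A_{\sigma_{W}}(\xi)\big)^{c}\big| < |G|$ to $|A_{\sigma_{W}}(\xi)| = |G|$.
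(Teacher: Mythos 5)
Your proposal is correct and follows essentially the same route as the paper: the paper likewise deduces from the displayed description of $\bigl(A_{\sigma_{W}}(\xi)\bigr)^{c}$ that this set is finite when $\xi$ and $W$ are finite and has cardinality at most $|W|$ when $\xi$ is finite and $W$ is infinite, and then concludes via Proposition~\ref{p: charaterization isolated points}, Lemma~\ref{l: isolated points are finite} and Theorem~\ref{t: not topologically free charaterization} exactly as you do. Your explicit bound $3\,|\xi|\,|W|$ and the cardinal arithmetic step are just a slightly more detailed rendering of the same bookkeeping.
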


\subsection{General idempotent factor sets}

Let $T \subseteq G \times G$ such that if 
\begin{equation}
    (x,y) \in T \text{ then } (y^{-1}, x^{-1}) \in T \text{ and } (1,1) \notin T.
    \label{eq: General idempotent condition}
\end{equation}
Define 
\[
    T_1 = \{ (x,y) \in T : x \neq 1, \, y \neq 1, \text{ and } x \neq y^{-1}  \},
\]
and
\[
    T_0 = \big\{ x \in G \setminus \{ 1 \} : C_{(x,x^{-1})} \cap T \neq \varnothing \big\}.
\]
Notice that $T_0$ satisfies condition \eqref{eq: Diagonal condition} and $T_1$ satisfies condition \eqref{eq: Lateral condition}.
Define $\sigma^{0} := \sigma_{T_0}$, $\sigma^{1} := \sigma_{T_1}$ as in Proposition~\ref{p: diagonal factor set} and Proposition~\ref{p: lateral factor set} respectively, and $\sigma_{T} := \sigma^{0} \sigma^{1}$. We call $\sigma_{T}$ as the \textbf{idempotent factor set} generated by $T$. 

\begin{lemma} \label{l: any idempotent factor set is generated by Null}
    Let $\sigma$ be an idempotent factor set, and
    \[
       \operatorname{Null}(\sigma):= \{ (x,y) \in G \times G : \sigma(x,y) = 0 \}.
    \]
    Then, $\sigma$ is the idempotent factor set generated by $\operatorname{Null}(\sigma)$.

\end{lemma}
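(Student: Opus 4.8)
The plan is to show that $\operatorname{Null}(\sigma) = \operatorname{Null}(\sigma_{\operatorname{Null}(\sigma)})$, which suffices since both $\sigma$ and $\sigma_{\operatorname{Null}(\sigma)}$ are idempotent (take values in $\{0,1\}$), and an idempotent factor set is determined by its zero-set. Write $T := \operatorname{Null}(\sigma)$; note that $T$ satisfies condition \eqref{eq: General idempotent condition} by Remark~\ref{r: sigma properties}(v) (so $(x,y)\in T \Leftrightarrow (y^{-1},x^{-1})\in T$) together with $\sigma(1,1)=1$. Let $T_0$ and $T_1$ be the associated sets from the construction, so $\sigma_{T} = \sigma^{0}\sigma^{1} = \sigma_{T_0}\sigma_{T_1}$. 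Since the product is pointwise and everything is $\{0,1\}$-valued, $\sigma_T(x,y) = 0$ if and only if $\sigma_{T_0}(x,y) = 0$ or $\sigma_{T_1}(x,y)=0$, i.e.\ if and only if $\{x,y,xy\}\cap T_0 \neq \varnothing$ or $C_{(x,y)}\cap T_1 \neq \varnothing$.

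For the inclusion $\operatorname{Null}(\sigma_T) \subseteq T$: suppose $\sigma_T(x,y)=0$. In the first case there is $z\in\{x,y,xy\}\cap T_0$, so $C_{(z,z^{-1})}\cap T \neq\varnothing$; by \eqref{eq: C3 Orbit invariance}-type relations for partial factor sets (Remark~\ref{r: sigma properties}(iii)) and the fact that $\sigma(z,z^{-1})=0$ forces $\sigma(1,z)=\sigma(z,1)=0$, one checks that in each of the three subcases $z=x$, $z=y$, $z=xy$ the relevant relation of Remark~\ref{r: sigma properties} propagates the zero to $\sigma(x,y)$; concretely $\sigma(x,1)=0\Rightarrow\{1,x,y^{-1}\}\notin\Omega_\sigma$ and hence $\sigma(x,y)=0$ by Corollary~\ref{c: basic zeros of B sigma depends only on sigma}, and the cases $z=y$, $z=xy$ are handled symmetrically using (v) of Remark~\ref{r: sigma properties}. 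In the second case $C_{(x,y)}\cap T_1\neq\varnothing$, so some member of $C_{(x,y)}$ lies in $\operatorname{Null}(\sigma)$; since all six entries of $C_{(x,y)}$ have the same $\sigma$-value (this is exactly \eqref{eq: sigma involution formula} and \eqref{eq: C3 Orbit invariance}, or for the general — possibly partial — $\sigma$, it is the content of Remark~\ref{r: sigma properties}(iv),(v) combined with the relations defining $pm(G)$), we get $\sigma(x,y)=0$, i.e.\ $(x,y)\in T$.

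For the reverse inclusion $T\subseteq\operatorname{Null}(\sigma_T)$: let $(x,y)\in T$, so $\sigma(x,y)=0$. If $x=1$, or $y=1$, or $x=y^{-1}$, then $(x,y)\in C_{(z,z^{-1})}$ for $z=y$, $z=x$, or $z=x$ respectively, and $\sigma(x,y)=0$ forces $z\in T_0$ (using again that $\sigma(z,z^{-1})=0$ whenever any entry of $C_{(z,z^{-1})}$ vanishes); then $\{x,y,xy\}\cap T_0\ni z$, so $\sigma_{T_0}(x,y)=0$ and hence $\sigma_T(x,y)=0$. Otherwise $x\neq 1$, $y\neq 1$, $x\neq y^{-1}$, so $(x,y)\in T_1$, whence $C_{(x,y)}\cap T_1\ni (x,y)$ and $\sigma_{T_1}(x,y)=0$, giving $\sigma_T(x,y)=0$. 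This establishes $\operatorname{Null}(\sigma_T)=\operatorname{Null}(\sigma)$, and since an idempotent factor set is recovered from its null-set, $\sigma=\sigma_T=\sigma_{\operatorname{Null}(\sigma)}$.

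The main obstacle I anticipate is the careful bookkeeping in showing that all six entries of $C_{(x,y)}$ carry the same $\sigma$-value and that the three "diagonal" subcases ($z\in\{x,y,xy\}$) each force $\sigma(x,y)=0$: this is where one must invoke precisely the right relations from Remark~\ref{r: sigma properties} and Corollary~\ref{c: basic zeros of B sigma depends only on sigma}, and it is easy to overlook a case (for instance, distinguishing whether $z\in T_0$ is witnessed through $T$ via a diagonal pair $(z,z^{-1})$ or via a lateral pair in $C_{(z,z^{-1})}$). Everything else is a direct unwinding of the definitions of $\sigma_{T_0}$, $\sigma_{T_1}$, and the pointwise product.
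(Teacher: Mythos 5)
Your proof is correct and follows essentially the same route as the paper's: reduce to the equality of the null sets, handle the diagonal contribution by propagating $\sigma(z,1)=0$ (equivalently $\sigma(z,z^{-1})=0$) to $\sigma(x,y)=0$, and handle the lateral contribution via the fact that all six entries of $C_{(x,y)}$ vanish simultaneously, which is exactly the \cite{DN} result the paper cites. The only slips are cosmetic bookkeeping of the kind you anticipated: to conclude $\sigma(x,y)=0$ from Corollary~\ref{c: basic zeros of B sigma depends only on sigma} you should use the set $\{1,x^{-1},y\}$ rather than $\{1,x,y^{-1}\}$ (the latter yields $\sigma(y,x)=0$), and for the subcase $y=1$ the pair $(x,1)$ lies in $C_{(x^{-1},x)}$ rather than $C_{(x,x^{-1})}$ --- both harmless because of the symmetries in Remark~\ref{r: sigma properties}.
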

\begin{proof}
    Set $T := \operatorname{Null}(\sigma)$. 
    Note that by \cite[Corollary 7]{DN} if $(x,y) \in T$, then $C_{(x,y)} \subseteq T$.
    In order to show that $\sigma = \sigma_{T}$ we only have to verify that $\sigma_{T}(x,y) = 0$ if, and only if, $(x,y) \in \operatorname{Null}(\sigma)$. It is clear that if $(x,y) \in T$ then $\sigma_{T}(x,y) = 0$. On the other hand, suppose that $\sigma_{T}(x,y) = 0$.
    Then there are two possible cases:
    \begin{enumerate}[(i)]
        \item $(x,y) \in T_{1} \subseteq T = \operatorname{Null}(\sigma)$,
        \item $\{ x, y, xy \} \cap T_{0} \neq \varnothing$, this means that 
            \[
                \sigma(x,1) = 0 \text{ or } \sigma(y,1) = 0 \text{ or }\sigma(xy,1) = 0,             
            \]
            which implies by \cite[Proposition 4 and Proposition 5]{DN} that $\sigma(x,y)=0,$ and then $(x,y) \in \operatorname{Null}(\sigma)$.
    \end{enumerate}
\end{proof}

We proceed with the next easy fact:
\begin{lemma}
    Let $\sigma$ and $\rho$ be idempotent factor sets, then $\Omega_{\sigma \rho} = \Omega_{\sigma} \cap \Omega_{\rho}$.
\end{lemma}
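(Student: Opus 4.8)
The plan is to show the two inclusions $\Omega_{\sigma\rho} \subseteq \Omega_\sigma \cap \Omega_\rho$ and $\Omega_\sigma \cap \Omega_\rho \subseteq \Omega_{\sigma\rho}$ by working directly with the characterization \eqref{eq: Omega sigma characterization by prohibitions} and Lemma~\ref{sec:OnlyTypeI}, which says that for an idempotent factor set the elements of $\Omega_{\cdot}$ are determined solely by $\sigma$-prohibitions of type I. Since $\sigma$ and $\rho$ are idempotent, so is $\sigma\rho$ (the monoid $pm(G)$ is a commutative inverse monoid under pointwise multiplication, and a pointwise product of $\{0,1\}$-valued functions is again $\{0,1\}$-valued), so all three of $\Omega_\sigma$, $\Omega_\rho$, $\Omega_{\sigma\rho}$ are cut out by type I prohibitions only.

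The key observation is the elementary fact that for $\{0,1\}$-valued $\sigma,\rho$ one has $(\sigma\rho)(g,s) = 0$ if and only if $\sigma(g,s)=0$ or $\rho(g,s)=0$; hence $\operatorname{Null}(\sigma\rho) = \operatorname{Null}(\sigma) \cup \operatorname{Null}(\rho)$. Translating this through the definition
\[
    \mathcal{P}_\tau^{\mathrm{I}} = \{ \{h, hg, hgs\} : \tau(g,s) = 0 \}
\]
gives $\mathcal{P}_{\sigma\rho}^{\mathrm{I}} = \mathcal{P}_\sigma^{\mathrm{I}} \cup \mathcal{P}_\rho^{\mathrm{I}}$. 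Then I would write, using Lemma~\ref{sec:OnlyTypeI} for each of the three factor sets,
\[
    \Omega_{\sigma\rho} = \{ \xi \in \Omega : V \nsubseteq \xi,\ \forall V \in \mathcal{P}_{\sigma\rho}^{\mathrm{I}} \}
    = \{ \xi \in \Omega : V \nsubseteq \xi,\ \forall V \in \mathcal{P}_\sigma^{\mathrm{I}} \cup \mathcal{P}_\rho^{\mathrm{I}} \}
    = \Omega_\sigma \cap \Omega_\rho,
\]
which is exactly the claim.

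The only point requiring a small amount of care — and I expect it to be the main (very mild) obstacle — is confirming that $\sigma\rho$ is genuinely a factor set so that $\Omega_{\sigma\rho}$ and $\mathcal{P}_{\sigma\rho}^{\mathrm{I}}$ are defined, and that Lemma~\ref{sec:OnlyTypeI} applies to it; this is handled by Remark~\ref{r: sigma properties} (the monoid structure on $pm(G)$) together with the observation that a pointwise product of idempotent factor sets is idempotent, since $\sigma(g,h),\rho(g,h) \in \{0,1\}$ forces $\sigma(g,h)\rho(g,h) \in \{0,1\}$. One could alternatively phrase the whole argument through Lemma~\ref{l: any idempotent factor set is generated by Null}, noting $\operatorname{Null}(\sigma\rho) = \operatorname{Null}(\sigma) \cup \operatorname{Null}(\rho)$, but the direct route via type I prohibitions is the shortest. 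No deeper input is needed; the statement is really just bookkeeping once Lemma~\ref{sec:OnlyTypeI} is in hand.
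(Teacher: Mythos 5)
Your proposal is correct and follows essentially the same route as the paper: both arguments reduce to the observation that for $\{0,1\}$-valued factor sets $(\sigma\rho)(x,y)=0$ iff $\sigma(x,y)=0$ or $\rho(x,y)=0$, so that $\mathcal{P}^{\mathrm{I}}_{\sigma\rho}=\mathcal{P}^{\mathrm{I}}_{\sigma}\cup\mathcal{P}^{\mathrm{I}}_{\rho}$, and then apply the type~I characterization of Lemma~\ref{sec:OnlyTypeI} to all three factor sets. Your extra remarks verifying that $\sigma\rho$ is again an idempotent factor set are left implicit in the paper but are a harmless (and sound) addition.
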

\begin{proof}
    Observe that
    \begin{align*}
        \xi \in \Omega_{\sigma \rho} 
        & \Leftrightarrow \text{ for all } h, x, y \in G \text{ such that } \sigma(x,y)\rho(x,y) = 0 \text{ we have that }  \{ h, hx, hxy \} \nsubseteq \xi \\
        & \Leftrightarrow \text{ for all } h, x, y \in G \text{ such that } \sigma(x,y)=0 \text{ or } \rho(x,y) = 0 \text{ we have that }  \{ h, hx, hxy \} \nsubseteq \xi \\
        & \Leftrightarrow \text{ for all } V \in \mathcal{P}^{I}_{\sigma} \text{ and } V' \in \mathcal{P}^{I}_{\rho} \text{ we have that } V, V' \nsubseteq \xi \\
        & \Leftrightarrow \xi \in \Omega_{\sigma} \cap \Omega_{\rho}.
    \end{align*}
    
\end{proof}

\begin{proposition} \label{p: small generated set of a factor set implies no isolated points}
    Let $T \subseteq G \times G$ be a set satisfying condition \eqref{eq: General idempotent condition} such that $|T| < |G|$.
    Then, $\Omega_{\sigma_{T}}$ has no isolated points.
\end{proposition}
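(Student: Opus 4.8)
The plan is to reduce the statement, through the characterization of isolated points in Proposition~\ref{p: charaterization isolated points}, to a cardinality estimate on sets of admissible elements. Since $\sigma_T=\sigma_{T_0}\sigma_{T_1}$ with both $\sigma_{T_0}$ and $\sigma_{T_1}$ idempotent, the lemma $\Omega_{\sigma\rho}=\Omega_\sigma\cap\Omega_\rho$ for idempotent factor sets gives $\Omega_{\sigma_T}=\Omega_{\sigma_{T_0}}\cap\Omega_{\sigma_{T_1}}$. In particular every $\xi\in\Omega_{\sigma_T}$ lies in $\Omega_{\sigma_{T_0}}$ and in $\Omega_{\sigma_{T_1}}$, and for $g\in G\setminus\xi$ one has $g\in A_{\sigma_T}(\xi)$ iff $\xi\cup\{g\}$ belongs to both $\Omega_{\sigma_{T_0}}$ and $\Omega_{\sigma_{T_1}}$; hence $A_{\sigma_T}(\xi)=A_{\sigma_{T_0}}(\xi)\cap A_{\sigma_{T_1}}(\xi)$, so $G\setminus A_{\sigma_T}(\xi)=\bigl(G\setminus A_{\sigma_{T_0}}(\xi)\bigr)\cup\bigl(G\setminus A_{\sigma_{T_1}}(\xi)\bigr)$. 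Infinite elements of $\Omega_{\sigma_T}$ are never isolated by Lemma~\ref{l: isolated points are finite}, so it remains to show that for every \emph{finite} $\xi\in\Omega_{\sigma_T}$ the set $A_{\sigma_T}(\xi)$ is infinite; by the displayed identity it suffices that both $G\setminus A_{\sigma_{T_0}}(\xi)$ and $G\setminus A_{\sigma_{T_1}}(\xi)$ have cardinality $<|G|$, since $G$ is infinite and the union of two subsets of $G$ of cardinality $<|G|$ again has cardinality $<|G|$.

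Next I would bound the generating sets. Clearly $T_1\subseteq T$, so $|T_1|\le|T|<|G|$. For $T_0$ I would record the computation $C_{(x,x^{-1})}=\{(x,x^{-1}),\,(1,x),\,(x^{-1},1)\}$, valid for every $x\in G$; consequently, if $x\in T_0$ then $T$ contains one of the pairs $(x,x^{-1})$, $(1,x)$, $(x^{-1},1)$, and from any such pair $(a,b)\in T$ the element $x$ must be one of $a$, $b$, $a^{-1}$. Assigning to each $x\in T_0$ a pair in $C_{(x,x^{-1})}\cap T$ thus defines a map $T_0\to T$ with fibres of size at most three, so $|T_0|\le 3|T|$; since $G$ is infinite this gives $|T_0|<|G|$ (it equals $|T|$ when $T$ is infinite, and is finite when $T$ is finite).

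Finally I would invoke the estimates already available for diagonal and lateral factor sets. Applying Remark~\ref{r: complement admissibles of xi cardinal S} with $S=T_0$ to the finite set $\xi\in\Omega_{\sigma_{T_0}}$ shows that $G\setminus A_{\sigma_{T_0}}(\xi)$ is finite if $T_0$ is finite and has cardinality $|T_0|$ if $T_0$ is infinite; in either case its cardinality is $<|G|$. Likewise, the explicit description of $\bigl(A_{\sigma_W}(\xi)\bigr)^{c}$ obtained just before Proposition~\ref{prop:LateralTopolFree}, applied with $W=T_1$ and the finite set $\xi\in\Omega_{\sigma_{T_1}}$, gives $|G\setminus A_{\sigma_{T_1}}(\xi)|\le|T_1|<|G|$ (and finite if $T_1$ is finite). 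Combining the two, $|G\setminus A_{\sigma_T}(\xi)|<|G|$, hence $|A_{\sigma_T}(\xi)|=|G|$; in particular $A_{\sigma_T}(\xi)$ is infinite, so $\xi$ is not isolated by Proposition~\ref{p: charaterization isolated points}. Therefore $\Omega_{\sigma_T}$ has no isolated points.

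The step I expect to require the most care is purely organizational: pinning down the identity $A_{\sigma_T}(\xi)=A_{\sigma_{T_0}}(\xi)\cap A_{\sigma_{T_1}}(\xi)$ via $\Omega_{\sigma_T}=\Omega_{\sigma_{T_0}}\cap\Omega_{\sigma_{T_1}}$, and justifying $|T_0|<|G|$ from $|T|<|G|$ through the computation of $C_{(x,x^{-1})}$; the cardinal arithmetic that a union of two subsets of $G$ of cardinality $<|G|$ is again of cardinality $<|G|$ is standard for infinite $|G|$ but deserves to be stated explicitly.
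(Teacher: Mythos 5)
Your proof is correct and follows essentially the same route as the paper: decompose $\bigl(A_{\sigma_T}(\xi)\bigr)^{c}$ as $\bigl(A_{\sigma_{T_0}}(\xi)\bigr)^{c}\cup\bigl(A_{\sigma_{T_1}}(\xi)\bigr)^{c}$ via $\Omega_{\sigma_T}=\Omega_{\sigma_{T_0}}\cap\Omega_{\sigma_{T_1}}$, bound each piece using the diagonal and lateral estimates, and conclude via Proposition~\ref{p: charaterization isolated points}. Your write-up is in fact more careful than the paper's on two points it glosses over: the justification that $|T_0|<|G|$ (via the computation of $C_{(x,x^{-1})}$, where the paper simply asserts $|T_0|<|T|$) and the cardinal arithmetic for the union.
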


\begin{proof}
    Observe that
    \begin{align*}
        \big( A_{\sigma_{T}}(\xi) \big)^{c} 
        &= \big\{ h : \xi \cup \{ h \} \notin \Omega_{\sigma^{0}} \cap \Omega_{\sigma^{1}}  \big\} \\
        &= \big\{ h : \xi \cup \{ h \} \notin \Omega_{\sigma^{0}} \text{ or } \xi \cup \{ h \} \notin \Omega_{\sigma^{1}} \big\} \\
        &= \big\{ h : h \in ( A_{\sigma^{0}}(\xi) )^{c} \text{ or } h \in ( A_{\sigma^{1}}(\xi) )^{c} \big\} \\
        &= \big( A_{\sigma^{0}}(\xi) \big)^{c} \cup \big( A_{\sigma^{1}}(\xi) \big)^{c}.
    \end{align*}
    Since, $|T_0| < |T|$ and $|T_1| < |T|$ we have that if $\xi$ is finite then  $|(A_{\sigma^{0}}(\xi))^{c}|, |(A_{\sigma^{1}}(\xi))^{c}| < |G|$, consequently $|(A_{\sigma_{T}}(\xi))^{c}| < |G|$ and $|A_{\sigma_{T}}(\xi)| = |G|$.
\end{proof}

A direct consequence of Theorem~\ref{t: not topologically free charaterization} and Proposition~\ref{p: small generated set of a factor set implies no isolated points} is the following corollary:

\begin{corollary}\label{cor:IdempotentTopolFree}
     Suppose that $T \subseteq G \times G$ satisfies condition \eqref{eq: General idempotent condition},
    and let $\sigma$ be a factor set generated by $T.$ If  $|T| < |G|,$
    then $\hat{\theta}$ is topologically free.
\end{corollary}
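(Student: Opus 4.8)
The plan is simply to chain together the two results cited immediately before the statement. First I would recall the standing hypothesis of this section, namely that $G$ is an \emph{infinite} group, so that the assumptions of Theorem~\ref{t: not topologically free charaterization} are in force. Next, since $T \subseteq G \times G$ satisfies condition \eqref{eq: General idempotent condition}, the idempotent factor set it generates is exactly $\sigma = \sigma_{T} = \sigma^{0}\sigma^{1}$ in the notation introduced above (with $\sigma^{0} = \sigma_{T_0}$, $\sigma^{1} = \sigma_{T_1}$), and the hypothesis $|T| < |G|$ is precisely the input required by Proposition~\ref{p: small generated set of a factor set implies no isolated points}. Applying that proposition gives that $\Omega_{\sigma_{T}}$ has no isolated points.

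Then I would invoke Theorem~\ref{t: not topologically free charaterization}, whose equivalence of items (i) and (ii) states that $\hat{\theta}$ fails to be topologically free if and only if there exists $g \in G$ such that $\operatorname{Fix}^{\sigma}_{g}$ contains an isolated point of $\Omega_{\sigma}$. Since $\Omega_{\sigma} = \Omega_{\sigma_{T}}$ has no isolated points at all, item (ii) cannot hold, and therefore item (i) fails, i.e. $\hat{\theta}$ is topologically free. (One may equally cite Corollary~\ref{c: no isolated points implies topologically free} directly, which records exactly this last implication.) This completes the argument.

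Since the whole deduction is a two-line concatenation of already-established facts, there is no genuine obstacle here; the only points that warrant a moment's care are the identification of ``the factor set generated by $T$'' with $\sigma_{T}$ so that Proposition~\ref{p: small generated set of a factor set implies no isolated points} applies verbatim, and the remark that it is the section-wide assumption that $G$ is infinite which licenses the use of Theorem~\ref{t: not topologically free charaterization}.
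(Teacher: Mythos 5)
Your proposal is correct and follows exactly the paper's intended argument: the paper presents this corollary as a direct consequence of Proposition~\ref{p: small generated set of a factor set implies no isolated points} (no isolated points in $\Omega_{\sigma_T}$ when $|T|<|G|$) combined with Theorem~\ref{t: not topologically free charaterization}. Your added remarks on the standing infiniteness assumption on $G$ and the identification of $\sigma$ with $\sigma_T$ are sensible clarifications but do not change the route.
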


The next fact is direct consequence of Lemma~\ref{l: any idempotent factor set is generated by Null} and Corollary~\ref{cor:IdempotentTopolFree}. 

\begin{corollary}\label{cor:IdempotentNullTopolFree}
    Let $\sigma$ be an idempotent factor set of $G$.
    If $|\operatorname{Null}(\sigma)| < |G|,$ then $\hat{\theta}$ is topologically free.
\end{corollary}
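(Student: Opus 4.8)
The plan is to deduce this corollary directly from the two results it cites, so the proof is essentially a one-line chain of implications. First I would invoke Lemma~\ref{l: any idempotent factor set is generated by Null}: since $\sigma$ is an idempotent factor set, it equals the idempotent factor set $\sigma_{T}$ generated by $T := \operatorname{Null}(\sigma)$. By the very definition of $\operatorname{Null}(\sigma)$ and part (v) of Remark~\ref{r: sigma properties} (which gives $\sigma(x,y)=0 \Leftrightarrow \sigma(y^{-1},x^{-1})=0$), together with $\sigma(1,1)=1$, the set $T$ satisfies the symmetry condition \eqref{eq: General idempotent condition}, so it is a legitimate generating set in the sense required by Corollary~\ref{cor:IdempotentTopolFree}.

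Next, I would apply the hypothesis $|\operatorname{Null}(\sigma)| < |G|$, i.e. $|T| < |G|$, and feed this into Corollary~\ref{cor:IdempotentTopolFree}, which states precisely that if $T$ satisfies \eqref{eq: General idempotent condition} and $|T| < |G|$, then the spectral partial action $\hat{\theta}$ associated to the factor set generated by $T$ is topologically free. Since $\sigma = \sigma_{T}$, this $\hat\theta$ is exactly the spectral partial action attached to $\sigma$, and we are done.

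I do not anticipate any real obstacle here: both cited results are already established in the excerpt, and the only thing to check is that $\operatorname{Null}(\sigma)$ really does satisfy the structural condition \eqref{eq: General idempotent condition}, which is immediate from the symmetry properties of factor sets recalled in Remark~\ref{r: sigma properties}. If one wanted to be slightly more careful, one could also note (as was done just above in the proof of Lemma~\ref{l: any idempotent factor set is generated by Null}, via \cite[Corollary 7]{DN}) that $\operatorname{Null}(\sigma)$ is in fact a union of full orbits $C_{(x,y)}$, which is more than enough to guarantee \eqref{eq: General idempotent condition}. So the entire proof reads: by Lemma~\ref{l: any idempotent factor set is generated by Null}, $\sigma$ is generated by $\operatorname{Null}(\sigma)$, which satisfies \eqref{eq: General idempotent condition}; since $|\operatorname{Null}(\sigma)| < |G|$, Corollary~\ref{cor:IdempotentTopolFree} applies and yields that $\hat{\theta}$ is topologically free.
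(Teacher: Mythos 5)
Your proposal is correct and follows exactly the route the paper takes: the paper states this corollary as a direct consequence of Lemma~\ref{l: any idempotent factor set is generated by Null} (writing $\sigma=\sigma_{T}$ with $T=\operatorname{Null}(\sigma)$) and Corollary~\ref{cor:IdempotentTopolFree}. Your additional verification that $\operatorname{Null}(\sigma)$ satisfies condition \eqref{eq: General idempotent condition}, via Remark~\ref{r: sigma properties}(v) and $\sigma(1,1)=1$, is a correct and welcome filling-in of a detail the paper leaves implicit.
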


\begin{example}
    Let $G$ be a group and $N$ a subgroup of $G$. Define
    \[
        \sigma_{N}(a,b) := \left\{\begin{matrix}
            1 & \text{ if }  a,b \in N \\ 
            0 &  \text{ otherwise.}
        \end{matrix}\right.
    \]
    Then, by \cite[Theorem 4]{DN} we know that $\sigma_{N} \in pm(G)$. If $N$ is infinite, then $\Omega_{\sigma_{N}}$ has no isolated points. Indeed, notice that $\xi \subseteq N$ for all $\xi \in \Omega_{\sigma_{N}}$, and $A_{\sigma_{N}}(\xi) = N \setminus \xi$ is infinite if $\xi$ is finite. On the other hand, if $N$ is finite then by Corollary~\ref{c: spectre is discrete if the admisibles of 1 is finite} we have that $\Omega_{\sigma_{N}}$ is finite and discrete.
\end{example}

\begin{theorem} \label{p: unique decomposition of idempotent factor sets}
    Let $\sigma$ be an idempotent factor set of $G$. Then, there exist unique idempotent factor sets $\delta$ and $\lambda$ such that:
    \begin{enumerate}[(i)]
        \item $\delta$ is a diagonal factor set;
        \item $\lambda$ is a lateral factor set;
        \item $\sigma = \delta \lambda$;
        \item if $\delta(x, y) = 0$, then $\lambda(x, y) = 1$,
        \item $\operatorname{Null}(\delta) \cap \operatorname{Null}(\lambda) = \varnothing$ and $\operatorname{Null}(\sigma) = \operatorname{Null}(\delta) \sqcup \operatorname{Null}(\lambda)$.
    \end{enumerate}
\end{theorem}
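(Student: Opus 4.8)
The plan is to first establish existence by exhibiting explicit candidates for $\delta$ and $\lambda$, then verify properties (i)--(v), and finally prove uniqueness. By Lemma~\ref{l: any idempotent factor set is generated by Null}, $\sigma = \sigma_{T}$ where $T := \operatorname{Null}(\sigma)$, and by \cite[Corollary 7]{DN} the set $T$ is closed under the operation $(x,y) \mapsto C_{(x,y)}$. Recalling the construction preceding the theorem, I set $\delta := \sigma^{0} = \sigma_{T_0}$ and $\lambda := \sigma^{1} = \sigma_{T_1}$, where $T_0 = \{ x \in G \setminus \{1\} : C_{(x,x^{-1})} \cap T \neq \varnothing \}$ and $T_1 = \{ (x,y) \in T : x \neq 1,\, y \neq 1,\, x \neq y^{-1} \}$. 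Properties (i) and (ii) are then immediate from Proposition~\ref{p: diagonal factor set} and Proposition~\ref{p: lateral factor set}, once one checks (as already noted in the text) that $T_0$ satisfies \eqref{eq: Diagonal condition} and $T_1$ satisfies \eqref{eq: Lateral condition}; the symmetry $g^{-1} \in T_0$ for $g \in T_0$ follows from $C_{(x,x^{-1})} = C_{(x^{-1},x)}$ up to the defining relations, and $1 \notin T_0$ by construction, while the condition on $T_1$ uses $(1,1) \notin T$ and the defining restrictions in $T_1$.

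For property (iii), $\sigma = \delta\lambda$ means $\operatorname{Null}(\sigma) = \operatorname{Null}(\delta) \cup \operatorname{Null}(\lambda)$ since these are $\{0,1\}$-valued; so I would show $\operatorname{Null}(\sigma_{T_0}) \cup \operatorname{Null}(\sigma_{T_1}) = T$. The inclusion $\subseteq$ is clear because $T_0$-prohibitions come from elements of $T$ of the special ``diagonal'' form and $T_1 \subseteq T$, using that $T$ is closed under $C_{(-,-)}$. For $\supseteq$: given $(x,y) \in T$, either $x = 1$ or $y = 1$ or $x = y^{-1}$ — in which case $C_{(x,x^{-1})} \cap T \neq \varnothing$ forces the relevant element of $\{x, y, xy\}$ into $T_0$, so $\sigma_{T_0}(x,y) = 0$ — or else $(x,y) \in T_1$, giving $\sigma_{T_1}(x,y) = 0$. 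This case analysis leans on \cite[Proposition 4, Proposition 5]{DN} relating $\sigma(x,1)$, $\sigma(y,1)$, $\sigma(xy,1)$ to $\sigma(x,y)$, exactly as in the proof of Lemma~\ref{l: any idempotent factor set is generated by Null}. Property (iv) — $\delta(x,y) = 0 \Rightarrow \lambda(x,y) = 1$ — holds because $\delta(x,y) = 0$ means $\{x,y,xy\} \cap T_0 \neq \varnothing$, i.e.\ one of $x,y,xy$ equals some $z$ with $z$ or its inverse landing in a $C$-orbit hitting $T$; I would check that this is incompatible with $C_{(x,y)} \cap T_1 \neq \varnothing$ by observing that membership in $T_1$ excludes the degenerate patterns, so the two types of nullity are disjoint on the level of pairs. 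Property (v) is then just a restatement: $\operatorname{Null}(\delta) \cap \operatorname{Null}(\lambda) = \varnothing$ is (iv) made symmetric (it also needs $\lambda(x,y)=0 \Rightarrow \delta(x,y)=1$, which follows since $(x,y) \in T_1$ forbids $x,y,xy \in T_0$ — here one must rule out, e.g., $xy \in T_0$ while $(x,y) \in T_1$, using the orbit structure of $C_{(x,y)}$), and $\operatorname{Null}(\sigma) = \operatorname{Null}(\delta) \sqcup \operatorname{Null}(\lambda)$ combines (iii) with disjointness.

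For uniqueness, suppose $\sigma = \delta'\lambda'$ with $\delta'$ diagonal, $\lambda'$ lateral, satisfying (iv) and (v). From (v), $\operatorname{Null}(\sigma) = \operatorname{Null}(\delta') \sqcup \operatorname{Null}(\lambda')$, so it suffices to show that $\operatorname{Null}(\delta')$ and $\operatorname{Null}(\lambda')$ are forced. A diagonal factor set $\delta'$ is determined by a set $S'$ via $\delta' = \sigma_{S'}$, and $\operatorname{Null}(\sigma_{S'}) = \{(x,y) : \{x,y,xy\} \cap S' \neq \varnothing\}$; in particular $x \in S' \iff \delta'(x,1) = 0$. But $\delta'(x,1) = 0$ and $\sigma = \delta'\lambda'$ with (iv) giving $\lambda'(x,1) = 1$ forces $\sigma(x,1) = 0$, and conversely $\sigma(x,1) = 0$ with $\sigma = \delta'\lambda'$ means $\delta'(x,1) = 0$ or $\lambda'(x,1) = 0$, and since $\lambda'$ is lateral, $\lambda'(x,1) = 1$ always (as in the proof of Proposition~\ref{p: lateral factor set}), hence $\delta'(x,1) = 0$, i.e.\ $x \in S'$. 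Thus $S' = \{x \in G \setminus \{1\} : \sigma(x,1) = 0\}$, which by \cite[Proposition 4, Proposition 5]{DN} coincides with $T_0$, so $\delta' = \delta$. Then $\operatorname{Null}(\lambda') = \operatorname{Null}(\sigma) \setminus \operatorname{Null}(\delta)$ is determined, and since a lateral factor set is determined by its null set (it equals $\sigma_{W}$ for $W$ that set of pairs, after closing under $C_{(-,-)}$), we get $\lambda' = \lambda$. The main obstacle I anticipate is the bookkeeping in property (iv)/(v): one must carefully use the six-element structure of $C_{(x,y)}$ and the definitions of $T_0$, $T_1$ to see that a pair cannot simultaneously be ``killed diagonally'' and ``killed laterally,'' and this requires invoking \cite[Corollary 7]{DN} (closure of $\operatorname{Null}(\sigma)$ under $C$) together with the precise degeneracy conditions $x = 1$, $y = 1$, $x = y^{-1}$ that separate the two regimes.
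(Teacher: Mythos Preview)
Your existence construction has a genuine gap: setting $\lambda := \sigma_{T_1}$ does \emph{not} in general satisfy (iv). The claim that $\delta(x,y)=0$ is ``incompatible with $C_{(x,y)}\cap T_1\neq\varnothing$'' is false. Concretely, suppose $T_0\neq\varnothing$ and take any $x\in T_0$, so that $\sigma(x,1)=0$; by \cite[Propositions~4 and~5]{DN} this forces $\sigma(x,y)=0$ for every $y\in G$. Choosing any $y\notin\{1,x^{-1}\}$ gives $(x,y)\in T_1$, hence $\sigma_{T_1}(x,y)=0$, while $x\in T_0$ also gives $\sigma_{T_0}(x,y)=0$. Thus $\operatorname{Null}(\sigma_{T_0})\cap\operatorname{Null}(\sigma_{T_1})\neq\varnothing$, and your argument for (iv) and (v) breaks down precisely at the step ``membership in $T_1$ excludes the degenerate patterns'': that condition excludes $x=1$, $y=1$, $x=y^{-1}$, but does nothing to prevent $x\in T_0$ or $xy\in T_0$.

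The paper repairs this by replacing $W:=T_1$ with the smaller set $\overline{W}:=\{(x,y)\in W:\sigma_{T_0}(x,y)=1\}$ and taking $\lambda:=\sigma_{\overline W}$. Because $T_0$ is closed under inversion, $\sigma_{T_0}$ is constant on each orbit $C_{(x,y)}$, and one then checks that $\sigma_{\overline W}(x,y)=0$ iff $\sigma(x,y)=0$ and $\sigma_{T_0}(x,y)=1$; this is exactly the statement $\operatorname{Null}(\lambda)=\operatorname{Null}(\sigma)\setminus\operatorname{Null}(\delta)$, from which (iv) and (v) are immediate. Your uniqueness argument is essentially correct and in fact forces $\operatorname{Null}(\lambda')=\operatorname{Null}(\sigma)\setminus\operatorname{Null}(\delta)$, which is precisely why the existence witness must be $\sigma_{\overline W}$ rather than $\sigma_{T_1}$.
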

\begin{proof}
    By Lemma~\ref{l: any idempotent factor set is generated by Null} we know that $\sigma$ is generated by $\operatorname{Null}(\sigma)$. Thus, $\sigma = \sigma_{S} \sigma_{W}$ where $\sigma_{S}$ is the diagonal factor set generated by
    \[
        S := \{ x \in G \setminus \{ 1 \} : C_{(x,x^{-1})} \cap \operatorname{Null}(\sigma) \neq \varnothing \},
    \]
    and $\sigma_{W}$ is the lateral factor set generated by
    \[
        W := \{ (x,y) \in \operatorname{Null}(\sigma) : x \neq 1, \, y \neq 1 \text{ and } x \neq y^{-1} \}.
    \]
    We define a substitute for $W$ as follows:
    \[
        \overline{W}:= \{ (x,y) \in W : \sigma_S(x,y) = 1 \}.
    \]
    Note that by construction, if $\sigma_W(x,y) = 1$, then $\sigma_{\overline{W}}(x,y)=1$.
    Moreover, $\sigma_{\overline{W}}(x,y) = 0$ if, and only if, $\sigma(x,y) = 0$ and $\sigma_{S}(x,y) = 1$.
    Thus, $\sigma = \delta \lambda$ where $ \delta = \sigma_{S}$ and $\lambda = \sigma_{\overline{W}}$.
    This gives $(iv)$ which implies $(v)$ and the uniqueness of $\delta$ and $\lambda$.
\end{proof}

\section[Infinite dihedral group]{Infinite dihedral group \(D_{\infty}\)}\label{sec:InfDihedral}

By \cite[Corollary 5.8]{DoNoPi}  any factor set is equivalent to the pro\-duct of an idempotent factor set a factor set with total domain.
Furthermore, by Proposition~\ref{p: unique decomposition of idempotent factor sets} we know that any idempotent factor set is the product of a diagonal factor set and a lateral factor set. 

For the particular case of the infinite dihedral group $D_{\infty}$ by \cite[Theorem 5.4]{HGGLimaHPinedo} we know that the factor sets with total domain (up to equivalence) are isomorphic to the semigroup \((\kappa^*)^{\mathbb{Z}^{+} \times \mathbb{Z}^{+} \sqcup \mathbb{Z}^{+} \times \mathbb{Z}}\).
Therefore, a direct application of the previous results about lateral and diagonal factor sets is to fully describe, up to equivalence, the factor sets of the infinite dihedral group $D_{\infty}.$

Denote by $D_{\infty}$ the infinite dihedral group with presentation
\[
    D_{\infty}= \langle a, b | b^{2}=1, \, ba = a^{-1}b \rangle.
\]

\begin{remark}
    Let $\sigma$ be a factor set of $G$, and let $\xi \in \operatorname{Fix}^{\sigma}_{ba^{l}}$. Observe that for $k \in \mathbb{Z}$ we have that
    \[
       \xi \cup \{ a^{k} \} \in \Omega_{\sigma_{\omega}} 
       \Leftrightarrow
       \xi \cup \{ ba^{k+l} \} = \hat{\theta}_{ba^{l}}(\xi \cup \{ a^{k} \}) \in \Omega_{\sigma_{\omega}}.
    \]
    Therefore, $a^{k} \in A_{\sigma}(\xi)$ if, and only if, $ba^{k+l} \in A_{\sigma}(\xi)$.
\end{remark}

\subsection{Diagonal factor sets of the infinite dihedral group}

Let $S$ be a symmetric subset of \( D_{\infty} \setminus \{ 1\}\). Observe that
\[
   D_{\infty}= \{ a^{i} \}_{i \in \mathbb{Z}} \sqcup \{ ba^{i} \}_{i \in \mathbb{Z}}.
\]

Therefore, we can split $S$ in two disjoints sets $S = S_{a} \sqcup S_{b}$ such that $S_{a} \subseteq \{ a^{i} \}$ and $S_b \subseteq \{ b a^{i} \}$. Write \(\textbf{2}:= \{ 0, 1\}\).
Thus, $S$ is determined by a function $\nu^S:=(\nu_0^{S}, \nu_1^{S}): \mathbb{Z}^{+} \sqcup \mathbb{Z} \to \textbf{2}$ such that  $\nu_0 : \mathbb{Z}^{+} \to  \textbf{2}^{S}$  is given by 
\[
    \nu_0^{S}(n):=\left\{\begin{matrix}
        0 & \text{ if }a^{n} \in S \\ 
        1 & \text{ otherwise,}
    \end{matrix}\right.  
\]
and $\nu_1^{S}: \mathbb{Z} \to \textbf{2}$ is such that
\[
    \nu_1^{S}(m):=\left\{\begin{matrix}
        0 & \text{ if }ba^{m} \in S \\ 
        1 & \text{ otherwise.}
    \end{matrix}\right.  
\]
Thus, we obtain a map from the diagonal factor sets to the functions of $\mathbb{Z}^{+} \sqcup \mathbb{Z}$ to $\textbf{2}$, given by $\sigma_{S} \to \nu^{S}$.
Moreover, we can prove that all the diagonal factor sets of $D_{\infty}$ are in bijection with $\{ 0,1 \}^{\mathbb{Z}^{+}\sqcup \, \mathbb{Z}}$.
Indeed, the inverse map of the above function is constructed as follows: for any $\nu := (\nu_0, \nu_1) \in \textbf{2}^{\mathbb{Z}^{+} \sqcup \, \mathbb{Z}}$ such that $\nu_{0}: \mathbb{Z}^{+} \to \{ 0,1 \}$ and $\nu_{1}: \mathbb{Z} \to \{ 0,1 \}$, we define 
    \[
        S_{\nu} := \{ a^{i}, a^{-i} : i \in \mathbb{Z}^{+} \text{ and } \nu_0(i)=0 \} \cup  \{ ba^{i} : i \in \mathbb{Z} \text{ and } \nu_1(i)=0 \}.
    \]
Thus, the map $\nu \mapsto \sigma_{S_{\nu}}$ is the inverse function of $\sigma_{S} \to \nu^{S}$. Hence, we obtain the following lemma:
\begin{lemma} \label{l: diagonal factor sets of DInf}
    Let $pm(D_\infty)_{\operatorname{diag}} \subseteq pm(D_{\infty})$ be the set of the diagonal factor sets. Then, $pm(D_\infty)_{\operatorname{diag}} \cong \textbf{2}^{\mathbb{Z}^{+} \sqcup \, \mathbb{Z}}$.
\end{lemma}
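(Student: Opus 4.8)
The plan is to verify that the bijection $\nu \mapsto \sigma_{S_\nu}$ constructed in the discussion preceding the lemma is a homomorphism of semigroups, where $pm(D_\infty)$ carries its pointwise multiplication (Remark~\ref{r: sigma properties}) and $\textbf{2}^{\mathbb{Z}^{+} \sqcup \mathbb{Z}}$ carries the coordinatewise product, with $\textbf{2} = \{0,1\}$ viewed as the two-element multiplicative (meet) monoid. Since the set-theoretic bijection has essentially been exhibited above, the content left is to match the two operations.

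First I would record, for an arbitrary group $G$, the identity $\sigma_S \sigma_{S'} = \sigma_{S \cup S'}$ for symmetric subsets $S, S'$ of $G \setminus \{1\}$: by the definition of $\sigma_S$, the product $\sigma_S(x,y)\sigma_{S'}(x,y)$ vanishes precisely when $\{x,y,xy\}$ meets $S$ or meets $S'$, i.e. when it meets $S \cup S'$; and $S \cup S'$ is again symmetric and avoids $1$, so $\sigma_{S \cup S'} \in pm(G)$ by Proposition~\ref{p: diagonal factor set}. In particular the diagonal factor sets form a submonoid of $pm(G)$ with unit $\sigma_{\varnothing} = 1$. I would also note that $S \mapsto \sigma_S$ is injective, because $g \in S$ if and only if $\sigma_S(g,g^{-1}) = 0$ (using $1 \notin S$ and the symmetry of $S$); together with surjectivity, which holds by the very definition of diagonal factor set, this shows that $S \mapsto \sigma_S$ is an isomorphism of monoids from the symmetric subsets of $G \setminus \{1\}$ under union onto the diagonal factor sets under pointwise multiplication.

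Next I would specialize to $G = D_\infty$. From the presentation one gets $D_\infty = \{a^i : i \in \mathbb{Z}\} \sqcup \{ba^i : i \in \mathbb{Z}\}$ and $ba^i = a^{-i}b$, whence $(ba^i)^2 = 1$, so every $ba^i$ is an involution. Consequently a symmetric subset $S$ of $D_\infty \setminus \{1\}$ places no constraint on its ``$b$-part'' $S_b \subseteq \{ba^i : i \in \mathbb{Z}\}$, while its ``$a$-part'' $S_a$ is a symmetric subset of $\{a^i : i \neq 0\}$, hence determined by $S_a \cap \{a^i : i > 0\}$; so $S \mapsto (S_a \cap \{a^i : i>0\},\, S_b)$ identifies the symmetric subsets of $D_\infty \setminus \{1\}$, ordered by union, with $\textbf{2}^{\mathbb{Z}^{+}} \times \textbf{2}^{\mathbb{Z}} = \textbf{2}^{\mathbb{Z}^{+} \sqcup \mathbb{Z}}$, ordered by coordinatewise union. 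Transporting through the convention defining $\nu^S = (\nu_0^S, \nu_1^S)$, which sends membership in $S$ to the value $0$, a coordinatewise union of the sets $S$ becomes exactly the coordinatewise product in $\textbf{2}$; composing with the monoid isomorphism of the previous paragraph shows that $\nu \mapsto \sigma_{S_\nu}$ is the asserted semigroup isomorphism. All the verifications are elementary; the only genuinely $D_\infty$-specific input is $(ba^i)^2 = 1$ (which frees the $b$-coordinates of a symmetric set), and the only point demanding care is keeping the sign convention straight so that union of sets corresponds to multiplication in $\textbf{2}$, so I expect no real obstacle.
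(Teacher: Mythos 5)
Your proposal is correct and follows essentially the same route as the paper: both identify a diagonal factor set $\sigma_S$ with the pair $(S_a\cap\{a^i: i>0\},\,S_b)$, i.e.\ with a function in $\textbf{2}^{\mathbb{Z}^{+}\sqcup\,\mathbb{Z}}$, using that each $ba^i$ is an involution and that $S_a$ is determined by its positive part. You go a bit further than the paper's text by explicitly verifying $\sigma_S\sigma_{S'}=\sigma_{S\cup S'}$ and the injectivity of $S\mapsto\sigma_S$ via $\sigma_S(g,g^{-1})=0\Leftrightarrow g\in S$, which is exactly what is needed to upgrade the paper's stated bijection to the semigroup isomorphism claimed in the lemma.
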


Let $\nu := (\nu_0, \nu_1) \in \textbf{2}^{\mathbb{Z}^{+} \sqcup \, \mathbb{Z}}$ such that $S_{\nu} = S$, by Lemma~\ref{l: omega sigma S complement charaterization} we know that the $\sigma_{S}$-prohibitions are determined by the set $\mathcal{P}(S) :=\big\{ \{ g, gs \} : g \in G \text{ and } s \in S \big\}$, therefore 
\begin{equation} \label{eq: symmetric S-prohibitions char}
    \{ x, y \} \in \mathcal{P}(S) \Leftrightarrow x^{-1}y \in S \Leftrightarrow y^{-1}x \in S.
\end{equation}
Thus, we obtain three types of $\mathcal{P}(S)$-prohibition on $D_{\infty}$:

\begin{lemma}
    Let $S \subseteq D_{\infty} \setminus \{ 1 \}$ be a symmetric subset, and $\nu: \mathbb{Z}^{+} \sqcup \mathbb{Z} \to \{ 0,1 \}$ such that $S_{\nu} = S$. Then, the $\mathcal{P}(S)$-prohibition are classified in three types:
    \begin{equation}
       \text{For }p,q \in \mathbb{Z},\, p \neq q ,\, \{ a^{p}, a^{q} \} \in \mathcal{P}(S) \Leftrightarrow a^{p-q} \in S_a \Leftrightarrow \nu_{0}(|p-q|) = 0,         
        \label{eq: I sym DInf}
    \end{equation}
    \begin{equation}
        \text{For }p,q \in \mathbb{Z},\, p \neq q ,\, \{ ba^{p}, ba^{q} \} \in \mathcal{P}(S) \Leftrightarrow ba^{p} ba^{q} = a^{q-p} \in S_a \Leftrightarrow  \nu_{0}(|p-q|) = 0, 
        \label{eq: II sym DInf}
    \end{equation}
    \begin{equation}
        \text{For }p,q \in \mathbb{Z},\, \{ a^{p}, ba^{q} \} \in \mathcal{P}(S) \Leftrightarrow a^{-p} ba^{q} = ba^{p+q} \in S_b \Leftrightarrow \nu_1(p+q) = 0
        \label{eq: III sym DInf}
    \end{equation}
\end{lemma}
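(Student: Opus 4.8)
The plan is to reduce everything to the already-established equivalence \eqref{eq: symmetric S-prohibitions char}, namely $\{x,y\}\in\mathcal{P}(S)\Leftrightarrow x^{-1}y\in S\Leftrightarrow y^{-1}x\in S$, and then to carry out the relevant group arithmetic in $D_\infty$ in each of the three possible shapes of a two-element subset. First I would recall that every element of $D_\infty$ is uniquely of the form $a^{p}$ or $ba^{p}$ with $p\in\mathbb{Z}$; consequently a two-element subset $\{x,y\}\subseteq D_\infty$ is of exactly one of the following three types: $\{a^{p},a^{q}\}$ with $p\neq q$, $\{ba^{p},ba^{q}\}$ with $p\neq q$, or $\{a^{p},ba^{q}\}$ (the last being automatically a pair of distinct elements). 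Since $\mathcal{P}(S)=\{\{g,gs\}:g\in G,\ s\in S\}$, this case division is exhaustive, so it remains to compute $x^{-1}y$ in each type and translate membership in $S=S_{a}\sqcup S_{b}$ into a condition on $\nu=(\nu_0,\nu_1)$.

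For the computations I would use only the defining relations, written in the convenient forms $b^{2}=1$, $ba^{k}=a^{-k}b$, $a^{-k}b=ba^{k}$ and $(ba^{k})^{-1}=ba^{k}$. In the first type, with $x=a^{p}$, $y=a^{q}$, one has $x^{-1}y=a^{q-p}$; this is an element of $\langle a\rangle$, so by \eqref{eq: symmetric S-prohibitions char} the pair lies in $\mathcal{P}(S)$ iff $a^{q-p}\in S_{a}$. In the second type, with $x=ba^{p}$, $y=ba^{q}$, the element $ba^{p}$ has order two, hence $x^{-1}y=ba^{p}\cdot ba^{q}=a^{-p}b^{2}a^{q}=a^{q-p}$, again a power of $a$, so the pair lies in $\mathcal{P}(S)$ iff $a^{q-p}\in S_{a}$. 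In the third type, with $x=a^{p}$, $y=ba^{q}$, moving $b$ past $a^{-p}$ gives $x^{-1}y=a^{-p}ba^{q}=ba^{p}a^{q}=ba^{p+q}\in\{ba^{i}\}$, so the pair lies in $\mathcal{P}(S)$ iff $ba^{p+q}\in S_{b}$.

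Finally I would translate membership in $S=S_{\nu}$ into the stated conditions on $\nu$. By the definition of $S_{\nu}$ one has, for $i\neq 0$, that $a^{i}\in S_{a}$ iff $\nu_{0}(|i|)=0$, where the absolute value is forced because $S_{a}$ is inverse-closed and the domain of $\nu_{0}$ is $\mathbb{Z}^{+}$; and $ba^{j}\in S_{b}$ iff $\nu_{1}(j)=0$ for every $j\in\mathbb{Z}$. Applying this with $i=q-p$ (using $p\neq q$) in the first two types and with $j=p+q$ in the third yields precisely \eqref{eq: I sym DInf}, \eqref{eq: II sym DInf} and \eqref{eq: III sym DInf}. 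There is no genuine obstacle here: the only points demanding a little care are the sign bookkeeping, the passage from the (possibly negative) integer $q-p$ to $|p-q|$ in the argument of $\nu_{0}$, and the remark that $ba^{p}$ is an involution so that its inverse is itself; everything else is routine manipulation in $D_\infty$.
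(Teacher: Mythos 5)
Your proposal is correct and follows exactly the route the paper intends: the paper's proof consists of the single line ``Direct computations using \eqref{eq: symmetric S-prohibitions char} and Lemma~\ref{l: diagonal factor sets of DInf},'' and you have simply carried out those computations (the three-way case split on the shape of $\{x,y\}$, the identities $ba^{k}=a^{-k}b$ and $(ba^{k})^{-1}=ba^{k}$, and the translation through the definition of $S_{\nu}$). The sign bookkeeping via the symmetry of $S_{a}$ and the passage to $|p-q|$ are handled correctly.
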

\begin{proof}
    Direct computations using \eqref{eq: symmetric S-prohibitions char} and Lemma~\ref{l: diagonal factor sets of DInf}.
\end{proof}

Note that if $\xi \in \operatorname{Fix}^{\sigma}_{ba^{l}}$ then
\[
   \xi = \{ 1, a^{i_{1}}, \ldots, a^{i_{n}}, \ldots \} \cup \{ ba^{l}, ba^{i_{1}+l}, \ldots, ba^{i_{n}+l}, \ldots \},
\]

thus to each $\xi \in \operatorname{Fix}^{\sigma}_{ba^{l}}$ we can associate a set $
I_{\xi}^{l}:= \{ i \in \mathbb{Z} : a^{i} \in \xi \}$. Note that the set $I^{l}_{\xi}$ completely determines the fixed point $\xi$. Conversely, let $I \subseteq \mathbb{Z}$, $0 \in I$, then by equations \eqref{eq: I sym DInf}, \eqref{eq: II sym DInf} and \eqref{eq: III sym DInf} we obtain
\[
   \xi_I^{l} = \{ a^{i} \}_{i \in I} \cup \{ ba^{i+l} \}_{i \in I} \in \Omega_{\sigma_{S}} \Leftrightarrow 
    \nu_0(|p-q|) = 1 \text{ and } \nu_1(p+q+l)=1 \forall p,q \in I.
\]
Therefore, we define the set
\[
    \Delta_{\nu}^{l}:= \{ I \subseteq \mathbb{Z} : 0 \in I, \, \nu_0(|p-q|)=1, \, \forall p,q \in I, p \neq q, \text{ and } \nu_1(p+q+l)=1, \, \forall p, \, q \in I\}
\]
that is in bijection with the set $\operatorname{Fix}^{\sigma}_{ba^{l}}$.

\subsection{Lateral factor sets of the infinite dihedral group}

Let $W \subseteq D_{\infty} \times D_{\infty}$ be such that $C_{(z, z^{-1})} \cap W = \varnothing$ for all $z \in D_{\infty}$.

\begin{remark}
    Observe that in the definition of $\sigma_{W}$ (see equation \eqref{eq: lateral partial factor set}) if $W' \subseteq G \times G$ is such that $\{ C_{(x,y)} : (x,y) \in W \} = \{ C_{(x,y)} : (x,y) \in W'\}$ then $\sigma_{W} = \sigma_{W'}$. In particular, if we replace $(x,y) \in W$ by another element $(a,b) \in C_{(x,y)}$ we obtain the same generated factor set.
\end{remark}

\begin{remark} \label{r: S3 Orbits of Dinf}
    By \cite[Lemma 5.3]{HGGLimaHPinedo} we know that the set
    \[
        \{ C_{(x,y)} : (x,y) \in G^{2} \text{ such that } x \neq 1, \, y \neq 1 \text{ and } x \neq y^{-1} \}
    \]
    is in bijection with the set
    \[
        \{ (a^{n}, a^{m}) : (n,m) \in \mathbb{Z}^{+} \times \mathbb{Z}^{+}\} \sqcup \{ (a^{i}, ba^{j}) : (i, j) \in \mathbb{Z}^{+} \times \mathbb{Z}\}.
    \]
\end{remark}

\begin{proposition} \label{p: lateral factor sets of DInf characterization}
    The lateral factor sets of $D_{\infty}$ are isomorphic as a semigroup to
    \[
       \big\{ \omega: \mathbb{Z}^{+} \times \mathbb{Z}^{+} \sqcup \mathbb{Z}^{+} \times \mathbb{Z} \to \textbf{2} \big\}.
    \]
\end{proposition}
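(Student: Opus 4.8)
The plan is to exhibit a bijection between lateral factor sets of $D_\infty$ and functions $\omega : \mathbb{Z}^{+}\times\mathbb{Z}^{+} \sqcup \mathbb{Z}^{+}\times\mathbb{Z} \to \textbf{2}$ that is compatible with the semigroup operation (pointwise multiplication on both sides). First I would recall from Remark~\ref{r: S3 Orbits of Dinf} that the $C$-orbits of pairs $(x,y)$ with $x\neq 1$, $y\neq 1$, $x\neq y^{-1}$ are parametrised by $(n,m)\in\mathbb{Z}^{+}\times\mathbb{Z}^{+}$ (corresponding to $(a^n,a^m)$) and by $(i,j)\in\mathbb{Z}^{+}\times\mathbb{Z}$ (corresponding to $(a^i,ba^j)$). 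Since $\sigma_W$ depends only on the collection $\{\,C_{(x,y)} : (x,y)\in W\,\}$, and since condition \eqref{eq: Lateral condition} precisely excludes the orbits $C_{(z,z^{-1})}$, a lateral factor set is completely determined by a choice, for each orbit in the index set above, of whether that orbit meets $W$ or not. This is exactly the data of a function $\omega$ into $\textbf{2}=\{0,1\}$: set $\omega(n,m)=0$ iff $C_{(a^n,a^m)}\cap W\neq\varnothing$, and $\omega(i,j)=0$ iff $C_{(a^i,ba^j)}\cap W\neq\varnothing$.

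The key steps, in order: (1) Define the map $\sigma_W \mapsto \omega^W$ as above; it is well-defined by the remark that $\sigma_W$ depends only on the set of orbits hit by $W$. (2) Construct the inverse: given $\omega=(\omega_0,\omega_1)$ with $\omega_0:\mathbb{Z}^{+}\times\mathbb{Z}^{+}\to\textbf{2}$ and $\omega_1:\mathbb{Z}^{+}\times\mathbb{Z}\to\textbf{2}$, put
\[
    W_\omega := \{ (a^n,a^m) : n,m\in\mathbb{Z}^{+},\ \omega_0(n,m)=0 \} \cup \{ (a^i,ba^j) : i\in\mathbb{Z}^{+},\ j\in\mathbb{Z},\ \omega_1(i,j)=0 \},
\]
check that $W_\omega$ satisfies \eqref{eq: Lateral condition} (none of the chosen pairs lies in an orbit of the form $C_{(z,z^{-1})}$, again by Remark~\ref{r: S3 Orbits of Dinf}, which separates these orbits from the ones with $x\neq y^{-1}$), and verify $\omega^{W_\omega}=\omega$ and $\sigma_{W_{\omega^W}}=\sigma_W$. (3) Check the semigroup structure: $pm(D_\infty)$ carries pointwise multiplication, and for idempotent factor sets $\sigma_W\sigma_{W'}$ is again a lateral factor set iff its null set is symmetric of lateral type; one shows $C_{(x,y)}\cap(W\cup W')\neq\varnothing$ iff $\sigma_W(x,y)\sigma_{W'}(x,y)=0$, so that $\sigma_W\sigma_{W'}=\sigma_{W\cup W'}$ and correspondingly $\omega^{W\cup W'}=\omega^W\cdot\omega^{W'}$ pointwise — hence the bijection is a semigroup isomorphism onto $\textbf{2}^{\mathbb{Z}^{+}\times\mathbb{Z}^{+}\sqcup\mathbb{Z}^{+}\times\mathbb{Z}}$.

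I expect the main obstacle to be bookkeeping around the orbit structure: one must make sure that the parametrisation in Remark~\ref{r: S3 Orbits of Dinf} is genuinely a partition of the relevant pairs into $C$-orbits, so that each lateral factor set corresponds to exactly one function (no overcounting from two index pairs giving the same orbit, and no orbit missed). A second subtlety is verifying closure under products: one needs that the product of two lateral factor sets has a null set of the form excluding all $C_{(z,z^{-1})}$, which follows because each factor already excludes them and $\operatorname{Null}(\sigma_W\sigma_{W'})=\operatorname{Null}(\sigma_W)\cup\operatorname{Null}(\sigma_{W'})$; invoking \cite[Corollary 7]{DN} to see that each null set is a union of full $C$-orbits keeps the argument clean. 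The $2$-cocycle/$pm(G)$ membership of each $\sigma_W$ is already granted by Proposition~\ref{p: lateral factor set}, so that part needs no further work.
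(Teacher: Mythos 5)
Your proposal is correct and follows essentially the same route as the paper: parametrise the $C$-orbits via Remark~\ref{r: S3 Orbits of Dinf}, define $W_\omega$ from $\omega=(\omega_0,\omega_1)$ and the inverse assignment $W\mapsto(\omega_0,\omega_1)$ through the sets of orbits met by $W$, and conclude bijectivity. You additionally spell out the semigroup-homomorphism verification ($\sigma_W\sigma_{W'}=\sigma_{W\cup W'}$, hence $\omega^{W\cup W'}=\omega^W\cdot\omega^{W'}$), which the paper leaves implicit; this is a welcome extra but does not change the argument.
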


\begin{proof}
    Let $\omega:=(\omega_0, \omega_1): \mathbb{Z}^{+} \times \mathbb{Z}^{+} \sqcup \mathbb{Z}^{+} \times \mathbb{Z} \to \textbf{2}$ where $\omega_0: \mathbb{Z}^{+} \times \mathbb{Z}^{+} \to \textbf{2}$ and $\omega_1: \mathbb{Z}^{+} \times \mathbb{Z} \to \textbf{2}$. We define
    \begin{align*}
        W_{\omega} := \big\{ (a^{n}, a^{m}) : (n,m) \in \mathbb{Z}^{+} \times \mathbb{Z}^{+} &\text{ and } \omega_0(n,m)=0 \big\}  \\
        &\cup \big\{ (a^{i}, ba^{j}) : (i,j) \in \mathbb{Z}^{+} \times \mathbb{Z} \text{ and } \omega_1(i,j)=0 \big\}.
    \end{align*}
    Therefore, $W_{\omega}$ generates the lateral factor set $\sigma_{\omega}:= \sigma_{W_{\omega}}$. Thus, we define the map
    \[
        \phi: \big\{ \omega: \mathbb{Z}^{+} \times \mathbb{Z}^{+} \sqcup \mathbb{Z}^{+} \times \mathbb{Z} \to \textbf{2} \big\} \to pm(D_{\infty})_{\operatorname{lat}}
    \]
    such that $\phi(\omega):= \sigma_{\omega}$. By Remark~\ref{r: S3 Orbits of Dinf} we conclude that this map is a bijection. Explicitly, if $W$ is such that $C_{(z, z^{-1})} \cap W = \varnothing$ then we define
    \[
        W_0 := \{ (n,m) \in \mathbb{Z}^{+} \times \mathbb{Z}^{+} : C_{(a^{n}, a^{m})} \cap W \neq \varnothing \}
    \]
    and
    \[
        W_1 := \{ (i,j) \in \mathbb{Z}^{+} \times \mathbb{Z} : C_{(a^{i}, ba^{j})} \cap W \neq \varnothing \}.
    \]
    Thus, we define the maps $\omega_0: \mathbb{Z}^{+} \times \mathbb{Z}^{+} \to \textbf{2}$ and $\omega_1: \mathbb{Z}^{+} \times \mathbb{Z} \to \textbf{2}$ such that
    \[
        \omega_0(n,m) :=
        \left\{\begin{matrix}
            0 & \text{ if } (n,m) \in W_0 \\ 
            1 & \text{ otherwise}
        \end{matrix}\right.
    \]
    and
    \[
        \omega_1(i,j) :=
        \left\{\begin{matrix}
            0 & \text{ if } (i,j) \in W_1 \\ 
            1 & \text{ otherwise.}
        \end{matrix}\right.
    \]
    Finally, $\omega:= (\omega_0, \omega_1)$ is such that $\sigma_\omega = \sigma_W$.
\end{proof}

\begin{remark} \label{r: lateral prohibitions DInf}
    Let $\omega= (\omega_0, \omega_1)$ as above. By Proposition~\ref{l: lateral factor set prohibitions charaterization} and Proposition~\ref{p: lateral factor sets of DInf characterization} we conclude that $\xi \in \Omega_{\sigma_{\omega}}$ if, and only if, 
    \begin{enumerate}[(i)]
        \item $\{ a^{u}, a^{u + n}, a^{u + n + m} \} \nsubseteq \xi $ for all $u \in \mathbb{Z}$ and all pairs $(n,m) \in \mathbb{Z}^{+} \times \mathbb{Z}^{+}$ such that $\omega_0(n,m)=0$.
        \item $\{ ba^{u}, ba^{u + n}, ba^{u + n + m} \} \nsubseteq \xi $ for all $u \in \mathbb{Z}$ and all pairs $(n,m) \in \mathbb{Z}^{+} \times \mathbb{Z}^{+}$ such that $\omega_0(n,m)=0$.
        \item $\{ a^{u}, a^{n + u}, ba^{m - u - n} \} \nsubseteq \xi $ for all $u \in \mathbb{Z}$ and all pairs $(n,m) \in \mathbb{Z}^{+} \times \mathbb{Z}$ such that $\omega_1(n,m)=0$.
        \item $\{ ba^{u}, ba^{n + u}, a^{m - u - n} \} \nsubseteq \xi $ for all $u \in \mathbb{Z}$ and all pairs $(n,m) \in \mathbb{Z}^{+} \times \mathbb{Z}$ such that $\omega_1(n,m)=0$.
    \end{enumerate}
\end{remark}

\begin{lemma}
    Let $I, J \subseteq \mathbb{Z}$, $0 \in I$, write
    \[
       \xi:= \{ a^{i} \}_{i \in I} \cup \{ ba^{j} \}_{j \in J}.
    \]
    Then, $\xi \in \Omega_{\sigma_{\omega}}$ if, and only if,
    \begin{enumerate}[(1)]
        \item $\omega_0(y-x, z-y) = 1 \text{ for all } x,y,z \in I \text{ such that } x<y<z$,
        \item $\omega_0(y-x, z-y) = 1 \text{ for all } x,y,z \in J \text{ such that } x<y<z$,
        \item $\omega_1(y-x, z+y) = 1 \text{ for all } x,y \in I, \,  x<y$ and $z \in J$,
        \item $\omega_1(y-x, z+y) = 1 \text{ for all } x,y \in J, \,  x<y$ and $z \in I$.
    \end{enumerate}
\end{lemma}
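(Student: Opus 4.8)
The plan is to read the claim off Remark~\ref{r: lateral prohibitions DInf} by an elementary reparametrization of each of the four families of forbidden configurations there. Since $0 \in I$, the set $\xi$ contains $a^{0} = 1_{D_{\infty}}$, so $\xi \in \Omega$; hence $\xi \in \Omega_{\sigma_{\omega}}$ precisely when $\xi$ contains none of the $\sigma_{\omega}$-prohibitions, i.e. exactly when statements (i)--(iv) of Remark~\ref{r: lateral prohibitions DInf} all hold. I would prove that (i) is equivalent to item (1) of the lemma, (ii) to item (2), (iii) to item (3), (iv) to item (4), and then conjoin the four equivalences.

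For (i): an inclusion $\{a^{u}, a^{u+n}, a^{u+n+m}\} \subseteq \xi$ with $u \in \mathbb{Z}$ and $(n,m) \in \mathbb{Z}^{+} \times \mathbb{Z}^{+}$ carries the same information as an ordered triple $x := u < y := u+n < z := u+n+m$ of elements of $I$, and conversely every triple $x < y < z$ in $I$ arises this way from $u = x$, $n = y - x$, $m = z - y$, with $n,m \in \mathbb{Z}^{+}$ automatically. Under this correspondence the constraint $\omega_0(n,m) = 0$ becomes $\omega_0(y-x, z-y) = 0$, so the failure of (i) is exactly the existence of $x < y < z$ in $I$ with $\omega_0(y-x,z-y) = 0$; negating gives item (1). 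Replacing $I$ by $J$ throughout turns (ii) into item (2).

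For (iii): the inclusion $\{a^{u}, a^{n+u}, ba^{m-u-n}\} \subseteq \xi$ with $(n,m) \in \mathbb{Z}^{+} \times \mathbb{Z}$ means $u \in I$, $n+u \in I$ and $m-u-n \in J$. Writing $x := u$, $y := n+u$ and $z := m-u-n$, this says $x < y$ in $I$ and $z \in J$, with $n = y-x \in \mathbb{Z}^{+}$ and $m = z+y \in \mathbb{Z}$; conversely, given $x < y$ in $I$ and $z \in J$, the choice $u = x$, $n = y-x$, $m = z+y$ recovers such an inclusion. Hence (iii) fails exactly when there exist $x<y$ in $I$ and $z \in J$ with $\omega_1(y-x, z+y) = 0$, and negating yields item (3). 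The mirror computation, with the roles of $I$ and $J$ (and of $a$ and $b$) interchanged, converts (iv) into item (4).

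Conjoining the four equivalences proves the lemma. I do not expect a genuine obstacle here; the only point that needs care is checking that each reparametrization $(u,n,m) \leftrightarrow (x,y,z)$ is an honest bijection between the relevant index sets — in particular that $n$ always lands in $\mathbb{Z}^{+}$ and $m$ in the correct set ($\mathbb{Z}^{+}$ for (i)--(ii), $\mathbb{Z}$ for (iii)--(iv)) — so that ``for some forbidden parameter'' and ``for some triple of the stated shape'' describe exactly the same collection of subsets of $\xi$.
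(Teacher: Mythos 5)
Your proposal is correct and follows essentially the same route as the paper: both proofs reduce the claim to the four forbidden-configuration families of Remark~\ref{r: lateral prohibitions DInf} and establish the item-by-item equivalences (i)$\leftrightarrow$(1), (ii)$\leftrightarrow$(2), (iii)$\leftrightarrow$(3), (iv)$\leftrightarrow$(4) via the reparametrization $u=x$, $n=y-x$, $m=z-y$ (resp.\ $m=z+y$ in the mixed cases). The bijectivity check you flag at the end is exactly the content of the paper's displayed chains of equivalences, so nothing is missing.
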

\begin{proof}
    Item $(i)$ of Remark~\ref{r: lateral prohibitions DInf} is equivalent to
    \begin{align*}
        \{ a^{u}, &\, a^{u+n}, a^{u+n+m} \} \neq \{ a^{x}, a^{y}, a^{z} \} \, \forall u \in \mathbb{Z}, \, \forall n,m \in \mathbb{Z}^{+}, \, \omega_0(n,m)=0,\, \forall x,y,z \in I \\
        & \Leftrightarrow
        (u, u+n, u+n+m) \neq (x,y,z) \, \forall u \in \mathbb{Z}, \, \forall n,m \in \mathbb{Z}^{+}, \, \omega_0(n,m)=0,\, \forall x,y,z \in I \\
        & \Leftrightarrow
        \text{ if } (u, u+n, u+n+m) = (x,y,z) \text{ for some } u \in \mathbb{Z}, \, n,m \in \mathbb{Z}^{+} \\ & \hspace{7cm} \text{ and } x,y,z \in I, \text{ then } \omega_0(n,m)=1,   
    \end{align*}
    which is equivalent to $(1)$. Analogously, one shows that $(2)$ is equivalent to $(ii)$ of Remark~\ref{r: lateral prohibitions DInf}. Item $(iii)$ of Remark~\ref{r: lateral prohibitions DInf} is equivalent to 
   \begin{align*}
       \{ a^{u}, &a^{u+n}, ba^{m-u-n} \} \neq \{ a^{x}, a^{y}, ba^{z} \} \, \forall u \in \mathbb{Z}, \, \forall (n,m) \in \mathbb{Z}^{+} \times \mathbb{Z}, \, \omega_1(n,m)=0,
        \\ & \hspace{7cm} \forall x,y \in I, \text{ and } z \in J  \\
       & \Leftrightarrow
        (u, u+n, m-u-n) \neq (x,y,z) \, \forall u \in \mathbb{Z}, \, \forall (n,m) \in \mathbb{Z}^{+} \times \mathbb{Z}, \, \omega_1(n,m)=0, \\
            & \hspace{7cm} \forall x,y \in I, \text{ and } z \in J  \\
       & \Leftrightarrow
        \text{ if } (u, u+n, m-u-n) = (x,y,z) \text{ for some } u \in \mathbb{Z}, \, (n,m) \in \mathbb{Z}^{+} \times \mathbb{Z}, \\ & \hspace{7cm} x,y \in I \text{ and } z \in J\text{ then } \omega_0(n,m)=1,   
   \end{align*}
    which is equivalent to (3). Analogously one shows that $(iv)$ of Remark~\ref{r: lateral prohibitions DInf} is equivalent to $(4)$.
\end{proof}

\begin{corollary}
    Let $I \subseteq \mathbb{Z}$, $0 \in I$. Then, $\xi_I^{l} := \{ a^{i} \}_{i \in I} \cup \{ b a^{i+l} \}_{i \in I} \in \Omega_{\sigma_{\omega}}$ if, and only if,
    \begin{enumerate}[(I)]
        \item $\omega_0(y-x, z-y) = 1 \text{ for all } x,y,z \in I \text{ such that } x<y<z$, 
        \item $\omega_1(y-x, z+y+l) = 1 \text{ for all } x,y,z \in I \text{ such that } x<y$.
    \end{enumerate}
\end{corollary}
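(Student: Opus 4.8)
The plan is to specialize the preceding Lemma to the case $J = \{\, i + l : i \in I \,\}$, since $\xi_I^{l} = \{ a^{i} \}_{i \in I} \cup \{ b a^{i+l} \}_{i \in I}$ is exactly the set $\xi$ of the Lemma for this choice of $J$ (and $0 \in I$ holds by hypothesis, as the Lemma requires). Thus $\xi_I^{l} \in \Omega_{\sigma_{\omega}}$ if and only if the four conditions (1)--(4) of the Lemma hold for this $I$ and $J$, and it remains only to rewrite those conditions in terms of $I$ and $l$ alone.

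First I would note that condition (1) is literally statement (I), so nothing has to be done there. For condition (2), every element of $J$ has the form $x' + l$ with $x' \in I$, and a triple $x < y < z$ in $J$ corresponds bijectively to a triple $x' < y' < z'$ in $I$ via $x = x' + l$, $y = y' + l$, $z = z' + l$; since $y - x = y' - x'$ and $z - y = z' - y'$, condition (2) is seen to be equivalent to (I), hence redundant once (1) is assumed. Next, for condition (3), the indices $x, y$ run over $I$ with $x < y$ while $z$ runs over $J$, so writing $z = z' + l$ with $z' \in I$ gives $z + y = z' + y + l$, and condition (3) becomes $\omega_1(y - x, z' + y + l) = 1$ for all $x < y$ in $I$ and all $z' \in I$, which is precisely (II). Finally, for condition (4) one has $x, y \in J$ with $x < y$ and $z \in I$; writing $x = x' + l$, $y = y' + l$ with $x', y' \in I$ gives $y - x = y' - x'$ and $z + y = z + y' + l$, so condition (4) reduces to $\omega_1(y' - x', z + y' + l) = 1$ for all $x' < y'$ in $I$ and all $z \in I$, i.e. again (II). Collecting these, conditions (1)--(4) are jointly equivalent to (I) together with (II), which is the assertion of the corollary.

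Since the argument is just a translation of indices, there is no real obstacle; the only point needing a little care will be verifying that the two ``crossed'' conditions (2) and (4) — which involve a triple, respectively a pair, drawn entirely from the shifted copy $J$ — genuinely collapse onto (I) and (II) rather than producing new constraints. As indicated above, this is guaranteed by the translation invariance of the differences $y - x$ and $z - y$ and of the sums appearing in the $\omega_1$-arguments, so the reduction goes through cleanly.
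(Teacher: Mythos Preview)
Your proposal is correct and follows exactly the approach the paper intends: the corollary is stated without proof precisely because it is the specialization of the preceding Lemma to $J = I + l$, and your reduction of conditions (1)--(4) to (I) and (II) via the translation $j \leftrightarrow i + l$ is the expected verification.
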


Similarly to the diagonal case, we define
\begin{align*}
    \Lambda^{l}_{\omega}:= \Big\{ I \subseteq \mathbb{Z} : 
    0 \in I, &\quad \omega_0(y-x, z-y) = 1 \text{ for all } x,y,z \in I \text{ such that } x<y<z, \\
    & \quad \omega_1(x-y, z+y+l) = 1 \text{ for all } x,y,z \in I \text{ such that } x<y.
                                                                            \Big\}.
\end{align*}
Observe that $\Lambda^{l}_{\omega}$ is in bijection with $\operatorname{Fix}_{ba^{l}}^{\sigma_{\omega}}$.

\subsection{Idempotent factor sets of the infinite dihedral group}

Let $\sigma$ be an idempotent factor set of $D_{\infty}$. By Proposition~\ref{p: unique decomposition of idempotent factor sets}, Lemma~\ref{l: diagonal factor sets of DInf} and Proposition~\ref{p: lateral factor sets of DInf characterization} there exists unique $\nu: \mathbb{Z}^{+} \sqcup \mathbb{Z} \to \textbf{2}$ and $\omega: \mathbb{Z}^{+} \times \mathbb{Z}^{+} \sqcup \mathbb{Z}^{+} \times \mathbb{Z} \to \textbf{2}$ such that $\sigma = \sigma_{\nu} \sigma_{\omega}$ and $\operatorname{Null}(\sigma_{\nu}) \cap \operatorname{Null}(\sigma_{\omega}) = \varnothing$. Therefore,
\[
    \mathcal{P}^{\operatorname{I}}_{\sigma} = \mathcal{P}^{\operatorname{I}}_{\sigma_{\nu}} \sqcup \mathcal{P}^{\operatorname{I}}_{\sigma_{\omega}}.
\]

\begin{proposition}\label{p: idempotent factor sets of DInf}
    There is a surjection from the semigroup of all pairs of functions \((\nu, \omega)\) where \(\nu: \mathbb{Z}^{+} \sqcup \mathbb{Z} \to \textbf{2}\) and \(\omega: \mathbb{Z}^{+} \times \mathbb{Z}^{+} \sqcup \mathbb{Z}^{+} \times \mathbb{Z} \to \textbf{2}\) and the semigroup of all idempotent factor sets of \(D_{\infty}\).
\end{proposition}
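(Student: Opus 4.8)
The plan is to exhibit the map $(\nu,\omega)\mapsto \sigma_{S_\nu}\sigma_{W_\omega}$ and show it is a well-defined semigroup homomorphism onto the idempotent factor sets of $D_\infty$. First I would recall that by Lemma~\ref{l: diagonal factor sets of DInf} every pair $\nu=(\nu_0,\nu_1)$ produces a symmetric subset $S_\nu\subseteq D_\infty\setminus\{1\}$ satisfying \eqref{eq: Diagonal condition}, hence a diagonal factor set $\sigma_{S_\nu}=\sigma_\nu\in pm(D_\infty)$ by Proposition~\ref{p: diagonal factor set}; and by Proposition~\ref{p: lateral factor sets of DInf characterization} every $\omega=(\omega_0,\omega_1)$ produces a subset $W_\omega$ with $C_{(z,z^{-1})}\cap W_\omega=\varnothing$ for all $z$, hence a lateral factor set $\sigma_{W_\omega}=\sigma_\omega\in pm(D_\infty)$ by Proposition~\ref{p: lateral factor set}. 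Since $pm(D_\infty)$ is a commutative monoid under pointwise multiplication, the product $\sigma_{(\nu,\omega)}:=\sigma_\nu\sigma_\omega$ is again an element of $pm(D_\infty)$, and it is idempotent because both factors take values in $\{0,1\}$. This defines the claimed map $\Phi$ on the semigroup of pairs (with pointwise product), and since taking generated diagonal/lateral factor sets is multiplicative in the obvious sense on the indexing data while $pm(D_\infty)$ multiplies pointwise, $\Phi$ is a semigroup homomorphism.

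For surjectivity I would invoke Theorem~\ref{p: unique decomposition of idempotent factor sets}: any idempotent factor set $\sigma$ of $D_\infty$ decomposes uniquely as $\sigma=\delta\lambda$ with $\delta$ a diagonal factor set and $\lambda$ a lateral factor set. By Lemma~\ref{l: diagonal factor sets of DInf} the diagonal factor set $\delta$ equals $\sigma_\nu$ for a (unique) $\nu\in\textbf{2}^{\mathbb{Z}^+\sqcup\mathbb{Z}}$, and by Proposition~\ref{p: lateral factor sets of DInf characterization} the lateral factor set $\lambda$ equals $\sigma_\omega$ for a (unique) $\omega\in\textbf{2}^{\mathbb{Z}^+\times\mathbb{Z}^+\sqcup\mathbb{Z}^+\times\mathbb{Z}}$. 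Hence $\sigma=\sigma_\nu\sigma_\omega=\Phi(\nu,\omega)$, proving that $\Phi$ is onto. The statement only claims a surjection (not a bijection), which is consistent with the fact that the decomposition in Theorem~\ref{p: unique decomposition of idempotent factor sets} builds $\lambda$ from the \emph{reduced} set $\overline{W}$ rather than from all of $W$, so different $\omega$'s whose $W_\omega$ agree after removing pairs killed by $\sigma_\nu$ can give the same product.

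The one point requiring a little care — and the main (mild) obstacle — is checking that the product decomposition $\mathcal{P}^{\mathrm{I}}_\sigma=\mathcal{P}^{\mathrm{I}}_{\sigma_\nu}\sqcup\mathcal{P}^{\mathrm{I}}_{\sigma_\omega}$ indicated before the statement is genuinely a disjoint union, i.e.\ that $\operatorname{Null}(\sigma_\nu)\cap\operatorname{Null}(\sigma_\omega)=\varnothing$, which is exactly what item (v) of Theorem~\ref{p: unique decomposition of idempotent factor sets} guarantees after replacing $\omega$ by the reduced data; this is what makes the indexing by pairs natural and makes $\Phi$ well-behaved. Since $\sigma$ is idempotent, by Lemma~\ref{sec:OnlyTypeI} the space $\Omega_\sigma$ is entirely determined by type I prohibitions, so this bookkeeping on $\mathcal{P}^{\mathrm{I}}$ is all that is needed; no analysis of type II prohibitions enters. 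Assembling these observations completes the proof.
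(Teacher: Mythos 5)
Your proposal is correct and takes essentially the same route as the paper, which derives the surjection directly from Theorem~\ref{p: unique decomposition of idempotent factor sets} together with the parametrizations in Lemma~\ref{l: diagonal factor sets of DInf} and Proposition~\ref{p: lateral factor sets of DInf characterization}. Your additional remarks on multiplicativity of $\Phi$ and on why the map is only a surjection (because of the reduction to $\overline{W}$) are accurate and consistent with the paper's discussion.
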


\begin{remark}
    By Proposition~\ref{p: idempotent factor sets of DInf} the task of determine when a factor set is idempotent is reduced to a pure computational problem involving functions from \(\mathbb{Z}\) to \(\textbf{2}\).
\end{remark}

Combining \cite[Theorem 5.4]{HGGLimaHPinedo} and Proposition~\ref{p: idempotent factor sets of DInf} we obtain the following:
\begin{theorem}\label{teo:InfDihedtral}
    Let \(pm'(D_{\infty})\) be the subsemigroup of \(pm(D_{\infty})\) of all factor sets such that \(\sigma(g,g^{-1}) \in \{ 0, 1\}\). Then, there is a surjective map of semigroup 
    \[
       \textbf{2}^{\mathbb{Z}^{+} \sqcup \mathbb{Z}} \times \kappa^{\mathbb{Z}^{+} \times \mathbb{Z}^{+} \sqcup \mathbb{Z}^{+} \times \mathbb{Z}} \to pm'(D_{\infty}),
    \]
    where \(\textbf{2}:= \{ 0, 1\}\). Furthermore, for every \(\sigma \in pm'(D_{\infty})\) there exists unique \(\nu: \mathbb{Z}^{+} \sqcup \mathbb{Z} \to \textbf{2}\) and \(\omega: \mathbb{Z}^{+} \times \mathbb{Z}^{+} \sqcup \mathbb{Z}^{+} \times \mathbb{Z} \to \kappa\) such that \(\sigma = \sigma_{\nu} \sigma_{\omega}\) and \(\operatorname{Null}(\sigma_{\nu}) \cap \operatorname{Null}(\sigma_{\omega}) = \varnothing\).
\end{theorem}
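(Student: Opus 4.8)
The plan is to assemble the statement from four ingredients already established. The first is \cite[Corollary 5.8]{DoNoPi}: every factor set of $D_{\infty}$ is equivalent to a product of an idempotent factor set and a factor set with total domain, so classifying factor sets up to equivalence reduces to classifying the idempotent ones and the total-domain ones and taking products. The second is \cite[Theorem 5.4]{HGGLimaHPinedo}, which identifies the total-domain factor sets of $D_{\infty}$, up to equivalence, with the semigroup $(\kappa^{*})^{\mathbb{Z}^{+}\times\mathbb{Z}^{+}\sqcup\mathbb{Z}^{+}\times\mathbb{Z}}$. The third is Proposition~\ref{p: idempotent factor sets of DInf}, built on the bijections of Lemma~\ref{l: diagonal factor sets of DInf} and Proposition~\ref{p: lateral factor sets of DInf characterization}, which gives a surjection onto the idempotent factor sets of $D_{\infty}$ from pairs $(\nu,\omega_{1})$ with $\nu\colon\mathbb{Z}^{+}\sqcup\mathbb{Z}\to\textbf{2}$ (the diagonal datum) and $\omega_{1}\colon\mathbb{Z}^{+}\times\mathbb{Z}^{+}\sqcup\mathbb{Z}^{+}\times\mathbb{Z}\to\textbf{2}$ (the lateral datum). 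The fourth is Proposition~\ref{p: unique decomposition of idempotent factor sets}, which makes the diagonal/lateral splitting of an idempotent factor set unique once one imposes $\operatorname{Null}(\delta)\cap\operatorname{Null}(\lambda)=\varnothing$.

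The crucial move is to merge the lateral $\textbf{2}$-valued datum and the total-domain $\kappa^{*}$-valued datum, which live on the same index set $\mathbb{Z}^{+}\times\mathbb{Z}^{+}\sqcup\mathbb{Z}^{+}\times\mathbb{Z}$, into a single $\kappa$-valued function $\omega$. I would use that a map $\omega\colon\mathbb{Z}^{+}\times\mathbb{Z}^{+}\sqcup\mathbb{Z}^{+}\times\mathbb{Z}\to\kappa$ is the same datum as a subset $\omega^{-1}(0)$ of the index set together with a $\kappa^{*}$-valued function on its complement: the former produces, via Proposition~\ref{p: lateral factor sets of DInf characterization}, a lateral idempotent factor set $\lambda_{\omega}$; the latter produces, via a normalized representative in \cite[Theorem 5.4]{HGGLimaHPinedo}, a total-domain factor set $\tau_{\omega}$ with $\tau_{\omega}(g,g^{-1})=1$; and $\sigma_{\omega}:=\lambda_{\omega}\tau_{\omega}$ is independent of the arbitrary choice of $\tau_{\omega}$ over $\omega^{-1}(0)$, since on every orbit $C_{(x,y)}$ contained in $\omega^{-1}(0)$ the factor $\lambda_{\omega}$ already vanishes. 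Putting $\Phi(\nu,\omega):=\sigma_{\nu}\,\sigma_{\omega}$ with $\sigma_{\nu}$ the diagonal factor set of $\nu$, and observing that the quoted parametrizations are all maps of semigroups for the pointwise operations, yields the claimed semigroup homomorphism $\textbf{2}^{\mathbb{Z}^{+}\sqcup\mathbb{Z}}\times\kappa^{\mathbb{Z}^{+}\times\mathbb{Z}^{+}\sqcup\mathbb{Z}^{+}\times\mathbb{Z}}\to pm'(D_{\infty})$.

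For surjectivity, take $\sigma\in pm'(D_{\infty})$. By \cite[Corollary 5.8]{DoNoPi} it is equivalent to a product $\varepsilon\tau$ with $\varepsilon$ idempotent and $\tau$ of total domain, and by \cite[Corollary 7]{DN} its null-set $\operatorname{Null}(\sigma)=\operatorname{Null}(\varepsilon)$ is a union of $C$-orbits. Apply Proposition~\ref{p: unique decomposition of idempotent factor sets} to $\varepsilon$ to get $\varepsilon=\delta\lambda$ with $\delta$ diagonal, $\lambda$ lateral, and $\operatorname{Null}(\delta)\cap\operatorname{Null}(\lambda)=\varnothing$; read off $\nu$ from $\delta$ by Lemma~\ref{l: diagonal factor sets of DInf}, and encode $\lambda$ together with $\tau$ in one $\kappa$-valued $\omega$ as above, so that $\sigma=\sigma_{\nu}\sigma_{\omega}$ and, by construction, $\operatorname{Null}(\sigma_{\nu})\cap\operatorname{Null}(\sigma_{\omega})=\varnothing$. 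Uniqueness of $(\nu,\omega)$ under that constraint then follows from the uniqueness clause of Proposition~\ref{p: unique decomposition of idempotent factor sets} (which pins down $\delta$, hence $\nu$, and $\lambda$, hence $\omega^{-1}(0)$) together with \cite[Theorem 5.4]{HGGLimaHPinedo} (which pins down $\tau$, hence the $\kappa^{*}$-values of $\omega$).

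I expect the main obstacle to be the bookkeeping in the merging step: one must check that the pointwise product of a lateral idempotent factor set with a total-domain factor set is again a factor set of $D_{\infty}$ (only the pointwise semigroup structure of $pm(D_{\infty})$ is available a priori, so one verifies that the conditions of \cite[Theorem 4]{DN} persist), that $\sigma_{\omega}$ genuinely does not see the auxiliary values of $\tau_{\omega}$ over $\omega^{-1}(0)$, and --- most delicately --- that $\sigma$ can be brought to the normal form $\sigma_{\nu}\sigma_{\omega}$ on the nose for every $\sigma\in pm'(D_{\infty})$; should this require adjusting $\sigma$ within its equivalence class, the statement is to be read, and proved, up to equivalence, exactly as phrased in the Introduction. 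The remaining points --- that $\Phi$ respects the semigroup operations, and that $\operatorname{Null}(\sigma_{\nu})\cap\operatorname{Null}(\sigma_{\omega})=\varnothing$ is precisely the normalization collapsing the preimage to a single pair --- are routine given the cited results.
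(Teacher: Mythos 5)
Your proposal is correct and follows essentially the same route as the paper, whose entire proof of this theorem is the one-line remark that it follows by combining \cite[Theorem 5.4]{HGGLimaHPinedo} with Proposition~\ref{p: idempotent factor sets of DInf} (itself resting on Theorem~\ref{p: unique decomposition of idempotent factor sets}, Lemma~\ref{l: diagonal factor sets of DInf} and Proposition~\ref{p: lateral factor sets of DInf characterization}). Your write-up in fact supplies more detail than the paper does, in particular the explicit merging of the lateral $\textbf{2}$-valued datum with the total-domain $\kappa^{*}$-valued datum into one $\kappa$-valued $\omega$, and the caveat that the factorization is obtained up to equivalence, exactly as the Introduction phrases it.
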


\section*{Acknowledgments}

The first named author was partially supported by 
Funda\c c\~ao de Amparo \`a Pesquisa do Estado de S\~ao Paulo (Fapesp), process n°:  2020/16594-0, and by  Conselho Nacional de Desenvolvimento Cient\'{\i}fico e Tecnol{\'o}gico (CNPq), process n°: 312683/2021-9. The second named author was supported by Fapesp, process n°: 2022/12963-7.

\bibliographystyle{abbrv}
\bibliography{azu.bib}

\end{document}